\renewcommand{\phi}{\varphi}
\newtheorem{theorem}{Theorem}[section]
\newtheorem{lemma}[theorem]{Lemma}
\newtheorem{proposition}[theorem]{Proposition}
\newtheorem{remark}[theorem]{Remark}
\newtheorem{definition}[theorem]{Definition}
\newtheorem{assumption}[theorem]{Assumption}
\definecolor{forestgreen}{rgb}{0.13, 0.55, 0.13}
\definecolor{celestialblue}{rgb}{0.29, 0.59, 0.82}
\title[Neural Field Equations and Hawkes processes]{Neural Field Equations and Hawkes processes: long-term stability of traveling wave profiles in the neutral case}
\author{Eric Lu\c{c}on}
\address{Université d’Orléans, Université de Tours, CNRS, IDP, UMR 7013, Orléans, France, \url{eric.lucon@univ-orleans.fr}.
}
\author{Christophe Poquet}
\address{Univ Lyon, Université Claude Bernard Lyon 1, CNRS UMR 5208, Institut Camille Jordan, F-69622 Villeurbanne, France, \url{poquet@math.univ-lyon1.fr}}
\keywords{Neural Field Equation, spatially-extended systems, traveling waves, Hawkes processes, noise-induced dynamics.}
\subjclass[2020]{60G55, 60H30, 60J25, 45J05, 45M05, 35C07, 37N25, 92C20}
\date{\today}
\begin{document}

\begin{abstract}
We consider the long-time behavior of a population of interacting Hawkes processes on the real line, with spatial extension. The large population behavior of the system is governed by the standard Voltage-based Neural Field Equation (NFE). We prove long-time stability of the system w.r.t. traveling wave solutions for the NFE on a diffusive time scale, in the neutral case, that is when the speed of the traveling wave solution is zero: the position of the traveling wave profile becomes essentially Brownian on a diffusive time scale. We borrow here from the seminal work of Chevallier, Duarte, L\"ocherbach and Ost concerning the approximation of NFE by Hawkes processes, the existence result of traveling waves of Ermentrout and McLeod and from the stability result of the traveling wave profile from Lang and Stannat. We provide also similar results concerning the activity based NFE.
\end{abstract}

\maketitle

%\tableofcontents

\section{Introduction}

\subsection{Neural Field Equations}
\label{sec:NFE}
Neural Field Equations (NFE) have been first introduced by Wilson and Cowan \cite{Wilson:1972aa,Wilson1973} and Amari \cite{Amari:1977:DPF:2731211.2731248} to model mesoscopic dynamics of spatially-extended networks of interacting neurons. The literature generically distinguishes between the \emph{Voltage-based Neural Field Equation (VNFE)}
\begin{equation}
\label{eq:NFE_intro}
\partial_{ t} u_t(x) = - u_t(x) + \int_{ \mathcal{ D}} W(x,y) f(u_t(y)) {\rm d}y
\end{equation}
and the \emph{Rate-based (or Activity-based) Neural Field Equation (RNFE)}
\begin{equation}
\label{eq:rateNFE_intro}
\partial_{ t} v_{ t}(x) = -v_{ t}(x) + f \left(\int_{ \mathcal{ D}}W(x, y) v_{ t}(y) {\rm d}y\right).
\end{equation}
Here, $u_{ t}(x)$ stands for the voltage of a neuron and $v_{ t}(x)$ is the neuronal activity at a mesoscopic position $x\in \mathcal{ D}$ in a cortical region $ \mathcal{ D} \subset \mathbb{ R}^{ d}$, $d\geq1$. We refer to e.g. \cite{MR2871421} for biological heuristics for these equations. $W$ is a spatial kernel denoting inhomogeneous connectivity between neurons and $f$ is a sigmoid function modeling synaptic integration between neurons. NFE-type equations have been shown to exhibit various dynamical patterns, e.g. traveling waves, bump solutions, spiral waves, etc. We refer to e.g. \cite{MR2871421,Coombes2005,Ermentrout1998} for extensive reviews on the subject and to Section~\ref{sec:literature} below for further references.

\subsection{First assumptions and stability of traveling waves solutions for the NFE}
We are concerned in this paper with traveling waves solutions associated to \eqref{eq:NFE_intro} and \eqref{eq:rateNFE_intro}. A major part of the present paper is devoted to the analysis of the VNFE \eqref{eq:NFE_intro}, that we will denote as NFE for simplicity without further notice in the following. Corresponding results concerning the companion RNFE \eqref{eq:rateNFE_intro} will be addressed in Section~\ref{sec:RNFE}. The first rigorous proof for the existence of traveling wave solutions concerns the VNFE \eqref{eq:NFE_intro} and goes back to the paper of Ermentrout and McLeod \cite{ermentrout_mcleod_1993} in dimension $d=1$. We recall in this paragraph the setting of \cite{ermentrout_mcleod_1993} and place ourselves under the appropriate set of hypotheses ensuring the existence of traveling wave solutions for the NFE \eqref{eq:NFE_intro}. 

\subsubsection{First main hypotheses}
We assume in the rest of the paper the following
\begin{assumption}
\label{ass:Wf}
Suppose that $d=1$ and $ \mathcal{ D}= \mathbb{ R}$.
\begin{enumerate}
\item Assumption on the kernel $W$: suppose that $W(x,y)=W(x-y)$, $x,y\in \mathbb{ R}$ where the kernel $W$ is even, nonnegative and satisfies
\[
\int_{ \mathbb{ R}}W(x) {\rm d}x=1.
\]
\item Suppose that $f$ is a smooth sigmoid: $f: \mathbb{ R}\to [0,1]$ is three-times continuously differentiable on $ \mathbb{ R}$, with $f^{ \prime}>0$. Suppose also that $x \mapsto f(x)-x$ is bistable, i.e. that $f(x)-x$ has exactly three zeros, $0\leq a_{ 1}<a <a_{ 2}\leq 1$ with $f^{ \prime}(a_{ i})<1$, $i=1,2$ and $f^{ \prime}(a)>1$. We suppose also that $f$ is convex-concave around $a$, that is $ f^{ \prime \prime}(x)\geq 0$ for $x\leq a$ and $ f^{ \prime \prime}(x)\leq 0$ for $x\geq a$.
\end{enumerate}
\end{assumption}
Under Assumption~\ref{ass:Wf}, the NFE \eqref{eq:NFE_intro} can be rewritten in a convolution form as
\begin{equation}
\label{eq:NFE conv}
\partial_{ t} u_t(x) = - u(x) + W*f(u_t)(x),\ x\in \mathbb{ R}.
\end{equation}
Here, constant functions $ u\equiv a_{ 1}, a_{ 2}, a$ are uniform solutions to the NFE \eqref{eq:NFE conv} (the two first being stable, the third one unstable). The results of \cite{ermentrout_mcleod_1993} concern the existence of traveling wave solutions to \eqref{eq:NFE conv}, that is solutions of the form $(x,t) \mapsto u_{ t}(x)= \hat{ u}(x-ct)$, where $ \hat{ u}$ is a traveling wave profile and  $c\in \mathbb{ R}$ is the speed of the traveling wave. Note that plugging the previous expression into \eqref{eq:NFE conv} give that $ (\hat{ u}, c)$ necessarily solves (by the change of variables $ \xi= x-ct$)
\begin{equation}
\label{eq:profile_hatu}
- c \partial_{ \xi} \hat{ u}(\xi)= - \hat{ u}(\xi) + W\ast f( \hat{ u})(\xi).
\end{equation}
The main result of \cite{ermentrout_mcleod_1993} is the following.

\begin{theorem}[Ermentrout and McLeod \cite{ermentrout_mcleod_1993}]
\label{th:EML}
Under Assumption~\ref{ass:Wf}, there exists a unique (modulo translations) couple $ \left( \hat{ u}, c\right)$, where $ \hat{ u}: \mathbb{ R}\to [0, 1]$ strictly increasing on $ \mathbb{ R}$ with $\lim_{ -\infty} \hat{ u}= a_{ 1}$ and $ \lim_{ +\infty} \hat{ u}= a_{ 2}$ and $c\in \mathbb{ R}$ so that $ u^{ TW}(x,t):= \hat{ u}(x-ct)$ solves \eqref{eq:NFE conv}. Moreover, the profile $ \hat{ u}$ is continuously differentiable and the speed $c$ satisfies
\begin{equation}
\label{eq:cVShatu}
c= \frac{ \int_{ a_{ 1}}^{a_{ 2}}(x-f(x)) {\rm d}x}{ \int_{ \mathbb{ R}} \left(\partial_{ x}\hat{ u}\right)^{ 2}(x) f^{ \prime}( \hat{ u}(x)) {\rm d}x}.
\end{equation}
\end{theorem}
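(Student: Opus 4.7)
My strategy, following Ermentrout and McLeod, is to regularize \eqref{eq:profile_hatu} by adding a small viscosity: for $\ve>0$ I consider the profile equation
\begin{equation*}
\ve \hat u_\ve''(\xi) + c_\ve \hat u_\ve'(\xi) - \hat u_\ve(\xi) + W\ast f(\hat u_\ve)(\xi) = 0,\qquad \hat u_\ve(\pm\infty)=a_{1,2},
\end{equation*}
which is a bistable nonlocal reaction--diffusion equation. Under Assumption~\ref{ass:Wf}, the reaction $u\mapsto -u+W\ast f(u)$ keeps $a_1, a_2$ as stable uniform equilibria and $a$ as unstable, and the convex/concave structure of $f$ around $a$ fixes the sign of the nonlinearity on either side. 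The plan is to construct $(\hat u_\ve, c_\ve)$ for each fixed $\ve>0$, derive $\ve$-uniform estimates, and pass to the limit $\ve\to 0$.

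For fixed $\ve>0$ I would build $(\hat u_\ve, c_\ve)$ by solving the regularized problem on a bounded interval $[-L,L]$ with Dirichlet data $a_1, a_2$ at the endpoints. The order-preserving character of $u\mapsto W\ast f(u)$ (from $f'>0$) together with the bistable structure provides monotone sub- and super-solutions, so that a monotone iteration or a Leray--Schauder degree argument yields a solution; the normalization $\hat u_\ve(0)=a$ singles out $c_\ve$. Letting $L\to\infty$ produces a monotone profile on $\R$. The limit $\ve\to 0$ then rests on three $\ve$-uniform estimates: $a_1\leq \hat u_\ve\leq a_2$ (maximum principle), a bound on $c_\ve$ obtained by testing the profile equation against $f'(\hat u_\ve)\hat u_\ve'$ (the very identity that will eventually give \eqref{eq:cVShatu}), and an $L^1$ bound on $\hat u_\ve'$ from monotonicity and the fixed limits. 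These yield equicontinuity of $\hat u_\ve$ and of $W\ast f(\hat u_\ve)$, so after the normalization $\hat u_\ve(0)=a$, Helly's selection principle and Arzelà--Ascoli provide a subsequence converging locally uniformly to a monotone solution $(\hat u,c)$ of \eqref{eq:profile_hatu}. The limits at $\pm\infty$ are necessarily zeros of $x\mapsto f(x)-x$ and, by monotonicity, must be $a_1$ and $a_2$. Continuous differentiability of $\hat u$ follows from $\hat u'=c^{-1}(\hat u-W\ast f(\hat u))$ when $c\neq 0$, and from $\hat u=W\ast f(\hat u)$ combined with the regularizing action of the convolution on the bounded monotone function $f(\hat u)$ when $c=0$.

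Uniqueness modulo translation I would obtain by the classical sliding method, which uses only the comparison principle for the order-preserving semigroup generated by \eqref{eq:NFE conv}: given two profiles with the same limits, shift one far enough to dominate the other, slide it back until they touch, and conclude equality. Finally, the speed formula \eqref{eq:cVShatu} is the energy identity obtained by multiplying \eqref{eq:profile_hatu} by $f'(\hat u)\partial_\xi\hat u = \partial_\xi f(\hat u)$ and integrating over $\R$: by symmetrization of the double integral, the evenness of $W$ yields $\int W\ast f(\hat u)\,\partial_\xi f(\hat u)\,\rmd\xi = \tfrac12(a_2^2-a_1^2)$, while integrating $-\int\hat u\,\partial_\xi f(\hat u)\,\rmd\xi$ by parts and using the limits $\hat u(\pm\infty)=a_{1,2}$ produces $-(a_2^2-a_1^2)+\int_{a_1}^{a_2}f(u)\,\rmd u$. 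Combining the two terms reproduces exactly $-\int_{a_1}^{a_2}(x-f(x))\,\rmd x$ on the right-hand side, against $-c\int(\partial_\xi\hat u)^2 f'(\hat u)\,\rmd\xi$ on the left.

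The main obstacle is the nonlocality of the reaction term: it rules out the direct phase-plane ODE analysis available for local bistable fronts, and the parabolic smoothing disappears in the limit $\ve=0$, so the selection of a unique speed $c$ must be shown to persist. The two structural ingredients that make the plan succeed are the monotonicity of $f$, which powers both the sub/super-solution construction at the $\ve>0$ level and the sliding argument for uniqueness, and the bistability of $x\mapsto f(x)-x$, which is what singles out the unique pair $(\hat u, c)$.
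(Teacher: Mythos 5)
You should first note that the paper does not prove Theorem~\ref{th:EML} at all: it is imported verbatim from \cite{ermentrout_mcleod_1993}, and the only indication of its proof given in the text (Remark~\ref{rem:hyp W}) is that Ermentrout and McLeod reduce the case of the exponential kernel with $c=0$ to the second-order ODE $\hat u''+\sigma^{-2}(f(\hat u)-\hat u)=0$ and then run a continuation argument from this explicitly solvable situation to general kernels and speeds. Your route — viscous regularization, construction on bounded intervals by monotone iteration/degree, compactness as $L\to\infty$ and $\ve\to0$, uniqueness by sliding, and the energy identity for the speed — is therefore a genuinely different strategy, much closer in spirit to \cite{chen1997} and to the convolution phase-transition literature than to the cited proof. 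Two pieces of your sketch are sound as stated: the comparison/sliding framework is legitimate here because $f'>0$ and $W\geq0$ make the semiflow of \eqref{eq:NFE conv} order preserving, and your derivation of \eqref{eq:cVShatu} by multiplying \eqref{eq:profile_hatu} by $\partial_\xi f(\hat u)$, symmetrizing the double integral using the evenness of $W$ and $f(a_i)=a_i$, and integrating by parts is correct.

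The genuine gap is in the limiting step of the existence argument. Helly/Arzel\`a--Ascoli give you a monotone limit $(\hat u,c)$ solving \eqref{eq:profile_hatu}, but nothing in your argument prevents this limit from being degenerate: the normalization $\hat u_\ve(0)=a$ only excludes the constants $a_1,a_2$, and the limit could perfectly well be $\hat u\equiv a$, or a ``half front'' connecting $a$ to $a_1$ or to $a_2$ (the limits at $\pm\infty$ are zeros of $f(x)-x$, and $a$ is one of them). Moreover, your proposed uniform bound on $c_\ve$ is circular: testing the regularized equation against $f'(\hat u_\ve)\hat u_\ve'$ controls $c_\ve$ only if you already have a lower bound, uniform in $\ve$ and $L$, on $\int(\hat u_\ve')^2 f'(\hat u_\ve)$, i.e.\ precisely the non-flattening/non-degeneracy you are trying to establish (the same quantity is the denominator of \eqref{eq:cVShatu}). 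Ruling this out is the crux of every construction of bistable fronts by approximation; it requires additional uniform estimates — typically uniform exponential convergence to $a_1,a_2$ away from the normalization point, exploiting $f'(a_i)<1$, or a sub/super-solution pair that pins a genuine transition layer of bounded width — and it is exactly the part of the work that Ermentrout--McLeod's continuation argument (or Chen's squeezing estimates) is designed to do. A secondary, smaller omission: sliding only identifies two profiles travelling \emph{with the same speed}; uniqueness of the pair $(\hat u,c)$ also requires the separate comparison argument showing that waves with two different speeds and the same limits cannot coexist.
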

For technical reasons, we specify the main results below to the case of exponential kernel~$W$:
\begin{assumption}
\label{ass:W_exp}
Suppose that for fixed $ \sigma>0$, 
\begin{equation}
\label{eq:Wexp}
W(x) = \frac{1}{2\sigma}e^{-\frac{|x|}{\sigma}},\ x\in \mathbb{ R}.
\end{equation}
\end{assumption}
\begin{remark}\label{rem:hyp W}
Theorem~\ref{th:EML} is valid for any generic choice of $W$ satisfying Assumption~\ref{ass:Wf} and not at all specific to exponential interaction kernel $W$ in \eqref{eq:Wexp}. The case \eqref{eq:Wexp} however plays a central role in the analysis of \eqref{eq:NFE conv}: the key argument of \cite{ermentrout_mcleod_1993} is precisely to show that in the case \eqref{eq:Wexp}, solving \eqref{eq:profile_hatu} with $c=0$ reduces to $ \hat{ u}^{ \prime \prime} + \frac{ 1}{ \sigma^{ 2}} \left( f(\hat{ u})- \hat{ u}\right)=0$ and the rest of the proof of \cite{ermentrout_mcleod_1993} is a clever continuation argument from this particular case. The reason why we restrict our analysis below to \eqref{eq:Wexp} is the possibility in this case to obtain explicit tail estimates on the traveling wave profile $ \hat{ u}$ (as noticed by e.g. Lang and Stannat, see \cite[Prop.~9]{2016JDE...261.4275L} and \cite[Section~4]{10113715M1033927}), which will be necessary in many points of our proof. Despite this technical aspect, we do not believe the precise form of $W$ to be really essential for the results of the paper.
\end{remark}

\subsubsection{The neutral case and the manifold of stationary solutions}
There is in general no explicit expression for $ \hat{ u}$ and the wave speed $c$: identity \eqref{eq:cVShatu} only defines $c$ in an implicit way, as $c$ depends itself on the unknown profile $ \hat{ u}$. However, \eqref{eq:cVShatu} gives that $c$ is of the same sign as the denominator $\int_{ a_{ 1}}^{a_{ 2}}(x-f(x)) {\rm d}x$. The second main assumption of the paper is that we restrict ourselves to the particular case where $c=0$:
\begin{assumption}[The neutral case]
\label{ass:neutral}
We make the following symmetry assumption for $f$ around the unstable fixed-point $a$:
\begin{equation}
\label{eq:f_symmetry_a}
f(a+x)-a=-(f(a-x)-a), \ x \in \mathbb{ R}.
\end{equation}
 In particular, one easily sees from \eqref{eq:f_symmetry_a} that $a=\frac{a_1+a_2}{2}$ and $ \int_{ a_{ 1}}^{a_{ 2}} \left(f(x)-x\right){\rm d}x=0$ so that using \eqref{eq:cVShatu}, we are in a situation where $c=0$.
\end{assumption}
The above conclusion that $c=0$ of Assumption~\ref{ass:neutral} seems quite restrictive. We discuss in Section~\ref{sec:extensions} below the practical challenges of the present case $c=0$ and the possibility to extend the present techniques to the situation where $c\neq 0$.

\medskip

Under Assumption~\ref{ass:neutral}, the existence of the traveling waves $(x,t) \mapsto \hat{ u}(x-ct)$ given by Theorem~\ref{th:main} translates (by translation invariance of \eqref{eq:NFE conv}) into the existence of a family of stationary solutions 
\begin{equation}
\hat u_\varphi(x):= \hat{ u}_{ 0}(x- \varphi),\ x\in \mathbb{ R}, \varphi\in \mathbb{ R}
\end{equation} 
for \eqref{eq:NFE conv} that are the translations of a fixed profile $\hat u_0$ (that we arbitrarily fix as the only translation of the original profile $ \hat{ u}$ such that $\hat{u}_{ 0}(0)=a$, so that $\hat u_\phi (\phi)=a$ for every $ \varphi\in \mathbb{ R}$). Moreover one can easily check that 
\begin{equation*}
\hat u_\phi(\phi+x)-a = -\left(\hat u_\phi(\phi-x)-a\right),\ x\in \mathbb{ R}, \varphi\in \mathbb{ R}.
\end{equation*}
We denote by $ \mathcal{ M}$ the manifold induced by these stationary solutions:
\begin{equation}
\label{eq:M}
\mathcal{ M} :=  \left\{\hat u_\varphi:\, \varphi\in \mathbb{R}\right\}.
\end{equation}
\subsubsection{Functional setting and well-posedness of the NFE}
In order to characterize the stability of $ \mathcal{ M}$ for \eqref{eq:NFE conv}, we follow the $L^{ 2}$-approach of Kr\"uger, Lang and Stannat \cite{2016JDE...261.4275L,https://doi.org/10.14279/depositonce-5019,10113713095094X,10113715M1033927}. Introduce $L^{ 2}:= (L^{ 2} \left(\mathbb{ R}\right), {\rm d}x)$ as the standard flat space of test functions $f$ on $ \mathbb{ R}$ such that $ \left\Vert f \right\Vert_{ L^{ 2}}^{ 2}:=\int_{ \mathbb{ R}} \left\vert f(x) \right\vert^{ 2} {\rm d}x<\infty$. Note that $L^{ 2}$ is \emph{not} a proper framework for the analysis of \eqref{eq:NFE conv} as for example $\hat u_\varphi$ does not belong to $L^{ 2}$. Observe here that finding a solution $ t \mapsto u_t$ to \eqref{eq:NFE conv} is equivalent to consider the difference between $u$ and the stationary profile $ \hat{ u}_{ 0}$
\begin{equation*}
v_t(x):= u_t(x) - \hat u_0(x), x\in \mathbb{ R},\ t\geq0
\end{equation*}
which solves
\begin{equation}
\label{eq:v}
\partial_{ t}v_t = -v_t + W \ast \left( f(\hat u_0+v_t) - f(\hat u_0)\right).
\end{equation}
Since the mapping $ v\in L^{ 2} \mapsto W \ast \left( f(\hat u_0+v) - f(\hat u_0)\right)\in L^2$ is Lipschitz, it is well known (see for example \cite{sell2013dynamics}) that for any initial condition $v_0\in L^2$ there exists a unique strong solution $v \in \mathcal{C}([0,\infty),L^2)$ to \eqref{eq:v}. This well-posedness result for \eqref{eq:v} translates in return into a well-posedness result for \eqref{eq:NFE conv} by setting $u_{ t}:= v_{ t}+ \hat{ u}_{ 0}$. Hence the proper functional setting for \eqref{eq:NFE conv} is to work in the \emph{affine} $L^{ 2}$-space
\begin{equation}
L^{ 2}_{ \hat{ u}_{ 0}}:= L^{ 2} + \hat{ u}_{ 0}.
\end{equation}
An important result concerning the local stability of the manifold of traveling wave solutions has been given by Stannat and Lang in \cite{2016JDE...261.4275L}: they prove in \cite[Th.~8]{2016JDE...261.4275L} the existence of a spectral gap for the linearized dynamics around the stationary profile $ \hat{ u}_{ 0}$, inducing some local stability property for the manifold $ \mathcal{ M}$. This result will be of crucial importance to us as it naturally induces the existence of a neighborhood of $\mathcal{M}$ such that for any initial condition $u_0$ in this neighborhood, the solution $u_t$ to \eqref{eq:NFE conv} converges exponentially fast to some $\hat u_{\Theta(u_0)}$ for some phase $\Theta(u_0)\in \mathbb{ R}$. The mapping $\Theta$ is called the isochron map, and is constant along the trajectories defined by \eqref{eq:NFE conv}. More details about this stability result and the regularity of the isochron map will be given is Section~\ref{sec: stab, isochron}.

\section{The model and main results}
\subsection{The microscopic model}
\label{sec:micro_model}
So far, we have only discussed about the deterministic, macroscopic model given by Neural Field Equations \eqref{eq:NFE conv} and \eqref{eq:rateNFE_intro}. The aim of the present paper is to introduce a probabilistic microscopic model of interacting point processes that approximates correctly in large population the traveling wave solutions to both \eqref{eq:NFE conv} and \eqref{eq:rateNFE_intro}. The main result of the paper will be to look at the diffusive behavior of this particle system w.r.t. the structure of traveling wave profiles on a long time scale.

\subsubsection{Motivation}
The question of the derivation of the macroscopic NFE (as well as stochastic perturbations) by adequate microscopic systems has been a longstanding issue in the mathematical neuroscience literature, mostly from a non-rigorous perspective. We refer to Section~\ref{sec:literature} below for a detailed exposition of the literature on the subject. The present model originates from a previous work by Chevallier, Duarte, L\"ocherbach and Ost \cite{CHEVALLIER20191}, which has been the first to derive rigorously NFE-type dynamics from the large population limit of point processes of Hawkes type. Hawkes point processes have been introduced by Hawkes and Oakes in \cite{MR0378093} (originally motivated by the modeling of earthquakes activity) and have met recently a growing interest in mathematical neuroscience, since this framework allows to capture time-dependence in the distribution of spikes of interacting neurons. In order to motivate our model, let us mention briefly the result of \cite{CHEVALLIER20191}: the authors introduce a family of $N$ mean-field interacting Hawkes processes, defined through their counting processes $(Z_{ 1}, \ldots, Z_{ N})$, where $Z_{ i,t}$ accounts for the number of spikes of neuron $i$ in the time interval $[0,t]$. Each neuron $i$ is associated to a position $x_{ i}\in \mathbb{ R}^{ d}$ and the dynamics of each $Z_{ i}$ is defined through its intensity process defined as $\lambda_{ i,t}= f \left(U_{ i, t-}\right)$ where the potential $U_{ i}$ satisfies (with our notations)
\begin{equation}
\label{eq:lambdai_CDLO}
U_{ i,t}= e^{ - t} u_{ 0}(x_{ i}) + \frac{ 1}{ N} \sum_{ j=1}^{ N} W(x_{ i}- x_{ j}) \int_{ 0}^{ t} e^{ -(t-s)}{\rm d}Z_{ j, s}.
\end{equation}
Hence, \eqref{eq:lambdai_CDLO} defines a mean-field family of interacting Hawkes processes with spatial extension. An important hypothesis in \cite{CHEVALLIER20191} is the convergence of the empirical measure of the positions $ \frac{ 1}{ N}\sum_{ j=1}^{ N} \delta_{ x_{ j}}$ towards some probability profile $ \rho({\rm d}x)$ on $ \mathbb{ R}^{ d}$. Under some further mild assumptions on the coefficients, the main result of \cite{CHEVALLIER20191} concerns the convergence of the system as $N\to\infty$ to independent inhomogeneous copies of a nonlinear point process $ \left(\bar Z_{ x,t}\right)_{ x\in \mathbb{ R}}$ with intensity $ \lambda_{ t}(x)=f \left(u_{ t}(x)\right)$ solving the NFE-type equation
\begin{equation}
\label{eq:NFE_CDLO}
\partial_{ t} u_t(x) = - u_t(x) + \int_{ \mathbb{ R}^{ d}} W(x-y) f(u_t(y)) \rho({\rm d}y).
\end{equation}
The only but crucial difference with the original NFE \eqref{eq:NFE conv} lies in the integration w.r.t. the spatial variable: $ \rho$ in \eqref{eq:NFE_CDLO} is a probability measure whereas one integrates w.r.t. the Lebesgue measure in \eqref{eq:NFE conv}. From the perspective of traveling wave solutions for these equations, this changes dramatically the nature of the dynamics: Theorem~\ref{th:EML} uses in an essential way the translation invariance of the convolution in \eqref{eq:NFE conv}, which is broken by the presence of $ \rho$ in \eqref{eq:NFE_CDLO}. In words, there is in general no such thing as traveling waves for \eqref{eq:NFE_CDLO}.

\subsubsection{Our model}
The previous remark is the starting point of the present work. The aim of the paper is to address the following questions:
\begin{enumerate}
\item Can we construct a Hawkes dynamics that approximates correctly the NFE \eqref{eq:NFE conv} and reproduces faithfully its traveling wave structure?
\item Can we take advantage of the structural stability of these traveling waves to infer the influence of the noise within the microscopic system on the macroscopic properties of these traveling waves? The question of the influence of noise on structured dynamics such as traveling waves is a longstanding issue in the literature, which has been mostly addressed so far from a macroscopic perspective (that is from a SPDE point of view, i.e. adding directly a macroscopic noise to the NFE \eqref{eq:NFE conv}). We refer in details to the SPDE approach in Section~\ref{sec:literature} below.
\end{enumerate}

We are now in position to define our model. We follow here the classical construction of Hawkes processes via a thinning procedure w.r.t. independent Poisson random measures \cite{Ogata1981,MR546120}. For the rest of the paper, $ \varepsilon>0$ is a small parameter and $f$ and $W$ satisfy Assumptions~\ref{ass:Wf},~\ref{ass:W_exp} and~\ref{ass:neutral}.
\begin{definition}
\label{def:model}
Consider an infinite family of neurons, indexed by $ \mathbb{ Z}$, where the neuron $i\in \mathbb{ Z}$ is attached to position $x_{ i}= i \varepsilon\in \varepsilon\mathbb{ Z}$. Denote by $ t \mapsto Z_{ i,t}^{ (\varepsilon)}$ the counting process associated to neuron $i\in \mathbb{ Z}$: for $t>0$, $Z_{ i,t}^{ (\varepsilon)}$ equals the number of spikes of neuron $i$ on $[0,t]$. The family $ \left(Z_{ i,t}^{ (\varepsilon)}\right)_{ i\in \mathbb{ Z}, t\geq0}$ is defined as follows: let $ \pi_{ i} \left({\rm d}s, {\rm d}z\right), i\in \mathbb{ Z}$ be a collection of i.i.d. Poisson random measures on $[0, +\infty) \times [0, +\infty)$ with intensity $ {\rm d}z {\rm d}t$. Then define, for $i\in \mathbb{ Z}$,
\begin{equation}
\label{eq:Zi_def}
Z_{ i,t}^{ (\varepsilon)}= Z_{ i,t}= \int_{ 0}^{t} \int_{ 0}^{+\infty} \mathbf{ 1}_{ z\leq \lambda_{ i,s}^{ ( \varepsilon)}} \pi_{ i}({\rm d}s, {\rm d}z),\ t\geq0,
\end{equation}
where the conditional intensity $\lambda_{ i,t}^{ ( \varepsilon)}= \lambda_{i, t}$ of $Z_{i}^{ (\varepsilon)}$ is given by
\begin{equation}
\label{eq:lambda_i}
\lambda_{ i,t}^{ ( \varepsilon)}= f \left(U_{ i,t^-}^{ (\varepsilon)}\right),\ t \geq0
\end{equation}
with potentials $U_{i, t}^{ (\varepsilon)}=U_{i, t}$ defined as
\begin{align}
\label{eq:Ui_mod}
U_{i,t}^{ (\varepsilon)}&= e^{ -t} u_{ 0}^{ (\varepsilon)}(x_{ i}) + \varepsilon\sum_{ j\in \mathbb{Z}} W(x_{ i}- x_{ j}) \int_{ 0}^{t} e^{ -(t-s)} {\rm d}Z_{ j,s}^{ (\varepsilon)}, \quad i=- N_{ \varepsilon}, \ldots, N_{ \varepsilon},\ t\geq0
\end{align}
and
\begin{align}
\label{eq:Ui_outside}
U_{i,t}^{ (\varepsilon)}=\begin{cases}
a_{ 1} & \text{ for } i < -N_{ \varepsilon},\\
a_{ 2}& \text{ for } i > N_{ \varepsilon}
\end{cases}
\quad ,\ t\geq0
\end{align}
for 
\begin{equation}
\label{eq:N_ell_eps}
N_\varepsilon=\lfloor \ell_\varepsilon /\varepsilon\rfloor\ \text{ and }\ \ell_{ \varepsilon}:= \frac{ 1}{ \varepsilon^{ \beta}},
\end{equation}
where for the rest of the paper, the parameter $ \beta>0$ is a small parameter, chosen to verify
\begin{equation}
\label{eq:beta_small}
\beta\in \left(0, \frac{ 1}{ 12}\right).
\end{equation}
\end{definition}
In words, we are considering an infinite family of interacting neurons, whose dynamics depends on their positions: the neurons inside the bulk (i.e. of index $ i\in \left\lbrace -N_{ \varepsilon}, \ldots, N_{ \varepsilon}\right\rbrace$) are of Hawkes type, interacting with the whole population through \eqref{eq:Ui_mod}. They feel in particular the influence of boundary condition in $\pm \infty$, in the presence of independent Poisson processes outside the bulk, with intensities that match the boundary conditions $a_{ 1}$ and $ a_{ 2}$ for the traveling wave profile $ \hat{ u}$.

\medskip

In addition to Assumptions~\ref{ass:Wf},~\ref{ass:W_exp} and~\ref{ass:neutral}, we suppose also
\begin{assumption}
\label{ass:u0}
The initial condition $u_{ 0}^{ (\varepsilon)}$ is in the affine space $L^{ 2}_{ \hat{ u}_{ 0}}= L^{ 2}+ \hat{ u}_{ 0}$, uniformly in $ \varepsilon>0$: $\sup_{ \varepsilon>0} \left\Vert u_{ 0}^{ (\varepsilon)}- \hat{ u}_{ 0} \right\Vert_{ L^{ 2}}< \infty$. Moreover, the family $ \left(u_{ 0}^{ (\varepsilon)}\right)_{ \varepsilon>0}$ is uniformly Lipschitz continuous on $ \mathbb{ R}$: there exists some $C>0$ such that for all $ \varepsilon>0$
\begin{equation*}
\left\vert u_{ 0}^{ (\varepsilon)}(x)  - u_{ 0}^{ (\varepsilon)}(y)\right\vert \leq C \left\vert x-y \right\vert,\ x,y\in \mathbb{ R}.
\end{equation*}
\end{assumption}
The first result concerns the well-posedness of the system given by Definition~\ref{def:model}:
\begin{proposition}
\label{prop:model}
Under Assumptions~\ref{ass:Wf} and \ref{ass:u0}, there exists a pathwise unique system $ \left(Z_{ i}^{ (\varepsilon)}\right)_{ i\in \mathbb{ Z}}$ satisfying Definition~\ref{def:model} such that $ t \mapsto \sup_{ i\in \mathbb{ Z}} \mathbb{ E} \left(Z_{ i, t}^{ (\varepsilon)}\right)$ is locally bounded.
\end{proposition}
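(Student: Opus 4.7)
The key structural fact is the \emph{one-way coupling} built into Definition~\ref{def:model}: for $|i|>N_\varepsilon$ the potential $U_{i,t}^{(\varepsilon)}$ is held fixed at $a_1$ or $a_2$, so by \eqref{eq:Zi_def} the exterior processes $Z_i^{(\varepsilon)}$ are nothing but homogeneous Poisson processes of rates $f(a_1)$ and $f(a_2)$, mutually independent, obtained directly from the $\pi_i$'s by a deterministic thinning, and with laws that do not depend on anything in the bulk. The plan is therefore to realize these exterior Poisson processes first, and then build the finite bulk system $(Z_i^{(\varepsilon)})_{|i|\le N_\varepsilon}$ on top of them.

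Conditionally on the exterior, I would rewrite the bulk potential \eqref{eq:Ui_mod} as $U_{i,t}^{(\varepsilon)} = \tilde U_{i,t} + E_{i,t}$, where
\[
E_{i,t}:=\varepsilon\sum_{|j|>N_\varepsilon}W(x_i-x_j)\int_0^t e^{-(t-s)}\rmd Z_{j,s}^{(\varepsilon)}
\]
is now a \emph{given} random forcing and $\tilde U_{i,t}$ sees only bulk contributions. By the explicit form of $W$ from Assumption~\ref{ass:W_exp}, the sum $\varepsilon\sum_{j\in\mathbb{Z}}W(x_i-x_j)=\varepsilon\sum_k W(\varepsilon k)$ is a Riemann-sum approximation of $\int_\mathbb{R} W=1$ and is bounded by a constant $C$ uniformly in $i$ and $\varepsilon$, so $\mathbb{E}[E_{i,t}]\le C(1-e^{-t})$ and in particular $E_{i,t}<\infty$ a.s. for every $i,t$. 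On the $(2N_\varepsilon+1)$-dimensional bulk, the intensity $\lambda_{i,t}=f(U_{i,t^-})$ is bounded by $\|f\|_\infty=1$ and Lipschitz with constant $L=\|f'\|_\infty$. I would then construct the solution by a Picard iteration: set $Z_i^{[0]}\equiv 0$ for $|i|\le N_\varepsilon$ and define $Z_i^{[n+1]}$ by thinning $\pi_i$ at the level $\lambda_{i,s}^{[n+1]}=f(U_{i,s^-}^{[n]})$. Coupling $Z_i^{[n+1]}$ and $Z_i^{[n]}$ through the common $\pi_i$ yields
\[
\mathbb{E}\bigl[|Z_{i,t}^{[n+1]}-Z_{i,t}^{[n]}|\bigr]\le \int_0^t\mathbb{E}\bigl[|\lambda_{i,s}^{[n+1]}-\lambda_{i,s}^{[n]}|\bigr]\rmd s\le L\int_0^t\mathbb{E}\bigl[|U_{i,s}^{[n]}-U_{i,s}^{[n-1]}|\bigr]\rmd s,
\]
and a short computation using integration by parts and the uniform bound on $\varepsilon\sum_k W$ above gives a Gronwall-type recursion of the form $g^{[n+1]}(t)\le 2LC\int_0^t g^{[n]}(s)\rmd s$ for $g^{[n]}(t):=\sup_{|i|\le N_\varepsilon}\mathbb{E}|Z_{i,t}^{[n+1]}-Z_{i,t}^{[n]}|$, whence the Picard iterates converge in $L^1$ on every bounded time interval. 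Existence on the bulk follows; pathwise uniqueness comes from the very same Gronwall bound applied to the difference of any two solutions driven by the same $(\pi_i)$, and equivalently one can invoke directly the general thinning construction of \cite{CHEVALLIER20191}, which applies verbatim to our finite-population, bounded-intensity setting.

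Finally, the local boundedness of $t\mapsto\sup_{i\in\mathbb{Z}}\mathbb{E}[Z_{i,t}^{(\varepsilon)}]$ is then immediate and even uniform in $i$ and $\varepsilon$: from $f\le 1$ one gets $\mathbb{E}[Z_{i,t}^{(\varepsilon)}]=\mathbb{E}\int_0^t f(U_{i,s}^{(\varepsilon)})\rmd s\le t$ for every $i\in\mathbb{Z}$ and every $\varepsilon>0$. The only mildly delicate point in the whole argument is the uniformity in $\varepsilon$ of the Gronwall constant in the Picard step, which is ensured by the Riemann-sum bound on $\varepsilon\sum_j W(x_i-x_j)$; the size $N_\varepsilon\to\infty$ of the bulk plays no role beyond making the problem finite-dimensional.
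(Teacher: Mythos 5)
Your proposal is correct and follows essentially the same route as the paper: observe that the processes outside the bulk are just independent homogeneous Poisson processes, so the problem reduces to the finite family $i=-N_\varepsilon,\ldots,N_\varepsilon$ with bounded Lipschitz intensity, which is handled by a classical Picard iteration (the paper simply delegates this step to \cite[Th.~6]{MR3449317} rather than writing out the Gronwall recursion), and the bound $f\leq 1$ gives $\mathbb{E}\left(Z_{i,t}^{(\varepsilon)}\right)\leq t$ uniformly in $i$. The extra details you provide (one-way coupling, the uniform Riemann-sum bound on $\varepsilon\sum_j W(x_i-x_j)$, uniformity in $\varepsilon$ of the Gronwall constant) are consistent with, and a fleshed-out version of, the paper's sketch.
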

Proof of Proposition~\ref{prop:model} relies on the following remark: note that by assumption, the processes $ Z_{ i}^{ (\varepsilon)}$ for $ \left\vert i \right\vert> N_{ \varepsilon}$ are nothing else than independent homogeneous Poisson processes (with constant rates $a_{ 1}$ or $a_{ 2}$). Hence the only difficulty of Proposition~\ref{prop:model} is to give a construction of the Hawkes processes within $i=-N_{ \varepsilon},\ldots, N_{ \varepsilon}$ (and there is a finite number of them). The proof relies then on a classical Picard iteration whose proof can be found in \cite[Th.~6]{MR3449317}.
%In \cite{CHEVALLIER20191} the authors considered a slightly different model, with a finite number $N$ of counting processes $\tilde Z_{i,t}$ attached to positions $\tilde x_i$ such that $\frac1N \sum_{i=1}^N \delta_{\tilde x_i}$ converges to a probability measure $\rho$ in the Wasserstein metric. They showed that the empirical measure $\frac1N \sum_{i=1}^N \delta_{(\tilde Z_{i,t},x_i)}$ converges to the following version of the NFE, involving $\rho$,
%\begin{equation}
%\label{eq:NFE_rho}
%\partial_{ t} u_t(x) = - u_t(x) + \int_{ \mathbb{ R}} W(x-y) f(u_t(y)) \rho({\rm d}y).
%\end{equation}
%Remark that, compared to \eqref{eq:NFE conv}, this version of the NFE is not invariant by translation, and thus cannot exhibit traveling waves or a line of stationary states as $M$. This motivates our choice of an infinite family of processes $Z_i$ and of boundary condition \eqref{eq:Ui_outside}, to approach microscopically the profiles $\hat u_\varphi$.

\subsection{The voltage profile and main result}
\subsubsection{The voltage profile}
The main object of interest of the paper is the voltage profile associated to the neurons given by Definition~\ref{def:model}, that is defined in the following way: consider the intervals 
\begin{equation}
\label{eq:def_I_i}
I_{i}^{ (\varepsilon)} =I_{ i}:= \left[x_i- \frac{ \varepsilon}{ 2},x_{i}+ \frac{ \varepsilon}{ 2}\right), \ i \in \mathbb{ Z},
\end{equation} so that $ \mathbb{ R}$ is partitioned into $ \mathbb{ R}= \mathcal{ D}_{ -}^{ (\varepsilon)} \cup \mathcal{ D}_{ 0}^{ (\varepsilon)} \cup \mathcal{ D}_{+}^{ (\varepsilon)}$ where
\begin{equation}
\label{eq:partition_R}
\begin{cases}
\mathcal{ D}_{ -}^{ (\varepsilon)}&= \mathcal{ D}_{ -}:= \left( -\infty, -\varepsilon N_\varepsilon- \frac{ \varepsilon}{ 2}\right),\\
\mathcal{ D}_{ 0}^{ (\varepsilon)}&= \mathcal{ D}_{ 0}:= \bigcup_{ i=-N_{ \varepsilon}}^{N_{ \varepsilon}} I_{ i}^{ (\varepsilon)}= \left[  -\varepsilon N_\varepsilon- \frac{ \varepsilon}{ 2},  \varepsilon N_\varepsilon+ \frac{ \varepsilon}{ 2} \right).\\
\mathcal{ D}_{ +}^{ (\varepsilon)}&= \mathcal{ D}_{ +}:= \left[ \varepsilon N_\varepsilon+ \frac{ \varepsilon}{ 2}, +\infty\right).
\end{cases}
\end{equation} 
Define the microscopic voltage profile $\left(U_t^{ (\varepsilon)} \right)_{ t\geq0}$ associated to the particle system $ \left(Z_{ i}^{ (\varepsilon)}\right)_{ i\in \mathbb{ Z}}$ as
\begin{equation}
\label{eq:profileU}
U_t^{ (\varepsilon)}(x):= U_{ t}(x) =a_1{\bf 1}_{ \mathcal{ D}_-^{ (\varepsilon)}}(x) + \sum_{ i=-N_\varepsilon }^{N_\varepsilon} U_{i,t}^{ (\varepsilon)} {\bf 1}_{I_{i}^{ (\varepsilon)}}(x)+a_2{\bf 1}_{ \mathcal{ D}_+^{ (\varepsilon)}}(x),\ x\in \mathbb{ R},
\end{equation}
where $ U^{ (\varepsilon)}_{ i, t}$ is given by \eqref{eq:Ui_mod}.
%$U_t$ satisfies the following equation:
%\begin{align}
%\label{eq:dUt}
%{\rm d} U_t(x)=- U_t(x){\bf 1}_{\mathcal{D}_0}(x)+ \sum_{ i=-N_\varepsilon }^{N_\varepsilon}\Bigg(\varepsilon  \sum_{ j\in \mathbb{ Z}}W(x_i-x_j)f(U_{j,t^-})\Bigg){\bf 1}_{I_i}(x){\rm d} t +{\rm d} M_t(x),
%\end{align}
%where $M_t(x)=M_{\varepsilon,t}(x)$ is the martingale defined as
%\begin{equation*}\partial
%{\rm d} M_t(x) = \varepsilon\sum_{ i=-N_\varepsilon }^{N_\varepsilon}\sum_{ j\in \mathbb{ Z}}W(x_i-x_j)\big({\rm d}Z_{j,t}-f(U_{j,t^-})\big){\bf 1}_{I_{i}}(x).
%\end{equation*}
\subsubsection{Main result}

The main result of the paper is then
\begin{theorem}\label{th:main}
Grant Assumptions~\ref{ass:Wf},~\ref{ass:W_exp},~\ref{ass:neutral} and~\ref{ass:u0}. Let $ \beta$ verify \eqref{eq:beta_small}. Let $ U^{ (\varepsilon)}$ be the spatial profile process given by \eqref{eq:profileU}, associated to the family of interacting neurons $ \left(Z_{ i}^{ (\varepsilon)}\right)_{ i\in \mathbb{ Z}}$ given by Definition~\ref{def:model} and let $ \hat{ u}$ be the traveling wave profile given by Theorem~\ref{th:EML}.  Then, there exists some $ \beta_{ 0}>0$ such that, if as $ \varepsilon\to0$
\begin{equation}
\label{eq:u0_close_to_M}
\left\Vert u_{ 0}^{ (\varepsilon)} - \hat{ u}_{ 0}\right\Vert_{ L^{ 2}} \leq C \varepsilon^{ \beta_{ 0}}
\end{equation} then the following holds: 
\begin{enumerate}
\item Convergence on a bounded time interval: for any fixed $T_{ 0}>0$, all $ \eta>0$
\begin{equation}
\label{eq:conv_main_T0}
\mathbb{P}\left(\sup_{0\leq t\leq T_{ 0} } \left\Vert U_{t}^{ (\varepsilon)}-\hat u\right\Vert_{L^2} \geq \eta\right) \underset{\varepsilon\rightarrow 0}{\longrightarrow} 0,
\end{equation}
\item Brownian wandering along $ \mathcal{ M}$ on a diffusive time scale: for any $t_f>0$, there exists some càdlàg process $ \psi^{ (\varepsilon)}:= \left( \psi^{ (\varepsilon)}_{ u}\right)_{ u\in [0, t_{ f}]}$ that converges in distribution as $ \varepsilon\to 0$ to a standard Brownian motion $ \mathcal{W}$ on $[0, t_{ f}]$ such that for all $\eta>0$,
\begin{equation}
\label{eq:conv_main}
\mathbb{P}\left(\sup_{0\leq u\leq t_f } \left\Vert U_{u\varepsilon^{-1}}^{ (\varepsilon)}-\hat u_{ \varrho\psi_{u}^{ (\varepsilon)}}\right\Vert_{L^2} \geq \eta\right) \underset{\varepsilon\rightarrow 0}{\longrightarrow} 0,
\end{equation}
where the constant $ \varrho>0$ is given by
\begin{equation}
\label{eq:sigma}
\varrho^{ 2}:= \frac{\int_{ \mathbb{ R}}\left( \int_{ \mathbb{ R}} W(x-y)\hat u'_{ 0}(y) f^{ \prime}( \hat{ u}_{ 0}(y)) {\rm d}y\right)^{ 2}f \left( \hat{ u}_{ 0}\right)(x) {\rm d}x}{ \int_{ \mathbb{ R}} \left( \hat{ u}_{ 0}^{ \prime}\right)^{ 2}(x) f^{ \prime} \left( \hat{ u}_{ 0}(x)\right) {\rm d}x}.
\end{equation}
\end{enumerate}
\end{theorem}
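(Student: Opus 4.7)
The plan is to combine (i) a quenched propagation-of-chaos-type estimate à la Chevallier--Duarte--Löcherbach--Ost to handle the first item and, more importantly, to control $U^{(\varepsilon)}_t-\hat u_0$ in $L^2$ on short time intervals, with (ii) an isochron decomposition of the dynamics around $\mathcal{M}$ together with a martingale functional central limit theorem on the long diffusive time scale. For part (1), I would first decompose each Hawkes counting process as $\mathrm{d}Z^{(\varepsilon)}_{j,t}=\lambda^{(\varepsilon)}_{j,t}\,\mathrm{d}t + \mathrm{d}M^{(\varepsilon)}_{j,t}$, write the piecewise-constant profile $U^{(\varepsilon)}_t$ in mild form against the semigroup of $-\mathrm{Id}$, and compare it to the mild formulation of \eqref{eq:NFE conv} using the Lipschitz character of $v\mapsto W\ast(f(\hat u_0+v)-f(\hat u_0))$ on $L^{2}$. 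The deterministic (Riemann-sum) error is $O(\varepsilon)$ in $L^{2}$ on compacts thanks to the uniform Lipschitz property of $u_{0}^{(\varepsilon)}$ and the regularity of $f,\hat u_0$, while the martingale contribution is controlled in $L^{2}$ by Doob's inequality using $\mathbb{E}\bigl[\langle M_j^{(\varepsilon)}\rangle_t\bigr]\leq \|f\|_\infty t$; the boundary corrections from $\mathcal{D}_\pm^{(\varepsilon)}$ are handled via the exponential tail of $W$ in \eqref{eq:Wexp} together with $N_\varepsilon=\lfloor \varepsilon^{-1-\beta}\rfloor$, which pushes these boundary terms below $\varepsilon^{\beta_0}$ for $\beta$ small enough. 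Gronwall then yields both \eqref{eq:conv_main_T0} and the quantitative refinement $\sup_{t\leq T_0}\|U^{(\varepsilon)}_t-\hat u_0\|_{L^{2}}\lesssim \varepsilon^{\beta_0}$ needed to enter the local stability neighborhood of $\mathcal{M}$.

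\textbf{Isochron decomposition and identification of $\varrho^{2}$.} On the neighborhood of $\mathcal{M}$ provided by the Lang--Stannat spectral gap recalled after \eqref{eq:v}, the isochron $\Theta$ is $\mathcal{C}^{2}$ with $\Theta(\hat u_\varphi)=\varphi$; its gradient at $\hat u_0$ is the left eigenvector $e_\Theta$ associated with the neutral direction $-\hat u_0'$ of the linearized operator $L h=-h+W*(f'(\hat u_0)h)$, normalized so that $\langle e_\Theta,-\hat u_0'\rangle_{L^{2}}=1$. A direct computation identifies $e_\Theta(x)=-\hat u_0'(x)f'(\hat u_0(x))\big/\int (\hat u_0')^{2}f'(\hat u_0)\,\mathrm{d}y$, which matches exactly the denominator of \eqref{eq:sigma}. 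Applying Itô's formula to $t\mapsto \Theta(U^{(\varepsilon)}_t)$, the deterministic part of the drift vanishes to first order because $\Theta$ is constant along solutions of \eqref{eq:NFE conv}; the remaining contributions are the martingale
\begin{equation*}
\mathcal{N}^{(\varepsilon)}_t = \varepsilon \sum_{|j|\leq N_\varepsilon} \int_0^t \langle e_\Theta, W(\cdot-x_j)\mathbf{1}_{I_j}\rangle_{L^{2}}\,\mathrm{d}M^{(\varepsilon)}_{j,s} + o(1),
\end{equation*}
together with an Itô correction and nonlinear remainder terms that I want to show are negligible on the diffusive time scale. The predictable quadratic variation of $\mathcal{N}^{(\varepsilon)}$ equals, to leading order,
\begin{equation*}
\langle \mathcal{N}^{(\varepsilon)}\rangle_t \;=\; \varepsilon^{2}\sum_{|j|\leq N_\varepsilon} \Bigl(\int_{\mathbb R} e_\Theta(x)W(x-x_j)\,\mathrm{d}x\Bigr)^{\!2}\!\int_0^t f(U^{(\varepsilon)}_{j,s})\,\mathrm{d}s\;\underset{\varepsilon\to 0}{\sim}\; \varepsilon\, \varrho^{2}\, t,
\end{equation*}
which after the time-rescaling $t=u\varepsilon^{-1}$ produces exactly the variance $\varrho^{2}u$ announced in \eqref{eq:sigma}. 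The natural definition is then $\psi^{(\varepsilon)}_u := \varrho^{-1}\Theta(U^{(\varepsilon)}_{u\varepsilon^{-1}})$, and the martingale CLT of Rebolledo (jumps of $\mathcal{N}^{(\varepsilon)}$ are $O(\varepsilon)$) gives convergence of $\psi^{(\varepsilon)}$ to a standard Brownian motion $\mathcal{W}$.

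\textbf{Main obstacle: diffusive confinement and restart argument.} The serious difficulty is that the local stability arguments of Lang--Stannat control $\|U^{(\varepsilon)}_t-\hat u_{\Theta(U^{(\varepsilon)}_t)}\|_{L^{2}}$ only as long as this quantity remains small; on the time scale $\varepsilon^{-1}$, the phase $\Theta(U^{(\varepsilon)}_t)$ itself performs an $O(1)$ Brownian excursion, so one cannot hope to stay in a \emph{fixed} neighborhood of a single $\hat u_\varphi$. The strategy I would adopt is a stopping-time/restart scheme: split $[0,t_f\varepsilon^{-1}]$ into mesoscopic intervals $[kT_\varepsilon,(k+1)T_\varepsilon]$ with $T_\varepsilon=\varepsilon^{-\alpha}$ for a well-chosen $\alpha\in(0,1)$, and on each block apply the spectral-gap contraction to the transverse part $U^{(\varepsilon)}_t-\hat u_{\Theta(U^{(\varepsilon)}_t)}$ (which contracts at exponential rate $\lambda>0$) while simultaneously bounding the noise-driven excursions transverse to $\mathcal{M}$ by $O(\sqrt{\varepsilon T_\varepsilon \log(\varepsilon^{-1})})$ through a Burkholder--Davis--Gundy inequality on $\mathcal{N}^{(\varepsilon)\perp}$. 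Using $\beta<1/12$ leaves enough margin to absorb the polynomially growing boundary and discretization errors in the $o(1)$ remainders of the Itô expansion over the full $\lfloor t_f/\varepsilon T_\varepsilon\rfloor$ restarts. Finally, the tightness of $\psi^{(\varepsilon)}$ together with the identification of $\varrho^{2}$ as the scalar limit of $\langle\mathcal{N}^{(\varepsilon)}\rangle$, and the transverse estimate, give \eqref{eq:conv_main}. The neutral-case hypothesis \eqref{eq:f_symmetry_a} is essential precisely at this stage: any non-zero wave speed $c$ would produce a secular drift of order $c\varepsilon^{-1}$, destroying the Brownian scaling.
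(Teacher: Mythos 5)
There is a genuine gap at the heart of your isochron step: you write that the It\^o correction and the nonlinear/discretization remainders in the expansion of $\Theta(U^{(\varepsilon)}_t)$ are ``negligible on the diffusive time scale'', but by naive size estimates they are not, and making them negligible is precisely the hardest part of the proof. The second-order term $\sum_{j}\int_0^t {\rm D}^2\Theta(U_s)[\varpi_j,\varpi_j]\,\lambda_{j,s}\,{\rm d}s$ is of order $\sum_j\Vert\varpi_j\Vert_{L^2}^2\sim\varepsilon^{1-\beta}$ per unit time, hence of order $\varepsilon^{-\beta}$ over $[0,t_f\varepsilon^{-1}]$, which diverges; similarly the lattice error $\varepsilon\sum_j W(x_i-x_j)f(U_{j,s})-W*f(U_s)$ has $L^2$ norm of order $\varepsilon^{1-\beta/2}$ per unit time (Proposition~\ref{prop:bound_approx}), giving a priori a contribution of order $\varepsilon^{-\beta/2}$ to the phase. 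Either one, if it carried a drift, would destroy the Brownian scaling. The paper's Proposition~\ref{prop: bound Theta} shows that these terms produce \emph{no} drift only through exact symmetry cancellations: freezing the isochron derivatives at $\hat u_{\Theta(U_s)}$, using the spectral representation \eqref{eq:D2Theta spect proj}, the oddness of $f''(\hat u_0)\hat u_0'$ and of $f(\hat u_0)-a$ against the evenness of $W$ and $\hat u_0' m_0$, and replacing $\Theta(U_s)$ by its $\varepsilon$-lattice approximation $\bar\Theta_s$ so that the change of variables respects the grid (the term $D_{5,t}$ is shown to vanish identically, and $C_{4,t}$ is reduced to $O(\varepsilon)$). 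Your closing remark that neutrality is only needed to avoid a secular drift $c\varepsilon^{-1}$ misses the point: even at $c=0$ one must prove the absence of a noise- and discretization-induced drift, and this is where the bulk of the work lies.

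A second, related problem is that your ``main obstacle'' and restart scheme do not address the actual confinement issue. Since the tubular neighborhood $\mathcal{N}$ of $\mathcal{M}$ is translation invariant, an $O(1)$ excursion of the phase is not in itself harmful; what matters is that the interacting bulk $\mathcal{D}_0$ is \emph{fixed}, of size $\varepsilon^{-\beta}$, with frozen boundary data $a_1,a_2$ outside. One must therefore prove that the phase stays well inside the bulk (the paper shows $\vert\Theta(U_t)\vert\leq\varepsilon^{-3\beta/4}$) and that the isochron and its derivatives are essentially insensitive to $\mathcal{D}_\pm$ (Proposition~\ref{prop:D2Theta}); these estimates are intertwined with the proximity-to-$\mathcal{M}$ iteration (here run over blocks of fixed length $T$ with the truncated operator $L_\varphi$ and the boundary-corrected contraction \eqref{eq:etL0exp}), not separable from it. Your blocks of length $T_\varepsilon=\varepsilon^{-\alpha}$ with a BDG bound on the transverse noise could in principle be organized into an alternative scheme, but as stated it neither controls the boundary contributions over such long blocks nor supplies the a priori phase bound that the transverse analysis itself requires, so the circular dependence between ``close to $\mathcal{M}$'' and ``phase inside the bulk'' is left unresolved. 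The finite-horizon part (1) and the identification of $\varrho^2$ via ${\rm D}\Theta(\hat u_0)$ and the martingale CLT are consistent with the paper's route and are fine in outline.
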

The informal conclusion of Theorem~\ref{th:main} is the following: on a diffusive time scale of order $ \varepsilon^{ -1}$, the process $U^{ (\varepsilon)}$ wanders about along the manifold $ \mathcal{ M}$ of traveling wave profiles and the phase of the process along $ \mathcal{ M}$ is essentially Brownian, see Figure~\ref{fig:TW}.
\begin{figure}[h]
\centering
\includegraphics[width=\textwidth]{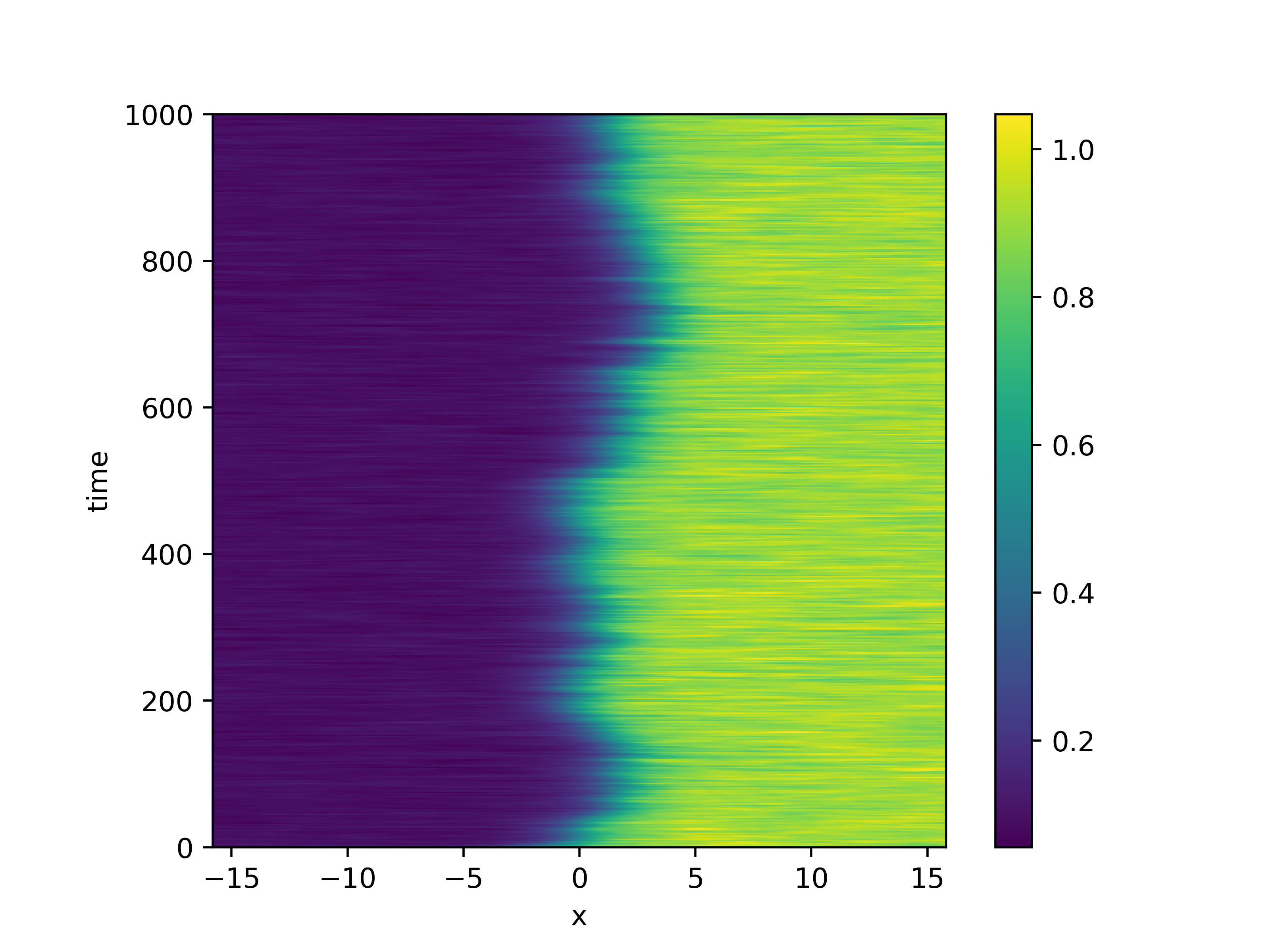}
\caption{Simulation of the voltage profile $U^{ (\varepsilon)}$ for $ \varepsilon=0.01$, $ \beta= \frac{ 3}{ 4}$, $t_{ f}=10$, $f(x)= \frac{ 1}{ 2} + \frac{ 1}{ \pi}\arctan \left(8(x- \frac{ 1}{ 2})\right)$, $W$ given by \eqref{eq:Wexp} for $ \sigma=1$ and initial condition $ u_{ 0}^{ (\varepsilon)}(x)= a_{ 1} \mathbf{ 1}_{ x< - \ell_{ \varepsilon}} + \left(a_{ 1}+ \frac{ a_{ 2}-a_{ 1}}{ 2\ell_{ \varepsilon}}(x+\ell_{ \varepsilon}) \right)\mathbf{ 1}_{ - \ell_{ \varepsilon} \leq x \leq \ell_{ \varepsilon}} + a_{ 2} \mathbf{ 1}_{ x> \ell_{ \varepsilon}}$.}
\label{fig:TW}
\end{figure}
\subsubsection{About the initial condition}
We have assumed in \eqref{eq:u0_close_to_M} that the initial condition $ u_{ 0}^{ (\varepsilon)}$ gets eventually close, as $ \varepsilon\to0$ to the invariant manifold $ \mathcal{ M}$. This choice has been made mostly for simplicity of exposition and can be easily generalized as follows: one can replace condition \eqref{eq:u0_close_to_M} by the more general assumption of convergence of $u_{ 0}^{ (\varepsilon)}$ towards some element $v_{ 0}$ in a macroscopic neighborhood of $ \mathcal{ M}$ (but not necessarily on $ \mathcal{ M}$). Indeed, supposing now that 
\begin{equation}
\label{eq:u0_not_tooclose_to_M}
\left\Vert u_{ 0}^{ (\varepsilon)} - v_{ 0}\right\Vert_{ L^{ 2}} \leq C \varepsilon^{ \beta_{ 0}}
\end{equation} 
where $ v_{ 0} \in L^{ 2}_{ \hat{ u}_{ 0}}$ verifies $ {\rm dist}_{ L^{ 2}}(v_{ 0}, \mathcal{ M})\leq \eta_{ 0}$ for some sufficiently small $ \eta_{ 0}>0$, then the result of Theorem~\ref{th:main} remains true, up to the following slight modifications: the convergence \eqref{eq:conv_main} now becomes
\begin{equation}
\label{eq:conv_main_gen}
\mathbb{P}\left(\sup_{ c \varepsilon \log \left\vert \varepsilon \right\vert\leq u\leq t_f } \left\Vert U_{u\varepsilon^{-1}}^{ (\varepsilon)}-\hat u_{ \theta_{ 0}+ \varrho\psi_{u}^{ (\varepsilon)}}\right\Vert_{L^2} \geq \eta\right) \underset{\varepsilon\rightarrow 0}{\longrightarrow} 0.
\end{equation}
This logarithmic time in $c \log \left\vert \varepsilon \right\vert$ represents the initial time that is necessary for the profile $ U^{ (\varepsilon)}$, starting from $u_{ 0}^{ (\varepsilon)}\approx v_{ 0}$ to reach an $ \varepsilon$-neighborhood of $ \mathcal{ M}$ (and once $U^{ (\varepsilon)}$ has reach such a microscopic neighborhood, we are back in the hypotheses of Theorem~\ref{th:main} and the rest of the proof follows). Starting from $v_{ 0}$ to an $ \varepsilon$-neighborhood of $ \mathcal{ M}$ is indeed logarithmic in $ \varepsilon$, as the system mostly follows the deterministic flow given by the NFE \eqref{eq:NFE conv} that is locally exponentially attractive (see Theorem~\ref{th:stability} below). Doing so, the deterministic flow induces a slight dephasing expressed in \eqref{eq:conv_main_gen} by the presence of the initial $ \theta_{ 0}$. The treatment of this initial approach to the manifold $ \mathcal{ M}$ follows a discretization procedure that is in itself very similar to the one that we describe in the following for the proof of Theorem~\ref{th:main}. As this initial procedure has been already written in details in previous papers (in different context of phase oscillators, see e.g. \cite{Bertini2014,Giacomin:2015ab,2021arXiv210702473L}), we have chosen to keep to hypothesis \eqref{eq:u0_close_to_M} for simplicity of exposition and refer the reader to the aforementioned references for further details.

\subsection{About the rate-based Neural Field Equation}
\label{sec:RNFE}

\subsubsection{Definitions and traveling wave solutions}
The above results concern the Voltage-based NFE \eqref{eq:NFE conv}. We are interested in this paragraph in the companion Rate-based NFE \eqref{eq:rateNFE_intro}. Suprisingly, very few rigorous results exist about the latter (we are aware at least of \cite{Riedler2013} where well-posedness and  approximation results of \eqref{eq:rateNFE_intro} in terms of Markov processes on a bounded domain are provided). Again, write \eqref{eq:rateNFE_intro} in a convolution form, for $d=1$:
\begin{equation}
\label{eq:rateNFE}
\partial_{ t} v_{ t}(x) = -v_{ t}(x) + f \left(\int_{ \mathbb{ R}}W(x-y) v_{ t}(y) {\rm d}y\right), \ v_{ t}(x)_{ \vert_{ t=0}}=v_{ 0}(x).
\end{equation}
It appears that a very simple correspondence can be drawn between the VNFE \eqref{eq:NFE_intro} and the RNFE \eqref{eq:rateNFE_intro}, which allows in particular to prove the existence of traveling wave solutions to \eqref{eq:rateNFE}. Surprisingly, despite the simplicity of Proposition~\ref{prop:correspondanceNFE}, we have have not been able to find a similar result in the existing literature. In this sense, to the best of our knowledge, Proposition~\ref{prop:correspondanceNFE} appears to be new and may have an interest of its own. Proof of Proposition~\ref{prop:correspondanceNFE} is given in Appendix~\ref{sec:app rate based NFE}.
\begin{proposition}
\label{prop:correspondanceNFE}
Suppose Assumption~\ref{ass:Wf}. Then, the following holds:
\begin{enumerate}
\item \label{it:v} Well-posedness for \eqref{eq:rateNFE}: for any initial condition $v_{ 0}$ that is nonnegative, continuous and bounded on $ \mathbb{ R}$, there is a unique solution $ t \mapsto v_{ t}$ to \eqref{eq:rateNFE} that is continuous on $ \mathbb{ R}$. Moreover, such a solution remains nonnegative and bounded by $\max \left( \left\Vert v_{ 0} \right\Vert_{ \infty}, \left\Vert f \right\Vert_{ \infty}\right)$.
\item \label{it:utov} Correspondence between solutions to \eqref{eq:NFE conv} and \eqref{eq:rateNFE}:
\begin{enumerate}
\item  \label{subit:vtfromut}Let $u_{ 0}$ be continuous and nonnegative on $ \mathbb{ R}$. Let $(u_{ t})_{ t\geq0}$ be the solution to  the VNFE \eqref{eq:NFE conv} with initial condition $u_{ 0}$. Let $v_{ 0}$ such that
\begin{equation}
\label{eq:u0Wv0}
u_{ 0}= W\ast v_{ 0}.
\end{equation}
Then 
\begin{equation}
\label{eq:vtfromut}
v_{ t}:= \int_{ 0}^{t} e^{ -(t-s)} f(u_{ s}) {\rm d}s+ v_{ 0} e^{ -t}
\end{equation}
solves the RNFE \eqref{eq:rateNFE} with initial condition $v_{ 0}$ (we refer to Remark~\ref{rem:u0Wv0} for more comments on the identity \eqref{eq:u0Wv0}).
\item \label{subit:utfromvt}Conversely, let $v_{ 0}$ nonnegative, continuous and bounded and let $(v_{ t})_{ t\geq0}$ solution to the RNFE \eqref{eq:rateNFE} with initial condition $v_{ 0}$. Then 
\begin{equation}
\label{eq:utWvt}
t \mapsto u_{ t}:= W\ast v_{ t}
\end{equation}
is well-defined, bounded and continuous on $ \mathbb{ R}$ and solves the VNFE \eqref{eq:NFE conv} with initial condition $u_{ 0}=W\ast v_{ 0}$.
\end{enumerate} 
\item \label{it:TW} Traveling waves solution to \eqref{eq:rateNFE}: under Assumption~\ref{ass:Wf}, there exists one and (modulo translation) only one monotonic traveling wave solution $ (x,t) \mapsto \hat{ v}(x-c^{ \prime}t)$ to the RNFE \eqref{eq:rateNFE} such that $\lim_{-\infty} \hat{ v}= a_{ 1}$ and $ \lim_{ +\infty} \hat{ v}=a_{ 2}$. Moreover, the speed $ c^{ \prime}$ is equal to the speed $c$ of the traveling wave profile $ \hat{ u}(x-ct)$ associated to the VNFE \eqref{eq:NFE conv} and one has the following correspondence between the two profiles: 
\begin{enumerate}
\item Let $ \hat{ u}$ be a traveling wave profile to the VNFE \eqref{eq:NFE conv}. Then, any traveling profile $ \hat{ v}$ to the RNFE \eqref{eq:rateNFE} is a translation of
\begin{equation}
\label{eq:hatv}
\hat{ v}(x):= \begin{cases}
f \left(\hat{ u}(x)\right)& \text{ if } c=0,\\
\frac{1}{ c} \int_{ 0}^{+\infty} e^{ - \frac{ z}{ c}} f \left( \hat{ u}(x+z)\right) {\rm d}z, & \text{ if }c\neq 0
\end{cases}
\end{equation}
\item Let $ \hat{ v}$ a traveling wave profile to the RNFE \eqref{eq:rateNFE}. Then, any traveling profile $ \hat{ u}$ to the VNFE \eqref{eq:NFE conv} is necessarily a translation of 
\begin{equation}
\label{eq:hatuVSv}
\hat{ u} = W \ast \hat{ v}.
\end{equation}
\end{enumerate}
We define as
\begin{equation}
\label{eq:Mrate}
\mathcal{ M}_{ rate}:= \left\lbrace \hat{ v}(\cdot- \psi),\ \psi\in \mathbb{ R}\right\rbrace
\end{equation}
as the manifold of traveling wave profiles for \eqref{eq:rateNFE}.
%\begin{equation}
%\label{eq:vt_TW}
%v_{ t}:= \int_{ -\infty}^{t} e^{ -(t-s)} f \left(\hat{ u}(x-cs)\right) {\rm d}s = \int_{ 0}^{t} e^{ -(t-s)} f \left( \hat{ u}(x-cs)\right) + v_{ 0}e^{ -t}
%\end{equation}
%with 
%\begin{equation}
%\label{eq:v0_TW}
% v_{ 0}(x):= \int_{ 0}^{+\infty} e^{-s} f \left( \hat{ u}(x+cs)\right) {\rm d}s
%\end{equation}
%is a traveling wave solution to \eqref{eq:rateNFE}, with the same speed $c$: one can rewrite $v$ as
%\begin{equation}
%\label{eq:v_hatv}
%v_{ t}(x)= \hat{ v}(x-ct)
%\end{equation}
%for 
\end{enumerate}
\end{proposition}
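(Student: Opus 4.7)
The three items are tackled in order, with parts (\ref{it:utov}) and (\ref{it:TW}) both resting on the single observation that if $u_t = W\ast v_t$ and $v$ solves the RNFE, then $u$ solves the VNFE, and conversely one can recover $v$ from $u$ through a Duhamel formula. Part (\ref{it:v}) is a standard semigroup argument, while part (\ref{it:TW}) is obtained by pushing the correspondence through the traveling wave ansatz.

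\emph{Well-posedness (item \ref{it:v}).} The map $v\mapsto f(W\ast v)$ is globally Lipschitz on $C_b(\mathbb{R})$ with the sup-norm, since $W\in L^1(\mathbb{R})$ with unit integral and $f\in C^1_b(\mathbb{R})$. A standard Picard iteration in $C([0,T],C_b(\mathbb{R}))$ for arbitrary $T>0$ then gives a unique continuous bounded solution. For invariance of non-negativity and the upper bound $M:=\max(\|v_0\|_\infty,\|f\|_\infty)$, I would rely on a comparison argument in $L^\infty$: at any point where $v_t$ touches $0$ from above, $\partial_t v_t \geq f(W\ast v_t)\geq 0$; at any point where $v_t$ reaches $M$, $\partial_t v_t \leq -M+\|f\|_\infty \leq 0$.

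\emph{Correspondence (item \ref{it:utov}).} For (\ref{subit:vtfromut}), the Duhamel formula \eqref{eq:vtfromut} is designed so that $\partial_t v_t = -v_t + f(u_t)$ with $v_t|_{t=0}=v_0$. The key point is to verify $W\ast v_t = u_t$ for every $t\geq 0$. Convolving the equation for $v$ with $W$ gives $\partial_t(W\ast v_t)=-W\ast v_t+W\ast f(u_t)$, and the VNFE \eqref{eq:NFE conv} yields the same linear non-autonomous equation for $u_t$, with the common initial value $u_0=W\ast v_0$; Gronwall closes the argument. Substituting $u_t=W\ast v_t$ back into $\partial_t v_t=-v_t+f(u_t)$ gives the RNFE. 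The converse (\ref{subit:utfromvt}) is immediate: convolving \eqref{eq:rateNFE} with $W$ gives $\partial_t(W\ast v_t)=-W\ast v_t+W\ast f(W\ast v_t)$, and continuity and boundedness of $u_t=W\ast v_t$ follow from $W\in L^1(\mathbb{R})$ and boundedness of $v$.

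\emph{Traveling waves (item \ref{it:TW}).} Given the VNFE profile $(\hat u,c)$ from Theorem~\ref{th:EML}, I construct the RNFE profile explicitly. When $c=0$, $\hat u$ is stationary, so $\hat u = W\ast f(\hat u)$; taking $\hat v:=f(\hat u)$ gives $W\ast\hat v=\hat u$, hence $f(W\ast\hat v)=f(\hat u)=\hat v$ and $\hat v$ is a stationary RNFE profile. When $c\neq 0$, take $\hat v$ as in \eqref{eq:hatv} (for $c<0$ the analogous formula integrates over $(-\infty,0]$). Direct differentiation of the defining integral yields $c\hat v'(\xi)=\hat v(\xi)-f(\hat u(\xi))$. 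A Fubini exchange, the VNFE profile equation \eqref{eq:profile_hatu} rewritten as $W\ast f(\hat u)=\hat u - c\hat u'$, and one integration by parts (in which the exponential factor kills the boundary terms at infinity by boundedness of $\hat u$) together give $W\ast\hat v=\hat u$, so that $-c\hat v'=-\hat v+f(W\ast\hat v)$ as required. The limits $\hat v(\pm\infty)=a_{1,2}$ follow from $f(a_i)=a_i$, and monotonicity of $\hat v$ is inherited from that of $\hat u$ and $f$. For uniqueness, if $\hat v$ is any monotonic RNFE traveling wave with speed $c'$ and boundary values $a_1,a_2$, then by (\ref{subit:utfromvt}) the convolution $W\ast\hat v$ is a monotonic VNFE traveling wave with speed $c'$ and the same limits, so Theorem~\ref{th:EML} forces $c'=c$ and $W\ast\hat v=\hat u(\cdot-\varphi)$ for some $\varphi$; substituting back, the remaining equation for $\hat v$ is a linear first-order ODE whose only bounded solution is the one above.

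\emph{Main obstacle.} The most delicate step will be the rigorous derivation of $W\ast\hat v=\hat u$ for $c\neq 0$: the Fubini exchange between $W$ and the improper integral in \eqref{eq:hatv} and the integration by parts both require uniform boundedness of $\hat u$ together with the correct sign of $c$, which is why separate formulas are needed for $c>0$ and $c<0$. Verifying that the resulting $\hat v$ is genuinely monotonic and $C^1$ (rather than merely bounded and continuous) also requires a little care, but ultimately follows from the smoothness of $\hat u$ supplied by Theorem~\ref{th:EML} combined with $f\in C^3$.
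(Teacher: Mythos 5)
Your proposal is correct. Items (1) and (2) follow essentially the paper's own argument: Picard iteration for well-posedness, and, for the correspondence, convolving the $v$-equation with $W$ and observing that $W\ast v_t$ and $u_t$ solve the same linear non-autonomous equation with the same initial datum $u_0=W\ast v_0$ (the paper integrates $\partial_t(e^t W\ast v_t)=\partial_t(e^t u_t)$ where you invoke Gronwall — same thing). For item (3) your existence half is genuinely more direct than the paper's: you verify $W\ast\hat v=\hat u$ by hand via Fubini, the profile identity $W\ast f(\hat u)=\hat u-c\hat u'$, and one integration by parts (this computation does go through for $c>0$, the boundary term at $+\infty$ vanishing by boundedness of $\hat u$), and then read off $-c\hat v'=-\hat v+f(W\ast\hat v)$; the paper instead derives \eqref{eq:hatv} from the uniqueness argument and proves existence by recasting $\hat v(\cdot-ct)$ in the Duhamel form \eqref{eq:vtfromut} and applying item~\eqref{subit:vtfromut}, which requires the separate compatibility identity \eqref{eq:pt_fixe_hatu}; your route avoids that detour, and you are in fact more careful than the paper about the sign of $c$ in \eqref{eq:hatv}. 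Your uniqueness half (map $\hat v$ to a VNFE wave through item~\eqref{subit:utfromvt}, apply Theorem~\ref{th:EML} to force $c'=c$ and $W\ast\hat v=\hat u(\cdot-\varphi)$, then solve the resulting first-order linear equation for $\hat v$ — an algebraic identity when $c=0$) coincides with the paper's. One small repair: in item (1), your pointwise ``touching'' argument for nonnegativity and the bound $\max(\Vert v_0\Vert_\infty,\Vert f\Vert_\infty)$ is only heuristic on the unbounded spatial domain (the infimum or supremum of $v_t$ need not be attained); the paper instead reads both properties off the mild formulation $v_t=e^{-t}v_0+\int_0^t e^{-(t-s)}f(W\ast v_s)\,{\rm d}s$, which you already have from the Picard setup and which settles the invariance immediately — use that instead.
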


\begin{remark}
\begin{enumerate}
\label{rem:u0Wv0}
\item Although written in convolution form for $d=1$ for simplicity, Items~\ref{it:v} and~\ref{it:utov} of Proposition~\ref{prop:correspondanceNFE} are actually valid at the level of generality of \eqref{eq:NFE_intro} and \eqref{eq:rateNFE_intro} (in this case, \eqref{eq:u0Wv0}, resp. \eqref{eq:utWvt}, must be written as $u_{ 0} = \int_{ \mathbb{ R}^{ d}} W(\cdot, y) v_{ 0}(y) {\rm d}y$, resp. $u_{ t} = \int_{ \mathbb{ R}^{ d}} W(\cdot, y) v_{ t}(y) {\rm d}y$).
\item The passage from a solution $u_{ t}$ to \eqref{eq:NFE conv} to a solution $v_{ t}$ to \eqref{eq:rateNFE} requires to solve the compatibility condition for the initial conditions \eqref{eq:u0Wv0}. Solving \eqref{eq:u0Wv0} in $v_{ 0}$ for a given $u_{ 0}$ corresponds to a generic deconvolution problem, but in a rather unconventional setting: we are generically looking at traveling wave solutions for $u_{ 0}$ and $v_{ 0}$, so that a direct application of Fourier techniques in e.g. some $L^{ 2} \left(\mathbb{ R}\right)$-framework is not directly applicable. Note however that one is able to solve this problem in the particular case of  \eqref{eq:Wexp}: provided $u_{ 0}\in \mathcal{ C}^{ 2}(\mathbb{ R})$, there is indeed a unique $v_{ 0}$ satisfying \eqref{eq:u0Wv0} given by
\begin{equation}
\label{eq:u0Wv0exp}
v_{ 0}= u_{ 0}- \sigma^{ 2} u_{ 0}^{ \prime \prime}.
\end{equation}
\item A last remark is that the compatibility condition \eqref{eq:u0Wv0} concerning the traveling wave profiles $ \hat{ u}= W\ast \hat{ v}$ is true by construction, without any further assumption on $W$ and not at all specific to the case \eqref{eq:Wexp}. Hence, the correspondence between the traveling solutions between \eqref{eq:NFE conv} and \eqref{eq:rateNFE} in Item~\eqref{it:TW} of Proposition~\ref{prop:correspondanceNFE} is true for general $W$ verifying Assumption~\ref{ass:Wf} only.
\end{enumerate}
\end{remark}

\subsubsection{Microscopic interpretation of the rate-based NFE and stability of traveling wave solutions}
The point of the present section is to transpose the macroscopic correspondence of Proposition~\ref{prop:correspondanceNFE} between the VNFE \eqref{eq:NFE conv} and the RNFE \eqref{eq:rateNFE} at a microscopic level, i.e. at the level of the Hawkes particle system that has been defined in Definition~\ref{def:model}. We  take advantage of the construction of Proposition~\ref{prop:correspondanceNFE} and the stability result of Theorem~\ref{th:main} concerning the VNFE \eqref{eq:NFE conv} to derive a similar stability result for the traveling wave profile of the RNFE \eqref{eq:rateNFE}. Note that we give in this way a novel and rigorous microscopic interpretation of the RNFE \eqref{eq:rateNFE} as the macroscopic limit of the rates of interacting Hawkes processes.

More precisely, recall the definition of the intensity $ \lambda_{ i}^{ (\varepsilon)}$ of the Hawkes process $Z_{ i}^{ (\varepsilon)}$ in \eqref{eq:lambda_i} and the voltage profile $U^{ (\varepsilon)}$ in \eqref{eq:profileU}. A natural transposition of the macroscopic correspondence formula \eqref{eq:vtfromut} provides a first simple microscopic candidate for the rate-based NFE \eqref{eq:rateNFE} in terms of the intensity of the Hawkes particle system:
\begin{definition}[Convoluted rate profile]
\label{def:conv_rate_profile}
For any $i\in \mathbb{ Z}$, $ \varepsilon>0$, introduce the process
\begin{equation}
\label{eq:conv_rate_i}
V_{ t,i}^{ (\varepsilon)}:= \int_{ 0}^{t} e^{ - (t-s)} \lambda_{ i,s}^{ (\varepsilon)} {\rm d}s,\ t\geq0.
\end{equation}
In words, $V_{ t,i}^{ (\varepsilon)}$ represents the instantaneous jump rate of the neuron $i$, that has been convoluted along the whole history of the process from $0$ to $t$. Secondly, define the microscopic convoluted rate profile as
\begin{equation}
\label{eq:conv_rate_profile}
V_{ t}^{ (\varepsilon)}(x):= \int_{ 0}^{t} e^{ -(t-s)} f \left(U_{ s}^{ (\varepsilon)}(x)\right) {\rm d}s + v_{ 0}^{ (\varepsilon)}(x) e^{ -t},\ x\in \mathbb{ R}
\end{equation}
where the voltage profile $U_{ t}^{ (\varepsilon)}$ is given in \eqref{eq:profileU} and for all $ \varepsilon>0$, $v_{ 0}^{ (\varepsilon)}$ is a continuous profile on $ \mathbb{ R}$.
%satisfies the compatibility condition \eqref{eq:u0Wv0} with the initial voltage profile $u_{ 0}^{ (\varepsilon)}$, i.e. $u_{ 0}^{ (\varepsilon)}= W\ast v_{ 0}^{ (\varepsilon)}$, recall \eqref{eq:Ui_mod} and Assumption~\ref{ass:u0}.
\end{definition} 
\begin{remark}
Recalling the definition of $I_{ i}^{ (\varepsilon)}$ in \eqref{eq:def_I_i}, it is immediate to see that, on each $I_{ i}^{ (\varepsilon)}$, $ V_{ t}^{ (\varepsilon)}$ in \eqref{eq:conv_rate_profile} coincides with $V_{ t, i}^{ (\varepsilon)}+ v_{ 0}^{ (\varepsilon)}$ in \eqref{eq:conv_rate_i}.
\end{remark}
A second observation is that in the neutral case $c=0$, the traveling profile of \eqref{eq:rateNFE} is simply written as $ \hat{ v}= f \left( \hat{ u}\right)$ (recall \eqref{eq:hatv}). Hence a second natural microscopic candidate is directly the rate profile itself
\begin{definition}[Rate profile]
\label{def:rate_profile}
For any $ \varepsilon>0$, introduce the microscopic rate profile of the Hawkes system \eqref{eq:Zi_def} as
\begin{equation}
\label{eq:Lambda}
\Lambda_{ t}^{ (\varepsilon)}= \sum_{ j\in \mathbb{ Z}} \lambda_{ j, t}^{ (\varepsilon)} \mathbf{ 1}_{ I_{ j}^{ (\varepsilon)}} = f \left(U_{ t-}^{ (\varepsilon)}\right),\ t\geq0.
\end{equation}
\end{definition}

The main result of the present section is the following:
\begin{theorem}
\label{th:main_rate}
Place ourselves under the same assumptions as for Theorem~\ref{th:main}. Suppose in addition that the initial profile $ v_{ 0}^{ (\varepsilon)}$ in \eqref{eq:conv_rate_profile} satisfies
\begin{equation}
\label{eq:v0eps}
\left\Vert v_{ 0}^{ (\varepsilon)} - \hat{ v}\right\Vert_{ L^{ 2}} \to 0,\ \text{ as }\varepsilon\to0,
\end{equation}
where $ \hat{ v}$ is the traveling wave profile to the RNFE \eqref{eq:rateNFE} defined by \eqref{eq:hatv}.
%Suppose also that the initial voltage profile $ u_{0}^{ (\varepsilon)}$ in \eqref{eq:Ui_mod} is of class $ \mathcal{ C}^{ 2}$ for all $ \varepsilon>0$, and that the initial rate profile $v_{ 0}^{ (\varepsilon)}$ satisfies, for all $ \varepsilon>0$,
%\begin{equation}
%v_{ 0}^{ (\varepsilon)}:= u_{ 0}^{ (\varepsilon)}- \sigma^{ 2} \left(u_{ 0}^{ (\varepsilon)}\right)^{ \prime \prime}
%\end{equation}
%Suppose additionnally that
%\begin{equation}
%\label{eq:u0''_close_to_M}
%\left\Vert \left(u_{ 0}^{ (\varepsilon)}\right)^{ \prime \prime} - \hat{ u}_{ 0}^{ \prime \prime}\right\Vert_{ L^{ 2}} \to 0,\ \text{ as } \varepsilon\to 0.
%\end{equation}
Then the rate profile $ \Lambda^{ (\varepsilon)}$ in Definition~\ref{def:rate_profile} and the convoluted rate profile $V^{ (\varepsilon)}$ in Definition~\ref{def:conv_rate_profile} satisfy: 
\begin{enumerate}
\item Convergence on a bounded time interval: for any $T_{ 0}>0$, $ \eta>0$,
\begin{align}
\mathbb{P}\left(\sup_{0\leq t\leq T_{ 0}} \left\Vert \Lambda_{t}^{ (\varepsilon)}- \hat{ v}\right\Vert_{L^2} \geq \eta\right)& \underset{\varepsilon\rightarrow 0}{\longrightarrow} 0, \label{eq:Lambda_convT0}\\
\mathbb{P}\left(\sup_{0\leq t\leq T_{ 0}} \left\Vert V_{t}^{ (\varepsilon)}- \hat{ v}\right\Vert_{L^2} \geq \eta\right)& \underset{\varepsilon\rightarrow 0}{\longrightarrow} 0.\label{eq:V_convT0}
\end{align}
\item Brownian wandering along $ \mathcal{ M}_{ rate}$ (recall \eqref{eq:Mrate}) on a diffusive time scale: for any $t_f>0$, for the same càdlàg process $ \psi^{ (\varepsilon)}$ as in Theorem~\ref{th:main}, for the same constant $ \varrho>0$ given by \eqref{eq:sigma}, for all $\eta>0$,
\begin{align}
\label{eq:conv_main_Lambda}
\mathbb{P}\left(\sup_{0\leq u\leq t_f } \left\Vert \Lambda_{u\varepsilon^{-1}}^{ (\varepsilon)}- \hat{ v} \left(\cdot -  \varrho\psi_{u}^{ (\varepsilon)}\right)\right\Vert_{L^2} \geq \eta\right) &\underset{\varepsilon\rightarrow 0}{\longrightarrow} 0.
\end{align}
\end{enumerate}
 \end{theorem}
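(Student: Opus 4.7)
\medskip

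\noindent\emph{Strategy.} Theorem~\ref{th:main_rate} is essentially a direct corollary of Theorem~\ref{th:main}, by exploiting two elementary facts. First, in the neutral case $c=0$, the rate-based traveling profile satisfies $\hat v=f(\hat u)$ by \eqref{eq:hatv}, so translation-invariance of the $L^{2}$-norm gives $\hat v(\cdot-\varphi)=f(\hat u_{ \varphi})$ for every $\varphi\in\mathbb R$. Second, since $f\in\mathcal{C}^{3}(\mathbb R,[0,1])$ with bounded derivative, $f$ is globally Lipschitz with constant $L_{ f}:=\|f'\|_{\infty}<\infty$.

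\medskip

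\noindent\emph{Step 1: the rate profile $\Lambda^{ (\varepsilon)}$.} By \eqref{eq:Lambda} one has $\Lambda^{ (\varepsilon)}_{ t}=f(U^{ (\varepsilon)}_{ t^{-}})$, and therefore
\begin{equation*}
\bigl\|\Lambda^{ (\varepsilon)}_{ t}-\hat v(\cdot-\varphi)\bigr\|_{L^{2}}\;\leq\;L_{ f}\,\bigl\|U^{ (\varepsilon)}_{ t^{-}}-\hat u_{ \varphi}\bigr\|_{L^{2}},\qquad \varphi\in\mathbb R.
\end{equation*}
The distinction between $U^{ (\varepsilon)}_{ t^{-}}$ and $U^{ (\varepsilon)}_{ t}$ is harmless in $L^{2}$: each spike of neuron $j$ modifies the profile by $\varepsilon W(\cdot-x_{ j})\mathbf{1}_{ \mathcal{ D}_{ 0}}$, whose $L^{2}$-norm is of order $\varepsilon^{ 1/2}\|W\|_{L^{2}}\cdot\sqrt{\varepsilon}=O(\varepsilon)$ per spike, hence negligible. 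Choosing $\varphi=0$ and taking a supremum over $t\in[0,T_{ 0}]$ yields \eqref{eq:Lambda_convT0} from \eqref{eq:conv_main_T0}; choosing $\varphi=\varrho\psi^{ (\varepsilon)}_{ u}$ and taking a supremum over $u\in[0,t_{ f}]$ yields the Brownian wandering \eqref{eq:conv_main_Lambda} from \eqref{eq:conv_main}.

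\medskip

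\noindent\emph{Step 2: the convoluted rate profile $V^{ (\varepsilon)}$.} Using the tautology $\hat v=\int_{0}^{t}e^{-(t-s)}\hat v\,ds+e^{-t}\hat v$ together with $\hat v=f(\hat u)$, subtracting from \eqref{eq:conv_rate_profile} produces
\begin{equation*}
V^{ (\varepsilon)}_{ t}-\hat v\;=\;\int_{0}^{t}e^{-(t-s)}\bigl[f(U^{ (\varepsilon)}_{ s})-f(\hat u)\bigr]\,ds\;+\;e^{-t}\bigl(v^{ (\varepsilon)}_{ 0}-\hat v\bigr),
\end{equation*}
so that the Lipschitz bound gives
\begin{equation*}
\bigl\|V^{ (\varepsilon)}_{ t}-\hat v\bigr\|_{L^{2}}\;\leq\;L_{ f}\int_{0}^{t}e^{-(t-s)}\bigl\|U^{ (\varepsilon)}_{ s}-\hat u\bigr\|_{L^{2}}\,ds\;+\;e^{-t}\bigl\|v^{ (\varepsilon)}_{ 0}-\hat v\bigr\|_{L^{2}}.
\end{equation*}
The integral term is dominated by $L_{ f}\sup_{ s\in[0,T_{ 0}]}\|U^{ (\varepsilon)}_{ s}-\hat u\|_{L^{2}}$, which tends to $0$ in probability by \eqref{eq:conv_main_T0}; the second summand vanishes deterministically by \eqref{eq:v0eps}. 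Taking a supremum over $t\in[0,T_{ 0}]$ yields \eqref{eq:V_convT0}.

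\medskip

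\noindent\emph{Where the work lies.} All the genuine probabilistic content — the construction of the càdlàg phase $\psi^{ (\varepsilon)}$, stability of $\mathcal{M}$ on the diffusive time scale, identification of the diffusivity $\varrho$ — is already packaged in Theorem~\ref{th:main}. What Theorem~\ref{th:main_rate} does is to extract the rate-based counterpart through the explicit macroscopic correspondence of Proposition~\ref{prop:correspondanceNFE}. The only conceivable subtlety concerns the absence of a diffusive statement for $V^{ (\varepsilon)}$: on the $\varepsilon^{-1}$ time scale, the memory kernel $e^{-(t-s)}$ averages $f(U^{ (\varepsilon)})$ over the last $O(1)$ unit of time, during which $\psi^{ (\varepsilon)}$ typically moves by $O(\sqrt{\varepsilon})$. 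This fluctuation is negligible and a diffusive version of \eqref{eq:conv_main_Lambda} could in principle be obtained for $V^{ (\varepsilon)}$ at the price of additional technicalities that we do not pursue here.
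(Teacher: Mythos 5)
Your proof is correct and follows essentially the same route as the paper: the identity $\hat v=f(\hat u)$ in the neutral case, Lipschitz continuity of $f$, and the convergences of Theorem~\ref{th:main}, plus the same mild rewriting of $\hat v$ for the convoluted profile $V^{(\varepsilon)}$ together with \eqref{eq:v0eps}. Your extra remarks (the $t^-$ versus $t$ issue and the translation identity $\hat v(\cdot-\varphi)=f(\hat u_\varphi)$) are harmless refinements that the paper leaves implicit.
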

 \begin{remark}
The convoluted rate profile $ V^{ (\varepsilon)}$ is a less natural candidate for the approximation of $ \mathcal{ M}_{ rate}$ on a long time scale: following the same lines as in the proof of Theorem~\ref{th:main_rate}, one can however prove that
 \begin{equation*}
 \mathbb{P}\left(\sup_{0\leq u\leq t_f } \left\Vert V_{u\varepsilon^{-1}}^{ (\varepsilon)}- \hat{ V}_{ u \varepsilon^{ -1}}^{ (\varepsilon)}\right\Vert_{L^2} \geq \eta\right) \underset{\varepsilon\rightarrow 0}{\longrightarrow} 0,
 \end{equation*}
 for the process 
 \begin{equation*}
 \hat{ V}_{ t}^{ (\varepsilon)}= \int_{ 0}^{t} e^{ -(t-s)} \hat{ v} \left(\cdot + \varrho\psi_{ \varepsilon s}^{ (\varepsilon)}\right) {\rm d}s+ e^{ -t} \hat{ v}.
 \end{equation*}
The simplicity of the expression of $ \hat{V}^{ (\varepsilon)}$ allows however for a similar approach as we have done for the voltage profile $U^{ (\varepsilon)}$ itself: using the same arguments, it should be easy to prove in a same way that $ \sup_{ u\in [0, t_{ f}]}{\rm dist}_{ L^{ 2}} \left(\hat{ V}_{ u \varepsilon^{ -1}}, \mathcal{ M}_{ rate}\right)\to 0$ in probability.
 \end{remark}
 \begin{proof}[Proof of Theorem~\ref{th:main_rate}]
Recall \eqref{eq:hatv}: in the neutral case $c=0$, the traveling profile for \eqref{eq:rateNFE} reduces to $ \hat{ v}(x)= f \left(\hat{ u}(x)\right)$. Hence, the convergence results \eqref{eq:Lambda_convT0} and \eqref{eq:conv_main_Lambda} are immediate consequences of the identity $ \Lambda_{ t}^{ (\varepsilon)}(x) - \hat{ v}(x)= f \left(U_{ t-}^{ (\varepsilon)}(x)\right)- f \left( \hat{ u}(x)\right)$, together with the Lipschitz continuity of $f$ and the convergence results of Theorem~\ref{th:main}. Turn now to the convergence \eqref{eq:V_convT0}: for $t\geq 0$, writing $ \hat{ v}(x)= f \left(\hat{ u}(x)\right)= \int_{ 0}^{t} e^{ -(t-s)}f \left( \hat{ u}(x)\right) {\rm d}s + f \left(\hat{ u}(x)\right)e^{ -t}$, one has
\begin{align*}
V_{t}^{ ( \varepsilon)}(x) - \hat{ v}(x)&= \int_{ 0}^{t} e^{ - \left(t -s\right)} \left\lbrace f \left( U_{ s}^{ (\varepsilon)}(x)\right) - f \left( \hat{ u}(x)\right)\right\rbrace {\rm d}s+ \left(v_{ 0}^{ (\varepsilon)}(x)- \hat{ v}(x)\right) e^{ -t}
\end{align*}
and the result follows again from the Lipschitz continuity of $f$, Theorem~\ref{th:main} and \eqref{eq:v0eps}.
 \end{proof}

\subsection{Strategy of proof}
The proof of the long-time result of Theorem~\ref{th:main} relies on the observation that on a bounded interval $[0,T]$, the dynamics of the voltage profile $ U^{ (\varepsilon)}$ is essentially driven by the deterministic dynamics of the NFE: locally around $ \mathcal{ M}$, one can write (see Proposition~\ref{prop:mart_Y} for a precise statement) $ {\rm d} \left(U_{ t}^{ (\varepsilon)}- \hat{ u}\right) = \mathcal{ L} \left(U_{ t}^{ (\varepsilon)} - \hat{ u}\right){\rm d}t + R^{ (\varepsilon)}_{ t} {\rm d}t + M_{ t}^{ (\varepsilon)}$, where $ \mathcal{ L}$ is the linearized operator around $ \hat{ u}\in \mathcal{ M}$ (see \eqref{eq:Lvarphi} for a definition), $R^{ (\varepsilon)}$ gathers quadratic remainder terms and $M^{ (\varepsilon)}$ is the jump-noise part. By the stability result of Theorem~\ref{th:stability}, the approximate dynamics $ {\rm d} \left(U_{ t}^{ (\varepsilon)}- \hat{ u}\right) \approx \mathcal{ L}   \left(U_{ t}^{ (\varepsilon)}- \hat{ u}\right) {\rm d}t $ is attractive. The point is then to carefully control the noise (this is done in Section~\ref{sec:control_noise}) and the remainder parts and to proceed by iteration on intervals of order $1$, $[kT, (k+1)T]$: once, with high probability, $U^{ (\varepsilon)}$ is close to $ \mathcal{ M}$ at time $t=kT$ (at a distance at most $ \varepsilon^{ \alpha}$ for some $ \alpha>0$), the deterministic dynamics will keep $ U^{ (\varepsilon)}$ at a slightly larger distance from $ \mathcal{ M}$ (at most $ \varepsilon^{ \alpha^{ \prime}}$ say, for some $ \alpha^{ \prime}< \alpha$) on $[kT, (k+1)T]$. Choosing then $T$ sufficiently large (see the condition \eqref{hyp:T}) so that we are sure that $U^{ (\varepsilon)}$ goes back to a neighborhood of size $ \varepsilon^{ \alpha}$ at time $t=(k+1)T$, we can proceed with a recursion on $k$.

A second part of the analysis consists in deriving the dynamics of the phase of the process $U^{ (\varepsilon)}$ along $ \mathcal{ M}$ and to check that this phase is approximately Brownian on a time scale of $ \varepsilon^{ -1}$. This is done through the use of the isochron $ \Theta$ associated to \eqref{eq:NFE conv} (see Section~\ref{sec: stab, isochron}) and an application of Ito's formula to $ \Theta \left(U_{ t}^{ (\varepsilon)}\right)$.

This program is somehow similar to previous longtime analysis, mostly in the context of mean-field diffusions, see Section~\ref{sec:literature} for further details. Note that the fact that we are not dealing here with a mean-field system (but with an infinite number of particles on the real line) causes certain disturbances to the above program. The control of the distance of $ U^{ (\varepsilon)}$ to $ \mathcal{ M}$ requires itself the a priori knowledge that the phase $ \Theta( U^{ \varepsilon)})$ does not derive too much from the bulk $ \mathcal{ D}_{ 0}^{ (\varepsilon)}$ so that the two previous parts are actually quite intertwined. The proof of the absence of drift for the phase requires very precise estimates and rely heavily on the different symmetries of the system. This together with the control of boundary conditions for the particle system entails some significant technical difficulties that were not present in previous papers.

\subsection{Position of the result w.r.t. the literature}
\label{sec:literature}
\subsubsection{The Neural Field Equation as a prototype for the emergence of structured patterns and sensitivity to noise of traveling waves}
Structured patterns are ubiquitous in cortical areas, see e.g. \cite{ERMENTROUT200133}, \cite[Fig.~4]{MR2871421} or \cite{Huang2010}. Since the seminal works of Wilson and Cowan \cite{Wilson:1972aa} and Amari \cite{Amari:1977:DPF:2731211.2731248}, Neural Field Equations have proven to reproduce a wide variety of such patterns, such as stationary states \cite{Faugeras2008}, traveling waves \cite{ermentrout_mcleod_1993}, localized patterns (bumps, pulses) \cite{101137130918721,doi:10.1137/18M1165797,Faye2013,Pinto2001,Faugeras2008,Veltz2010} and spiral waves \cite{Laing2005}. 

The version that is studied in the present paper corresponds to a simple instance of the NFE, in dimension $d=1$ within an excitatory network (i.e. the interaction kernel $W$ is nonnegative). Although this setting would seem simplistic from a biological perspective, this is the case which gathers most of the existing rigorous mathematical results. As already mentioned, we strongly base our analysis on two seminal works: firstly, the existence of traveling waves for the NFE established by Ermentrout and McLeod \cite{ermentrout_mcleod_1993} (see also \cite{chen1997} for related results) and secondly, the linear stability of these traveling waves proven by Lang and Stannat \cite{2016JDE...261.4275L}. Note also that the existence of traveling waves can be made rigorous and explicit in the particular case where the function $f$ is Heaviside, see e.g. \cite{Bressloff2015} for details. Extensions of such results (even the mere existence of traveling waves) to higher dimensions and to situations where inhibition is present remain poorly understood from a rigorous mathematical perspective (in this direction, see \cite{Pinto2001} for an analysis of an Excitatory-Inhibitory network).

Another simple model corresponds to the so-called Ring model, that has applications to the visual cortex \cite{Kilpatrick2013,Veltz2010,AgatheNerine2025}: in this case, the integration domain in \eqref{eq:NFE conv} is the one-dimensional circle and the integration kernel is $2\pi$-periodic, a simple instance being $W(x-y)=\cos(x-y)$. Here, periodicity does not allow for traveling waves, one obtains instead wandering bumps, whose existence and stability have been proven in \cite{Kilpatrick2013} when $f$ is Heaviside and in \cite{AgatheNerine2025} when $f$ is sigmoid. The long term stability of Hawkes approximation for the Ring model has been proven by Agathe-Nerine in \cite{AgatheNerine2025} and one can see the present paper as a continuation of \cite{AgatheNerine2025}. Note that the Ring model combines spatial translation invariance (as in our  case) and contrary to our case, a spatial domain with finite Lebesgue-measure. From the perspective of the Hawkes approximation, this last feature makes the analysis in \cite{AgatheNerine2025} considerably simpler than the present one, as the original approach of Duarte et al. \cite{CHEVALLIER20191} by mean-field Hawkes processes is sufficient for \cite{AgatheNerine2025}. 

\subsubsection{Stability of structured patterns under noise}
There is biological evidence of stochasticity in neuronal activity \cite{Luo2020}. A natural question is the influence of noise on the stability of structured patterns for the NFE \eqref{eq:NFE conv}. A longstanding approach in this direction has been to consider macroscopic perturbations to the NFE, i.e. Stochastic Neural Field Equations (SNFE), generically written as
\begin{equation}
\label{eq:SNFE}
\partial_{ t} U_t(x) = - U_t(x) + \int_{ \mathbb{ R}} W(x-y) f(U_t(y)) {\rm d}y + \varepsilon\sigma(U_{ t}(x)) {\rm d}W(t,x)
\end{equation}
where $W$ is an appropriate $Q$-Wiener process in some Hilbert space $H$ (we refer to \cite{MR3367676} for a rigorous functional setting associated to \eqref{eq:SNFE}). The point is to question the influence of noise on the shape and the position of the traveling wave, in an asymptotic regime where the noise is small $ \varepsilon\to 0$, on a bounded time interval $[0,T]$. Namely, writing (with our notations) the solution $U=U^{ (\varepsilon)}$ to \eqref{eq:SNFE} as
\begin{equation}
\label{eq:asympt_exp_Ueps}
U^{ (\varepsilon)}(t,x) \approx \hat{ u}(x- C_{t}^{ (\varepsilon)}t) + \sqrt{ \varepsilon} \Phi^{ (\varepsilon)} \left(x- C_{ t}^{ (\varepsilon)},t\right),\ \text{ as } \varepsilon\to 0,
\end{equation}
the point is to derive asymptotic expansion in $ \varepsilon \to 0$ of the speed of traveling wave $ C^{ (\varepsilon)}$ and the shape profile $ \Phi^{ (\varepsilon)}$. This program has been carried out in a heuristic way by Bressloff and Webber \cite{Bressloff2012}, Bressloff and Kilpatrick in \cite{Bressloff2015} and by Kilpatrick and Ermentrout for the Ring model in \cite{Kilpatrick2013}. If one considers $U^{ (\varepsilon)}$ as a noisy version of the traveling wave profile $ \hat{ u}$ in \eqref{eq:asympt_exp_Ueps}, the difficulty is to make rigorous sense of the position of the noisy traveling wave $C_{ t}^{ (\varepsilon)}$. A common approach has been simultaneously followed by Inglis and MacLaurin \cite{10113715M102856X}, Kr\"uger and Stannat \cite{10113713095094X,Kruger:2017aa} and Lang \cite{10113715M1033927} (see also \cite[Chap.~5]{https://doi.org/10.14279/depositonce-5019}). Although their results are nonintersecting, all of these works rely on a similar identification of the wave phase $C^{ (\varepsilon)}$ as a solution to a minimisation problem 
\begin{equation} 
\label{eq:minim_Ceps}
C^{ (\varepsilon)}_{ t} \approx \inf_{ \phi\in \mathbb{ R}} \left\lbrace \left\Vert U^{ (\varepsilon)}_{ t} - \hat{ u}(\cdot + \phi)\right\Vert^{ 2}_{ H} \right\rbrace
\end{equation} 
for some functional space $H$ (see the aforementioned references for details). Note however that this approach seems to be intrinsically valid for bounded time intervals $[0,T]$ but maybe not suitable for a long time analysis.

Our approach significantly differs from the previous setting. Whereas the previous analysis is made by adding directly noise to the macroscopic NFE on a bounded time interval, the present approach analyses the influence of noise from a microscopic point of view (that is at the level of the particle system of Definition~\ref{def:model}) and noise only becomes macroscopic on a time scale that is unbounded w.r.t. the size of the population. Note that our approach for the identification of the phase does not rely on any $L^{ 2}$-minimisation procedure as for \eqref{eq:minim_Ceps}, but on a direct application of the isochron associated to \eqref{eq:NFE conv}. The possibility of drawing rigorous connections between the SPDE approach and our results  seems unclear to us.

\subsubsection{NFE as mean-field limits of Hawkes processes}
Neural Field Equations have been originally derived as informal limits of piecewise deterministic Markov processes (see eg. \cite{MR2871421} for a review). Rigorous approaches in this direction have been made by Riedler and Buckwar \cite{Riedler2013} (for NFEs on a bounded domain) and Lang and Stannat \cite{Lang2017}. We are here concerned with the approximation of the NFEs in terms of Hawkes processes, \cite{MR0378093}. The mathematical derivation of (homogeneous) mean-field Hawkes processes goes back to the seminal work of Delattre, Fournier and Hoffmann \cite{MR3449317} (the mean-field limit being then the Nonlinear Renewal Equation, that can be seen as the non-spatial version of the NFEs). As already mentioned earlier, Chevallier, Duarte, L\"ocherbach and Ost in \cite{CHEVALLIER20191} have been the first to address rigorously the approximation on a bounded time frame of a voltage based NFE \eqref{eq:NFE_CDLO} (recall here the comments made in Section~\ref{sec:micro_model} concerning the difficulties of this approach for the study of traveling waves). Note here that we are not aware of any previous results addressing a similar approach for the rate based NFE \eqref{eq:rateNFE}. In this sense, the result of Theorem~\ref{th:main_rate} seems to be new, even on a bounded time horizon. These propagation of chaos results for Hawkes processes have been generalized in many directions, e.g. in a multi-populations setting \cite{DITLEVSEN20171840,Locherbach2017}. Going beyond finite time propagation of chaos is a difficult task for mean-field models in general. A first attempt in this direction is to provide fluctuations results, see e.g.  \cite{2019arXiv190906401C,Heesen2021}.  As already mentioned, one can see the present work as a natural continuation of the works of Agathe-Nerine who has proven long time results for mean-field Hawkes models with a unique equilibrium \cite{agathenerine22} and with rotation invariance \cite{AgatheNerine2025}. Although the techniques used here share some similarities with \cite{AgatheNerine2025}, passing from the circle to the real line presents significant technical difficulties (one of them being the control of the boundary conditions), all linked to the fact that our present model is essentially no longer mean-field, contrary to the mentioned references.

\subsubsection{Long-time behaviors of stable normally hyperbolic manifolds}
The present paper borrows techniques used for the longtime dynamics of interacting particle systems under the presence of a stable macroscopic manifold. By the local stability result of Theorem~\ref{th:stability}, the manifold $ \mathcal{ M}$ of traveling wave profiles defined in \eqref{eq:M} is a simple prototype of a Stable Normally Hyperbolic Manifold (SNHM) \cite{Bates1998,sell2013dynamics}. The question is then to understand the behavior of a particle system subject to noise, in the vicinity of such a SNHM. Such an analysis, originally initiated in the context of Ginzburg-Landau equations, see eg. \cite{MR1340032,MR1337253,MR2448321,MR3023410}, has been in recent years successfully applied to mean-field diffusions in the presence of a stable limit cycle (eg. phase oscillators \cite{BGP,GPP2012,doi:10.1137/110846452,MR3336876,Giacomin:2015ab,MR3689966} and FitzHugh-Nagumo systems \cite{Lucon2019,2021arXiv210702468L,Lucon2021}). The use of the isochron \cite{Guckenheimer1975,Adams2025} in this context was initiated in \cite{Giacomin:2015ab} for small noise behavior of finite dimensional diffusions. 

\subsection{Possible extensions}
\label{sec:extensions}
A natural question concerns the possibility to extend the present results to the case where the traveling wave has nontrivial speed $c\neq 0$. The expected result is that on a time scale of order $ \varepsilon^{ -1}$, the dephasing for the voltage profile $U_{ u \varepsilon^{ -1}}^{ (\varepsilon)}$ should write as $ C_{ u}^{ (\varepsilon)}= c u \varepsilon^{ -1} + v_{ u}^{ (\varepsilon)} + \varrho \psi_{ u}^{ (\varepsilon)}$, where $c$ is the deterministic wave speed and $ \psi^{ (\varepsilon)}$ converges again to a Brownian motion. The main difference with \eqref{eq:conv_main} is that, when $c\neq 0$, one expects the microscopic noise to induce a macroscopic deterministic perturbation $v^{ \varepsilon}$ to the original wave shift $c u \varepsilon^{ -1}$. We see here the specificity of the present case $c=0$: a considerable amount of work is spent in this paper to prove that, due to the symmetries of the system, \emph{there is actually no such additional drift when $c=0$} (and in this respect, one may not see the case $c=0$ as a simplification to the general case).

As far as the case $c\neq 0$ is concerned, start with the obvious: keeping the same Hawkes dynamics given by \eqref{eq:Ui_mod} within the \emph{fixed} bulk $ \mathcal{ D}_{ 0}^{ (\varepsilon)}=\left[  -\varepsilon N_\varepsilon- \frac{ \varepsilon}{ 2}, \varepsilon N_\varepsilon+ \frac{ \varepsilon}{ 2} \right)$ is no longer appropriate. When $c\neq 0$, the traveling wave (and hence $U^{ (\varepsilon)}$ itself) would reach the boundary of the bulk in a time of order $t \approx \frac{ \ell_{ \varepsilon}}{ c} =O \left(\varepsilon^{ - \beta} \right) \ll \varepsilon^{ -1}$, so that the convergence result of \eqref{eq:conv_main} cannot remain true. One needs to define a \emph{moving bulk} $ \mathcal{ D}_{ 0, t}^{ (\varepsilon)}=\left[  -\varepsilon N_\varepsilon- \frac{ \varepsilon}{ 2} + ct, \varepsilon N_\varepsilon+ \frac{ \varepsilon}{ 2} + ct\right)$ in order to see some noise-induced nontrivial correction to the deterministic speed $c$.

One main ingredient of the present analysis is the linear stability result of $ \mathcal{ M}$ proven by Lang and Stannat in \cite{2016JDE...261.4275L}, see Theorem~\ref{th:stability} below. The stability result is actually written in \cite{2016JDE...261.4275L} as a spectral gap inequality for the \emph{frozen wave operator} $L^{ \#}v:= - v + c \partial_{ x} v + W \ast \left(f^{ \prime} \left(\hat{ u}\right)v\right)$ which arises as the linearisation around the traveling wave solution once the moving frame change of variables $(t,x) \mapsto (t, x+ct)$ has been carried out. This spectral gap is expressed in some weighted $L^{ 2}$-space $L^{ 2}(\rho)$ that is, unless $c=0$, \emph{not} equivalent to the flat space $L^{ 2}$. Note that when $c=0$, $L^{ \#}$ coincides with the linearized operator $ \mathcal{ L}_{ \varphi}$ in \eqref{eq:Lvarphi} below. Performing such a moving frame change of variables does not seem to be a good idea at the level of the particle system \eqref{eq:profileU}: the domain of $L^{ \#}$ is $H^{ 1}( \mathbb{ R})$ whereas the microscopic voltage profile $U^{ (\varepsilon)}$ is not even continuous. A strategy here would consist in keeping the original space-time frame $(x,t)$ and take advantage of another stability result of Lang and Stannat, see \cite[Th.~8]{2016JDE...261.4275L}: the spectral gap inequality \eqref{eq:spectralgap_LangStannat} remains true at least in a regime where $c$ is small. The hope is then to extend the present results in such a regime of small wave speed. However, it appears from calculations similar to the linearisation arguments of Proposition~\ref{prop:mart_Y} below that the spectral gap estimate is no longer sufficient for the long-time analysis of $ U^{ (\varepsilon)}$ along $ \mathcal{ M}$ in the case $c\neq 0$. A strategy would be to obtain finer controls on the dynamics around $ \mathcal{ M}$ (see \cite{2021arXiv210702468L,Bates2008} for similar arguments). This will be the object of future works.
\subsection{ Organisation of the paper}
\label{sec:organisation}
The rest of the paper is organised as follows: Section~\ref{sec: stab, isochron} gathers estimates concerning the stability of the manifold $ \mathcal{ M}$ of traveling wave profiles together with regularity estimates of the isochron around $ \mathcal{ M}$. Section~\ref{sec:iter scheme} introduces the semimartingale decompositions of the voltage profile and the isochron around $ \mathcal{ M}$ and the main iterative scheme at the basis of the stability result for the voltage profile of Theorem~\ref{th:main}. Section~\ref{sec:control_noise} is devoted to the control of the noise terms in these decompositions. The proximity of the process w.r.t. $ \mathcal{ M}$ is proven in Section~\ref{sec:proximity_M}. Section~\ref{sec:proof_main} is devoted to the proof of the main Theorem~\ref{th:main}. Section~\ref{sec: boundary} contains the technical boundary estimates that are necessary in the previous sections. Appendix~\ref{sec:app_auxiliary} contains several auxiliary technical estimates. Appendix~\ref{sec:app rate based NFE} contains the proof of Proposition~\ref{prop:correspondanceNFE}. Appendix~\ref{sec:app regularity isochron} contains the proof of Proposition~\ref{prop:regularity isochron}.

\section{Stability and isochron map.}\label{sec: stab, isochron}

In this Section we describe in more details the stability results for the NFE \eqref{eq:NFE conv} and the properties of the isochron map we will extensively use in the proof of our result. Let us define, for $ \varphi\in \mathbb{ R}$, the operator $\mathcal{L}_{ \varphi}$ of the linearized dynamics of \eqref{eq:NFE conv} in the neighborhood of $\hat u_\varphi$
\begin{equation}
\label{eq:Lvarphi}
\mathcal{L}_{ \varphi}v= -v + W\ast \left(f^{ \prime} \left( \hat{ u}_{ \varphi}\right) v \right).
\end{equation}
It is easy to check that $\mathcal{L}_\phi \hat u'_\phi=0$ and that this operator is bounded and self-adjoint in the weighted space $L^{ 2}_{ m_\varphi}$ endowed with the scalar product
\begin{equation}
\label{eq:L2_weight}
\left\langle v,w\right\rangle_{ m_\varphi}:= \int_{ \mathbb{ R}} v(x) w(x) m_{\varphi}(x) {\rm d}x,\qquad  m_{\varphi}(x):= f^{ \prime}( \hat{ u}_{\varphi}(x))>0.
\end{equation}
Since $f^{ \prime}$ is bounded above and below, $L^{ 2}_{ m_{ \varphi}}$ is equivalent to the usual $L^{ 2}$ space, uniformly on $ \varphi$: we have
\begin{equation}
\label{eq:norm_equiv}
\sqrt{ f^{ \prime}(a_{ 1})}\left\Vert v \right\Vert_{ L^{ 2}} \leq \left\Vert v \right\Vert_{ m_{ \varphi}} \leq \sqrt{ f^{ \prime}(a_{ 2})}\left\Vert v \right\Vert_{ L^{ 2}}.
\end{equation} 
Let us denote the projections, for $\phi\in\mathbb{R}$ and $v\in L^2$,
\begin{equation}
\label{eq:projectionsL2}
P_\phi v = \frac{\left\langle v,\hat u'_\phi\right\rangle_{m_\phi}}{\left\langle \hat u'_\phi,\hat u'_\phi\right\rangle_{m_\phi}}u'_\phi, \quad P^\perp_\phi v = v - P_\phi v.
\end{equation}
The following theorem corresponds to Theorems~4 and~8 in \cite{2016JDE...261.4275L} (see also Theorem 4.1.2 and Corollary 4.2.4 in \cite{https://doi.org/10.14279/depositonce-5019}).
\begin{theorem}[Lang and Stannat \cite{2016JDE...261.4275L}]
\label{th:stability}
There exists $\kappa>0$ such that for all $\varphi\in \mathbb{R}$ and all $v\in L^2$
\begin{equation}
\label{eq:spectralgap_LangStannat}
\left\langle \mathcal{L}_{\varphi} v, v\right\rangle_{ m_\varphi} \leq -\kappa\left(\Vert v\Vert_{m_\varphi}^2-\left\langle v\, ,\, \hat u'_\varphi\right\rangle_{ m_\varphi}^{ 2}\right),
\end{equation}
and thus there exists a constant $C_\mathcal{L}>0$ such that for any $\varphi\in \mathbb{R}$ and any $v\in L^2$ satisfying $P_\varphi v=0$ we have for all $t\geq 0$
\begin{equation}
\label{eq:contract_ecL}
\left\Vert e^{t \mathcal{L}_\varphi} v\right\Vert_{L^2} \leq C_\mathcal{L} e^{-\kappa t}\left\Vert v\right\Vert_{L^2}.
\end{equation}
Moreover there exist constants $c, \mu>0$ such that, for all $z \in \mathbb{ R}$,
\begin{equation}
\label{eq:uprime_exp}
0\leq \hat{ u}^{ \prime}_0(z)\leq c e^{ - \mu \left\vert z \right\vert}.
\end{equation}
\end{theorem}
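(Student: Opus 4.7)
The plan is to prove the three parts in the order they appear: first the spectral gap estimate \eqref{eq:spectralgap_LangStannat}, then the contraction estimate \eqref{eq:contract_ecL} as a direct consequence, and finally the tail estimate \eqref{eq:uprime_exp} via a separate ODE argument specific to the exponential kernel of Assumption~\ref{ass:W_exp}.

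For \eqref{eq:spectralgap_LangStannat}, the first step is to verify that $\mathcal{L}_\varphi = -\mathrm{Id} + K_\varphi$, with $K_\varphi v := W\ast(f'(\hat u_\varphi)v)$, is bounded and self-adjoint on $L^2_{m_\varphi}$: using the symmetry $W(x-y)=W(y-x)$ and the weight $m_\varphi = f'(\hat u_\varphi)$, a direct Fubini computation gives $\langle K_\varphi v, w\rangle_{m_\varphi} = \langle v, K_\varphi w\rangle_{m_\varphi}$. Differentiating the profile equation \eqref{eq:profile_hatu} with $c=0$ yields $\hat u'_\varphi = W\ast(f'(\hat u_\varphi)\hat u'_\varphi)$, so $\hat u'_\varphi$ lies in $\ker \mathcal{L}_\varphi$ with eigenvalue $0$ (equivalently, $K_\varphi$ has $1$ as eigenvalue with eigenfunction $\hat u'_\varphi$). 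The key remaining step is a Perron–Frobenius argument: since $W\geq 0$ and $f'(\hat u_\varphi)>0$, the integral operator $K_\varphi$ is positivity-improving on $L^2_{m_\varphi}$ (acting on appropriate cones of functions localized in a compact region where $\hat u'_\varphi$ is strictly positive), its top eigenvalue is simple, and must be attained at the strictly positive eigenfunction $\hat u'_\varphi$. The rest of the spectrum is therefore strictly below $1$. To turn this into a quantitative gap $\kappa>0$, one has to rule out accumulation of spectrum at $1$: this follows from compactness-type arguments using that $f'(\hat u_\varphi)-f'(a_i)\to 0$ exponentially at $\pm\infty$ (by \eqref{eq:uprime_exp}, applied to $\hat u_\varphi$ itself), so $K_\varphi$ is a compact perturbation of the multiplier $W\ast(f'(a_i)\cdot)$ on each tail, whose spectrum is bounded away from $1$. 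Uniformity in $\varphi$ is automatic since $\hat u_\varphi = \hat u_0(\cdot-\varphi)$ and $\mathcal{L}_\varphi$ is conjugate to $\mathcal{L}_0$ by translation, which is an isometry between the weighted spaces. This gives \eqref{eq:spectralgap_LangStannat}.

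For \eqref{eq:contract_ecL}, the plan is to apply \eqref{eq:spectralgap_LangStannat} to $v_t := e^{t\mathcal{L}_\varphi}v$ assuming $P_\varphi v = 0$. Since $\mathcal{L}_\varphi \hat u'_\varphi = 0$, the semigroup preserves $P_\varphi^\perp$, so $P_\varphi v_t = 0$ and $\langle v_t,\hat u'_\varphi\rangle_{m_\varphi}=0$ for all $t\geq0$. Then $\tfrac{\mathrm{d}}{\mathrm{d}t}\|v_t\|^2_{m_\varphi} = 2\langle \mathcal{L}_\varphi v_t,v_t\rangle_{m_\varphi}\leq -2\kappa\|v_t\|^2_{m_\varphi}$ by \eqref{eq:spectralgap_LangStannat}, so Gronwall gives $\|v_t\|_{m_\varphi}\leq e^{-\kappa t}\|v\|_{m_\varphi}$, and \eqref{eq:contract_ecL} follows from the norm equivalence \eqref{eq:norm_equiv} with $C_\mathcal{L}=\sqrt{f'(a_2)/f'(a_1)}$.

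Finally for \eqref{eq:uprime_exp}, exploit the exponential form of $W$: since $W$ is the Green's function of $\mathrm{Id}-\sigma^2\partial_x^2$, the stationary equation $\hat u_0 = W\ast f(\hat u_0)$ becomes the second order ODE $\hat u_0 - \sigma^2\hat u_0'' = f(\hat u_0)$. Differentiating once and setting $y=\hat u_0'$ gives $\sigma^2 y'' = (1-f'(\hat u_0))\,y$. As $z\to\pm\infty$, $\hat u_0(z)\to a_i$ and $1-f'(a_i)>0$ by Assumption~\ref{ass:Wf}(2), so the coefficient converges to a strictly positive limit $\alpha_i>0$. Standard asymptotic analysis for second order linear ODEs (or a direct phase plane / Lyapunov argument using $y>0$ and $y$ integrable since $\hat u_0$ has finite total variation) shows that the only solution not blowing up at infinity decays like $y(z)\sim C e^{-\sqrt{\alpha_i/\sigma^2}\,|z|}$. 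Taking $\mu = \min_i\sqrt{(1-f'(a_i))}/\sigma$ and choosing $c$ so that the estimate also absorbs the bounded central region yields \eqref{eq:uprime_exp}. The main obstacle throughout is the quantitative step in the spectral analysis: turning the qualitative Perron–Frobenius simplicity into a genuine gap that is uniform in $\varphi$; the translation invariance of the profile reduces this to a single computation at $\varphi=0$, but care is still needed to control the essential spectrum coming from the non-compact domain $\mathbb{R}$.
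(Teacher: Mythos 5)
The paper itself does not prove this theorem: it is imported from Lang and Stannat \cite{2016JDE...261.4275L} (their Theorems~4 and~8), so there is no in-paper argument to compare against; your sketch follows what is essentially the standard route of that reference, and in outline it is sound. The ingredients are the right ones: self-adjointness of $\mathcal{L}_\varphi=-\mathrm{Id}+\mathcal{K}_\varphi$ in $L^2_{m_\varphi}$, the kernel element $\hat u'_\varphi$ obtained by differentiating \eqref{eq:profile_hatu} with $c=0$, control of the essential spectrum from the limits $f'(\hat u_\varphi)\to f'(a_i)<1$ at $\pm\infty$, Perron--Frobenius simplicity of the eigenvalue $1$ of $\mathcal{K}_\varphi$ (the kernel $W>0$ makes $\mathcal{K}_\varphi$ positivity improving globally, so your parenthetical about cones on compact regions is not needed), translation conjugacy for uniformity in $\varphi$, Gronwall plus \eqref{eq:norm_equiv} for \eqref{eq:contract_ecL}, and the reduction $\sigma^2\hat u_0''=\hat u_0-f(\hat u_0)$ specific to Assumption~\ref{ass:W_exp} for \eqref{eq:uprime_exp}, which is exactly the route indicated in Remark~\ref{rem:hyp W} (Lang--Stannat, Prop.~9).

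Two points need care if this is written out. First, the logical order in the spectral step: you must localize the essential spectrum strictly below $1$ \emph{before} invoking Perron--Frobenius, since a positive eigenfunction identifies the top of the spectrum only once one knows that the spectrum above the essential part consists of discrete eigenvalues; this works because the limit operator $v\mapsto W\ast(g_\infty v)$, with $g_\infty$ equal to $f'(a_1)$ on one tail and $f'(a_2)$ on the other, has operator norm at most $\max_i f'(a_i)<1$ by Young's inequality, and its difference with $\mathcal{K}_\varphi$ is compact using only the qualitative convergence $f'(\hat u_\varphi(x))-g_\infty(x)\to0$. In particular your appeal to \eqref{eq:uprime_exp} at that point is unnecessary and mildly circular, since you only establish \eqref{eq:uprime_exp} in the last step. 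Second, in the tail estimate you should justify $\hat u_0'\to0$ at $\pm\infty$ (monotonicity and integrability of $\hat u_0'$ together with boundedness of $\hat u_0''$ from the ODE, or directly from $\hat u_0'=W\ast(f'(\hat u_0)\hat u_0')$) before running the comparison with $e^{-\sqrt{\alpha}\,|z|/\sigma}$ on each tail; with that, the maximum-principle comparison you sketch is correct.
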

It is well known (see e.g. \cite[Th.~4.6]{GPP2012}, \cite[Cor.~3.6]{AgatheNerine2025} or \cite[Th.~3.1]{Adams2025} for similar results that can be easily adapted to the present case) that the linear stability property stated in this Theorem ensures the local stability of $\mathcal{M}$ for \eqref{eq:NFE conv}. More precisely there exists an isochron map $\Theta$, defined in a tubular neighborhood $\mathcal{N}$ of $\mathcal{M}$,
\begin{equation}
\label{eq:mcN}
\mathcal{N} = \{v:\, \exists \varphi\in \mathbb{R},\, \Vert v-\hat u_\varphi\Vert_{L^2} \leq C_0\},
\end{equation}
such that for $0<\lambda<\kappa$ there exists a constant $C_\lambda>0$ such that for any initial condition $u_0\in \mathcal{N}$ we have
\begin{equation}\label{eq:convergence ut}
\left\Vert u_t-\hat u_{\Theta(u_0)}\right\Vert_{L^2}\leq C_\lambda e^{-\lambda t} \left\Vert u_0-\hat u_{\Theta(u_0)}\right\Vert_{L^2}.
\end{equation}
In particular, taking $u_{ 0}= \hat{ u}_{ \varphi}$ for some $ \varphi\in \mathbb{ R}$, one has $ u_{ t}= \hat{ u}_{ \varphi}$ for all $t\geq0$ so that $ \Theta( \hat{ u}_{ \varphi})= \varphi$ for all $ \varphi\in \mathbb{ R}$.
The isochron map satisfies the following properties (the proof of this Proposition in given in Appendix \ref{sec:app regularity isochron}).
\begin{proposition}\label{prop:regularity isochron}
The map $u\mapsto \Theta(u)$ is $C^2$ from $\mathcal{N}$ to $\mathbb{R}$.
Moreover, for any $u\in \mathcal{N}$, $h_2\in L^2$ and $h^3\in L^2\cap L^\infty$, the map $u\mapsto {\rm D}^2\Theta(u)[h^2,h^3]$ is Frechet differentiable and ${\rm D}^3 \Theta(u)[h^1,h^2,h^3]:={\rm D} {\rm D}^2\Theta(u)[h^2,h^3][h^1]$ satisfies
\begin{equation}\label{eq:bound D3Theta}
\left\Vert {\rm D}^3 \Theta(u)[h^1,h^2,h^3]\right\Vert_{2}\leq C_\Theta \Vert h^1\Vert_{L^2}\Vert h^2\Vert_{L^2}(\Vert h^3\Vert_{L^2}+\Vert h^3\Vert_{L^\infty}).
\end{equation}
Moreover,
\begin{equation}
\label{eq:uniform_bound_DTheta}
\sup_{ u_{ 0}\in \mathcal{ N}} \left\Vert {\rm D} \Theta(u_{ 0}) \right\Vert< +\infty\text{ and }\sup_{ u_{ 0}\in \mathcal{ N}} \left\Vert {\rm D}^{ 2} \Theta(u_{ 0}) \right\Vert< +\infty,
\end{equation}
and thus (modifying the constant $C_{ \Theta}$ if necessary), for all $v\in \mathcal{ N}$, 
\begin{equation}
\label{eq:Theta_dist}
\left\Vert v - \hat{ u}_{ \Theta(v)} \right\Vert_{ L^{ 2}} \leq C_{ \Theta} {\rm dist}_{ L^{ 2}} \left(v, \mathcal{ M}\right).
\end{equation}
Secondly, for any $\varphi\in \mathbb{R}$ and any $v,w\in L^2$ we have
\begin{equation}
\label{eq:DTheta}
{\rm D}\Theta(\hat u_\phi)[v] = - \frac{\left\langle v,\hat u'_\phi\right\rangle_{m_\phi}}{\left\langle \hat u'_\phi,\hat u'_\phi\right\rangle_{m_\phi}},
\end{equation}
and
\begin{align}
\label{eq:D2Theta}
{\rm D}^2\Theta(\hat u_\phi)[v,w]=&-\frac{1}{\left\langle \hat u'_\phi,\hat u'_\phi\right\rangle_{m_\phi}} \int_0^\infty \int_\mathbb{R} f''(\hat u_\phi)\hat u'_\phi  e^{s\mathcal{L}_\phi}P^\perp_\phi v  \, e^{s\mathcal{L}_\phi}w \nonumber\\
&-\frac{1}{\left\langle \hat u'_\phi,\hat u'_\phi\right\rangle_{m_\phi}} \int_0^\infty \int_\mathbb{R} f''(\hat u_\phi)\hat u'_\phi e^{s\mathcal{L}_\phi} v  \, e^{s\mathcal{L}_\phi} P^\perp_\phi w\\
&-\frac{1}{\left\langle \hat u'_\phi,\hat u'_\phi\right\rangle_{m_\phi}} \int_0^\infty \int_\mathbb{R} f''(\hat u_\phi)\hat u'_\phi e^{s\mathcal{L}_\phi}P^\perp_\phi v  \, e^{s\mathcal{L}_\phi} P^\perp_\phi w.\nonumber
\end{align}
Moreover, for $(P^\lambda_\phi)_\lambda$ the spectral projection of $\mathcal{L}_\phi$,
\begin{equation}\label{eq:D2Theta spect proj}
{\rm D}^2\Theta(\hat u_\phi)[v,w]=\frac{1}{\left\langle \hat u_0',\hat u_0'\right\rangle_{m_0}}\int_\mathbb{R}\int_{\mathbb R^2\setminus\{(0,0)\}}\frac{f''(\hat u_\phi)(x) \hat u'_\phi(x)}{\lambda+\mu} {\rm d} P^\lambda_\phi[v](x) {\rm d} P^\mu_\phi[w](x) {\rm d} x.
\end{equation}
\end{proposition}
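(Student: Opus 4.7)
The plan is to exploit the flow-invariance of $\Theta$, namely $\Theta(\Phi_t u) = \Theta(u)$ for all $t\geq 0$, where $\Phi_t$ denotes the semi-flow of the NFE \eqref{eq:NFE conv}, together with the exponential contraction \eqref{eq:contract_ecL}. The existence and $C^2$ regularity of $\Theta$ on $\mathcal{N}$ follow from a standard strong stable manifold argument: at each $\hat u_\varphi\in\mathcal M$ the linearization $\mathcal L_\varphi$ has a one-dimensional kernel $\mathbb R\hat u'_\varphi$ (tangent to $\mathcal M$) and a spectral gap by Theorem~\ref{th:stability}, so the classical Hadamard--Perron construction (see e.g.\ \cite{Bates1998,sell2013dynamics}) provides a $C^2$ foliation of $\mathcal N$ by strong stable leaves tangent to $\mathrm{Range}(P^\perp_\varphi)$. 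Defining $\Theta(u)$ as the unique phase whose leaf contains $u$ yields the $C^2$ map. The distance bound \eqref{eq:Theta_dist} then follows from the mean value theorem and $\Theta(\hat u_\varphi)=\varphi$, once we have the uniform bound \eqref{eq:uniform_bound_DTheta} on $\mathrm D\Theta$.

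To derive \eqref{eq:DTheta}, I would differentiate the invariance $\Theta(\Phi_t u) = \Theta(u)$ in the direction $v$ at $u=\hat u_\phi$: since $\mathrm D\Phi_t(\hat u_\phi) = e^{t\mathcal L_\phi}$, one gets $\mathrm D\Theta(\hat u_\phi)[e^{t\mathcal L_\phi}v] = \mathrm D\Theta(\hat u_\phi)[v]$ for every $t\geq 0$. Letting $t\to\infty$ and using $e^{t\mathcal L_\phi}v \to P_\phi v$ (the kernel part is preserved by the semigroup and the complementary part decays by \eqref{eq:contract_ecL}), we conclude that $\mathrm D\Theta(\hat u_\phi)[v]$ depends on $v$ only through $\langle v,\hat u'_\phi\rangle_{m_\phi}$. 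The proportionality constant is fixed by $\Theta(\hat u_{\phi+\delta})=\phi+\delta$, i.e.\ $\mathrm D\Theta(\hat u_\phi)[-\hat u'_\phi]=1$ (noting $\partial_\delta \hat u_{\phi+\delta}|_{\delta=0}=-\hat u'_\phi$), and this gives \eqref{eq:DTheta}.

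For the second-order formulas, I would introduce $g(u):= \mathrm D\Theta(u)[F(u)]$ with $F(u):=-u+W\ast f(u)$; flow-invariance gives $g\equiv 0$ on $\mathcal N$. Differentiating $g=0$ twice at $\hat u_\phi$ (using $F(\hat u_\phi)=0$, so the $\mathrm D^3\Theta$-term cancels) yields the functional equation
\begin{equation*}
\mathrm D^2\Theta(\hat u_\phi)[v,\mathcal L_\phi w] + \mathrm D^2\Theta(\hat u_\phi)[\mathcal L_\phi v, w] = - \mathrm D\Theta(\hat u_\phi)\bigl[W\ast(f''(\hat u_\phi)\,vw)\bigr].
\end{equation*}
Using the key identity $W\ast(f'(\hat u_\phi)\hat u'_\phi)=\hat u'_\phi$ (which is exactly $\mathcal L_\phi \hat u'_\phi=0$) together with \eqref{eq:DTheta}, the right-hand side simplifies to $\frac{1}{\langle \hat u'_\phi,\hat u'_\phi\rangle_{m_\phi}}\int f''(\hat u_\phi)\hat u'_\phi\, v w\,\mathrm dx$. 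Substituting $(v,w)\mapsto(e^{s\mathcal L_\phi}v, e^{s\mathcal L_\phi}w)$ turns the left-hand side into $\partial_s \mathrm D^2\Theta(\hat u_\phi)[e^{s\mathcal L_\phi}v, e^{s\mathcal L_\phi}w]$. Integrating in $s$ from $0$ to $\infty$ and using the $P_\phi/P^\perp_\phi$ decomposition to isolate the exponentially decaying contributions (so that the resulting time integrals converge) leads to the three-term representation \eqref{eq:D2Theta}. The spectral version \eqref{eq:D2Theta spect proj} then follows by writing $e^{s\mathcal L_\phi}P^\perp_\phi v = \int_{\lambda\neq 0} e^{-\lambda s}\mathrm dP^\lambda_\phi[v]$ and using $\int_0^\infty e^{-(\lambda+\mu)s}\mathrm ds=(\lambda+\mu)^{-1}$ on the stable subspace.

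Finally, differentiating $g=0$ one more time produces a representation of $\mathrm D^3\Theta$ as double-time integrals involving $f'''(\hat u_\phi)$, $\hat u'_\phi$ and triple products of $e^{s\mathcal L_\phi}$-evolved directions; the bound \eqref{eq:bound D3Theta} follows by estimating two of the three factors in $L^2$ via the contraction \eqref{eq:contract_ecL} and the third in $L^\infty$ (hence the asymmetric role of $h^3$). The uniform bounds \eqref{eq:uniform_bound_DTheta} on $\mathcal M$ are immediate from translation invariance ($\hat u_\varphi$ is just a shift of $\hat u_0$, so all intrinsic quantities in the explicit formulas are $\varphi$-independent), and extend to $\mathcal N$ by continuity of $\mathrm D\Theta,\mathrm D^2\Theta$ together with the tubular structure of $\mathcal N$. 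The main obstacle I expect is the $\mathrm D^3\Theta$ estimate: the triple integral does not a priori exhibit joint decay in all three factors simultaneously, so one must carefully distribute the $P_\phi/P^\perp_\phi$ projections across the three arguments in order to expose enough contraction to close the estimate through an $L^\infty$-$L^2$-$L^2$ interpolation — this asymmetry is precisely why $h^3$ appears with both $L^2$ and $L^\infty$ norms in \eqref{eq:bound D3Theta}.
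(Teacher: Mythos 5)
Your derivation of the on-manifold formulas is sound and genuinely different from the paper's: you differentiate the flow invariance and then the infinitesimal identity ${\rm D}\Theta(u)[F(u)]\equiv 0$ with $F(u)=-u+W\ast f(u)$, whereas the paper studies the Frechet derivatives of the flow ${\rm D}T_t,{\rm D}^2T_t,{\rm D}^3T_t$ along trajectories, proves via Duhamel/Gronwall estimates that they form Cauchy families as $t\to\infty$, and reads off the derivatives of $\Theta$ from $T_\infty$. The genuine gap is that your proposal stops being quantitative off $\mathcal{M}$. The bounds \eqref{eq:uniform_bound_DTheta} and \eqref{eq:bound D3Theta} are asserted, and used throughout Sections~\ref{sec:control_noise}--\ref{sec:proof_main} (e.g. at $U_s$, at $\hat u_{\Theta(U_s)}+rY_s$, at $U_s+r\varpi_j$), \emph{uniformly over all of} $\mathcal{N}$, not just on $\mathcal{M}$. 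Your functional-equation trick closes only where $F$ vanishes, i.e. at $\hat u_\phi$, so it yields ${\rm D}^3\Theta$ and its bound only on $\mathcal{M}$; and ``extend to $\mathcal{N}$ by continuity together with the tubular structure'' is not a valid argument: $\mathcal{N}$ is an infinite-dimensional tube over a noncompact manifold, translation invariance only quotients out the direction along $\mathcal{M}$, and the remaining cross-section (an $L^2$-ball of radius $C_0$) is not compact, so continuity provides no uniform constant. Moreover, the existence of the mixed third derivative ${\rm D}\,{\rm D}^2\Theta(u)[h^2,h^3][h^1]$ with $h^3\in L^2\cap L^\infty$ at a \emph{general} $u\in\mathcal{N}$ is not delivered by a $C^2$ Hadamard--Perron foliation; it is precisely what the paper obtains by propagating ${\rm D}^3T_t(u_0)$ along arbitrary trajectories in $\mathcal{N}$ in the space $\mathcal{X}=L^2\cap L^\infty$ (Young/H\"older forces one factor in $L^\infty$), with constants depending only on $C_0,\kappa,\lambda,\Vert W\Vert_{L^2},\Vert f''\Vert_{L^\infty},\Vert f^{(3)}\Vert_{L^\infty}$, plus the crucial cancellation obtained by differentiating the fixed-point identity $T_\infty(u_0)=W\ast f(T_\infty(u_0))$. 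Repairing your proof would essentially require redoing this trajectory-wise analysis.

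A secondary but real omission concerns the passage to \eqref{eq:D2Theta}: after substituting $(e^{s\mathcal{L}_\phi}v,e^{s\mathcal{L}_\phi}w)$ and integrating in $s$, the contribution $\int f''(\hat u_\phi)\hat u'_\phi\,(P_\phi v)(P_\phi w)$ is constant in $s$, and the boundary term at $s=\infty$ is proportional to ${\rm D}^2\Theta(\hat u_\phi)[\hat u'_\phi,\hat u'_\phi]$; your ``isolate the exponentially decaying contributions'' step only closes because both vanish, and they vanish through the parity structure of the neutral case ($f''(\hat u_0)$ odd, $\hat u'_0$ and $m_0$ even), which is exactly the symmetry the paper invokes both here and in the analogous cancellation for the third derivative. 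This needs to be stated explicitly, since without it the claimed time integrals do not converge as written.
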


\medskip

In the following we will also make use of the truncated operator $L_{ \phi}$ defined as follows: set $L^{ 2}(\mathcal{ D}_{ 0})=L^{ 2}_{ m_{ \varphi}}(\mathcal{ D}_{ 0})$ the $L^{ 2}$-space of test functions with compact support in $ \mathcal{ D}_{ 0}$ endowed with the scalar product 
\begin{equation}
\label{eq:scalarproduct_D0}
\left\langle u\, ,\, v\right\rangle_{ m_\phi, \mathcal{ D}_{ 0}}:= \int_{ \mathcal{ D}_{ 0}} u \, v\,  m_\phi,
\end{equation}
and define $ L_{ \varphi}: L^{ 2}(\mathcal{ D}_{ 0}) \to L^{ 2} \left(\mathcal{ D}_{ 0}\right)$ given by
\begin{equation}
\label{eq:L0}
L_{ \phi}(u)(x):= \mathcal{L}_\phi \left(u\right)(x) \mathbf{ 1}_{ \mathcal{ D}_{ 0}}(x)= \mathcal{L}_\phi \left(u \mathbf{ 1}_{ \mathcal{ D}_{ 0}}\right)(x) \mathbf{ 1}_{ \mathcal{ D}_{ 0}}(x),\ u\in L^{ 2} \left(\mathcal{ D}_{ 0}\right).
\end{equation}
The truncated operator $L_\varphi$ inherits stability properties from the initial operator $\mathcal{L}_\varphi$, with additional boundary terms, as stated in the following Proposition. The proof of these results are given in Appendix~\ref{app:L0}.

\begin{proposition}
\label{prop:L0}
For any $\varphi\in \mathbb{R}$ the operator $L_{\phi}$ is bounded from $L^{ 2}( \mathcal{ D}_{ 0})$ into itself and selfadjoint for the scalar product \eqref{eq:scalarproduct_D0}. Moreover, $L_{ \varphi}$ is dissipative and hence generates a $C_{ 0}$-semigroup of contraction $ e^{ tL_{ \varphi}}$ in the following sense
\begin{align}
\label{eq:Dirform_Lphi}
\left\langle L_{ \varphi} v\, ,\, v\right\rangle_{ m_{ \varphi}, \mathcal{ D}_{ 0}} & \leq 0,\ v\in L^{ 2} \left(\mathcal{ D}_{ 0}\right),\\
\label{eq:bound_eLphi}
\left\Vert e^{ t L_{ \varphi}} v\right\Vert_{ m_{ \varphi}, \mathcal{ D}_{ 0}} &\leq \left\Vert v \right\Vert_{ m_{ \varphi}, \mathcal{ D}_{ 0}},\ t\geq0, \ v\in L^{ 2} \left(\mathcal{ D}_{ 0}\right).
\end{align} 
Secondly, $L_{ \varphi}$ satisfies the following spectral gap inequality, for the same constant $\kappa$ as in eq. \eqref{eq:spectralgap_LangStannat} of Theorem~\ref{th:stability},
\begin{equation}
\label{eq:SG_L0}
\left\langle L_{\phi}v , v\right\rangle_{m_\phi, \mathcal{ D}_{ 0}}\leq -\kappa\left( \left\Vert v \right\Vert_{m_\phi, \mathcal{ D}_{ 0}}^{ 2} -\left\langle v , \mathbf{ 1}_{ \mathcal{ D}_{ 0}} \hat{ u}'_\phi\right\rangle_{m_\phi, \mathcal{ D}_{ 0}}^{ 2}\right).
\end{equation}
Moreover there exist constants $\kappa_1,\kappa_2>0$ such that if $v\in L^2(\mathcal{D}_0)$ satisfies $P_\phi v=0$, then 
\begin{equation}
\label{eq:etL0exp}
\left\Vert e^{tL_\phi} v\right\Vert_{m_\phi,\mathcal{D}_0}\leq e^{-\kappa_1 t}\Vert v\Vert_{m_\phi,\mathcal{D}_0}+ \kappa_2 t^{ 2} \left\Vert \hat u'_\phi {\bf 1}_{\mathcal{D}_0^c}\right\Vert_{m_\phi} \left\Vert v \right\Vert_{ m_{ \varphi}, \mathcal{ D}_{ 0}}.
\end{equation}
\end{proposition}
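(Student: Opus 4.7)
My plan is to treat the four assertions in order of increasing delicacy: boundedness/selfadjointness, the spectral gap, dissipativity together with contractivity, and finally the refined exponential decay.

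Boundedness of $L_\varphi$ on $L^2(\mathcal{D}_0)$ is routine: Young's inequality gives $\|W\ast g\|_{L^2}\leq \|g\|_{L^2}$, and combined with $\|m_\varphi\|_\infty \leq \|f'\|_\infty$ this yields $\|L_\varphi\|\leq 1+\|f'\|_\infty$. Selfadjointness with respect to $\langle\cdot,\cdot\rangle_{m_\varphi,\mathcal{D}_0}$ follows from $W(x-y)=W(y-x)$ and Fubini: the bilinear form reduces to the integral $\int_{\mathcal{D}_0^2}W(x-y)m_\varphi(x)m_\varphi(y)u(y)v(x)\,dx\,dy$, which is manifestly symmetric in $(u,v)$.

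The spectral gap \eqref{eq:SG_L0} is a direct consequence of \eqref{eq:spectralgap_LangStannat} applied to $w:=v\mathbf{1}_{\mathcal{D}_0}$ viewed as an element of $L^2(\mathbb{R})$ by extension by zero: by construction of $L_\varphi$ one has $\langle\mathcal{L}_\varphi w,w\rangle_{m_\varphi}=\langle L_\varphi v,v\rangle_{m_\varphi,\mathcal{D}_0}$, $\|w\|_{m_\varphi}^2=\|v\|_{m_\varphi,\mathcal{D}_0}^2$ and $\langle w,\hat u'_\varphi\rangle_{m_\varphi}=\langle v,\mathbf{1}_{\mathcal{D}_0}\hat u'_\varphi\rangle_{m_\varphi,\mathcal{D}_0}$. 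Dissipativity \eqref{eq:Dirform_Lphi} does \emph{not} follow from \eqref{eq:SG_L0} by naive Cauchy-Schwarz because $\|\hat u'_\varphi\|_{m_\varphi}$ need not be $\leq 1$; my plan is instead to substitute $g(x):=\sqrt{m_\varphi(x)}v(x)\mathbf{1}_{\mathcal{D}_0}(x)$ and rewrite
\[
\langle L_\varphi v,v\rangle_{m_\varphi,\mathcal{D}_0}=-\|g\|_{L^2}^2+\langle Tg,g\rangle_{L^2(\mathbb{R})},
\]
where $T$ is the selfadjoint integral operator on $L^2(\mathbb{R})$ with positive symmetric kernel $K(x,y):=W(x-y)\sqrt{m_\varphi(x)m_\varphi(y)}$. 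Differentiating the profile equation $-\hat u_\varphi+W\ast f(\hat u_\varphi)=0$ shows that $\sqrt{m_\varphi}\hat u'_\varphi$ is an eigenfunction of $T$ with eigenvalue $1$, while \eqref{eq:spectralgap_LangStannat} rules out any spectrum above $1$. Therefore $\|T\|_{L^2\to L^2}=1$, and the compression to functions supported in $\mathcal{D}_0$ has norm at most $1$, giving $\langle Tg,g\rangle\leq \|g\|^2$ and hence \eqref{eq:Dirform_Lphi}. The contraction \eqref{eq:bound_eLphi} follows from $\tfrac{d}{dt}\|e^{tL_\varphi}v\|_{m_\varphi,\mathcal{D}_0}^2=2\langle L_\varphi e^{tL_\varphi}v,e^{tL_\varphi}v\rangle_{m_\varphi,\mathcal{D}_0}\leq 0$.

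For the refined decay \eqref{eq:etL0exp}, I would set $u(t):=e^{tL_\varphi}v$, extend by zero to $\mathbb{R}$, and verify by direct inspection that on $\mathbb{R}$
\[
\partial_t u=\mathcal{L}_\varphi u-\bigl[W\ast (m_\varphi u)\bigr]\mathbf{1}_{\mathcal{D}_0^c},
\]
yielding the Duhamel representation
\[
u(t)=e^{t\mathcal{L}_\varphi}v-\int_0^t e^{(t-s)\mathcal{L}_\varphi}\bigl[(W\ast(m_\varphi u(s)))\mathbf{1}_{\mathcal{D}_0^c}\bigr]\,ds.
\]
Since $P_\varphi v=0$ and $P_\varphi$ commutes with $e^{t\mathcal{L}_\varphi}$, the first term is bounded by $C_\mathcal{L}e^{-\kappa t}\|v\|_{L^2}$ via \eqref{eq:contract_ecL}, producing the $e^{-\kappa_1 t}$ contribution. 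For the correction, the central estimate is the boundary bound
\[
\|(W\ast(m_\varphi u(s)))\mathbf{1}_{\mathcal{D}_0^c}\|_{L^2}\leq C\,\|\hat u'_\varphi\mathbf{1}_{\mathcal{D}_0^c}\|_{m_\varphi}\,\|u(s)\|_{m_\varphi,\mathcal{D}_0},
\]
obtained via Cauchy-Schwarz on the convolution together with the exponential tails \eqref{eq:uprime_exp} of $\hat u'_\varphi$ and of the explicit kernel \eqref{eq:Wexp}. Then I would split the integrand into its $P_\varphi$-parallel part (left invariant by $e^{(t-s)\mathcal{L}_\varphi}$ and contributing linearly in $(t-s)$) and its perpendicular part (decaying as $e^{-\kappa(t-s)}$ and integrating to a bounded quantity), and bootstrap using the contractivity $\|u(s)\|_{m_\varphi,\mathcal{D}_0}\leq\|v\|_{m_\varphi,\mathcal{D}_0}$ from the previous step. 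The $t^2$ prefactor arises from the two-step iteration required to control the slow drift of $P_\varphi u(t)$ away from $0$ induced by the boundary correction. The hard part is precisely this last step: one must isolate rigorously the non-decaying parallel direction and quantify the boundary correction in a form homogeneous to $\|\hat u'_\varphi\mathbf{1}_{\mathcal{D}_0^c}\|_{m_\varphi}$, rather than a cruder quantity, so as to retain the exponential factor in the leading term.
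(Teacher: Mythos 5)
Your treatment of boundedness, selfadjointness, the spectral gap \eqref{eq:SG_L0} and the contraction \eqref{eq:bound_eLphi} is correct and essentially the paper's route: everything reduces to the identity $\langle L_\varphi v,w\rangle_{m_\varphi,\mathcal{D}_0}=\langle\mathcal{L}_\varphi v,w\rangle_{m_\varphi}$ for $v,w$ extended by zero. For dissipativity the paper argues more directly, writing $\langle\mathcal{L}_\varphi v,v\rangle_{m_\varphi}=\langle\mathcal{L}_\varphi P^\perp_\varphi v,P^\perp_\varphi v\rangle_{m_\varphi}\leq 0$ using $\mathcal{L}_\varphi\hat u'_\varphi=0$ and selfadjointness; your symmetrized operator $T$ with kernel $W(x-y)\sqrt{m_\varphi(x)m_\varphi(y)}$ is a legitimate variant, but the step ``\eqref{eq:spectralgap_LangStannat} rules out any spectrum above $1$'' needs the same decomposition to be rigorous: applied to a general test function, \eqref{eq:spectralgap_LangStannat} does not by itself exclude $\langle Tg,g\rangle>\Vert g\Vert^2$; you must first split $g$ along the eigenfunction $\sqrt{m_\varphi}\,\hat u'_\varphi$ and its ($T$-invariant) orthogonal complement, which is exactly the $P_\varphi/P_\varphi^\perp$ argument.

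The proof of \eqref{eq:etL0exp}, however, has a genuine gap: your ``central estimate''
\[
\bigl\Vert \bigl(W\ast(m_\varphi u(s))\bigr)\mathbf 1_{\mathcal D_0^c}\bigr\Vert_{L^2}\leq C\,\Vert\hat u'_\varphi\mathbf 1_{\mathcal D_0^c}\Vert_{m_\varphi}\,\Vert u(s)\Vert_{m_\varphi,\mathcal D_0}
\]
is false. For $u(s)$ supported in $\mathcal D_0$, Cauchy--Schwarz only yields $\Vert (W\ast(m_\varphi u(s)))\mathbf 1_{\mathcal D_0^c}\Vert_{L^2}\leq C\Vert u(s)\Vert_{m_\varphi,\mathcal D_0}$ with $C$ of order one, since $\int_{\mathcal D_0^c}\int_{\mathcal D_0}W(x-y)^2\,{\rm d}y\,{\rm d}x=O(1)$ for the kernel \eqref{eq:Wexp}, uniformly in $\ell_\varepsilon$; and taking $u(s)$ concentrated in a unit interval adjacent to the boundary of $\mathcal D_0$ shows this order cannot be improved, whereas $\Vert\hat u'_\varphi\mathbf 1_{\mathcal D_0^c}\Vert_{m_\varphi}$ is exponentially small. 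Consequently the Duhamel correction $\int_0^te^{(t-s)\mathcal L_\varphi}\bigl[(W\ast(m_\varphi u(s)))\mathbf 1_{\mathcal D_0^c}\bigr]{\rm d}s$ is of order $\Vert v\Vert$, not of order $\Vert\hat u'_\varphi\mathbf 1_{\mathcal D_0^c}\Vert_{m_\varphi}\Vert v\Vert$, and your scheme only gives $\Vert e^{tL_\varphi}v\Vert\leq Ce^{-\kappa t}\Vert v\Vert+C'\Vert v\Vert$, which is useless. The point you miss is that the exponentially small factor in \eqref{eq:etL0exp} is attached to the specific vector $\hat u'_\varphi$, not to the boundary flux of an arbitrary $u(s)$: the paper never leaves $\mathcal D_0$, but differentiates $\Vert e^{tL_\varphi}v\Vert^2_{m_\varphi,\mathcal D_0}$ and applies \eqref{eq:SG_L0}, so that the only obstruction to decay is the component $\langle e^{tL_\varphi}v,\mathbf 1_{\mathcal D_0}\hat u'_\varphi\rangle_{m_\varphi,\mathcal D_0}$; since $P_\varphi v=0$ this component vanishes at $t=0$, and its time derivative is bounded by $\Vert e^{tL_\varphi}v\Vert_{m_\varphi,\mathcal D_0}\Vert L_\varphi\hat u'_\varphi\Vert_{m_\varphi,\mathcal D_0}$, where $\Vert L_\varphi\hat u'_\varphi\Vert_{m_\varphi,\mathcal D_0}=\Vert\mathbf 1_{\mathcal D_0}\mathcal L_\varphi(\hat u'_\varphi\mathbf 1_{\mathcal D_0^c})\Vert_{m_\varphi}\leq c_{\mathcal L}\Vert\hat u'_\varphi\mathbf 1_{\mathcal D_0^c}\Vert_{m_\varphi}$ because $\mathcal L_\varphi\hat u'_\varphi=0$ exactly; a Gr\"onwall argument then produces \eqref{eq:etL0exp}. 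Note finally that even a repaired Duhamel argument based on \eqref{eq:contract_ecL} would give a prefactor $C_{\mathcal L}\sqrt{f'(a_2)/f'(a_1)}$ in front of the exponential, whereas \eqref{eq:etL0exp} (and its use through the choice of $T$ in \eqref{hyp:T}) is stated with prefactor $1$, which the weighted energy method delivers.
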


To define our iterative scheme in the following Section we will rely on the following projection defined on the tubular neighborhood $\mathcal{N}$ given by \eqref{eq:mcN} (reducing the radius of $\mathcal{N}$ if needed), whose proof is a consequence of the implicit function Theorem (see for example \cite[page 501]{sell2013dynamics} for a proof). 

\begin{lemma}\label{lem:proj}
There exists a positive constant $C_{\mathcal{M}}$ such that for any $u\in \mathcal{N}$, there exists a unique $\psi:={\rm proj}_{\mathcal{M}}(u)$ such that $P_\psi (u-\hat u'_\psi)=0$, and moreover 
\begin{equation}
\label{eq:proj_dist}
\left\Vert u-\hat u_{\psi}\right\Vert_{m_\psi}\leq C_{\mathcal{M}}{\rm dist}_{L^2}(u,\mathcal{M}).
\end{equation} 
\end{lemma}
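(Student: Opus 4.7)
\medskip

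\noindent\textbf{Proof plan for Lemma~\ref{lem:proj}.} I read the condition $P_\psi(u-\hat u'_\psi)=0$ as the usual orthogonality condition $P_\psi(u-\hat u_\psi)=0$ that characterizes the projection of $u$ onto the one-dimensional manifold $\mathcal{M}$: the residual is orthogonal to the tangent direction $\hat u'_\psi$ in the weighted inner product $\langle\cdot,\cdot\rangle_{m_\psi}$. The natural strategy is to apply the implicit function theorem to the scalar map
\begin{equation*}
F:\mathcal{N}\times\mathbb{R}\to\mathbb{R},\qquad F(u,\psi):=\left\langle u-\hat u_\psi,\hat u'_\psi\right\rangle_{m_\psi}=\int_\mathbb{R}\bigl(u(x)-\hat u_\psi(x)\bigr)\hat u'_\psi(x)f'\bigl(\hat u_\psi(x)\bigr){\rm d}x.
\end{equation*}

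\noindent The first step is to verify the hypotheses of IFT at an arbitrary base point on $\mathcal{M}$. One has $F(\hat u_\varphi,\varphi)=0$ for every $\varphi\in\mathbb{R}$ and, differentiating under the integral and using that the integrand vanishes identically when $u=\hat u_\varphi$ and $\psi=\varphi$ (so only the derivative of the first factor contributes),
\begin{equation*}
\partial_\psi F(\hat u_\varphi,\varphi)=-\int_\mathbb{R}\bigl(\hat u'_\varphi(x)\bigr)^2 f'\bigl(\hat u_\varphi(x)\bigr){\rm d}x=-\left\|\hat u'_\varphi\right\|^2_{m_\varphi}.
\end{equation*}
By translation invariance this quantity equals $-\|\hat u'_0\|^2_{m_0}$, a strictly negative constant independent of $\varphi$ (finite thanks to the exponential tail \eqref{eq:uprime_exp}). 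Smoothness of $F$ in $(u,\psi)$ follows from the regularity of $f,\hat u,\hat u'$ and $m$; in particular the partial differential $\partial_u F(u,\psi)[v]=\langle v,\hat u'_\psi\rangle_{m_\psi}$ is a uniformly bounded linear functional on $L^2$.

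\noindent The second step is to promote the pointwise IFT to a \emph{uniform} statement along $\mathcal{M}$. Applying IFT at $(\hat u_\varphi,\varphi)$ produces a radius $r_0(\varphi)>0$ and a Lipschitz constant $K(\varphi)$ for $u\mapsto\psi(u)$; translation invariance of the construction (the map $(u,\psi)\mapsto(u(\cdot+\varphi),\psi+\varphi)$ leaves $F$ unchanged) shows $r_0(\varphi)$ and $K(\varphi)$ are independent of $\varphi$, call them $r_0$ and $K$. Reducing the radius $C_0$ of the tubular neighborhood $\mathcal{N}$ in \eqref{eq:mcN} if necessary so that $C_0<r_0$ then yields, for every $u\in\mathcal{N}$, a unique $\psi=\mathrm{proj}_\mathcal{M}(u)$ satisfying $P_\psi(u-\hat u_\psi)=0$, together with uniform Lipschitz dependence $|\psi(u)-\psi(\tilde u)|\leq K\|u-\tilde u\|_{L^2}$.

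\noindent The final step is the distance estimate \eqref{eq:proj_dist}. Given $u\in\mathcal{N}$, pick (up to a vanishing remainder) $\varphi_*\in\mathbb{R}$ with $\|u-\hat u_{\varphi_*}\|_{L^2}\leq 2\,{\rm dist}_{L^2}(u,\mathcal{M})$. Since $\psi(\hat u_{\varphi_*})=\varphi_*$, the Lipschitz bound from IFT gives $|\psi-\varphi_*|\leq K\|u-\hat u_{\varphi_*}\|_{L^2}$, and since $\hat u_0$ is uniformly Lipschitz on $\mathbb{R}$ (a consequence of \eqref{eq:uprime_exp} and translation invariance), $\|\hat u_{\varphi_*}-\hat u_\psi\|_{L^2}\leq C|\psi-\varphi_*|\|\hat u'_0\|_{L^2}\leq C'\|u-\hat u_{\varphi_*}\|_{L^2}$. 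Combining these with the triangle inequality and the norm equivalence \eqref{eq:norm_equiv},
\begin{equation*}
\left\|u-\hat u_\psi\right\|_{m_\psi}\leq\sqrt{f'(a_2)}\left\|u-\hat u_\psi\right\|_{L^2}\leq\sqrt{f'(a_2)}(1+C')\left\|u-\hat u_{\varphi_*}\right\|_{L^2}\leq C_\mathcal{M}\,{\rm dist}_{L^2}(u,\mathcal{M}),
\end{equation*}
which is the claimed bound. The main obstacle is really the uniformity in $\varphi$ of the IFT, but this is for free once one exploits the translation invariance of $\mathcal{M}$ and the uniform lower bound on $\|\hat u'_\varphi\|^2_{m_\varphi}$; the rest is bookkeeping.
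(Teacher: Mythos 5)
Your proposal is correct and follows essentially the same route as the paper, which gives no detailed argument and simply invokes the implicit function theorem (citing Sell--You) exactly as you implement it: nondegeneracy of $\partial_\psi F$ at points of $\mathcal{M}$, translation invariance to make the IFT radius and Lipschitz constant uniform in $\varphi$, shrinking $C_0$, and the Lipschitz dependence of $\psi$ on $u$ to get \eqref{eq:proj_dist}. Only note that with the paper's convention $\hat u_\psi=\hat u_0(\cdot-\psi)$ one has $\partial_\psi F(\hat u_\varphi,\varphi)=+\left\Vert \hat u'_0\right\Vert^2_{m_0}$ rather than the negative of it, a sign slip that is immaterial since only the uniform nondegeneracy of this derivative is used.
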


\section{Iterative scheme and evolution of the phase}\label{sec:iter scheme}

Our aim is to define an iterative scheme that will allow us to prove that the process $\left(U_t^{ (\varepsilon)} \right)_{ t\geq0}$ given by \eqref{eq:profileU} stays close to $\mathcal{M}$ for long times with high probability. Let us consider $\tau=\tau_\varepsilon$ the stopping time defined as (recall here that the constant $C_{ 0}$ below defines the neighborhood $ \mathcal{ N}$ of $ \mathcal{ M}$, see \eqref{eq:mcN})
\begin{equation}
\label{eq:tau}
\tau_{ \varepsilon} = \inf\left\{t\geq 0 : {\rm dist}_{L^2}(U_t^{ (\varepsilon)},\mathcal{M})\geq C_0\right\},
\end{equation}
and consider $T>0$ satisfying (recall the definitions of constants $C_\mathcal{M}$ in Lemma~\ref{lem:proj} and $ \kappa$ in Theorem~\ref{th:stability})
\begin{equation}\label{hyp:T}
 \frac{ C_{ \mathcal{ M}}}{ \sqrt{ f^{ \prime}(a_{ 1})}} e^{- \kappa T} \leq \frac{ 1}{ 3}.
\end{equation}
The parameter $T>0$ defined through \eqref{hyp:T} has to be thought as a typical relaxation time (concerning the dynamics driven by the linear operator $L_{  \varphi}$ given by \eqref{eq:L0}) towards the manifold of stationary profiles $ \mathcal{ M}$. The necessity of the above definition \eqref{hyp:T} for $T>0$ will become clear later in the proof of Proposition~\ref{prop:closeness M} below. The only thing that is needed here is to take $T$ sufficiently large but finite. Fix $t_{ f}>0$ as in the statement of Theorem~\ref{th:main} and define
\begin{equation}
\label{eq:n_varepsilon}
n_\varepsilon := \lfloor t_f\varepsilon^{-1}/T\rfloor,
\end{equation}
and $T_k=kT$ for $k\in \{0,1,\ldots, n_\varepsilon\}$. Then we consider, for $ {\rm proj}_{  \mathcal{ M}}$ given by Lemma~\ref{lem:proj}, for $k\in \{0,1,\ldots,  \lfloor \tau/T\rfloor\}$,
\begin{equation*}
\psi_k := {\rm proj}_\mathcal{M}(U_{T_{k}}) \quad \text{if} \quad  T_k\leq \tau, \quad \text{and} \quad \psi_k := {\rm proj}_\mathcal{M}(U_{\tau}) \quad \text{if} \quad  T_k>\tau.
\end{equation*}
Moreover, for each $k\in \{1,2,\ldots, n_\varepsilon\}$ and $t\in [0,T]$, if $T_{k-1}+t\leq \tau$,
\begin{equation}
\label{eq:Yk}
Y_{k,t}^{ (\varepsilon)}=Y_{k,t} := \left(U_{T_{k-1}+t}^{ (\varepsilon)}-\hat u_{\psi_{k-1}}\right){\bf 1}_{\mathcal{D}_0},
\end{equation}
and if $T_{k-1}+t\geq \tau$,
\begin{equation*}
Y_{k,t}^{ (\varepsilon)}=Y_{k,t} := \left(U_\tau^{ (\varepsilon)}-\hat u_{\psi_{k-1}}\right){\bf 1}_{\mathcal{D}_0}.
\end{equation*}

\begin{proposition}
\label{prop:mart_Y}
One has the following semimartingale decomposition for $Y_{k, t}$: for any $t\in [0,T]$ such that $T_{k-1}+t\leq \tau$,
\begin{align}
{\rm d} Y_{ k,t}(x)=& L_{\psi_{k-1}}Y_{ k,t}(x)  {\rm d}t + W\ast g_{k,t}(x){\bf 1}_{ \mathcal{ D}_{ 0}}(x) {\rm d}t \nonumber \\ &+\sum_{ i=-N_\varepsilon }^{N_\varepsilon}\Bigg(\varepsilon  \sum_{ j\in \mathbb{ Z}}W(x_i-x_j)f(U_{j,T_{k-1}+t^-}) -W* f \left( U_{T_{k-1}+t}\right)(x) \Bigg){\bf 1}_{I_i}(x){\rm d} t \nonumber\\
&+{\rm d} M_{T_{k-1}+t}(x),\ x\in \mathbb{ R}\label{eq:semimart_Y}
\end{align}
where we recall the definition of $L_{ \psi}$ in \eqref{eq:L0} and define
\begin{equation}
\label{eq:gpsi}
g_{k,t}(x):= Y_{k,t}^{ 2}(x) \int_{ 0}^{1} (1-r) f^{ \prime\prime} \left( \hat{ u}_{\psi_{k-1}}(x) + r Y_{k,t}(x)\right){\rm d}r,\ x\in \mathbb{ R}
\end{equation}
and
\begin{equation*}
M_t(x)=\varepsilon\sum_{ i=-N_\varepsilon }^{N_\varepsilon}\sum_{ j\in \mathbb{ Z}} W(x_i-x_j){\bf 1}_{I_{i}}(x) \int_0^t \big({\rm d}Z_{j,t}-f(U_{j,t^-})\big),\ x\in \mathbb{ R}.
\end{equation*}
\end{proposition}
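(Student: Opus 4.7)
The plan is to derive \eqref{eq:semimart_Y} by a direct semimartingale manipulation of the defining equation \eqref{eq:Ui_mod}, combined with a pointwise Taylor expansion around $\hat u_{\psi_{k-1}}$ that exploits the stationary equation for the traveling profile in the neutral case $c=0$.

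First, I would differentiate \eqref{eq:Ui_mod} to obtain, for $|i|\leq N_\varepsilon$,
$$
{\rm d}U_{i,t}^{(\varepsilon)} = -U_{i,t}^{(\varepsilon)} {\rm d}t + \varepsilon \sum_{j\in \mathbb{Z}} W(x_i-x_j) {\rm d}Z_{j,t}^{(\varepsilon)}.
$$
Compensating the counting processes against their stochastic intensity via ${\rm d}Z_{j,t}^{(\varepsilon)} = f(U_{j,t^-}^{(\varepsilon)}){\rm d}t + {\rm d}\tilde M_{j,t}^{(\varepsilon)}$ (well-defined thanks to Proposition~\ref{prop:model}), multiplying by $\mathbf{1}_{I_i}(x)$ and summing over $|i|\leq N_\varepsilon$ yields the martingale $M_t(x)$ of the statement and a drift of the form $[-U + \varepsilon\sum_j W(x_i-x_j) f(U_j)]\mathbf{1}_{I_i}(x)$ on each $I_i$. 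Since $\hat u_{\psi_{k-1}}$ is time-independent, ${\rm d}Y_{k,t}(x) = \mathbf{1}_{\mathcal{D}_0}(x){\rm d}U_{T_{k-1}+t}^{(\varepsilon)}(x)$, so it remains only to reshape this drift.

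Second, I would apply three algebraic moves. (i) Add and subtract $W*f(U_{T_{k-1}+t}^{(\varepsilon)})(x)$ to isolate exactly the discretization remainder appearing as the third drift term in \eqref{eq:semimart_Y}. (ii) Invoke the stationarity identity $\hat u_{\psi_{k-1}} = W*f(\hat u_{\psi_{k-1}})$, which is valid in the neutral case $c=0$ under Assumption~\ref{ass:neutral}, to rewrite
$$
-U + W*f(U) = -Y_{k,t} + W*\bigl(f(U)-f(\hat u_{\psi_{k-1}})\bigr) \qquad \text{on } \mathcal{D}_0,
$$
using that $U-\hat u_{\psi_{k-1}} = Y_{k,t}$ on $\mathcal{D}_0$. (iii) Taylor expand pointwise with integral remainder:
$$
f(U(y)) - f(\hat u_{\psi_{k-1}}(y)) = f'(\hat u_{\psi_{k-1}}(y))(U(y)-\hat u_{\psi_{k-1}}(y)) + (U(y)-\hat u_{\psi_{k-1}}(y))^{2}\int_0^1 (1-r) f''\bigl(\hat u_{\psi_{k-1}}(y)+r(U(y)-\hat u_{\psi_{k-1}}(y))\bigr){\rm d}r.
$$
For $y\in \mathcal{D}_0$ this equals $f'(\hat u_{\psi_{k-1}}(y))Y_{k,t}(y) + g_{k,t}(y)$ by definition of $g_{k,t}$. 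Since both $Y_{k,t}$ and $g_{k,t}$ vanish outside $\mathcal{D}_0$, integrating against $W(x-\cdot)$ and combining with the term $-Y_{k,t}$ produces exactly $L_{\psi_{k-1}} Y_{k,t}(x)$ via the definition \eqref{eq:L0}, together with $W*g_{k,t}(x)\mathbf{1}_{\mathcal{D}_0}(x)$.

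\textbf{Main subtlety.} The delicate point in step (iii) is that off the bulk $\mathcal{D}_0$ the microscopic profile $U$ is pinned to $a_1$ or $a_2$ while $\hat u_{\psi_{k-1}}$ only asymptotes exponentially fast to these values. Hence $U-\hat u_{\psi_{k-1}}$ does not vanish on $\mathcal{D}_0^c$, so the pointwise Taylor identity strictly produces an extra contribution of the form $\int_{\mathcal{D}_0^c}W(x-y)[f(U(y))-f(\hat u_{\psi_{k-1}}(y))]{\rm d}y$ beyond the three drift terms in \eqref{eq:semimart_Y}. Thanks to the exponential decay bound $|\hat u'_0(z)|\leq c e^{-\mu|z|}$ from \eqref{eq:uprime_exp} and the choice $\ell_\varepsilon=\varepsilon^{-\beta}$, this boundary residual is of order $e^{-\mu\varepsilon^{-\beta}}$ and is controlled on the same footing as the discretization term in Section~\ref{sec: boundary}. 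All remaining manipulations are purely algebraic, and the martingale $M_t(x)$ is the standard compensated Hawkes jump noise weighted by the connectivity kernel.
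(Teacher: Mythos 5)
Your derivation follows exactly the paper's route: differentiate \eqref{eq:Ui_mod}, compensate the jumps to produce $M$, use the stationarity identity $\hat u_{\psi_{k-1}}=W\ast f(\hat u_{\psi_{k-1}})$, add and subtract $W\ast f(U_{T_{k-1}+t})$ to isolate the discretization error, and Taylor-expand with integral remainder to generate $L_{\psi_{k-1}}Y_{k,t}+W\ast g_{k,t}$; so in approach the two proofs coincide. The one place you diverge is your ``main subtlety'', and you are right to raise it: because $Y_{k,t}$ and $g_{k,t}$ are cut off to $\mathcal{D}_0$ (see \eqref{eq:Yk}, \eqref{eq:gpsi}) while $U_{T_{k-1}+t}$ equals $a_1,a_2$ rather than $\hat u_{\psi_{k-1}}$ off the bulk, the exact drift contains the extra boundary contribution $\int_{\mathcal{D}_0^c}W(x-y)\bigl[f(U_{T_{k-1}+t}(y))-f(\hat u_{\psi_{k-1}}(y))\bigr]{\rm d}y$, which is not one of the three drift terms of \eqref{eq:semimart_Y}. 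The paper's own proof passes over this point by asserting the Taylor identity ``for $y\in\mathbb{R}$'', which with the cut-off $Y_{k,t}$ is valid only on $\mathcal{D}_0$; so strictly speaking \eqref{eq:semimart_Y} holds modulo this residual, and your decomposition, which keeps it explicitly, is the accurate exact computation. As you note it is harmless: by \eqref{eq:uprime_exp} and Lemma~\ref{lem:int_tail_hatu} it is of order $e^{-\mu(\ell_\varepsilon-|\psi_{k-1}|)}$ in the relevant norms, hence negligible compared with every other term in the later iteration — but be aware that this smallness requires the a priori phase control $|\psi_{k-1}|\leq\tfrac12\varepsilon^{-\beta}$, which in the paper only becomes available in \eqref{eq:apriori_psik}, so at the level of the proposition itself the term should simply be carried along rather than declared small. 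Two minor corrections of attribution: this residual is controlled by the tail estimates of $\hat u_0$ (as in Lemma~\ref{lem:compare_D0_L2} and the $\Vert \hat u'_\phi\mathbf{1}_{\mathcal{D}_0^c}\Vert_{m_\phi}$ terms of Proposition~\ref{prop:L0}), not by Section~\ref{sec: boundary}, which concerns the isochron; and the discretization term of \eqref{eq:semimart_Y} is estimated in Proposition~\ref{prop:bound_approx}, not there either.
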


\begin{proof}[Proof of Proposition~\ref{prop:mart_Y}]
Note first that on $ \mathcal{ D}_{ 0}^{ c}$, ${\rm d} Y_{k, t}\equiv 0$, $ M_{ t} \equiv 0$,  $ \mathbf{ 1}_{ I_{ i}}\equiv 0$ for $i=-N_{ \varepsilon},\ldots, N_{ \varepsilon}$ and $L_{\psi_{k-1}}Y_{k,t} \equiv 0$, so that \eqref{eq:semimart_Y} is satisfied.
Concentrate now on $x\in \mathcal{ D}_{ 0}$. Since we have $0= - \hat{ u}_{\psi_{k-1}} + W\ast f \left( \hat{ u}_{\psi_{k-1}}\right)$, Itô's Lemma gives, for $x\in \mathcal{ D}_{ 0}$,
\begin{align}
{\rm d} Y_{k,t}(x)
&=- Y_{k, t}(x) {\rm d}t- W\ast f(\hat{ u}_{\psi_{k-1}})(x) {\rm d}t \nonumber\\
&\quad +\sum_{ i=-N_\varepsilon }^{N_\varepsilon}\Bigg(\varepsilon  \sum_{ j\in \mathbb{ Z}}W(x_i-x_j)f(U_{j,T_{k-1}+t^-})\Bigg){\bf 1}_{I_i}(x){\rm d} t +{\rm d} M_{T_{k-1}+t}(x). \label{eq:Yfirstdecomp}
\end{align}
Write the Taylor decomposition of $f$ around $\hat{ u}_{\psi_{k-1}}$ as
\begin{equation*}
f(U_{T_{k-1}+t}(y)) = f( \hat{ u}_{\psi_{k-1}}(y)) + f^{ \prime}( \hat{ u}_{\psi_{k-1}}(y)) Y_{k,t} + g_{k,t}(y),\ y\in \mathbb{ R}
\end{equation*} 
where the remainder term $g$ is given by \eqref{eq:gpsi}. Taking the convolution w.r.t. $W$, we obtain
\begin{equation}
\label{eq:Taylor_UN}
-W* f \left( \hat{ u}_{\psi_{k-1}}\right) =- W* f \left( U_{T_{k-1}+t}\right)+ W*\left( f'\left(\hat{ u}_{\psi_{k-1}}\right) Y_{ k,t}\right)+ W* g_{k,t}.
\end{equation}
Going back to \eqref{eq:Yfirstdecomp}, recalling that, for $x\in \mathcal{D}_0$ and $v\in L^{ 2} \left(\mathcal{ D}_{ 0}\right)$, $L_{\psi_{k-1}} v(x)=-v(x)+W*(f'(\hat{ u}_{\psi_{k-1}}) v)(x)$, we get
\begin{align*}
{\rm d} Y_{k,t}(x)=& L_{\psi_{k-1}} Y_{k, t}(x) {\rm d}t -W* f \left( U_{T_{k-1}+t}\right)(x) {\rm d}t + W* g_{k,t}(x) {\rm d}t \\
&+\sum_{ i=-N_\varepsilon }^{N_\varepsilon}\Bigg(\varepsilon  \sum_{ j\in \mathbb{ Z}}W(x_i-x_j)f(U_{j,T_{k-1}+t^-})\Bigg){\bf 1}_{I_i}(x){\rm d} t +{\rm d} M_{T_{k-1}+t}(x).
\end{align*}
The results follows then from the decomposition of $W* f \left( U_{T_{k-1}+t}\right)(x)$ onto $ \mathcal{ D}_{ 0}= \cup_{ i}I_{ i}$.
\end{proof}
As a consequence of Proposition~\ref{prop:mart_Y}, we obtain the mild formulation (see \cite[Lemma~3.2]{Zhu2017}) for $t\in [0,T]$ such that  $T_{k-1}+t\leq \tau$,
\begin{multline}
Y_{k,t} = \,  e^{t L_{\psi_{k-1}}}Y_{k,0} +\int_0^t  e^{ (t-s)L_{\psi_{k-1}}} W\ast g_{k,s}{\bf 1}_{ \mathcal{ D}_{ 0}} {\rm d}s\\
 +\sum_{ i=-N_\varepsilon }^{N_\varepsilon}\int_0^t e^{ (t-s)L_{\psi_{k-1}}}\Bigg(\varepsilon  \sum_{ j\in \mathbb{ Z}}W(x_i-x_j)f(U_{j,T_{k-1}+s^-}) -W* f \left( U_{T_{k-1}+s}\right) \Bigg){\bf 1}_{I_i}{\rm d} s\\
+\zeta_{k,t},\label{eq:Ykt mild}
\end{multline}
where the noise term in \eqref{eq:Ykt mild} is given by
\begin{equation}\label{eq:def zetakt}
\zeta_{k,t}^{ (\varepsilon)}=\zeta_{k,t}:= \sum_{ j\in \mathbb{Z}}\int_{ T_{k-1}}^{T_{k-1}+t} \int_{ 0}^{+\infty}e^{ (T_{k-1}+t-s)L_{\psi_{k-1}}} \varpi_{ j}^{ (\varepsilon)}\mathbf{ 1}_{ z\leq \lambda_{ j, s}} \tilde{ \pi}_{ j}({\rm d}s, {\rm d}z),
\end{equation}
for the following functions $ \varpi^{ (\varepsilon)}_{ j}$, that will appear repeatedly in the following
\begin{equation}
\label{eq:varpij}
\varpi_{j}^{ (\varepsilon)}=\varpi_{j}:= \varepsilon \sum_{ i=-N_{ \varepsilon}}^{ N_{ \varepsilon}} W(x_{ i}-x_{ j}) \mathbf{ 1}_{ I_{ i}^{ (\varepsilon)}} \in L^{ 2} \left(\mathcal{ D}_{ 0}\right) .
\end{equation}

On the other hand, a direct application of Ito's Lemma gives the following time evolution for the phase $\Theta(U_t)$, for $t\leq \tau$,
\begin{align}
\Theta(U_t)  = &\, \Theta(U_0) -\int_0^t {\rm D}\Theta(U_s)U_s  {\bf 1}_{\mathcal{D}_0} {\rm d} s \label{eq:Ito to Theta}\\
&+ \sum_{j\in\mathbb{Z}}\int_0^t \int_0^\infty\left(\Theta(U_s+\varpi_{j}\mathbf{ 1}_{ z\leq \lambda_{ j, s}} )-\Theta(U_s)\right){\rm d} s{\rm d} z+ \Xi_t,\nonumber
\end{align}
where
\begin{equation}
\label{eq:Nt}
\Xi_t^{ (\varepsilon)}= \Xi_t := \sum_{j\in\mathbb{Z}}\int_0^{t\wedge \tau}\int_0^\infty \left(\Theta(U_s+\varpi_{j}\mathbf{ 1}_{ z\leq \lambda_{ j, s}} )-\Theta(U_s)\right)\tilde \pi_j({\rm d} s,{\rm d}z).
\end{equation}

\section{Control of the noise}
\label{sec:control_noise}
The aim of this section is to establish a-priori bounds on the noise terms $ \zeta_{ k}^{ (\varepsilon)}$ in \eqref{eq:def zetakt} and $ \Xi^{ (\varepsilon)}$ in \eqref{eq:Nt}. A first technical tool in this direction is the following result concerning the control of  $ \varpi_j^{ (\varepsilon)}$ in \eqref{eq:varpij}. Lemma~\ref{lem:bound_sum_varpi} will be of constant use in the rest of the paper.
Recall that $ \beta>0$ is the parameter in the definition of $\ell_{ \varepsilon}$ in \eqref{eq:N_ell_eps} satisfying \eqref{eq:beta_small}.
\begin{lemma}
\label{lem:bound_sum_varpi}
We have the following bounds: 
\begin{enumerate}
\item There exist  $ C>0$ and $ \varepsilon_{ 0}>0$ only depending on $W$ such that for all $ \varepsilon\in (0, \varepsilon_{ 0})$, for all $j\in \mathbb{ Z}$,
\begin{equation}
\label{eq:estim_varpi_j}
\Vert \varpi_{j}^{ (\varepsilon)}\Vert_{L^2} \leq
\begin{cases}
C \varepsilon W(x_j-x_{-N_\varepsilon}) & \text{if } j<-N_\varepsilon,\\
C \varepsilon & \text{if } -N_\varepsilon\leq j\leq N_\varepsilon,\\
C \varepsilon W(x_j-x_{N_\varepsilon}) & \text{if } j>-N_\varepsilon.
\end{cases}
\end{equation}
In particular, there exists a constant $C>0$ such that
\begin{equation}
\label{eq:bound_supvarpi}
\sup_{ j\in \mathbb{ Z}} \left\Vert \varpi_{j}^{ (\varepsilon)} \right\Vert_{L^2} \leq C \varepsilon.
\end{equation}
\item For any integer $k\geq 1$, there exist $C>0$ and $ \varepsilon_{ 0}>0$ depending only on $W$ and $k$ such that for all $ \varepsilon\in (0, \varepsilon_{ 0})$
\begin{align}
\sum_{ j\in \mathbb{Z}} \Vert \varpi_{ \varepsilon, j}\Vert_{L^2}^k &\leq C \varepsilon^{ k-1- \beta}. \label{eq:bound sum vert pi}
\end{align}
\end{enumerate}
\end{lemma}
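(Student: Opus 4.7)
The proof is a direct computation exploiting the piecewise-constant structure of $\varpi_j^{(\varepsilon)}$ together with the exponential form of $W$ from Assumption~\ref{ass:W_exp}. Since the intervals $I_i^{(\varepsilon)}$ are disjoint with Lebesgue measure $\varepsilon$, one immediately gets the identity
\[
\|\varpi_j^{(\varepsilon)}\|_{L^2}^2 = \varepsilon^3 \sum_{i=-N_\varepsilon}^{N_\varepsilon} W(x_i - x_j)^2,
\]
and the whole statement reduces to estimating this discrete sum in the relevant regimes of $j$.

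For bound (i), three cases must be handled. When $|j| \leq N_\varepsilon$, I would simply bound the sum by its extension over all $i \in \mathbb{Z}$, which gives a geometric series $\sum_{i \in \mathbb{Z}} e^{-2|i-j|\varepsilon/\sigma}$ of size $O(\varepsilon^{-1})$; this yields $\|\varpi_j^{(\varepsilon)}\|_{L^2}^2 \leq C\varepsilon^2$. When $j > N_\varepsilon$ (the case $j < -N_\varepsilon$ being symmetric), the key step is the multiplicative factorisation, specific to the exponential kernel,
\[
W(x_i - x_j) = W(x_j - x_{N_\varepsilon}) \, e^{-(x_{N_\varepsilon} - x_i)/\sigma}, \qquad i \leq N_\varepsilon,
\]
which allows $W(x_j - x_{N_\varepsilon})^2$ to factor out of the sum; the remaining geometric series in $x_{N_\varepsilon} - x_i$ is again of size $O(\varepsilon^{-1})$, and taking the square root produces exactly the announced $C\varepsilon W(x_j - x_{N_\varepsilon})$. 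The uniform bound \eqref{eq:bound_supvarpi} then follows from boundedness of $W$.

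For bound (ii), I would split $\sum_{j \in \mathbb{Z}}$ into a ``bulk'' contribution $\sum_{|j| \leq N_\varepsilon}$ and an ``outer'' contribution $\sum_{|j| > N_\varepsilon}$, and apply (i) to each. The bulk contributes at most $(2N_\varepsilon + 1)(C\varepsilon)^k$; using $N_\varepsilon = \lfloor \varepsilon^{-1-\beta} \rfloor$ gives exactly $C\varepsilon^{k-1-\beta}$. The outer contribution is bounded by $C\varepsilon^k \sum_{m \geq 1} W(m\varepsilon)^k \leq C\varepsilon^{k-1}$ via another geometric sum. Since $\beta > 0$, the bulk term dominates and the claim follows.

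I do not expect any real obstacle: the proof is essentially bookkeeping. The only point where the specific form \eqref{eq:Wexp} of $W$ is genuinely used is the multiplicative identity for exponentials in the ``outside the bulk'' regime of (i), which is precisely what makes the contribution of the far-away indices $|j| > N_\varepsilon$ harmless — a feature that will be used repeatedly in Section~\ref{sec:control_noise} to handle the noise terms $\zeta_k^{(\varepsilon)}$ and $\Xi^{(\varepsilon)}$.
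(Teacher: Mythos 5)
Your proposal is correct and follows essentially the same route as the paper: reduce $\Vert \varpi_j^{(\varepsilon)}\Vert_{L^2}^2$ to $\varepsilon^3\sum_{i=-N_\varepsilon}^{N_\varepsilon} W(x_i-x_j)^2$ using the disjointness of the intervals $I_i^{(\varepsilon)}$, estimate the resulting discrete exponential sums by geometric series (with the factorisation of the exponential kernel handling $|j|>N_\varepsilon$, exactly as in the auxiliary Lemma~\ref{lem:W_IN_IN}), and then obtain \eqref{eq:bound sum vert pi} by splitting the sum over $j$ into the bulk term of size $N_\varepsilon(C\varepsilon)^k\sim \varepsilon^{k-1-\beta}$ and the exponentially summable outer term of size $\varepsilon^{k-1}$. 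The only cosmetic difference is that the paper delegates the geometric-sum computations to Lemma~\ref{lem:W_IN_IN} while you carry them out inline.
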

The proof of Lemma~\ref{lem:bound_sum_varpi} is postponed to Appendix~\ref{sec:estimates_W_varpi}.
Another technical tool that we will use in this Section is the following standard concentration result (see e.g. \cite[Corollary~5]{PSMIR_1995___2_A3_0}):
\begin{proposition}[Concentration for martingale]
\label{prop:concentration_mart}
Let $(M_{ t})_{ t\geq0}$ a real square integrable local martingale. Then, for all $ \eta>0$, $a>0$
\begin{equation}
\label{eq:concentration_M}
\mathbb{ P} \left( \sup_{ s\leq t} \left\vert M_{ s} \right\vert\geq \eta\right) \leq 2 \exp \left(- \Psi \left( \frac{ \eta}{ a}\right)\right) + 2 \mathbb{ P} \left( \left\langle M\right\rangle_{ t} > a^{ 2}\right) + \mathbb{ P} \left( \sup_{ s\leq t} \left\vert \Delta M_{s}\right\vert> a\right),
\end{equation}
for 
\begin{equation}
\label{eq:Psi}
\Psi(x):= (x+1) \ln \left(x+1\right)-x.
\end{equation}
\end{proposition}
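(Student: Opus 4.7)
The plan is to apply the standard exponential supermartingale method combined with truncation of both jumps and predictable quadratic variation. The right-hand side of \eqref{eq:concentration_M} is tailor-made to allow a reduction to the event where both quantities are well controlled: on its complement, I would just use the trivial union bound, which immediately produces the last two terms $2\PP(\langle M\rangle_t > a^2) + \PP(\sup_{s\leq t}|\Delta M_s| > a)$ (the factor $2$ coming from the two-sided statement $|M_s|\geq \eta$).

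On the good event, I would introduce the stopping time $\sigma := \inf\{s\geq 0: \langle M\rangle_s > a^2 \text{ or } |\Delta M_s| > a\}$ and replace $M$ by the stopped martingale $\tilde M_s := M_{s\wedge \sigma}$, which has jumps bounded by $a$ and predictable quadratic variation bounded by $a^2$ up to time $t$. The key ingredient is then the classical exponential supermartingale: for $\lambda>0$,
\[
Z^\lambda_s := \exp\Bigl(\lambda \tilde M_s - \tfrac{\phi(\lambda a)}{a^2} \langle \tilde M\rangle_s\Bigr), \qquad \phi(x):= e^x - 1 - x,
\]
is a positive supermartingale with $Z^\lambda_0 = 1$. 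Its supermartingale property rests on the elementary estimate $e^y - 1 - y \leq \phi(a)\, y^2/a^2$ for $y\leq a$ (applied to $y=\lambda \Delta\tilde M_s$), together with standard stochastic calculus for Doléans–Dade exponentials of semimartingales.

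From here, Doob's submartingale inequality applied to $Z^\lambda$ (equivalently a Chernoff argument on $\sup_{s\leq t}\tilde M_s$), combined with $\langle \tilde M\rangle_t \leq a^2$, yields
\[
\PP\Bigl(\sup_{s\leq t}\tilde M_s \geq \eta\Bigr) \leq \exp\bigl(-\lambda \eta + \phi(\lambda a)\bigr).
\]
Optimizing over $\lambda>0$ via the choice $\lambda = a^{-1}\log(1+\eta/a)$ produces precisely the exponent $-\Psi(\eta/a)$. Running the same argument for $-M$ doubles the bound so as to cover $|\tilde M|$, and recombining with the probability of the truncation event gives exactly \eqref{eq:concentration_M}.

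The only technical subtlety I anticipate is the handling of a possible jump of $\langle M\rangle$ at time $\sigma$: since $\langle M\rangle$ is predictable and nondecreasing but can have jumps, $\sigma$ has to be defined with some care (e.g. via left-limits of $\langle M\rangle$) in order to guarantee $\langle \tilde M\rangle_{t\wedge \sigma} \leq a^2$ on the good event $\{\langle M\rangle_t \leq a^2\}$. Beyond this bookkeeping, the proof is a textbook instance of exponential martingale concentration, and the role of the proposition in the present paper is essentially to provide, once and for all, the quantitative concentration tool that will be applied to the jump-noise terms $\zeta_{k,t}^{(\varepsilon)}$ of \eqref{eq:def zetakt} and $\Xi_t^{(\varepsilon)}$ of \eqref{eq:Nt} in the remainder of Section~\ref{sec:control_noise}.
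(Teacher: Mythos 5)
The paper offers no proof of this proposition at all: it is imported as a standard result with the pointer to \cite[Corollary~5]{PSMIR_1995___2_A3_0}, so there is no internal argument to compare yours against. Your sketch reconstructs the classical Bennett-type route — truncation, the exponential supermartingale $Z^\lambda_s=\exp\left(\lambda\tilde M_s-\phi(\lambda a)a^{-2}\langle\tilde M\rangle_s\right)$, Doob's maximal inequality for positive supermartingales, and the optimization $\lambda=a^{-1}\log(1+\eta/a)$, which indeed produces exactly the exponent $-\Psi(\eta/a)$ — and this is the same family of exponential martingale inequalities that the cited reference establishes by its cumulant method. The reduction to the good event and the recombination of the error terms are also fine (your union bound in fact gives a factor $1$, not $2$, in front of $\mathbb{P}(\langle M\rangle_t>a^2)$, which is harmless since the stated bound is weaker).

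There is, however, one unacknowledged gap at precisely the delicate point. You stop at $\sigma=\inf\{s:\langle M\rangle_s>a^2\ \text{or}\ |\Delta M_s|>a\}$ and assert that $\tilde M=M_{\cdot\wedge\sigma}$ has jumps bounded by $a$. This is false in general: when $\sigma$ is triggered by a large jump, that very jump $\Delta M_\sigma$, of size possibly much larger than $a$, is a jump of the stopped process. Since the supermartingale property of $Z^\lambda$ rests on the pathwise bound on the jumps (through the estimate $e^{y}-1-y\leq \phi(\lambda a)\,y^{2}/(\lambda a)^{2}$ for $y\leq\lambda a$, applied to $y=\lambda\Delta\tilde M_s$ — note also that your version of this estimate is missing the $\lambda$'s), the argument as written does not go through; it is exactly the same overshoot phenomenon as the jump of the predictable bracket at $\sigma$, which you do flag. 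Both issues are repairable, but only through genuinely careful bookkeeping (for instance a decomposition of $M$ into a compensated small-jump part, to which the supermartingale argument applies, and a separately controlled large-jump part), and this is precisely the technical content that the paper outsources by citing \cite{PSMIR_1995___2_A3_0} instead of proving the inequality.
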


\subsection{Control of the noise term \texorpdfstring{$\Xi^{ (\varepsilon)}$}{Xieps}}
We first begin with the a-priori control on $\Xi^{ (\varepsilon)}$ defined in \eqref{eq:Nt}:
\begin{proposition}\label{prop:bound N, R}
The noise term $ \Xi^{ (\varepsilon)}$ in \eqref{eq:Nt} satisfies the following: there exists some $C,c>0$ depending only on $W$, $f$ and $t_f$ such that for $ \delta >0$, the event $ \mathcal{ A}_{ \varepsilon, \delta}$ given by
\begin{equation}
\label{eq:event_mcA}
\mathcal{A}_{\varepsilon, \delta} = \left\{\sup_{ 0\leq t \leq \varepsilon^{-1}t_f} \left\vert \Xi_t^{ (\varepsilon)}\right\vert\leq C \varepsilon^{-\frac{\beta}{2}- \delta}\right\},
\end{equation}
we have, for $\varepsilon >0$,
\begin{align}
\mathbb{ P} \left(\mathcal{A}_{ \varepsilon, \delta}\right) \geq 1- e^{-c \varepsilon^{- \delta}} \label{eq:concentration_Nt}.
\end{align}
\end{proposition}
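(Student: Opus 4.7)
The process $\Xi^{(\varepsilon)}$ defined in \eqref{eq:Nt} is a purely discontinuous, locally square-integrable martingale, obtained as a compensated integral of a bounded functional against the i.i.d.\ Poisson random measures $\pi_j$, stopped at $\tau_\varepsilon$. The plan is to apply the general concentration inequality \eqref{eq:concentration_M} of Proposition~\ref{prop:concentration_mart}, for which two inputs are needed: a deterministic upper bound on the predictable quadratic variation $\langle\Xi^{(\varepsilon)}\rangle_{\varepsilon^{-1}t_f}$, and a deterministic upper bound on the maximal jump $\sup_s|\Delta\Xi^{(\varepsilon)}_s|$. Both will come from the Lipschitz regularity of $\Theta$ provided by Proposition~\ref{prop:regularity isochron} combined with the decay estimates on the functions $\varpi_j^{(\varepsilon)}$ in Lemma~\ref{lem:bound_sum_varpi}.

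First I would establish a Lipschitz estimate on the integrand. Since $\mathcal{N}$ is an open neighborhood of $\mathcal{M}$ and $\sup_{j}\Vert\varpi_j^{(\varepsilon)}\Vert_{L^2}\leq C\varepsilon$ by \eqref{eq:bound_supvarpi}, for $\varepsilon$ small enough the perturbed profile $U_{s^-}+\varpi_j^{(\varepsilon)}\mathbf{1}_{z\leq\lambda_{j,s}}$ stays in a (slightly enlarged) neighborhood of $\mathcal{M}$ on which $\Theta$ is $C^2$, as long as $s\leq\tau_\varepsilon$. The uniform bound \eqref{eq:uniform_bound_DTheta} on ${\rm D}\Theta$ combined with the mean value inequality then gives
\[
\bigl|\Theta(U_{s^-}+\varpi_j^{(\varepsilon)}\mathbf{1}_{z\leq\lambda_{j,s}})-\Theta(U_{s^-})\bigr|\leq C\,\Vert\varpi_j^{(\varepsilon)}\Vert_{L^2}\mathbf{1}_{z\leq\lambda_{j,s}}.
\]
From this, the maximal jump bound $\sup_s|\Delta\Xi^{(\varepsilon)}_s|\leq C\sup_j\Vert\varpi_j^{(\varepsilon)}\Vert_{L^2}\leq C\varepsilon$ holds almost surely. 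Secondly, computing the predictable compensator ${\rm d}z\,{\rm d}s$ of $\pi_j$ and using $\lambda_{j,s}\in[0,1]$ together with Lemma~\ref{lem:bound_sum_varpi}\eqref{eq:bound sum vert pi} with $k=2$,
\[
\langle\Xi^{(\varepsilon)}\rangle_t\leq C\sum_{j\in\mathbb{Z}}\Vert\varpi_j^{(\varepsilon)}\Vert_{L^2}^2\,(t\wedge\tau_\varepsilon)\leq C\varepsilon^{1-\beta}\,t,
\]
so that evaluated at $t=\varepsilon^{-1}t_f$ the deterministic estimate $\langle\Xi^{(\varepsilon)}\rangle_{\varepsilon^{-1}t_f}\leq C t_f\varepsilon^{-\beta}$ holds.

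The last step will be to insert these two bounds into \eqref{eq:concentration_M} with the choice $\eta=C\varepsilon^{-\beta/2-\delta}$ and $a=C_1\varepsilon^{-\beta/2}$, $C_1$ large. With these choices, both $\mathbb{P}(\langle\Xi^{(\varepsilon)}\rangle_{\varepsilon^{-1}t_f}>a^2)$ and $\mathbb{P}(\sup_s|\Delta\Xi^{(\varepsilon)}_s|>a)$ vanish for small $\varepsilon$ (the first by the deterministic bound above, the second because $C\varepsilon\ll C_1\varepsilon^{-\beta/2}$). The leading term involves $\Psi(\eta/a)=\Psi(\varepsilon^{-\delta})\gtrsim\varepsilon^{-\delta}|\ln\varepsilon|$, which yields the announced sub-exponential decay $e^{-c\varepsilon^{-\delta}}$ in \eqref{eq:concentration_Nt}. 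The only slightly delicate point of this plan is to justify that $\Theta$ is well-defined at the perturbed profile $U_{s^-}+\varpi_j^{(\varepsilon)}\mathbf{1}_{z\leq\lambda_{j,s}}$, which is precisely the reason the time integral in \eqref{eq:Nt} is stopped at $\tau_\varepsilon$; the rest is a direct application of Proposition~\ref{prop:concentration_mart}.
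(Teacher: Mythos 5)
Your proposal is correct and follows essentially the same route as the paper: bound the predictable bracket by $C\varepsilon^{1-\beta}(t\wedge\tau)$ and the jumps by $C\varepsilon$ via Proposition~\ref{prop:regularity isochron} and Lemma~\ref{lem:bound_sum_varpi}, then apply Proposition~\ref{prop:concentration_mart} with $a\sim\varepsilon^{-\beta/2}$ and $\eta\sim\varepsilon^{-\beta/2-\delta}$, the two probability terms vanishing because the bounds are deterministic. The only cosmetic difference is that you bound the bracket with a single Lipschitz estimate on $\Theta$, while the paper splits the increment into the ${\rm D}\Theta(U_s)[\varpi_j]$ term (handled by Riesz representation, Cauchy--Schwarz and Jensen together with \eqref{eq:W_doublesum}) plus a Taylor remainder controlled by ${\rm D}^2\Theta$; both yield the same order $\varepsilon^{1-\beta}$ per unit time, so your shortcut is harmless.
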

\begin{proof}[Proof of Proposition~\ref{prop:bound N, R}]
We drop the dependence in $ \varepsilon$ for readability. We have
\begin{align}
\langle \Xi\rangle_t &= \sum_{j\in\mathbb{Z}}\int_0^{t\wedge \tau}\int_0^\infty  \left(\Theta(U_s+\varpi_{j}\mathbf{ 1}_{ z\leq \lambda_{ j, s}} )-\Theta(U_s)\right)^2 {\rm d} z{\rm d } s \nonumber\\
&\leq  2\sum_{j\in\mathbb{Z}}\int_0^{t\wedge \tau} {\rm D}\Theta (U_s) \left[\varpi_j\right]^2 {\rm d} s \nonumber\\
&\quad +2\sum_{j\in\mathbb{Z}}\int_0^{t\wedge \tau}\int_0^\infty \left(\Theta(U_s+\varpi_{j}\mathbf{ 1}_{ z\leq \lambda_{ j, s}} )-\Theta(U_s)-{\rm D} \Theta(U_s)[\varpi_{j}\mathbf{ 1}_{ z\leq \lambda_{ j, s}} ]\right)^2{\rm d} z{\rm d } s. \label{eq:cro_Xi}
\end{align}
where we have used that $ \lambda_{ j,s}\leq 1$ in the first term of \eqref{eq:cro_Xi}. Concentrate on the first term above: by Riesz representation theorem, there exists some $ \nabla \Theta \left(U_{ s}\right)\in L^{ 2}$ such that for all $h\in L^{ 2}$, $ {\rm D} \Theta \left(U_{ s}\right)[h]= \left\langle \nabla \Theta \left(U_{ s}\right)\, ,\, h\right\rangle_{ L^{ 2}}$. Therefore, recalling the definition of $ \varpi_{ j}$ in \eqref{eq:varpij}, we have for all $j\in \mathbb{ Z}$, by Cauchy-Schwarz inequality
\begin{align*}
{\rm D}\Theta (U_s) \left[\varpi_j\right]^2 &= \left( \varepsilon \sum_{ i=-N_{ \varepsilon}}^{ N_{ \varepsilon}} W \left(x_{ i}- x_{ j}\right) \left\langle \nabla \Theta(U_{ s})\, ,\, \mathbf{ 1}_{ I_{ i}}\right\rangle \right)^{ 2},\\
&\leq \varepsilon^{ 2} \left(\sum_{ i=-N_{ \varepsilon}}^{ N_{ \varepsilon}} W\left(x_{ i}- x_{ j}\right)^{ 2}\right) \left( \sum_{ i=-N_{ \varepsilon}}^{ N_{ \varepsilon}}\left\langle \nabla \Theta(U_{ s})\, ,\, \mathbf{ 1}_{ I_{ i}}\right\rangle^{ 2}\right).
%&\leq C\varepsilon^{ 2} \left(\sum_{ i=-N_{ \varepsilon}}^{ N_{ \varepsilon}} W\left(x_{ i}- x_{ j}\right)^{ 2}\right) \left( \sum_{ i=-N_{ \varepsilon}}^{ N_{ \varepsilon}} \left\Vert \mathbf{ 1}_{ I_{ i}} \right\Vert^{ 2}_{ L^{ 2}}\right)
\end{align*}
Note that, using Jensen's inequality $ \langle \nabla \Theta (U_s), \mathbf{1}_{I_i}\rangle^2 \leq \varepsilon \int_{ I_{ i}} \nabla \Theta \left(U_{ s}\right)^{ 2}$. Therefore,
\begin{align*}
{\rm D}\Theta (U_s) \left[\varpi_j\right]^2 &\leq \varepsilon^{ 3} \left\Vert \nabla \Theta \left(U_{ s}\right) \right\Vert_{ L^{ 2}}^{ 2}\left(\sum_{ i=-N_{ \varepsilon}}^{ N_{ \varepsilon}} W\left(x_{ i}- x_{ j}\right)^{ 2}\right)\leq C \varepsilon^{ 3}\left(\sum_{ i=-N_{ \varepsilon}}^{ N_{ \varepsilon}} W\left(x_{ i}- x_{ j}\right)^{ 2}\right),
\end{align*}
using the uniform bound on ${\rm D} \Theta$ on $ \mathcal{ N}$ on the last term (recall Proposition~\ref{prop:regularity isochron}). Hence, we can bound the first term in \eqref{eq:cro_Xi} by
\begin{align}
2\sum_{j\in\mathbb{Z}}\int_0^{t\wedge \tau} {\rm D}\Theta (U_s) \left[\varpi_j\right]^2 {\rm d} s&\leq C (t\wedge \tau) \varepsilon^{ 3} \sum_{ j \in \mathbb{ Z}} \sum_{ i=-N_{ \varepsilon}}^{ N_{ \varepsilon}}W\left(x_{ i}- x_{ j}\right)^{ 2} \leq C \left(t\wedge \tau\right) \varepsilon^{ 1- \beta},
\end{align}
where we used the estimate \eqref{eq:W_doublesum} for the last bound.
%\begin{align*}
%\langle\nabla \Theta (U_s), \varpi_j\rangle^2 &= \left(\varepsilon\sum_{i=-N_\varepsilon}^{N_\varepsilon}\langle \nabla \Theta (U_s), \mathbf{1}_{I_i}\rangle  W(x_i-x_j)\right)^2\\
%&\leq \varepsilon^2 \left(\sum_{i=-N_\varepsilon}^{N_\varepsilon}\langle \nabla \Theta (U_s), \mathbf{1}_{I_i}\rangle^2\right)\left(\sum_{i=-N_\varepsilon}^{N_\varepsilon} W^2(x_i-x_j)\right)\\
%&\leq \varepsilon^3 \left\Vert \nabla\Theta(U_s)\right\Vert^2_{L^2}\sum_{i=-N_\varepsilon}^{N_\varepsilon} W^2(x_i-x_j),
%\end{align*}
Turn now to the second term of \eqref{eq:cro_Xi}: using now the uniform bound on $ \left\Vert {\rm D}^{ 2} \Theta(U) \right\Vert$ along $ \mathcal{ N}$ (Proposition~\ref{prop:regularity isochron}), we have, using again that $ \lambda_{ j}\leq 1$,
\begin{align*}
 \sum_{j\in\mathbb{Z}}&\int_0^{t\wedge \tau}\int_0^\infty \left(\Theta(U_s+\varpi_{j}\mathbf{ 1}_{ z\leq \lambda_{ j, s}} )-\Theta(U_s)-{\rm D} \Theta(U_s)[\varpi_{j}\mathbf{ 1}_{ z\leq \lambda_{ j, s}} ]\right)^2{\rm d} z{\rm d } s\\
&\leq  C (t\wedge \tau)\sum_{j\in\mathbb{Z}} \Vert \varpi_j\Vert^2_{L^2}\leq C (t\wedge \tau) \varepsilon^{ 1- \beta}
%&\leq C (t\wedge \tau)\varepsilon^2 \left(\sum_{j<-N_\varepsilon} W^2(x_j-x_{-N_\varepsilon})+\sum_{-N_\varepsilon\leq  j\leq N_\varepsilon} 1+\sum_{j>N_\varepsilon} W^2(x_j-x_{N_\varepsilon})\right)\\
%&\leq C (t\wedge \tau)\varepsilon^{1-\beta},
\end{align*}
where we used Lemma~\ref{lem:bound_sum_varpi} on the last line.
%\[
%\max \left(\sum_{j<-N_\varepsilon} W^2(x_j-x_{-N_\varepsilon}), \sum_{j>N_\varepsilon} W^2(x_j-x_{N_\varepsilon})\right)  \leq C \varepsilon^{ -1}.
%\]
This, together with the fact that $t\leq \varepsilon^{ -1} t_{ f}$ gives, for a constant $C>0$ that only depends on $W$, $f$ and $t_{ f}$
\begin{equation}
\label{eq:bound_cro_Xi}
\left\langle \Xi\right\rangle_{ t} \leq C \varepsilon^{ - \beta}.
\end{equation}
%Hence, we obtain, recalling that $\left\Vert \nabla\Theta(U_s)\right\Vert^2_{L^2}\leq C$, the bound
%\begin{align*}
%\langle N\rangle_t &\leq  C (t\wedge \tau)\varepsilon^{1-\beta}+ C (t\wedge \tau)\varepsilon^2 \sum_{i=-N_\varepsilon}^{N_\varepsilon}\sum_{j\in\mathbb{Z}} \varepsilon W^2(x_i-x_j)\\
%&\leq C \varepsilon^{1-\beta} (t\wedge \tau)\leq C \varepsilon^{-\beta}.
%\end{align*}
Moreover we have almost surely
\[
\sup_{0\leq t\leq \varepsilon^{-1}t_f} \left|\Delta \Xi_t\right|\leq \sup_{0\leq t\leq \varepsilon^{-1}t_f}\sup_{j\in \mathbb{Z}} \left|\Theta(U_t+\varpi_{j}\mathbf{ 1}_{ z\leq \lambda_{ j, s}} )-\Theta(U_t)\right| \leq \sup_{j\in \mathbb{Z}} \Vert \varpi_j\Vert_{L^2} \leq C \varepsilon,
\]
where we used \eqref{eq:bound_supvarpi} of Lemma~\ref{lem:bound_sum_varpi} in the last estimate. Noting that (recall Proposition~\ref{prop:concentration_mart}) $ \Psi(x) \geq x$ as long as $x\geq c$ for some $c>0$ so that $2 \exp \left(- \Psi \left( \frac{ \eta}{ a}\right)\right)\leq 2 \exp \left(- \frac{ \eta}{ a}\right)$ as long as $ a\leq \frac{ \eta}{ c}$, choosing $a$ in \eqref{eq:concentration_M} of order $\varepsilon^{-\frac{\beta}{2}}$ and $\eta$ of order $\varepsilon^{-\frac{\beta}{2}- \delta}$ we obtain the result.
\end{proof}

\subsection{Control on the noise term \texorpdfstring{$\zeta^{ (\varepsilon)}$}{zetaeps}}
We now turn to the control of $ \zeta^{ (\varepsilon)}$ in \eqref{eq:def zetakt}. The result is the following
\begin{proposition}\label{prop:bound zeta}
The noise term $ \zeta^{ (\varepsilon)}$ in \eqref{eq:def zetakt} satisfies the following: there exist some $ \varepsilon_{ 0}>0$ and some $C,c>0$ depending only on $W$ and $f$ such that the event given as
\begin{equation}
\label{eq:event_mcB}
\mathcal{B}_\varepsilon = \left\{\sup_{k\in \{1,2,\ldots,n_\varepsilon\}}\sup_{ 0\leq t \leq T} \left\Vert \zeta_{ k,t}^{ (\varepsilon)}\right\Vert_{m_{\psi_{k-1}}, \mathcal{ D}_{ 0}}\leq C T^{ 3/2}\varepsilon^{\frac12-2\beta}\right\},
\end{equation}
satisfies for $\varepsilon \in(0, \varepsilon_{ 0})$,
\begin{align}
\mathbb{ P} \left(\mathcal{B}_\varepsilon\right) \geq 1-  n_\varepsilon e^{-c \varepsilon^{-\frac{\beta}{2}}} \label{eq:concentration_zeta}.
\end{align}
\end{proposition}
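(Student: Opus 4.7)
The plan is to view $\zeta_{k,\cdot}$ as a Hilbert-valued semimartingale, apply Itô's formula to $\|\zeta_{k,t}\|_{m_{\psi_{k-1}},\mathcal{D}_0}^2$ and exploit the dissipativity of $L_{\psi_{k-1}}$ to kill the drift; the two resulting martingale pieces are then controlled by Proposition~\ref{prop:concentration_mart}, with a stopping-time bootstrap for the quadratic variation term that depends on $\|\zeta\|^2$ itself. A union bound over $k\in\{1,\dots,n_\varepsilon\}$ closes the argument.

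More precisely, since $L_{\psi_{k-1}}$ is bounded on $L^2(\mathcal{D}_0)$ by Proposition~\ref{prop:L0}, the mild representation \eqref{eq:def zetakt} coincides with the strong form $d\zeta_{k,t}=L_{\psi_{k-1}}\zeta_{k,t}\,dt+dM_t^k$, $\zeta_{k,0}=0$, where
\[
M_t^k:=\sum_{j\in\mathbb{Z}}\int_{T_{k-1}}^{T_{k-1}+t}\int_0^{+\infty}\varpi_j^{(\varepsilon)}\mathbf{1}_{z\leq\lambda_{j,s}}\,\tilde\pi_j(ds,dz)
\]
is an $L^2(\mathcal{D}_0)$-valued, purely discontinuous martingale. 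Itô's formula applied to $\|\zeta_{k,t}\|^2_{m_{\psi_{k-1}},\mathcal{D}_0}$, combined with the dissipativity \eqref{eq:Dirform_Lphi}, yields the bound $\|\zeta_{k,t}\|^2_{m_{\psi_{k-1}},\mathcal{D}_0}\leq 2A_t^k+N_t^k$, with real-valued martingale $A_t^k:=\int_{T_{k-1}}^{T_{k-1}+t}\langle\zeta_{k,s^-},dM_s^k\rangle_{m_{\psi_{k-1}},\mathcal{D}_0}$ and increasing jump term $N_t^k:=\sum_{T_{k-1}\leq s\leq T_{k-1}+t}\|\Delta\zeta_{k,s}\|^2_{m_{\psi_{k-1}},\mathcal{D}_0}$.

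Set $M:=CT^{3/2}\varepsilon^{1/2-2\beta}$. First I would handle $N^k$: its compensator is $\sum_j\|\varpi_j\|^2_{m_{\psi_{k-1}},\mathcal{D}_0}\int\lambda_{j,s}\,ds\leq C'T\varepsilon^{1-\beta}$ using $\lambda_{j,s}\leq1$, the norm equivalence \eqref{eq:norm_equiv} and \eqref{eq:bound sum vert pi} applied with exponent $2$; under \eqref{eq:beta_small} this is much smaller than $M^2/2$. The centered martingale $N^k-\langle N^k\rangle$ has jumps $\leq C\varepsilon^2$ and predictable quadratic variation $\leq CT\varepsilon^{3-\beta}$ (by \eqref{eq:bound sum vert pi} with exponent $4$), so Proposition~\ref{prop:concentration_mart} with $\eta\sim M^2$ and $a\sim\sqrt{T}\varepsilon^{(3-\beta)/2}$ produces $\eta/a\sim T^{5/2}\varepsilon^{-1/2-7\beta/2}\gg\varepsilon^{-\beta/2}$, and an exceptional event of probability at most $2\exp(-c\varepsilon^{-\beta/2})$. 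For $A^k$ I would introduce the stopping time $\tau_M:=\inf\{s\geq0:\|\zeta_{k,T_{k-1}+s}\|_{m_{\psi_{k-1}},\mathcal{D}_0}>M\}$ and note that, for $s\leq\tau_M$, Cauchy--Schwarz gives $|\Delta A_s^k|\leq M\sup_j\|\varpi_j\|\leq CM\varepsilon$ and $\langle A^k\rangle_s\leq CM^2T\varepsilon^{1-\beta}$. Applying Proposition~\ref{prop:concentration_mart} with $\eta=M^2/8$ and $a\sim M\sqrt{T}\varepsilon^{(1-\beta)/2}$ gives $\eta/a\sim T\varepsilon^{-3\beta/2}\gg\varepsilon^{-\beta/2}$ and again an exceptional event of probability at most $2\exp(-c\varepsilon^{-\beta/2})$. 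On the intersection of both complements, $\sup_{t\leq T\wedge\tau_M}\|\zeta_{k,t}\|^2_{m_{\psi_{k-1}},\mathcal{D}_0}<M^2$, forcing $\tau_M\geq T$; a union bound over the $n_\varepsilon$ intervals yields \eqref{eq:concentration_zeta}.

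The main obstacle is the circular dependence of $\langle A^k\rangle$ on $\|\zeta_{k,\cdot}\|^2$ itself, resolved by the stopping-time truncation at $\tau_M$. A secondary point is the careful bookkeeping of exponents: both concentration estimates need to be stronger than $e^{-c\varepsilon^{-\beta/2}}$, which is exactly ensured by the restriction $\beta<1/12$ from \eqref{eq:beta_small} and dictates the particular scaling $M\sim T^{3/2}\varepsilon^{1/2-2\beta}$ chosen in the statement (which is comfortably larger than the sharp bound $\sim\sqrt{T}\varepsilon^{(1-\beta)/2}$ that the compensator alone would suggest).
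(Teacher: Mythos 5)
Your proposal is correct and follows the same skeleton as the paper's proof (It\^o's formula for $\Vert\zeta_{k,t}\Vert^2_{m_{\psi_{k-1}},\mathcal{D}_0}$, dissipativity of $L_{\psi_{k-1}}$ killing the drift, Proposition~\ref{prop:concentration_mart} on the two jump terms, union bound over $k$), but it resolves the key circularity differently. The paper never stops the process: it first proves the auxiliary Lemma~\ref{lem:rho_k}, an a priori bound $\sup_q\sup_t|\rho_{k,q,t}|/\Vert\varpi_q\Vert_{m_{\psi_{k-1}}}\leq \kappa T\varepsilon^{-\beta}$ for $\rho_{k,q,t}=\langle\zeta_{k,t},\varpi_q\rangle_{m_{\psi_{k-1}}}$, obtained by a crude pathwise domination by weighted Poisson sums and a Chernoff bound; on that event the bracket and jumps of the martingale part $I_{k,t}=2A^k_t$ are controlled ($\langle I_{k,\cdot}\rangle_T\lesssim T^3\varepsilon^{1-3\beta}$), and the quadratic-jump term $J_{k,t}=N^k_t$ is handled by a direct exponential Markov bound for $\sum_j\Vert\varpi_j\Vert^2\tilde Z_{k,j}$. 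You instead bootstrap on $\Vert\zeta\Vert$ itself via the stopping time $\tau_M$ and Cauchy--Schwarz, and you center $N^k$ and reuse Proposition~\ref{prop:concentration_mart} for it. Both routes are sound; yours dispenses with Lemma~\ref{lem:rho_k} and in fact gives a sharper bracket bound ($\lesssim M^2T\varepsilon^{1-\beta}=T^4\varepsilon^{2-5\beta}$) at the cost of the self-referential stopping argument, which you close correctly (for $s\leq\tau_M$ one has $\Vert\zeta_{k,s^-}\Vert\leq M$, and on the good event $\sup_{t\leq T\wedge\tau_M}\Vert\zeta_{k,t}\Vert^2<M^2$ forces $\tau_M>T$). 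Two small remarks: the per-block failure probability is a constant multiple of $e^{-c\varepsilon^{-\beta/2}}$, which you should absorb into $c$ before the union bound (as the paper implicitly does); and your closing claim that the restriction $\beta<\tfrac1{12}$ from \eqref{eq:beta_small} is what makes the exponents work here is not accurate --- your comparisons $T\varepsilon^{-3\beta/2}\gg\varepsilon^{-\beta/2}$ and $T^{5/2}\varepsilon^{-1/2-7\beta/2}\gg\varepsilon^{-\beta/2}$ hold for every $\beta>0$; the constraint $\beta<\tfrac1{12}$ is needed later (Propositions~\ref{prop: bound Theta} and~\ref{prop:closeness M}), not in this proposition.
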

The proof of Proposition~\ref{prop:bound zeta} is more involved, since $\zeta_{k,t}^{ (\varepsilon)}$ in \eqref{eq:def zetakt} is not a martingale. We first consider the family of processes $(\rho_{k, q, t})_{k\geq 1, q\in \mathbb{Z}}$ defined as
\begin{equation}\label{eq:def_rho}
\rho_{k, q, t}:= \left\langle \zeta_{ k,t}^{ (\varepsilon)}\, ,\, \varpi_{ q}\right\rangle_{ m_{\psi_{k-1}}},\ t\geq0,\ k\geq1,\ q\in \mathbb{ Z}
\end{equation}
and prove the following preliminary Lemma.

\begin{lemma}
\label{lem:rho_k}
There exist constants $ \kappa, c_2>0$ such that
\begin{align}
\label{eq:bound_rho_k}
\mathbb{ P} \left(\sup_{ k=1, \ldots, n_{ \varepsilon}}\sup_{ q\in \mathbb{Z}} \sup_{ 0\leq t \leq T} \frac{\left\vert \rho_{k,q,t}\right\vert }{\Vert \varpi_{ q}\Vert_{m_{\psi_{k-1}}}}> \kappa T  \varepsilon^{-\beta} \right) &\leq n_{ \varepsilon}e^{-c_2 \varepsilon^{-(1+\beta)}  },
%\mathbb{ P} \left(\sup_{ k\in \mathbb{Z}} \sup_{ 0\leq t \leq T} \frac{\left\vert \rho^\perp_{ \varepsilon, k}(t)\right\vert }{\Vert \varpi_{ \varepsilon, k}\Vert_m } > \kappa  N_\varepsilon \varepsilon  \right) &\leq e^{-c_2 N_\varepsilon  }. \label{eq:bound_rho_P}
\end{align}
where we recall the definition of $n_{ \varepsilon}$ in \eqref{eq:n_varepsilon}.
\end{lemma}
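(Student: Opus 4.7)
My plan is to bypass the fact that $u\mapsto\rho_{k,q,u}$ is not a martingale (the semigroup appearing in $\zeta_{k,u}$ depends on the upper integration bound $u$) by freezing the outer variable. For each fixed $t\in[0,T]$ I would introduce the auxiliary pure-jump local martingale, indexed by $u\in[0,t]$,
\begin{equation*}
N_{k,q,t,u} := \sum_{j\in\mathbb{Z}}\int_{T_{k-1}}^{T_{k-1}+u}\!\!\int_0^\infty \left\langle \varpi_j, e^{(T_{k-1}+t-s)L_{\psi_{k-1}}}\varpi_q\right\rangle_{m_{\psi_{k-1}}}\mathbf{1}_{z\leq \lambda_{j,s}}\tilde{\pi}_j(ds,dz),
\end{equation*}
which satisfies $N_{k,q,t,t}=\rho_{k,q,t}$. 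Using the self-adjointness of $L_{\psi_{k-1}}$ from Proposition~\ref{prop:L0}, the contraction \eqref{eq:bound_eLphi}, the summation estimate \eqref{eq:bound sum vert pi} of Lemma~\ref{lem:bound_sum_varpi} and $\lambda_{j,s}\leq 1$, one obtains the deterministic bounds $\langle N_{k,q,t,\cdot}\rangle_u \leq V:=CT\varepsilon^{1-\beta}\|\varpi_q\|_{m_{\psi_{k-1}}}^2$ and $|\Delta N_{k,q,t,\cdot}|\leq J:=C\varepsilon\|\varpi_q\|_{m_{\psi_{k-1}}}$.

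I would then apply Proposition~\ref{prop:concentration_mart} with $a=\sqrt{V}$, combined with the Bernstein-type refinement $\Psi(x)\geq x^2/(2+2x/3)$, together with the choice $\eta=\tfrac{\kappa T}{2}\varepsilon^{-\beta}\|\varpi_q\|_{m_{\psi_{k-1}}}$. Since both $V$ and $J\eta/3$ are of order $T\varepsilon^{1-\beta}\|\varpi_q\|^2$, the Bernstein exponent becomes of order $\varepsilon^{-2\beta}/\varepsilon^{1-\beta}=\varepsilon^{-(1+\beta)}$, giving the pointwise estimate
\begin{equation*}
\mathbb{P}\left(\sup_{u\leq t}|N_{k,q,t,u}|\geq \eta\right) \leq 2\exp\left(-\frac{\eta^2}{2V+2J\eta/3}\right)\leq 2\exp\left(-c\varepsilon^{-(1+\beta)}\right),
\end{equation*}
which at $u=t$ controls $|\rho_{k,q,t}|$ for each fixed $(k,q,t)$.

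The remaining task is to obtain uniformity in $(k,q,t)$. The supremum over $k\in\{1,\ldots,n_\varepsilon\}$ costs the announced factor $n_\varepsilon$ via union bound. Uniformity in $t\in[0,T]$ is achieved by discretising on a grid of mesh $\varepsilon^M$ with $M$ large: the grid contains only polynomially many points, and between them the oscillation $\rho_{k,q,t}-\rho_{k,q,t_i}$ splits into a short-time martingale increment of quadratic variation $\lesssim\varepsilon^M V$ and a term involving the semigroup difference $e^{(t-s)L_{\psi_{k-1}}}-e^{(t_i-s)L_{\psi_{k-1}}}$, whose operator norm on $L^{2}(\mathcal{D}_0)$ is bounded by $|t-t_i|\|L_{\psi_{k-1}}\|$ since $L_{\psi_{k-1}}$ is bounded; both pieces are absorbed by the same Bernstein estimate with negligible extra probability. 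After dividing by $\|\varpi_q\|_{m_{\psi_{k-1}}}$ the bound is uniform in $q$; the estimate \eqref{eq:estim_varpi_j} of Lemma~\ref{lem:bound_sum_varpi} shows that $\|\varpi_q\|$ is super-polynomially small as soon as $|q|>N_\varepsilon+C\varepsilon^{-1}\log(\varepsilon^{-1})$, so such $q$ are handled by $|\rho_{k,q,t}|\leq\|\zeta_{k,t}\|\|\varpi_q\|$ combined with a crude a priori $L^2$-bound on $\zeta_{k,t}$ coming from the total number of Hawkes jumps on $[0,T]$, while the remaining polynomial range of $q$ is treated by union bound. The hardest point will be reaching the sharp rate $\varepsilon^{-(1+\beta)}$: the naive large-deviation asymptotic $\Psi(x)\sim x\log x$ alone would only give $\varepsilon^{-(1+\beta)/2}\log\varepsilon^{-1}$, and one has to invoke the quadratic Bernstein refinement of $\Psi$ precisely in the regime where $V$ and $J\eta/3$ are balanced.
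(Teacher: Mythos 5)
Your freezing-of-$t$ idea is coherent, and your computations of the bracket bound $V\sim T\varepsilon^{1-\beta}\Vert\varpi_q\Vert^2$ and jump bound $J\sim\varepsilon\Vert\varpi_q\Vert$ are right, but there is a genuine gap at the key concentration step. The pointwise bound you claim, $2\exp\bigl(-\eta^2/(2V+2J\eta/3)\bigr)$ with the variance proxy $V$ and the jump size $J$ entering \emph{separately}, is a Freedman--Bennett inequality that does not follow from Proposition~\ref{prop:concentration_mart} together with the refinement $\Psi(x)\geq x^2/(2+2x/3)$. In \eqref{eq:concentration_M} a single parameter $a$ controls both the bracket (through $\langle M\rangle_t\leq a^2$) and the jumps (through $\vert\Delta M\vert\leq a$); to kill the last two terms deterministically you are forced to take $a\geq\sqrt{V}$ (since $J\ll\sqrt V$ here the binding constraint is the bracket), and then the exponent is $\Psi(\eta/\sqrt V)$ with $\eta/\sqrt V\sim\sqrt{T}\,\varepsilon^{-(1+\beta)/2}$. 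Even using the quadratic refinement, the denominator becomes $2V+\tfrac23\eta\sqrt V$, not $2V+\tfrac23\eta J$, and the exponent is only of order $\varepsilon^{-(1+\beta)/2}\log(1/\varepsilon)$ --- short of the stated rate $\varepsilon^{-(1+\beta)}$. To rescue your route you would have to import a genuine Bennett inequality for jump martingales of the form $\mathbb{P}(\sup_{s\leq t}\vert M_s\vert\geq\eta)\leq 2\exp\bigl(-\tfrac{V}{J^2}\Psi(\tfrac{J\eta}{V})\bigr)$, which is standard in the literature but is not what the paper provides; and even then the chaining in $t$ and the splitting in $q$ add machinery, with the far-$q$ regime resting on an a priori bound $\Vert\zeta_{k,t}\Vert\lesssim\sum_j\Vert\varpi_j\Vert(T+\tilde Z_{k,j})$ that itself requires exactly the Poisson tail estimate below.

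The paper's proof is much shorter and avoids martingale concentration entirely: bounding the compensated measure by $\vert\tilde\pi_j\vert\leq \pi_j+{\rm d}s\,{\rm d}z$ and using the contraction \eqref{eq:bound_eLphi} plus Cauchy--Schwarz, one gets the \emph{pathwise} bound
\begin{equation*}
\sup_{q\in\mathbb{Z}}\sup_{0\leq t\leq T}\frac{\vert\rho_{k,q,t}\vert}{\Vert\varpi_q\Vert_{m_{\psi_{k-1}}}}\leq C\Bigl(T\varepsilon^{-\beta}+\sum_{j\in\mathbb{Z}}\Vert\varpi_j\Vert_{m_{\psi_{k-1}}}\tilde Z_{k,j}\Bigr),
\end{equation*}
where $\tilde Z_{k,j}$ are i.i.d.\ Poisson$(T)$ counts, so uniformity in $q$ and $t$ is free. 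A Chernoff bound with $\alpha=c/\varepsilon$ (legitimate since $\sup_j\Vert\varpi_j\Vert\leq C\varepsilon$ by \eqref{eq:bound_supvarpi}), combined with $\sum_j\Vert\varpi_j\Vert\leq C\varepsilon^{-\beta}$ from \eqref{eq:bound sum vert pi}, gives $\mathbb{P}\bigl(\sum_j\Vert\varpi_j\Vert\tilde Z_{k,j}>2(T+1)\varepsilon^{-\beta}\bigr)\leq e^{-c\varepsilon^{-(1+\beta)}}$; the rate $\varepsilon^{-(1+\beta)}$ arises precisely because each weight is of size $\varepsilon$ while the mean is of size $\varepsilon^{-\beta}$. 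A union bound over $k$ then yields \eqref{eq:bound_rho_k}. Note that the target threshold $\kappa T\varepsilon^{-\beta}$ is of the same order as the \emph{mean} of the dominating sum, which is why such a crude domination suffices and why your sharper martingale analysis is unnecessary here; in fact the estimate you propose for the far-$q$ range, applied to all $q$, is exactly this argument.
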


\begin{proof}[Proof of Lemma~\ref{lem:rho_k}]
Using the definition \eqref{eq:def zetakt} of $ \zeta_{k,t}$, the fact that $\lambda_{j,s}\in [0,1]$ and the a priori bound \eqref{eq:bound_eLphi} on the semigroup $e^{ tL_{ \psi_{ k-1}}}$, we get the rough bound
\begin{multline*}
\left| \rho_{ k,q,t}\right|  \leq \sum_{ j\in \mathbb{Z}} \int_{T_{k-1}}^{T_{k-1}+t} \int_0^{1} \left|\left\langle e^{ (T_{k-1}+t-s)L_{\psi_{k-1}}} \varpi_{  j}, \varpi_{q}\right\rangle_{ m_{\psi_{k-1}}}\right| \left({\rm d} s {\rm d} z + \pi_{j}({\rm d} s, {\rm d} z )\right)\\
 \leq  \left(\sum_{ j\in \mathbb{Z}} \Vert \varpi_{j}\Vert_{ m_{\psi_{k-1}}}  \right) \Vert \varpi_{q}\Vert_{ m_{\psi_{k-1}}} t+ \Vert \varpi_{q}\Vert_{ m_{\psi_{k-1}}}  \sum_{ j\in \mathbb{Z}} \Vert \varpi_{ j}\Vert_{ m_{\psi_{k-1}}} \int_{T_{k-1}}^{T_{k-1}+t}  \int_0^{1} \pi_{j}({\rm d} s, {\rm d} z ).
\end{multline*}
Applying Lemma~\ref{lem:bound_sum_varpi}, we deduce from \eqref{eq:bound sum vert pi} that
\begin{align}
\sup_{ q\in \mathbb{ Z}}\sup_{0\leq t\leq T} \frac{ \left| \rho_{k,q,t}\right|}{ \Vert \varpi_{q}\Vert_{ m_{\psi_{k-1}}}} & \leq C \left(\varepsilon^{-\beta} T +  \sum_{ j\in \mathbb{Z}} \Vert \varpi_{  j}\Vert_{ m_{\psi_{k-1}}} \tilde Z_{k,j}\right),\label{eq:bound sup rho}
\end{align}
where $(\tilde Z_{k,j})_{j\in \mathbb{Z}}$ are IID random variables with Poisson distribution of parameter $T$. Now, for any $\eta,\alpha>0$ we have
\begin{align*}
\mathbb{P}\left(\sum_{ j\in \mathbb{Z}} \Vert \varpi_{ j}\Vert_{ m_{\psi_{k-1}}} \tilde Z_{k,j} >\eta \right)
& \leq e^{-\alpha \eta }\mathbb{E}\left[\exp\left(\alpha\sum_{ j\in \mathbb{Z}} \Vert \varpi_{ j}\Vert_{ m_{\psi_{k-1}}} \tilde Z_{k,j}\right)\right]\\
&\leq e^{-\alpha \eta}\prod_{j\in \mathbb{Z}}\exp\left(T\left(e^{\alpha\Vert \varpi_{ j}\Vert_{ m_{\psi_{k-1}}} }-1\right)\right).
\end{align*}
Recall from Lemma~\ref{lem:bound_sum_varpi} that there is a constant $C>0$ such that $\sup_{ j\in \mathbb{ Z}} \left\Vert \varpi_{ j} \right\Vert_{ m_{ \varphi}} \leq C \varepsilon$. Hence, using that $e^x-1\leq 2x$ for $x\in [0,1]$, for $\alpha_\varepsilon=c/\varepsilon$ with $c$ small enough such that $cC\leq 1$, we obtain that $\alpha \Vert \varpi_{ j}\Vert_{ m_{\psi}}\leq 1$ for all $j$ and $\psi$, so that, using \eqref{eq:bound sum vert pi}
\begin{align}\label{eq:bound weighted sum Poisson}
\mathbb{P}\left(\sum_{ j\in \mathbb{Z}} \Vert \varpi_{ j}\Vert_{ m_{\psi_{k-1}}} \tilde Z_{k,j} >\eta \right) &\leq   e^{- \frac{c \eta }{\varepsilon}} \exp\left( \frac{2cT}{\varepsilon}  \sum_{j\in \mathbb{Z}}\Vert \varpi_{  j}\Vert_{ m_{\psi_{k-1}}}\right)\leq e^{-c\left(\eta \varepsilon^{-1} -2T\varepsilon^{-(1+\beta)}\right)},
\end{align} 
which for the choice of $ \eta= 2 (T+1) \varepsilon^{ - \beta}$, together with \eqref{eq:bound sup rho} and a union bound on $k=1, \ldots, n_{ \varepsilon}$ implies the result \eqref{eq:bound_rho_k}.
\end{proof}

We are now able to prove Proposition~\ref{prop:bound zeta}.

\begin{proof}[Proof of Proposition~\ref{prop:bound zeta}]
Applying Ito's formula to $ \phi(\zeta_{ k,t})$ for $ \phi(\zeta):= \left\Vert \zeta \right\Vert^{ 2}_{ m_{\psi_{k-1}},\mathcal{D}_0}$ (see \cite{Hausenblas2006,Zhu2017}), we obtain
\begin{align}
&\phi(\zeta_{ k,t}) = \int_{T_{k-1}}^{T_{k-1}+ t} {\rm D} \phi(\zeta_{ k,s})(L_{\psi_{k-1}}\zeta_{k,s}) {\rm d}s \nonumber\\
&+ \sum_{ j\in \mathbb{Z}} \int_{T_{k-1}}^{T_{k-1}+ t}\int_{ 0}^{+\infty} {\rm D} \phi(\zeta_{ k,s^{ -}})(  \varpi_{ j}) \mathbf{ 1}_{ z\leq \lambda_{ j, s}}\tilde{ \pi}_{ j}({\rm d}s, {\rm d}z)\label{eq:Ito_zeta}\\
&+ \sum_{ j\in \mathbb{Z}} \int_{T_{k-1}}^{T_{k-1}+ t} \int_{ 0}^{+\infty} \Big[ \phi(\zeta_{k,s^{ -}} +  \varpi_{j}\mathbf{ 1}_{ z\leq \lambda_{ j, s}}) - \phi(\zeta_{k,s^{ -}}) - {\rm D} \phi( \zeta_{ k,s^{ -}})(  \varpi_{ j}\mathbf{ 1}_{ z\leq \lambda_{ j, s}})\Big] \pi_{ j}({\rm d}s, {\rm d}z).\nonumber
\end{align}
Since ${\rm D}\phi(\zeta)(h)= 2 \left\langle \zeta\, ,\, h\right\rangle_{ m_{\psi_{k-1}},\mathcal{D}_0}$, we obtain in particular that the first term in \eqref{eq:Ito_zeta} is nonnegative: by \eqref{eq:Dirform_Lphi},
\[
\int_{T_{k-1}}^{T_{k-1}+ t} {\rm D} \phi(\zeta_{ k,t})(L_{\psi_{k-1}}\zeta_{k,s}) {\rm d}s
= 2\int_{T_{k-1}}^{T_{k-1}+ t} \left\langle \zeta_{ k,s}\, ,\, L_{\psi_{k-1}}\zeta_{ k,s}\right\rangle_{ m_{\psi_{k-1}},\mathcal{D}_0} {\rm d}s\leq 0.
\]
Denoting $I_{ k,t}$ the second term of the decomposition \eqref{eq:Ito_zeta}, we get, using again that ${\rm D}\phi(\zeta)(h)= 2 \left\langle \zeta\, ,\, h\right\rangle_{ m_{\psi_{k-1}},\mathcal{D}_0}$,
\begin{align*}
I_{k,t}&:= 2\sum_{ j\in \mathbb{Z}} \int_{T_{k-1}}^{T_{k-1}+ t} \int_{ 0}^{+\infty} \left\langle \zeta_{k,s^{ -}}\, ,\,  \varpi_{ j}\right\rangle_{ m_{\psi_{k-1}},\mathcal{D}_0} \mathbf{ 1}_{ z\leq \lambda_{ j, s}} \tilde{ \pi}_{ j}({\rm d}s, {\rm d}z),
\end{align*}
and
\begin{align*}
\left\langle I_{ k,\cdot}\right\rangle_{ T}&= 2\sum_{ j\in \mathbb{Z}} \int_{ T_{k-1}}^{T_k} \int_{ 0}^{+\infty} \left\langle \zeta_{ k,s^{ -}}, \varpi_{j} \right\rangle_{ m_{\psi_{k-1}},\mathcal{D}_0}^{ 2} \mathbf{ 1}_{ z\leq \lambda_{ j, s}} {\rm d}s{\rm d}z\\
&\leq 2\sum_{ j\in \mathbb{Z}}\int_{T_{k-1}}^{T_k} \left\langle \zeta_{ k,s^{ -}},\varpi_{j} \right\rangle_{ m_{\psi_{k-1}},\mathcal{D}_0}^{ 2} {\rm d}s \leq 2T \sum_{ j\in \mathbb{Z}} \sup_{0\leq t\leq T} \vert \rho_{k,j,t}\vert^2,
\end{align*}
as $ \lambda_{ j, s}\leq 1$ a.s. Now let us place ourselves on the event (recall Lemma~\ref{lem:rho_k})
\begin{equation}
\label{eq:event_Ak}
A_{ \varepsilon, T} :=\left\{ \sup_{ k=1, \ldots, n_{ \varepsilon}}\sup_{ q\in \mathbb{Z}} \sup_{ 0\leq t \leq T} \frac{\left\vert \rho_{ k,q,t}\right\vert}{\Vert \varpi_{  q}\Vert_{ m_{\psi_{k-1}}} } \leq \kappa T  \varepsilon^{-\beta}  \right\}.
\end{equation}
On $A_{ \varepsilon, T}$, we deduce from Lemma~\ref{lem:bound_sum_varpi}, eq.  \eqref{eq:bound sum vert pi} with $k=2$ that
\[
\left\langle I_{ k,\cdot}\right\rangle_{ T} \leq 4\kappa^2 T^{ 3}  \varepsilon^{-2\beta} \sum_{j\in \mathbb{Z}} \Vert \varpi_{ j}\Vert_{ m_{\psi_{k-1}}}^2 \leq  C_1 T^{ 3}\varepsilon^{1-3\beta}.
\]
Secondly, we have that a.s. for all $0\leq t\leq T$, $ \left\vert \Delta I_{ k,t} \right\vert \leq \sup_{ q} \sup_{ 0\leq t \leq T} \left\vert \rho_{k,q,t} \right\vert$, hence we deduce, still on the event $A_{ \varepsilon, T}$, recalling the estimate \eqref{eq:bound_supvarpi} in Lemma~\ref{lem:bound_sum_varpi},
\begin{align*}
\sup_{0\leq t\leq T}\left\vert \Delta I_{ k,t}\right\vert \leq  C_2 T \varepsilon^{1-\beta}.
\end{align*}
Therefore, for $C_{ 3}=\max \left( \sqrt{ C_{ 1}}, C_{ 2}\right)>0$, choosing $a:= C_3 T^{ 3/2} \varepsilon^{\frac{1-3\beta}{2}}$, then on the event $A_{\varepsilon,T}$ we have,
\[
\sup_{0\leq t\leq T}\left\vert \Delta I_{ k,t}\right\vert \leq  a \quad \text{and} \quad \left\langle I_{k,\cdot}\right\rangle_{ T}\leq a^2.
\]
Choosing $\eta= C a \varepsilon^{-\beta/2}$ with $ C$ large enough and applying Proposition~\ref{prop:concentration_mart} we obtain, for some $C_4>0$ and $C_{ 5}>0$
\[
\mathbb{P}\left(\sup_{0\leq t\leq T} \left|I_{k,t}\right|\geq C_{ 4}  T^{ 3/2} \varepsilon^{\frac{1}{2}-2\beta}\right) \leq  e^{- C_{ 5} \varepsilon^{-\frac{\beta}{2}}}.
\]
Finally denote by $J_{k,t}$ as the last term in the Ito decomposition \eqref{eq:Ito_zeta}:
\begin{multline*}
J_{k,t}:= \sum_{ j\in \mathbb{Z}} \int_{T_{k-1}}^{T_{k-1}+ t} \int_{ 0}^{+\infty} \Big[ \phi(\zeta_{k,s^{ -}} +  \varpi_{j}\mathbf{ 1}_{ z\leq \lambda_{ j, s}}) - \phi(\zeta_{k,s^{ -}}))\\
 - {\rm D} \phi( \zeta_{ k,s^{ -}})(  \varpi_{ j}\mathbf{ 1}_{ z\leq \lambda_{ j, s}})\Big] \pi_{ j}({\rm d}s, {\rm d}z).
\end{multline*}
Since again ${\rm D}\phi(\zeta)(h)= 2 \left\langle \zeta\, ,\, h\right\rangle_{ m}$, simply expanding the previous expression gives
\begin{equation*}
J_{k,t}=\sum_{ j\in \mathbb{Z}} \int_{T_{k-1}}^{T_{k-1}+ t}\int_{ 0}^{+\infty} \left\Vert \varpi_{ j} \right\Vert^{ 2}_{ m_{\psi_{k-1}}} \mathbf{ 1}_{ z\leq \lambda_{ j, s}}\pi_{ j}({\rm d}s, {\rm d}z),
\end{equation*}
and using again the bound $ \lambda_{ j, s}\leq 1$, we obtain that 
\begin{equation*}
\sup_{ 0\leq t \leq T} \left\vert J_{k,t} \right\vert= J_{ k,T} \leq C  \sum_{ j\in \mathbb{Z}} \left\Vert \varpi_{ j} \right\Vert^{ 2}_{ m_{\psi_{k-1}}} \tilde{ Z}_{k, j}
\end{equation*}
for the i.i.d. Poisson processes with intensity $T$ defined as in the proof of Lemma~\ref{lem:rho_k}. With calculations similar to the ones leading to \eqref{eq:bound weighted sum Poisson}, choosing $\alpha = \frac{ c}{ \varepsilon^{ 2}}$ for sufficiently small $c>0$ such that $0\leq \alpha \Vert \varpi_{ j}\Vert_{ m_{\psi_{k-1}}}^2\leq 1$ for all $j$ (recall \eqref{eq:bound_supvarpi}), we obtain, 
\begin{align*}
\mathbb{ P} \left( \sup_{ 0\leq t \leq T} \left\vert J_{ k,t} \right\vert> \eta\right)
&\leq e^{-c \eta/ \varepsilon^{ 2}}\prod_{j\in \mathbb{Z}} \exp\left(T\left(e^{ \frac{ c}{ \varepsilon^{ 2}} \left\Vert \varpi_{j} \right\Vert^{ 2}_{ m_{\psi_{k-1}}}}-1\right)\right)\\
&\leq e^{-c\eta/ \varepsilon^{ 2} } \exp\left(2 \frac{ cT}{ \varepsilon^{ 2}}\sum_{j\in \mathbb{Z}}\left\Vert \varpi_{ \varepsilon, j} \right\Vert^{ 2}_{ m_{\psi_{k-1}}}\right)\leq e^{- \frac{ c}{ \varepsilon^{ 2}} (\eta -2T \varepsilon^{1-\beta})},
\end{align*}
where we have used that $\sum_{j\in \mathbb{Z}}\left\Vert \varpi_{ j} \right\Vert^{ 2}_{ m_{\psi_{k-1}}} \leq C \varepsilon^{1-\beta}$ (Lemma~\ref{lem:bound_sum_varpi}). Hence, choosing $ \eta= (2T+1) \varepsilon^{ 1- \beta}$, we deduce that there exists $C_6>0$ such that
\begin{equation*}
\mathbb{ P} \left( \sup_{ 0\leq t \leq T} \left\vert J_{ k,t} \right\vert> (2T+1) \varepsilon^{1-\beta}\right) \leq e^{- C_{ 6}  \varepsilon^{-(1+\beta)}}.
\end{equation*}
Noting that there is no loss of generality in assuming that $T\geq1$ (recall the definition of $T$ in \eqref{hyp:T}), we have $CT^{ 3/2} \varepsilon^{ 1/2-2 \beta} \geq T \varepsilon^{ 1- \beta}$ as long as $ \varepsilon\in (0, \varepsilon_{ 0})$ for some sufficiently small $ \varepsilon_{ 0}>0$ depending only on $W$, $f$ and $t_{ f}$. Estimation \eqref{eq:concentration_zeta} is then a consequence of the triangle inequality and of a union bound over $k$.
\end{proof}

\section{Proximity to the manifold \texorpdfstring{$\mathcal{M}$}{M}}
\label{sec:proximity_M}
Recall the definitions of the events $ \mathcal{ A}_{ \varepsilon, \delta}$ and $ \mathcal{ B}_{ \varepsilon}$ in \eqref{eq:event_mcA} and \eqref{eq:event_mcB} respectively. Recall also \eqref{eq:beta_small}: the parameter $ \beta$ in \eqref{eq:N_ell_eps} satisfies $ \beta< \frac{ 1}{ 12}$. Throughout the rest of the proof, we fix some parameter $\delta>0$ satisfying
\begin{equation}
\label{eq:delta_small}
\delta\in \left(0,  \min \left(\frac{ 1}{ 12}-\beta, \frac{ \beta}{ 4}\right)\right).
\end{equation}
The precise reasons for this choice of parameters will become visible in the proofs of Propositions~\ref{prop: bound Theta} and~\ref{prop:closeness M}. We refer in particular to Remark~\ref{rem:beta_delta_small} below for more insights on \eqref{eq:beta_small} and \eqref{eq:delta_small}. For this choice of parameters, let us define the event
\begin{equation}
\label{eq:event_Cepsilon}
\mathcal{C}_{ \varepsilon, \delta} := \left\{ \left\Vert U_{ 0}-\hat u_0\right\Vert_{L^2} \leq \varepsilon^{\frac12-2\beta-\delta}\right\} \cap \mathcal{A}_{\varepsilon,\delta}\cap \mathcal{B}_\varepsilon.
\end{equation}
Recall the definition of the exit-time $ \tau= \tau_{ \varepsilon}$ in \eqref{eq:tau}. The aim of this section is to prove the following Proposition.
\begin{proposition}\label{prop:closeness M}
Let $ \beta, \delta$ satisfy \eqref{eq:beta_small} and \eqref{eq:delta_small}. On the event $\mathcal{C}_{ \varepsilon, \delta}$ given by \eqref{eq:event_Cepsilon}, we have, for $\varepsilon>0$ small enough, $\tau\geq t_f\varepsilon^{-1}$ and
\begin{equation*}
\sup_{0\leq t\leq t_f\varepsilon^{-1}} {\rm dist}_{L^2}\left(U_t,\mathcal{M}\right) \leq  \varepsilon^{\frac12-2\beta-2\delta}.
\end{equation*}
\end{proposition}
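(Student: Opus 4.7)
The plan is to prove Proposition~\ref{prop:closeness M} by induction on the macroscopic blocks $[T_{k-1},T_k]$, $k=1,\ldots,n_\varepsilon$, using the mild formulation \eqref{eq:Ykt mild} on each block. By construction $\psi_{k-1}={\rm proj}_\mathcal{M}(U_{T_{k-1}})$ satisfies $P_{\psi_{k-1}}Y_{k,0}=0$, so the semigroup contraction \eqref{eq:etL0exp} applies to the linear part of $Y_{k,t}$; combined with bounds on the quadratic Taylor remainder $W\ast g_{k,\cdot}$, on the discretization term and on the noise $\zeta_{k,\cdot}$, this produces a recursion of the form $\eta_{k+1}\leq \tfrac{1}{3}\eta_k + C\varepsilon^{1/2-2\beta}$ with $\eta_k:={\rm dist}_{L^2}(U_{T_k},\mathcal{M})$, whose stable regime has size $\varepsilon^{1/2-2\beta}$. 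Since $n_\varepsilon=O(\varepsilon^{-1})$, iterating on the whole diffusive window yields the claimed bound up to a small loss $\varepsilon^{-\delta}$ absorbing intra-block fluctuations, and the fact that the resulting distance is $\ll C_0$ for small $\varepsilon$ automatically gives $\tau\geq t_f\varepsilon^{-1}$.

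\textbf{Per-block estimates.} On each block I would bound the four contributions in \eqref{eq:Ykt mild} in the norm of $L^2_{m_{\psi_{k-1}}}(\mathcal{D}_0)$. The linear part is handled via Proposition~\ref{prop:L0}, eq. \eqref{eq:etL0exp}; the boundary correction $\|\hat u'_{\psi_{k-1}}\mathbf{1}_{\mathcal{D}_0^c}\|_m$ is $O(e^{-c\ell_\varepsilon})$ by the exponential decay \eqref{eq:uprime_exp}, provided $|\psi_{k-1}|\ll\ell_\varepsilon$, and by \eqref{hyp:T} the remaining contractive factor is at most $1/3$ at time $t=T$. The Taylor remainder satisfies $\|W\ast g_{k,s}\|_{L^2}\leq \|W\|_{L^2}\|g_{k,s}\|_{L^1}\leq C\|Y_{k,s}\|_{L^2}^2$ via Young's inequality, which is higher-order in $\varepsilon$ under the inductive bound on $\|Y_{k,s}\|_{L^2}$. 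The Riemann-sum discretization term has $L^2(\mathcal{D}_0)$-norm $O(\varepsilon^{1-\beta/2})$, using the Lipschitz regularity of $W$, $f$ and of $U^{(\varepsilon)}$ inherited from Assumption~\ref{ass:u0} together with $|\mathcal{D}_0|=O(\varepsilon^{-\beta})$. The noise is controlled directly on $\mathcal{B}_\varepsilon$ by Proposition~\ref{prop:bound zeta}: $\|\zeta_{k,t}\|_{m,\mathcal{D}_0}\leq CT^{3/2}\varepsilon^{1/2-2\beta}$, which is the dominant error and fixes the scale $\varepsilon^{1/2-2\beta}$ of the recursion.

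\textbf{Phase localization.} The contraction step tacitly requires $\psi_{k-1}$ to remain well inside the bulk so that the boundary factor in \eqref{eq:etL0exp} is genuinely $o(1)$. I would obtain this from Ito's formula \eqref{eq:Ito to Theta} applied to $\Theta(U_t)$: on $\mathcal{A}_{\varepsilon,\delta}$ the martingale component satisfies $|\Xi_t|\leq C\varepsilon^{-\beta/2-\delta}$, while the deterministic drift can be bounded using the inductive proximity of $U_s$ to $\hat u_{\psi_{k-1}}$ and the fact that the NFE vector field $W\ast f(\hat u_\phi)-\hat u_\phi$ vanishes on $\mathcal{M}$, further refined by the symmetries of Assumption~\ref{ass:neutral} to prevent any $O(\varepsilon^{-1})$ contribution on the diffusive scale. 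Since $\beta/2+\delta<\beta$ by \eqref{eq:delta_small}, this yields $|\psi_{k-1}|=o(\ell_\varepsilon)$ throughout $[0,t_f\varepsilon^{-1}]$, hence $\|\hat u'_{\psi_{k-1}}\mathbf{1}_{\mathcal{D}_0^c}\|_m$ is exponentially small in $\varepsilon$. The outside contributions to ${\rm dist}_{L^2}(U_t,\mathcal{M})$ coming from $\mathcal{D}_-\cup\mathcal{D}_+$, where $U_t\in\{a_1,a_2\}$, are handled similarly via \eqref{eq:uprime_exp}.

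\textbf{Main difficulty.} The most delicate point is the bootstrap between bulk stability (which produces the contraction only once $\psi_{k-1}$ is known to be well inside the bulk) and phase localization (which relies on the proximity of $U_t$ to $\mathcal{M}$). Closing the loop forces a careful balance between all the exponents appearing in the estimates: $1/2-2\beta$ for the noise scale to actually shrink, $\beta/2+\delta<\beta$ for the phase to stay in the bulk, and $1-\beta/2$ for the Riemann sum error to be subdominant, all while leaving enough room in $\delta$ to absorb inductive losses — this is exactly what the smallness conditions \eqref{eq:beta_small} and \eqref{eq:delta_small} encode. Running the recursion with these compatible exponents then gives the target bound $\varepsilon^{1/2-2\beta-2\delta}\to 0$, which is in particular much smaller than $C_0$, so that $\tau$ cannot occur before $t_f\varepsilon^{-1}$.
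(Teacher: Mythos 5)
Your proposal follows essentially the same route as the paper: a block-by-block iteration on $[T_{k-1},T_k]$ using the mild formulation \eqref{eq:Ykt mild}, the truncated-semigroup contraction \eqref{eq:etL0exp} with $T$ fixed by \eqref{hyp:T}, Young's inequality for the quadratic remainder, the discretization bounds, the noise bound on $\mathcal{B}_\varepsilon$, and the crucial bootstrap with the phase localization $|\Theta(U_t)|\ll \ell_\varepsilon$ (the content of Proposition~\ref{prop: bound Theta}) to keep the boundary term $\Vert \hat u'_{\psi_{k-1}}\mathbf{1}_{\mathcal{D}_0^c}\Vert$ exponentially small — this matches the paper's proof, with your explicit recursion $\eta_{k+1}\leq \tfrac13\eta_k+C\varepsilon^{1/2-2\beta}$ playing the role of the paper's fixed-level induction with exponents $\tfrac{3\delta}{2},\tfrac{7\delta}{4},2\delta$. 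One small correction: the Riemann-sum error $O(\varepsilon^{1-\beta/2})$ is not obtained from any Lipschitz regularity of $U^{(\varepsilon)}$ (the profile is piecewise constant and jumps; Assumption~\ref{ass:u0} only concerns $u_0^{(\varepsilon)}$), but from the fact that $U_s$ is constant on each cell $I_j$ so only the variation of $W$ within cells and the a.s.\ absence of simultaneous jumps enter, exactly as in Proposition~\ref{prop:bound_approx}.
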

To prove this result, we first show that if the noise terms are not too large (that is on the event $\mathcal{A}_{\varepsilon,\delta}$) and the process $U_t$ stays sufficiently close to the manifold $\mathcal{M}$, then the phase $\Theta(U_t)$ cannot be too large. This is the aim of the next Subsection, in which we rely in particular on the symmetries of the model to show that the dynamics of the phase does not drift on the time-scale $\varepsilon^{-1}t$ (we show that $|\Theta(U_t)|\leq \varepsilon^{-\frac{3\beta}{4}}$). This result allows then us to bound the boundary terms that appear in the iterative scheme defined in Section~\ref{sec:iter scheme}, which we use to prove Proposition~\ref{prop:closeness M}.

\subsection{Control on the isochron}
We prove in this paragraph the following proposition: recall the definition of the event $ \mathcal{ A}_{ \varepsilon, \delta}$ in \eqref{eq:event_mcA}.

\begin{proposition}\label{prop: bound Theta}
Let $ \beta, \delta$ satisfy \eqref{eq:beta_small} and \eqref{eq:delta_small}. Let $t_1\in [0,t_f]$ and consider the following event:
\begin{equation}
\label{eq:event_Oeps}
\mathcal{ O}_{ \varepsilon, \delta, t_{ 1}}:= \mathcal{A}_{\varepsilon,\delta}\cap \left\{\sup_{0\leq t\leq t_1\varepsilon^{-1}} {\rm dist}_{L^2}\left(U_t,\mathcal{M}\right) \leq  \varepsilon^{\frac12-2\beta-2\delta}\right\}.
\end{equation}
Then, on $ \mathcal{ O}_{ \varepsilon, \delta, t_{ 1}}$,
\begin{equation}
\label{eq:Theta_3b4}
\sup_{0\leq t\leq t_1\varepsilon^{-1}} \vert \Theta(U_t) \vert\leq \varepsilon^{-\frac{3\beta}{4}}.
\end{equation}
\end{proposition}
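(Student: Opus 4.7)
The plan is to exploit the It\^o decomposition \eqref{eq:Ito to Theta}, which for $t\leq\tau$ reads $\Theta(U_t)-\Theta(U_0) = \int_0^{t} \mathcal{B}_s\,ds + \Xi_t$, where $\mathcal{B}_s$ combines the continuous drift $-D\Theta(U_s)[U_s\mathbf{1}_{\mathcal{D}_0}]$ and the jump compensator. On $\mathcal{A}_{\varepsilon,\delta}$ one has $\sup_{t\leq\varepsilon^{-1}t_f}|\Xi_t|\leq C\varepsilon^{-\beta/2-\delta}$, which is $o(\varepsilon^{-3\beta/4})$ thanks to $\delta<\beta/4$ imposed in \eqref{eq:delta_small}, while Lipschitz continuity of $\Theta$ on $\mathcal{N}$ (Proposition~\ref{prop:regularity isochron}) combined with \eqref{eq:u0_close_to_M} yields $|\Theta(U_0)|\leq C\|U_0-\hat u_0\|_{L^2} = o(\varepsilon^{-3\beta/4})$. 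It therefore suffices to prove that on $\mathcal{O}_{\varepsilon,\delta,t_1}$,
\[
\Bigl|\int_0^{t_1\varepsilon^{-1}}\mathcal{B}_s\,ds\Bigr|\leq \tfrac12\,\varepsilon^{-3\beta/4}.
\]

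Taylor-expand each contribution $\Theta(U_s+\varpi_j\mathbf{1}_{z\leq\lambda_{j,s}})-\Theta(U_s)$ to second order in $\varpi_j$ and integrate out the Poisson variable: the drift splits as $\mathcal{B}_s=\mathcal{B}_s^{(1)}+\mathcal{B}_s^{(2)}+\mathcal{R}_s$, with $\mathcal{B}_s^{(1)}=D\Theta(U_s)\bigl[-U_s\mathbf{1}_{\mathcal{D}_0}+\sum_j\lambda_{j,s}\varpi_j\bigr]$, $\mathcal{B}_s^{(2)}=\tfrac12\sum_j\lambda_{j,s}D^2\Theta(U_s)[\varpi_j,\varpi_j]$, and a cubic remainder obeying $|\mathcal{R}_s|\leq C\varepsilon^{2-\beta}$ via \eqref{eq:bound D3Theta}, $\|\varpi_j\|_{L^\infty}\leq C\varepsilon$ and Lemma~\ref{lem:bound_sum_varpi} ($k=2$); integrating gives a cubic contribution of order $\varepsilon^{1-\beta}\ll\varepsilon^{-3\beta/4}$. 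For $\mathcal{B}_s^{(1)}$ the key tool is the isochron invariance $D\Theta(u)[\hat G(u)]=0$ on $\mathcal{N}$, with $\hat G(u):=-u+W*f(u)$. Comparing the Riemann sum $\sum_j\lambda_{j,s}\varpi_j$ with the continuum $\mathbf{1}_{\mathcal{D}_0}(W*f(U_s))$ on each $I_i$ yields a discretization error of order $\varepsilon$ (exploiting $|x_i-x|\leq\varepsilon/2$ on $I_i$ and Lipschitz regularity of $U_s$), so $\mathcal{B}_s^{(1)}=-D\Theta(U_s)[\hat G(U_s)\mathbf{1}_{\mathcal{D}_0^c}]+D\Theta(U_s)[\mathcal{E}_s^{\mathrm{disc}}]$. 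Assumption~\ref{ass:W_exp} and the exponential tail \eqref{eq:uprime_exp} imply $\|\hat G(U_s)\mathbf{1}_{\mathcal{D}_0^c}\|_{L^2}\leq Ce^{-c\ell_\varepsilon/\sigma}$, provided the phase remains deep inside the bulk, which holds \emph{a priori} under the sought bound $|\Theta(U_s)|\leq\varepsilon^{-3\beta/4}\ll\ell_\varepsilon=\varepsilon^{-\beta}$ (the bootstrap hypothesis uses $3\beta/4<\beta$); both contributions integrate to $o(\varepsilon^{-3\beta/4})$.

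The hardest term is $\mathcal{B}_s^{(2)}$: the naive estimate $\sum_j\|\varpi_j\|^2\leq C\varepsilon^{1-\beta}$ integrates to $O(\varepsilon^{-\beta})$, which \emph{exceeds} the target by a factor $\varepsilon^{-\beta/4}$, so that the symmetries of the model are indispensable. Assumption~\ref{ass:neutral} together with $W$ even and \eqref{eq:f_symmetry_a} imply that the affine involution $Tu(x):=2a-u(-x)$ fixes $\hat u_0$ and conjugates the NFE flow \eqref{eq:NFE conv} to itself, so that $\Theta\circ T=-\Theta$; differentiating twice at $\hat u_0$ yields the anticommutation $D^2\Theta(\hat u_0)[Rv,Rw]=-D^2\Theta(\hat u_0)[v,w]$ with $Rv(x):=v(-x)$. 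Writing $U_s=\hat u_{\Theta(U_s)}+v_s$ with $\|v_s\|_{L^2}\leq\varepsilon^{1/2-2\beta-2\delta}$ on $\mathcal{O}_{\varepsilon,\delta,t_1}$, translation invariance of $\Theta$ reduces the on-manifold part of $\mathcal{B}_s^{(2)}$ to its value at $\hat u_0$; combining the anticommutation with the exponential localization $f''(\hat u_0)\hat u_0'=O(e^{-\mu|\cdot|})$ from \eqref{eq:uprime_exp} in the representation \eqref{eq:D2Theta} cancels the $R$-symmetric part of the discrete kernel $\sum_j\lambda_{j,s}\varpi_j\otimes\varpi_j$, improving the bound to $|\mathcal{B}_s^{(2)}(\hat u_0)|\leq C\varepsilon$. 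The off-manifold deviation is controlled via \eqref{eq:bound D3Theta} by $C\|v_s\|_{L^2}\cdot\varepsilon^{1-\beta}\leq C\varepsilon^{3/2-3\beta-2\delta}$; integration yields $O(1)+O(\varepsilon^{1/2-3\beta-2\delta})$, both $o(\varepsilon^{-3\beta/4})$ thanks to \eqref{eq:beta_small}--\eqref{eq:delta_small}. The main obstacle is precisely this symmetry-driven cancellation in $\mathcal{B}_s^{(2)}$, performed in tandem with careful bookkeeping of the boundary terms arising from the cutoff $\mathbf{1}_{\mathcal{D}_0}$, all compatible with the bootstrap restriction $|\Theta|\ll\ell_\varepsilon$.
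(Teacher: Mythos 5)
Your overall architecture (It\^o decomposition of $\Theta(U_t)$, bootstrap stopping time, isochron invariance ${\rm D}\Theta(u)[-u+W*f(u)]=0$, noise absorbed on $\mathcal{A}_{\varepsilon,\delta}$ via $\delta<\beta/4$, symmetry cancellation for the second-order compensator) is the paper's, but two of your key estimates do not hold as stated. First, the boundary term: your claim $\Vert \hat G(U_s)\mathbf{1}_{\mathcal{D}_0^c}\Vert_{L^2}\leq Ce^{-c\ell_\varepsilon/\sigma}$ is false. On $\mathcal{D}_+$ one has $\hat G(U_s)=-a_2+W*f(U_s)$, and writing $f(U_s)=f(\hat u_{\Theta(U_s)})+\big(f(U_s)-f(\hat u_{\Theta(U_s)})\big)$, only the first piece is exponentially small (tails of $\hat u$ plus the bootstrap $|\Theta(U_s)|\ll\ell_\varepsilon$); the second piece contributes $\Vert W*(f(U_s)-f(\hat u_{\Theta(U_s)}))\mathbf{1}_{\mathcal{D}_+}\Vert_{L^2}=O(\Vert U_s-\hat u_{\Theta(U_s)}\Vert_{L^2})=O(\varepsilon^{1/2-2\beta-2\delta})$, because the $L^2$-deviation can sit right next to the boundary and $W$ transports it into $\mathcal{D}_+$ at order one. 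With only the uniform bound on ${\rm D}\Theta$ this integrates over $[0,t_1\varepsilon^{-1}]$ to $O(\varepsilon^{-1/2-2\beta-2\delta})\gg\varepsilon^{-3\beta/4}$, so your argument breaks there. The paper's fix is not smallness of the argument but smallness of the functional: ${\rm D}\Theta(\hat u_\varphi)$ and ${\rm D}^2\Theta(\hat u_\varphi)$ tested against functions supported in $\mathcal{D}_\pm$ are exponentially small (Proposition~\ref{prop:D2Theta}, proved via the $\mathcal{K}_\varphi^k$ propagation estimates of Lemma~\ref{lem:K}), combined with a Taylor expansion of ${\rm D}\Theta(U_s)$ around $\hat u_{\Theta(U_s)}$ up to the cubic remainder \eqref{eq:bound D3Theta}. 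None of this machinery appears in your outline. (Minor, non-fatal: your ``order $\varepsilon$'' discretization error via ``Lipschitz regularity of $U_s$'' is off — $U_s$ is a step function and the correct bound is the $\varepsilon^{1-\beta/2}$ of Proposition~\ref{prop:bound_approx}, which still integrates to $O(\varepsilon^{-\beta/2})\ll\varepsilon^{-3\beta/4}$; also $|\Theta(U_0)|$ small is not part of the event $\mathcal{O}_{\varepsilon,\delta,t_1}$ and must be imported from the initial condition, as you do.)

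Second, the cancellation in $\mathcal{B}^{(2)}$. Your involution $Tu(x)=2a-u(-x)$ does conjugate the flow and yields ${\rm D}^2\Theta(\hat u_0)[Rv,Rw]=-{\rm D}^2\Theta(\hat u_0)[v,w]$, but this identity only kills the part of $\sum_j\lambda_{j,s}{\rm D}^2\Theta[\varpi_j,\varpi_j]$ weighted by the \emph{even} component of $f(\hat u_0)$, i.e.\ the constant $a$: writing the continuum version $\varepsilon\int_{\mathbb{R}}{\rm D}^2\Theta(\hat u_0)[W_y\mathbf{1}_{\mathcal{D}_0},W_y\mathbf{1}_{\mathcal{D}_0}]\,f(\hat u_0)(y)\,{\rm d}y$ and splitting $f(\hat u_0)=a+g$ with $g$ odd, the $g$-part has an integrand \emph{invariant} under $y\mapsto-y$ (both factors flip sign), so the anticommutation gives no cancellation there. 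Since $g$ is $O(1)$ throughout the bulk and the $y$-integration extends over a region of length $\sim\ell_\varepsilon=\varepsilon^{-\beta}$, the surviving term is a priori of order $\varepsilon^{1-\beta}$ per unit time, hence $\varepsilon^{-\beta}$ after time integration — exactly the dangerous order that exceeds $\varepsilon^{-3\beta/4}$. The exponential localization of $f''(\hat u_0)\hat u_0'$ does not rescue this by itself: to get decay of $y\mapsto{\rm D}^2\Theta(\hat u_0)[W_y\mathbf{1}_{\mathcal{D}_0},W_y\mathbf{1}_{\mathcal{D}_0}]$ one needs propagation/localization estimates for $e^{s\mathcal{L}_0}$ of the type developed in Section~\ref{sec: boundary}, which you do not provide. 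The paper instead shows the whole leading term $D_{5,t}$ is \emph{exactly} zero, treating the odd-weighted part by a finer argument ($A^\pm=0$) that uses the spectral representation \eqref{eq:D2Theta spect proj}, the fact that $\mathcal{L}_0$ commutes with reflection so its spectral projections preserve parity, and the oddness of $f''(\hat u_0)\hat u_0'$ — information strictly beyond the single anticommutation identity. It also needs the replacement $\varpi_j\approx\varepsilon W_{x_j}$, the shift of the cutoff domain $\mathcal{D}_0$ by $\Theta(U_s)$ (again via Proposition~\ref{prop:D2Theta}), and the discrete phase $\bar\Theta_s\in\varepsilon\mathbb{Z}$ for the lattice-compatible change of variables; your ``careful bookkeeping'' remark gestures at this but the decisive cancellation of the odd-weighted part is missing, so the claimed bound $|\mathcal{B}^{(2)}_s(\hat u_0)|\leq C\varepsilon$ is not established.
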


Before turning to the proof of this result, let us first present some auxillary results that will be needed. We present first the following tail bound for the profile $\hat u_0$, to which we provide a elementary proof.
\begin{lemma}
\label{lem:int_tail_hatu}
There exists some constant $ C>0$ such that, for $\ell>0$,
\begin{equation}
\label{eq:int_tail_hatu}
\max \left(\int_{ -\infty}^{- \ell} \left(a_{ 1} - \hat{ u}_{ 0}(x)\right)^{ 2} {\rm d}x, \int_{ \ell}^{+\infty} \left(a_{ 2} - \hat{u}_{ 0}(x)\right)^{ 2} {\rm d}x\right) \leq C e^{ - 2\mu\ell},
\end{equation}
where $ \mu$ is the constant appearing in \eqref{eq:uprime_exp}.
\end{lemma}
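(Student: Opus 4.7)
The plan is to reduce both tail integrals to a pointwise exponential decay estimate on $|\hat{u}_0 - a_i|$ obtained by integrating the derivative bound \eqref{eq:uprime_exp} of Theorem~\ref{th:stability}, and then square-integrate the resulting exponential. Recall that $\hat{u}_0$ is strictly increasing on $\mathbb{R}$ with $\lim_{-\infty}\hat{u}_0 = a_1$ and $\lim_{+\infty}\hat{u}_0 = a_2$, so that $a_1 \leq \hat{u}_0(x) \leq a_2$ and, by the fundamental theorem of calculus,
\begin{equation*}
\hat{u}_0(x) - a_1 = \int_{-\infty}^{x} \hat{u}_0'(y) \, {\rm d}y, \qquad a_2 - \hat{u}_0(x) = \int_{x}^{+\infty} \hat{u}_0'(y) \, {\rm d}y.
\end{equation*}

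First I would handle the left tail: for $x \leq -\ell < 0$ the estimate \eqref{eq:uprime_exp} gives $\hat{u}_0'(y) \leq c e^{\mu y}$ on $(-\infty, x]$, so
\begin{equation*}
0 \leq \hat{u}_0(x) - a_1 \leq c \int_{-\infty}^{x} e^{\mu y} \, {\rm d}y = \tfrac{c}{\mu} e^{\mu x}.
\end{equation*}
Squaring and integrating over $(-\infty, -\ell)$ yields
\begin{equation*}
\int_{-\infty}^{-\ell} (a_1 - \hat{u}_0(x))^2 \, {\rm d}x \leq \tfrac{c^2}{\mu^2} \int_{-\infty}^{-\ell} e^{2\mu x} \, {\rm d}x = \tfrac{c^2}{2\mu^3} e^{-2\mu \ell}.
\end{equation*}
The right tail is entirely symmetric: for $x \geq \ell > 0$ one uses $\hat{u}_0'(y) \leq c e^{-\mu y}$ on $[x, +\infty)$ to get $0 \leq a_2 - \hat{u}_0(x) \leq \tfrac{c}{\mu} e^{-\mu x}$, and integrating the square over $[\ell, +\infty)$ produces the same bound $\tfrac{c^2}{2\mu^3} e^{-2\mu\ell}$. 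Taking $C := c^2/(2\mu^3)$ yields the claim.

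There is no real obstacle here; the statement is essentially a corollary of the pointwise exponential decay of $\hat{u}_0'$ in \eqref{eq:uprime_exp} combined with the monotonicity of $\hat{u}_0$ and its fixed limits at $\pm\infty$.
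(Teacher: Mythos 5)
Your proposal is correct and follows essentially the same route as the paper: write $a_2-\hat u_0(x)=\int_x^{+\infty}\hat u_0'(z)\,{\rm d}z$ (and symmetrically on the left), insert the exponential bound \eqref{eq:uprime_exp}, and integrate the square of the resulting exponential over the tail. The only cosmetic difference is that you first carry out the inner integral to get the pointwise bound $c\mu^{-1}e^{-\mu x}$ before squaring, whereas the paper keeps the iterated integral; the computation and constant are the same.
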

\begin{proof}[Proof of Lemma~\ref{lem:int_tail_hatu}]
This an immediate consequence of \eqref{eq:uprime_exp}:
\begin{multline*}
\int_{ \ell}^{+\infty} \left(a_{ 2}- \hat{ u}_{ 0}(x)\right)^{ 2} {\rm d}x = \int_{ \ell}^{ +\infty} \left( \int_{ x}^{+\infty} \hat{ u}^{ \prime}_{ 0}(z) {\rm d}z\right)^{ 2} {\rm d}x \leq c_{ 0}^{ 2} \int_{ \ell}^{ +\infty} \left( \int_{ x}^{+\infty} e^{ - \mu z} {\rm d}z\right)^{ 2} {\rm d}x \\= \frac{ c_{ 0}^{ 2}}{ 2 c_{ 1}^{ 3}} e^{ -2 \mu \ell},
 \end{multline*}
the second estimate of \eqref{eq:int_tail_hatu} being treated similarly.
\end{proof}

To control the boundary terms in the proof of Proposition~\ref{prop: bound Theta} we will rely on the following Proposition which shows that, close to $\mathcal{M}$, the isochron map depends very little on the the shape of the profile outside $\mathcal{D}_0$ (recall \eqref{eq:partition_R} for the definition of the partition). The proof of this Proposition will be given in Section~\ref{sec: boundary}.
\begin{proposition}
\label{prop:D2Theta}
Let $0 < \beta^{ \prime}< \beta$. There is a constant $C>0$ such that, for all $v, w\in L^{ 2}$, $ \varphi$ such that $ \left\vert \varphi \right\vert\leq c \varepsilon^{ - \beta}$ with $c\in(0,1)$,  we have
\begin{equation}\label{eq:DThetaD+-}
\left\vert {\rm D}\Theta( \hat{ u}_{ \varphi}) \left[ v \left(\mathbf{ 1}_{  \mathcal{ D}_{ -}}+\mathbf{ 1}_{  \mathcal{ D}_{ +}}\right)\right] \right\vert \leq C e^{ - \varepsilon^{ - \beta^{ \prime}}} \left\Vert v \right\Vert,
\end{equation}
and
\begin{equation}
\label{eq:D2ThetaD+-}
\left\vert {\rm D}^{ 2}\Theta( \hat{ u}_{ \varphi}) \left[ v, w \left(\mathbf{ 1}_{  \mathcal{ D}_{ -}}+\mathbf{ 1}_{  \mathcal{ D}_{ +}}\right)\right] \right\vert \leq C e^{ - \varepsilon^{ - \frac{ \beta^{ \prime}}{ 2}}} \left\Vert v \right\Vert \left\Vert w \right\Vert.
\end{equation}
\end{proposition}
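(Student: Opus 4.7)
The plan is to combine the explicit formulas \eqref{eq:DTheta}--\eqref{eq:D2Theta} with the exponential tail bound \eqref{eq:uprime_exp} on $\hat u'_0$ and an exponentially weighted $L^2$-estimate on the semigroup $e^{s\mathcal{L}_\varphi}$. For the first-order bound \eqref{eq:DThetaD+-}, I first note that by translation invariance $\langle \hat u'_\varphi,\hat u'_\varphi\rangle_{m_\varphi}=\langle \hat u'_0,\hat u'_0\rangle_{m_0}$ is a strictly positive constant, so only the numerator in \eqref{eq:DTheta} needs attention. Cauchy--Schwarz in $L^2$ together with
\[
\|\hat u'_\varphi\,\mathbf{1}_{\mathcal{D}_-\cup\mathcal{D}_+}\|_{L^2}^2 = \int_{|x|\geq \varepsilon N_\varepsilon+\varepsilon/2} \hat u'_0(x-\varphi)^2\,dx \leq C\,e^{-2\mu(1-c)\ell_\varepsilon},
\]
which follows from \eqref{eq:uprime_exp} and $|\varphi|\leq c\ell_\varepsilon$, yields \eqref{eq:DThetaD+-}: since $\ell_\varepsilon=\varepsilon^{-\beta}$ and $\beta'<\beta$, the resulting factor is bounded by $e^{-\varepsilon^{-\beta'}}$ for $\varepsilon$ small enough.

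For \eqref{eq:D2ThetaD+-}, the formula \eqref{eq:D2Theta} expresses $D^2\Theta(\hat u_\varphi)[v,\tilde w]$, with $\tilde w:=w(\mathbf{1}_{\mathcal{D}_-}+\mathbf{1}_{\mathcal{D}_+})$, as a sum of three terms of the form
\[
\frac{1}{\langle \hat u'_0,\hat u'_0\rangle_{m_0}}\int_0^\infty\int_\mathbb{R} f''(\hat u_\varphi)\,\hat u'_\varphi\cdot g_1(s,\cdot)\cdot g_2(s,\cdot)\,dx\,ds,
\]
where each $g_i$ is $e^{s\mathcal{L}_\varphi}$ applied to $v$, $\tilde w$, $P^\perp_\varphi v$, or $P^\perp_\varphi \tilde w$. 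The heuristic to exploit is that the weight $|f''(\hat u_\varphi)\hat u'_\varphi|\leq Ce^{-\mu|x-\varphi|}$ is exponentially localized near $\varphi$, while $\tilde w$ sits at distance at least $(1-c)\ell_\varepsilon$ from $\varphi$, so the semigroup should transport this spatial separation into the integrand.

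The technical core is an exponentially weighted estimate for $e^{s\mathcal{L}_\varphi}$. Writing $e^{s\mathcal{L}_\varphi}=e^{-s}\sum_n s^n K_\varphi^n/n!$ with $K_\varphi v:=W\ast(f'(\hat u_\varphi)v)$, I will verify via Young's convolution inequality and $|y-\varphi|-|x-\varphi|\leq |x-y|$ that for every $\alpha\in(0,1/\sigma)$,
\[
\|e^{-\alpha|\cdot-\varphi|}\,K_\varphi v\|_{L^2} \leq \frac{\|f'\|_\infty}{1-\alpha\sigma}\,\|e^{-\alpha|\cdot-\varphi|}\,v\|_{L^2},
\]
since $\|e^{\alpha|\cdot|}W\|_{L^1}=(1-\alpha\sigma)^{-1}$ under Assumption~\ref{ass:W_exp}. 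Exponentiating gives
\[
\|e^{-\alpha|\cdot-\varphi|}\,e^{s\mathcal{L}_\varphi}\tilde w\|_{L^2} \leq C_\alpha\,e^{C_\alpha s}\,\|e^{-\alpha|\cdot-\varphi|}\tilde w\|_{L^2} \leq C_\alpha\,e^{C_\alpha s}\,e^{-\alpha(1-c)\ell_\varepsilon}\,\|w\|_{L^2}.
\]
Fixing $\alpha=\mu/2$, I bound each of the three terms in \eqref{eq:D2Theta} by splitting $e^{-\mu|x-\varphi|}=e^{-\alpha|x-\varphi|}\cdot e^{-\alpha|x-\varphi|}$, applying Cauchy--Schwarz in $x$, and invoking the above weighted estimate on the $\tilde w$-side.

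The main obstacle is to absorb the growth $e^{C_\alpha s}$, since $C_\alpha$ and $\kappa$ cannot be compared a priori. I will resolve this by interpolating pointwise in $s$ between the weighted estimate and the trivial $L^2$-bound on $e^{s\mathcal{L}_\varphi}\tilde w$ (or its $P^\perp_\varphi$ version), i.e.
\[
\|e^{-\alpha|\cdot-\varphi|}e^{s\mathcal{L}_\varphi}\tilde w\|_{L^2}\leq C\min\bigl(\|w\|_{L^2},\,e^{C_\alpha s-\alpha(1-c)\ell_\varepsilon}\|w\|_{L^2}\bigr),
\]
combined with the spectral-gap factor $e^{-\kappa s}$ from \eqref{eq:contract_ecL}, which is present in each of the three terms since $P^\perp_\varphi$ appears on at least one side by construction. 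Integrating $e^{-\kappa s}\min(1,e^{C_\alpha s-\alpha(1-c)\ell_\varepsilon})$ over $s$ yields a factor $Ce^{-b\ell_\varepsilon}$ with $b=\kappa\alpha(1-c)/(C_\alpha+\kappa)>0$, uniform in $s$. Since $b\ell_\varepsilon=b\varepsilon^{-\beta}\geq\varepsilon^{-\beta'/2}$ for $\varepsilon$ small enough (as $\beta>\beta'/2$), this gives \eqref{eq:D2ThetaD+-}.
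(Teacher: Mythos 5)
Your proof of \eqref{eq:DThetaD+-} is the same as the paper's (Cauchy--Schwarz plus the tail bound \eqref{eq:uprime_exp}), but your proof of \eqref{eq:D2ThetaD+-} is correct and follows a genuinely different route. The paper attacks the term analogous to your weighted estimate by expanding $e^{s\mathcal{L}_\varphi}=e^{-s}\sum_k s^k\mathcal{K}_\varphi^k/k!$, truncating at $M=\lfloor\varepsilon^{-\alpha_2}\rfloor$, proving a finite-propagation estimate for the iterates $\mathcal{K}_\varphi^k$ applied to functions supported in $\mathcal{D}_\pm$ (Lemma~\ref{lem:K}), splitting the time integral at $s=\varepsilon^{-\alpha_1}$ where the spectral gap takes over, and killing the series tail by Stirling-type bounds; this requires the auxiliary exponents $\alpha_1<\alpha_2<\alpha_3$ with $\beta>\alpha_2+\alpha_3$. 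You replace all of this by a single exponentially weighted convolution inequality, $\Vert e^{-\alpha|\cdot-\varphi|}K_\varphi v\Vert_{L^2}\leq \frac{\Vert f'\Vert_\infty}{1-\alpha\sigma}\Vert e^{-\alpha|\cdot-\varphi|}v\Vert_{L^2}$ for $\alpha<1/\sigma$ (valid since $\Vert We^{\alpha|\cdot|}\Vert_{L^1}=(1-\alpha\sigma)^{-1}$), exponentiate it, and then interpolate in $s$ between the growing weighted bound and the decaying spectral-gap bound; the resulting integral $\int_0^\infty e^{-\kappa s}\min(1,e^{C_\alpha s-\alpha(1-c)\ell_\varepsilon})\,{\rm d}s$ indeed yields $Ce^{-b\ell_\varepsilon}$ with $b>0$, which beats $e^{-\varepsilon^{-\beta'/2}}$ since $\beta>\beta'/2$. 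Your argument is shorter, avoids the parameter bookkeeping of \eqref{eq:alphas}, and in fact only uses that $W$ has some finite exponential moment rather than its exact form, so it is arguably slightly more robust; the paper's propagation lemma is more hands-on but serves the same purpose of converting spatial separation into smallness over a sufficiently long time window.

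Two small points you should still nail down, neither of which is a real gap. First, your choice $\alpha=\mu/2$ must also satisfy $\alpha<1/\sigma$; since decreasing $\alpha$ only weakens the weight, simply take $\alpha=\min(\mu/2,1/(2\sigma))$, or note that one may without loss of generality shrink $\mu$ in \eqref{eq:uprime_exp}. Second, in the middle term of \eqref{eq:D2Theta} the projection sits on the $\tilde w$-side, so the weighted estimate must be applied to $P^\perp_\varphi\tilde w=\tilde w-P_\varphi\tilde w$ rather than to $\tilde w$ itself; the extra piece $P_\varphi\tilde w$ is a multiple of $\hat u'_\varphi$, hence localized near $\varphi$, but its coefficient $\langle\tilde w,\hat u'_\varphi\rangle_{m_\varphi}/\langle\hat u'_\varphi,\hat u'_\varphi\rangle_{m_\varphi}$ is exponentially small by exactly the computation you used for \eqref{eq:DThetaD+-}, so $\Vert e^{-\alpha|\cdot-\varphi|}P^\perp_\varphi\tilde w\Vert_{L^2}\leq Ce^{-\alpha(1-c)\ell_\varepsilon}\Vert w\Vert_{L^2}$ still holds (for $\alpha\leq\mu$). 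With these two remarks inserted, your proof is complete.
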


The proof of Proposition~\ref{prop: bound Theta} relies on an precise estimation of errors made by approaching the continuous NFE model \eqref{eq:NFE conv} with the discrete model. More precisely, let us define the following notations.
\begin{definition}
\label{def:Deltas}
For all $s\geq 0$, $ \varepsilon>0$, $i=-N_{ \varepsilon}, \ldots, N_{ \varepsilon}$, $x\in \mathbb{ R}$, define
\begin{align}
\Delta_{ s, \varepsilon, i}^{ (0)}(x)&:= \varepsilon \sum_{ j=-N_{ \varepsilon}}^{ N_{ \varepsilon}} W(x_i-x_j)f(U_{j,s^-}) - \int_{ \mathcal{ D}_{ 0}} W(x-y) f \left(U_{s}(y)\right) {\rm d}y, \label{eq:Delta0_def}\\
\Delta_{ s, \varepsilon, i}^{ (+)}(x)&:= \varepsilon \sum_{ j>N_{ \varepsilon}}W(x_i-x_j)f(U_{j,s^-}) - \int_{ \mathcal{ D}_{ +}} W(x-y) f \left(U_{s}(y)\right) {\rm d}y,\label{eq:Delta+_def}\\
\Delta_{ s, \varepsilon, i}^{ (-)}(x)&:= \varepsilon \sum_{ j<-N_{ \varepsilon}} W(x_i-x_j)f(U_{j,s^-}) - \int_{ \mathcal{ D}_{ -}} W(x-y) f \left(U_{s}(y)\right) {\rm d}y\label{eq:Delta-_def}.
\end{align}
\end{definition}

The proof of the following approximation results, which will be used in the proof of Proposition~\ref{prop: bound Theta}, will be given in Appendix~\ref{sec:discrete continuous}.

\begin{proposition}
\label{prop:bound_approx}
There exist some $C>0$ and $ \varepsilon_{ 0}>0$ only depending on $W,f$ such that for all $s\geq 0$, $ \varepsilon\in \left(0, \varepsilon_{ 0}\right)$, $i=-N_{ \varepsilon}, \ldots, N_{ \varepsilon}$, we have
\begin{equation}
\left\Vert \sum_{ i=- N_{ \varepsilon}}^{ N_{ \varepsilon}} \Delta_{ s, \varepsilon, i}^{ (0)} \mathbf{ 1}_{ I_{ i}}\right\Vert_{ L^{ 2}} \leq C \varepsilon^{ 1- \frac{ \beta}{ 2}},\label{eq:Norm_Delta0}
\end{equation}
and
\begin{equation}
\max \left(\left\Vert \sum_{ i=- N_{ \varepsilon}}^{ N_{ \varepsilon}} \Delta_{ s, \varepsilon, i}^{ (+)} \mathbf{ 1}_{ I_{ i}}\right\Vert_{ L^{ 2}} , \left\Vert \sum_{ i=- N_{ \varepsilon}}^{ N_{ \varepsilon}} \Delta_{ s, \varepsilon, i}^{ (-)} \mathbf{ 1}_{ I_{ i}}\right\Vert_{ L^{ 2}}\right)\leq C \varepsilon.
\label{eq:Norm_Delta+}
\end{equation}
\end{proposition}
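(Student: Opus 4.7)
My plan is to introduce the continuous midpoint quantities and split each $\Delta_{s,\varepsilon,i}^{(\star)}(x)$ into a pure discretization error and a pure shift error. Set
\[
A(x) := \int_{\mathcal{D}_0} W(x-y) f(U_s(y))\,\mathrm{d}y,\qquad C_i := \varepsilon \sum_{j=-N_\varepsilon}^{N_\varepsilon} W(x_i - x_j) f(U_{j,s^-}),
\]
and decompose, for $x\in I_i$, $\Delta_{s,\varepsilon,i}^{(0)}(x) = (C_i - A(x_i)) + (A(x_i) - A(x))$. Since the intervals $I_i$ are disjoint, $\bigl\|\sum_i \Delta^{(0)}_{s,\varepsilon,i} \mathbf{1}_{I_i}\bigr\|_{L^2}^2 = \sum_i \int_{I_i} |\Delta^{(0)}_{s,\varepsilon,i}(x)|^2\,\mathrm{d}x$, and I can treat each piece separately.

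I would control the \emph{shift} part by translation-continuity in $L^1$: since $\|W'\|_{L^1} = 1/\sigma$ for the kernel \eqref{eq:Wexp}, for $|h|=|x-x_i|\leq \varepsilon/2$ one has $|A(x_i)-A(x)| \leq \|f\|_\infty \|W(\cdot+h)-W\|_{L^1} \leq C\varepsilon$. Summing $\int_{I_i}|A(x_i)-A(x)|^2 \,\mathrm{d}x \leq C\varepsilon^3$ over the $\sim N_\varepsilon \sim \varepsilon^{-1-\beta}$ values of $i$ gives an $L^2$-norm $\leq C\varepsilon^{1-\beta/2}$. For the \emph{discretization} part, write $C_i - A(x_i) = \sum_j f(U_{j,s^-}) \int_{I_j} [W(x_i - x_j) - W(x_i - y)]\,\mathrm{d}y$; the term $j=i$ is $O(\varepsilon^2)$ via the pointwise Lipschitz bound $|W(0)-W(z)| \leq |z|/(2\sigma^2)$. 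For $j\neq i$, the integration range $\{x_i-y:y\in I_j\}$ stays at distance $\geq \varepsilon/2$ from $0$, so $W$ is $C^\infty$ there, and a second-order Taylor expansion in $z=y-x_j$ annihilates the first-order term by symmetry of $I_j$ around $x_j$, leaving a remainder bounded by $C\varepsilon^3 \sup_{y\in I_j}|W''(x_i-y)| \leq C\varepsilon^3 e^{-|x_i-x_j|/\sigma}$. The geometric sum $\sum_{j\neq i} e^{-|x_i-x_j|/\sigma} \leq C/\varepsilon$ then gives $|C_i - A(x_i)| \leq C\varepsilon^2$ uniformly in $i$, hence an $L^2$-contribution bounded by $C\varepsilon^{2-\beta/2}$. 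The shift error dominates and establishes \eqref{eq:Norm_Delta0}.

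For $\Delta^{(+)}$ (and symmetrically $\Delta^{(-)}$), I would use that $U_s\equiv a_2$ on $\mathcal{D}_+$ and $U_{j,s^-} = a_2$ for $j>N_\varepsilon$, so the factor $f(a_2)$ comes out and one compares $\varepsilon\sum_{j>N_\varepsilon} W(x_i-x_j)$ to $\int_{\mathcal{D}_+} W(x-y)\,\mathrm{d}y$. The same shift/discretization split applies, but now each piece carries an additional exponential-decay factor in $d_i := N_\varepsilon - i \geq 0$: the shift integral concerns $W$ on an interval of length $\leq \varepsilon/2$ located at distance $\geq d_i\varepsilon$ from the origin, producing a bound $\leq C\varepsilon\, e^{-d_i\varepsilon/\sigma}$; the Taylor remainder for $j>N_\varepsilon$, combined with $\sum_{j>N_\varepsilon} e^{-(j-i)\varepsilon/\sigma} \leq C\varepsilon^{-1} e^{-d_i\varepsilon/\sigma}$, yields $\leq C\varepsilon^2\, e^{-d_i\varepsilon/\sigma}$. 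The crucial final estimate $\sum_i \varepsilon \bigl(C\varepsilon e^{-d_i\varepsilon/\sigma}\bigr)^2 \leq C\varepsilon^3 \sum_{d\geq 0} e^{-2d\varepsilon/\sigma} \leq C\varepsilon^2$ (the geometric series evaluates to $O(1/\varepsilon)$) then gives the $L^2$-norm bound $\leq C\varepsilon$, establishing \eqref{eq:Norm_Delta+}.

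The main technical obstacle I anticipate is the tension between the $N_\varepsilon \sim \varepsilon^{-1-\beta}$ number of terms and the limited smoothness of $W$: one must simultaneously exploit the exact cancellation of the first-order Taylor term on the symmetric interval $I_j$ \emph{and} the exponential decay of $|W''|$ for $\Delta^{(0)}$, while for $\Delta^{(\pm)}$ the exponential tail of $W$ itself must suppress the contributions of indices $i$ close to the boundary. The kink of $W$ at $z=0$ is isolated and enters only through the single term $j=i$, handled by the direct Lipschitz estimate.
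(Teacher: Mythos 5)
Your overall scheme is sound and runs essentially parallel to the paper's: the paper likewise splits $\Delta^{(0)}$ into a jump-mismatch part and a discrete-versus-continuous part, and splits $\Delta^{(\pm)}$ into a quadrature error evaluated at $x_i$ plus a shift error in $x$; where the paper evaluates the resulting geometric sums explicitly for the exponential kernel, you substitute softer arguments (the $W^{1,1}$ translation bound $\Vert W(\cdot+h)-W\Vert_{L^1}\le |h|/\sigma$ for the shift, and the symmetric second-order Taylor expansion on $I_j$, using smoothness of $W$ away from $0$ and the decay of $W''$, for the quadrature error). Your orders check out: $C\varepsilon^{1-\beta/2}$ from the shift over the $\sim\varepsilon^{-1-\beta}$ cells, $C\varepsilon^{2-\beta/2}$ from the quadrature error, and for $\Delta^{(\pm)}$ the extra factor $e^{-d_i\varepsilon/\sigma}$ turns the sum over $i$ into a geometric series of size $O(1/\varepsilon)$, giving the bound $C\varepsilon$ of \eqref{eq:Norm_Delta+}, exactly as in the paper's explicit computation.

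There is, however, one concrete gap. Your identity $C_i-A(x_i)=\sum_j f(U_{j,s^-})\int_{I_j}\left[W(x_i-x_j)-W(x_i-y)\right]{\rm d}y$ is false as written, because $A$ is built from $f(U_s(y))=f(U_{j,s})$ on $I_j$ while $C_i$ uses $f(U_{j,s^-})$. The proposition is claimed for every $s\geq 0$, hence also at jump times, and there the discrepancy is not confined to one index: if neuron $j_0$ fires at time $s$, then by \eqref{eq:Ui_mod} every bulk potential moves, $U_{j,s}-U_{j,s^-}=\varepsilon W(x_j-x_{j_0})\leq \varepsilon\Vert W\Vert_\infty$. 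You must therefore add the term $\sum_j\left(f(U_{j,s^-})-f(U_{j,s})\right)\int_{I_j}W(x_i-y)\,{\rm d}y$, which by Lipschitz continuity of $f$ is bounded by $C\varepsilon\Vert W\Vert_{L^1}=C\varepsilon$ uniformly in $i$, and hence contributes $C\varepsilon^{1-\beta/2}$ to the $L^2$ norm after summing over the $\sim\varepsilon^{-1-\beta}$ cells — the same order as your shift term, so \eqref{eq:Norm_Delta0} survives, but the term has to be acknowledged and bounded (this is precisely the paper's $\Delta^{(0,1)}$ contribution). No such issue arises for $\Delta^{(\pm)}$, since there $U_{j,s^-}=U_{j,s}=a_2$ (resp.\ $a_1$) identically.
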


We can now turn to the proof of Proposition~\ref{prop: bound Theta}.

\begin{proof}[Proof of Proposition~\ref{prop: bound Theta}]
Note that by \eqref{eq:Theta_dist} we have, on $ \mathcal{ O}_{ \varepsilon, \delta, t_{ 1}}$,
\[\sup_{ s\leq t_{ 1} \varepsilon^{ -1}} \left\Vert U_{ s} - \hat{ u}_{ \Theta(U_{ s})}\right\Vert_{ L^{ 2}} \leq \mathcal{ C}_{ \Theta} \varepsilon^{ \frac{ 1}{ 2}- 2 \beta- 2 \delta}.
\]
Recalling the Ito decomposition \eqref{eq:Ito to Theta}, we have, for $t\in [0, t_1\varepsilon^{-1}]$,
\begin{align*}
\Theta(U_t)  &=  \Theta(U_0) -\int_0^t {\rm D}\Theta(U_s)U_s  {\bf 1}_{\mathcal{D}_0} {\rm d} s\\
&\quad + \sum_{j\in\mathbb{Z}}\int_0^t \int_0^\infty\left(\Theta(U_s+\varpi_{j}\mathbf{ 1}_{ z\leq \lambda_{ j, s}} )-\Theta(U_s)\right){\rm d} s{\rm d} z + \Xi_t,
\end{align*}
where $ \Xi_{ t}$ in defined in \eqref{eq:Nt}. We expand this decomposition into several contributions as follows:
\begin{align}
\Theta(U_t) &=\Theta(U_0) + \int_0^t {\rm D}\Theta(U_s)\left( -U_s+ W*f(U_s) \right) {\rm d} s \nonumber\\
& \quad + \int_0^t {\rm D}\Theta(U_s)\left(\left(U_s - W*f(U_s) \right) \left({\bf 1}_{\mathcal{D}_-} +{\bf 1}_{\mathcal{D}_+} \right)\right) {\rm d} s \nonumber\\
&\quad +  \sum_{j\in\mathbb{Z}}\int_0^t \int_0^\infty {\rm D}\Theta(U_s)\varpi_{j}\mathbf{ 1}_{ z\leq \lambda_{ j, s}}{\rm d} s{\rm d} z -\int_0^t {\rm D}\Theta(U_s)\left( W*f(U_s) {\bf 1}_{\mathcal{D}_0} \right) {\rm d} s \nonumber \\
&\quad + \sum_{j\in\mathbb{Z}}\int_0^t \int_0^\infty\left(\Theta(U_s+\varpi_{j}\mathbf{ 1}_{ z\leq \lambda_{ j, s}} )-\Theta(U_s)- {\rm D}\Theta(U_s)\varpi_{j}\mathbf{ 1}_{ z\leq \lambda_{ j, s}} \right){\rm d} z{\rm d} s + \Xi_t.\label{eq:decomp Theta}
\end{align}
Let us define
\begin{equation}
\label{eq:t_ast}
t_* =\inf \left\{t\in  [0, t_1\varepsilon^{-1}]:\, \vert \Theta(U_t) \vert>  \varepsilon^{- \frac{3\beta}{4}}\right\}.
\end{equation}
Our aim is to show that $t_*=t_1 \varepsilon^{ -1}$ by finding adequate upper bounds for the different terms of the right-hand side of \eqref{eq:decomp Theta}.

\medskip
\noindent
{\it Step $1$: first line of \eqref{eq:decomp Theta}.} Let us first tackle the first line of the right-hand side of \eqref{eq:decomp Theta}. This is the simplest part of the proof. Indeed, if for all $ v\in \mathcal{ N}$, $ \left(T_{ t}(v)\right)_{ t\geq0}$ is the flow of the dynamics of \eqref{eq:NFE conv} with $ T_{ 0}(v)=v$, we have, by construction of the isochron map $\Theta$, that $ \Theta \left(T_{ t}(v)\right)= \Theta \left(T_{ 0}(v)\right)= \Theta \left(v\right)$. Hence, by differentiation w.r.t. $t$ and taking $t=0$ gives ${\rm D}\Theta(v)\left( -v+ W*f(v)\right) = 0$ for all $v\in \mathcal{ N}$. Hence, the first integral term in \eqref{eq:decomp Theta} is simply $0$.

\medskip
\noindent
{\it Step $2$: second line of \eqref{eq:decomp Theta}.} Let us now work on the second line of the right-hand side of \eqref{eq:decomp Theta}. For simplicity we concentrate on what happens on $\mathcal{D}_+$, the estimates for $\mathcal{D}_-$ being similar. We have, for $t\leq t_{ \ast}$,
\begin{align}\label{eq:replace Us by proj with isochron}
\int_{ 0  }^{t}{\rm D}\Theta(U_s)&\left(\left(U_s - W*f(U_s) \right) {\bf 1}_{\mathcal{D}_+}\right) {\rm d}s\nonumber\\
&= \int_{ 0}^{t}{\rm D}\Theta(U_s)\left(\left(a_2 - \hat u_{\Theta(U_s)} \right) {\bf 1}_{\mathcal{D}_+}\right) {\rm d}s\\
&\quad + \int_{ 0}^{t}{\rm D}\Theta(U_s)\left(W*\left(f\left(\hat u_{\Theta(U_s)}\right) - f(U_s) \right) {\bf 1}_{\mathcal{D}_+}\right) {\rm d}s.\nonumber
\end{align}
Firstly,
\begin{align*}
\int_{ 0}^{t}\left\vert {\rm D}\Theta(U_s)\left(\left(a_2 - \hat u_{\Theta(U_s)} \right) {\bf 1}_{\mathcal{D}_+}\right)\right\vert {\rm d}s &\leq C \int_{ 0}^{t}\left\Vert \left(a_2 - \hat u_{\Theta(U_s)} \right) {\bf 1}_{\mathcal{D}_+}\right\Vert_{L^2} {\rm d}s\\
&= C \int_{ 0}^{t}\left(\int_{\ell_\varepsilon-\Theta(U_s)}^\infty (a_2-\hat u_0(x))^2{\rm d} x\right)^\frac12 {\rm d}s\\
&\leq C \int_{ 0}^{t}e^{-\mu\left(\ell_\varepsilon-\Theta(U_s)\right)} {\rm d}s\leq \frac{ C}{ \varepsilon} e^{ - \mu \left(\varepsilon^{ - \beta}-\varepsilon^{ - \frac{3\beta}{4}}\right)},
\end{align*}
where we have used Lemma~\ref{lem:int_tail_hatu} in the last estimate, together with the definition of $t_{ \ast}$. Concentrate now on the second term in the righthand side of \eqref{eq:replace Us by proj with isochron}: for $t\leq t_{ \ast}$,
\begin{align}
\int_{ 0}^{t}{\rm D}\Theta(U_s)&\left(W*\left(f\left(\hat u_{\Theta(U_s)}\right) - f(U_s) \right) {\bf 1}_{\mathcal{D}_+}\right) {\rm d}s \nonumber\\
&= \int_{ 0}^{t}\left({\rm D}\Theta(U_s) - {\rm D} \Theta( \hat{ u}_{ \Theta \left(U_{ s}\right)})\right)\left(W*\left(f\left(\hat u_{\Theta(U_s)}\right) - f(U_s) \right) {\bf 1}_{\mathcal{D}_+}\right) {\rm d}s \nonumber\\
&\quad + \int_{ 0}^{t}{\rm D} \Theta( \hat{ u}_{ \Theta \left(U_{ s}\right)})\left(W*\left(f\left(\hat u_{\Theta(U_s)}\right) - f(U_s) \right) {\bf 1}_{\mathcal{D}_+}\right) {\rm d}s \label{aux:DThetaAB}\\
&= A_{ t}+B_{ t}\nonumber.
\end{align}
Let us denote $Y_s=U_s-\hat u_{\Theta(U_s)}$ and decompose the first term $A_{ t}$ in \eqref{aux:DThetaAB} into
\begin{align}
&A_{ t}= \int_{ 0}^{t}{\rm D}^{ 2} \Theta \left(\hat{ u}_{ \Theta(U_{ s})}\right) \left[ Y_s, W*\left(f\left(\hat u_{\Theta(U_s)}\right) - f(U_s) \right) {\bf 1}_{\mathcal{D}_+}\right] {\rm d}s\nonumber\\
&\quad \quad + \frac{ 1}{ 2} \int_{ 0}^{t}\int_{ 0}^{1} (1-r){\rm D}^{ 3} \Theta \left(\hat{ u}_{ \Theta \left(U_{ s}\right)} + r Y_s\right) \left[Y_s, Y_s, W*\left(f\left(\hat u_{\Theta(U_s)}\right) - f(U_s) \right) {\bf 1}_{\mathcal{D}_+}\right] {\rm d}r {\rm d}s\nonumber\\
&\quad =A_{ 1, t} + A_{ 2, t}.\label{aux:At}
\end{align}
To control of $A_{ 1, t}$ in \eqref{aux:At} we apply Proposition~\ref{prop:D2Theta}, for any $ \beta^{ \prime}< \beta$ and $t\leq t_{ \ast}$,
\begin{align*}
\left\vert A_{ 1, t} \right\vert &\leq C e^{ - \varepsilon^{ -\frac{ \beta^{ \prime}}{ 2}}}\int_{ 0}^{t} \left\Vert Y_s \right\Vert_{L^2} \left\Vert W*\left(f\left(\hat u_{\Theta(U_s)}\right) - f(U_s) \right) \right\Vert_{L^2}{\rm d}s,\\
& \leq C e^{ - \varepsilon^{ -\frac{ \beta^{ \prime}}{ 2}}}\int_{ 0}^{t} \left\Vert U_{ s} - \hat{ u}_{ \Theta \left(U_{ s}\right)} \right\Vert_{L^2}^{ 2}{\rm d}s,
\end{align*}
by Young's inequality for convolution, Lipschitz continuity of $f$ and since $W\in L^{ 1}$. Hence, for $t\leq t_{ \ast}$, on the event $ \mathcal{ O}_{ \varepsilon, \delta, t_{ 1}}$,
\begin{equation}
\label{eq:boundA1t}
\left\vert A_{ 1, t} \right\vert  \leq C \varepsilon^{-4\beta-4\delta} e^{ - \varepsilon^{ -\frac{ \beta^{ \prime}}{ 2}}}.
\end{equation}
For the control of $A_{ 2, t}$ in \eqref{aux:At}, remark that
\begin{align*}
\left\Vert W*\left(f\left(\hat u_{\Theta(U_s)}\right) - f(U_s) \right) {\bf 1}_{\mathcal{D}_+} \right\Vert_{ L^{ 2}} \leq C \left\Vert W \right\Vert_{ L^{ 1}} \left\Vert U_{ s} - \hat{ u}_{ \Theta \left(U_{ s}\right)}\right\Vert_{ L^{ 2}},
\end{align*}
and
\begin{align*}
\left\Vert W*\left(f\left(\hat u_{\Theta(U_s)}\right) - f(U_s) \right) {\bf 1}_{\mathcal{D}_+} \right\Vert_{ L^{ \infty}} \leq C \left\Vert W \right\Vert_{ L^{ 2}} \left\Vert U_{ s} - \hat{ u}_{ \Theta \left(U_{ s}\right)}\right\Vert_{ L^{ 2}},
\end{align*}
so that, applying \eqref{eq:bound D3Theta}, on the event $ \mathcal{ O}_{ \varepsilon, \delta, t_{ 1}}$ (recall \eqref{eq:event_Oeps}), for $t\leq t_{ \ast}$,
\begin{align*}
\left\vert A_{ 2, t} \right\vert \leq C \int_{ 0}^{t}\left\Vert U_{ s} - \hat{ u}_{ \Theta \left(U_{ s}\right)}\right\Vert_{ L^{ 2}}^{ 3} {\rm d}s \leq C \varepsilon^{ 1/2 - 6 \beta - 6 \delta}.
\end{align*}
Concentrate now on the term $B_{ t}$ in \eqref{aux:DThetaAB}: relying again on Proposition~\ref{prop:D2Theta} we obtain, for any $\beta'<\beta$ and $t\leq t_*$,
\begin{align*}
\left\vert B_{ t} \right\vert &\leq Ce^{-\varepsilon^{\beta'}}\int_0^t \left\Vert W*\left(f\left(\hat u_{\Theta(U_s)}\right) - f(U_s) \right) \right\Vert_{L^2}{\rm d}s\\
&\leq Ce^{-\varepsilon^{\beta'}}\int_0^t \left\Vert \hat u_{\Theta(U_s)} - U_s\right\Vert_{ L^{ 2}} {\rm d}s,
\end{align*}
by Lipschitz-continuity of $f$ and Young's inequality. So, for $t\leq t_{ \ast}$, on the event $\mathcal{O}_{\varepsilon,\delta,t_1}$,
\begin{align*}
\left\vert B_{ t} \right\vert \leq C \varepsilon^{ -\frac{ 1}{ 2} - 2 \beta - 2 \delta} e^{ - \varepsilon^{ - \beta'}}.
\end{align*}

\medskip

\noindent
{\it Step $3$: third line of \eqref{eq:decomp Theta}.} Concentrate now on the third term in the decomposition \eqref{eq:decomp Theta}: for reasons that will become obvious in the following (see the term $C_{ 4,t}$ in \eqref{aux:Ct2}), we need to introduce the discrete $ \varepsilon$-approximation of the phase
\begin{equation}
\label{eq:barTheta}
\bar \Theta_s^{ (\varepsilon)} := \bar \Theta_s =\varepsilon \lfloor \Theta(U_s)/\varepsilon\rfloor,\ s\geq0.
\end{equation}
With this notation at hand, let
\begin{align}
C_{ t}&:= \sum_{j\in\mathbb{Z}}\int_0^t \int_0^\infty {\rm D}\Theta(U_s)\varpi_{j}\mathbf{ 1}_{ z\leq \lambda_{ j, s}}{\rm d} s{\rm d} z -\int_0^t {\rm D}\Theta(U_s)\left( W*f(U_s) {\bf 1}_{\mathcal{D}_0} \right) {\rm d} s, \nonumber\\
&= \int_{ 0}^{t} {\rm D} \Theta(U_{ s}) \left( \sum_{ i=-N_{ \varepsilon}}^{ N_{ \varepsilon}} \left(\sum_{ j\in \mathbb{ Z}} \varepsilon W(x_{ i}-x_{ j})  f \left(U_{ j, s-}\right)- W\ast f \left(U_{ s}\right)\right)  \mathbf{ 1}_{ I_{ i}}\right){\rm d}s \nonumber\\
&= \int_{ 0}^{t} \left({\rm D} \Theta(U_{ s})- {\rm D} \Theta(\hat{ u}_{ \bar \Theta_s})\right) \left( \sum_{ i=-N_{ \varepsilon}}^{ N_{ \varepsilon}} \left(\Delta_{ s, \varepsilon, i}^{ (-)} + \Delta_{ s, \varepsilon, i}^{ (0)} + \Delta_{ s, \varepsilon, i}^{ (+)}\right) \mathbf{ 1}_{ I_{ i}}\right){\rm d}s \nonumber\\
&\quad + \int_{ 0}^{t} {\rm D} \Theta(\hat{ u}_{\bar \Theta_s})  \sum_{ i=-N_{ \varepsilon}}^{ N_{ \varepsilon}}\sum_{ j=-N_{ \varepsilon}}^{ N_{ \varepsilon}}\varepsilon W(x_i-x_j)\left(f(U_{j,s^-})-f(U_{j,s})\right)\mathbf{ 1}_{ I_{ i}}{\rm d}s \nonumber\\
&\quad + \int_{ 0}^{t} {\rm D} \Theta(\hat{ u}_{ \bar \Theta_s})  \left( \sum_{ i=-N_{ \varepsilon}}^{ N_{ \varepsilon}} \left(\sum_{ j\in \mathbb{ Z}} \varepsilon W(x_{ i}-x_{ j})  f \left(U_{ j, s}\right)- W\ast f \left(U_{ s}\right)\right)  \mathbf{ 1}_{ I_{ i}}\right){\rm d}s \nonumber\\
&= C_{1,t}+C_{2,t}+C_{3,t}. \label{aux:Ct}
\end{align}
Remark that $C_{2,t}=0$ in \eqref{aux:Ct} since there is almost surely a countable number of jumps before time $t_*$. Concerning $C_{1,t}$ we get, by Proposition~\ref{prop:regularity isochron} and Proposition~\ref{prop:bound_approx}, for $t\leq t_{ \ast}$,
\begin{align*}
\vert C_{1,t}\vert&\leq C \int_0^t \left\Vert \hat u_{\bar \Theta_s} - U_s\right\Vert_{ L^{ 2}}\left\Vert  \sum_{ i=-N_{ \varepsilon}}^{ N_{ \varepsilon}} \left(\Delta_{ s, \varepsilon, i}^{ (-)} + \Delta_{ s, \varepsilon, i}^{ (0)} + \Delta_{ s, \varepsilon, i}^{ (+)}\right) \mathbf{ 1}_{ I_{ i}}\right\Vert_{L^2} {\rm d} s\leq C \varepsilon^{1/2-5\beta/2-2\delta},
\end{align*}
where we used the fact that
\[
\left\Vert \hat u_{\bar \Theta_s} - U_s\right\Vert_{ L^{ 2}}\leq \left\Vert \hat u_{\bar \Theta_s} - \hat u_{\Theta(U_s)}\right\Vert_{ L^{ 2}}+\left\Vert \hat u_{\Theta(U_s)} - U_s\right\Vert_{ L^{ 2}}\leq C\varepsilon^{1/2-2\beta-2\delta}.
\]
Let us now focus on $C_{3,t}$ in \eqref{aux:Ct}. Define
\begin{equation}
\bar Y_s=U_s-\hat u_{\bar \Theta_s}
\end{equation}
as well as
\begin{equation}
h_s(y) = \int_0^1 f'\left(\hat u_{\bar \Theta_s}(y)+rY_s(y)\right){\rm d} r.
\end{equation}
With this at hand, let us make another further decomposition of this term
\begin{align}
C_{3,t} &= \int_0^t \sum_{ i\in \mathbb{Z}}\sum_{ j\in \mathbb{Z}}\int_{I_i}\int_{I_j}\left(W(x_i-x_j)-W(x-y)\right)f(\hat u_{\bar \Theta_s})(y)\hat u'_{\bar \Theta_s}(x) m_{\bar \Theta_s}(x){\rm d}y{\rm d}x{\rm d}s \nonumber\\
&\quad +\int_0^t \sum_{ i\in \mathbb{Z}}\sum_{ j\in \mathbb{Z}}\int_{I_i}\int_{I_j}\left(W(x_i-x_j)-W(x-y)\right)\bar Y_s(y) h_s(y)\hat u'_{\bar \Theta_s}(x) m_{\bar \Theta_s}(x){\rm d}y{\rm d}x{\rm d}s \nonumber\\
&\quad +\int_0^t \sum_{ i > N_\varepsilon}\int_{I_i}\left(\sum_{ j\in \mathbb{Z}}\varepsilon W(x_i-x_j)f(U_{j,s})-W*f(U_s)(x)\right)\hat u'_{\bar \Theta_s}(x) m_{\bar \Theta_s}(x){\rm d}x{\rm d}s \nonumber\\
&\quad +\int_0^t \sum_{ i < -N_\varepsilon}\int_{I_i}\left(\sum_{ j\in \mathbb{Z}}\varepsilon W(x_i-x_j)f(U_{j,s})-W*f(U_s)(x)\right)\hat u'_{\bar \Theta_s}(x) m_{\bar \Theta_s}(x){\rm d}x{\rm d}s \nonumber\\
&=C_{4,t}+C_{5,t}+C_{6,t}+C_{7,t}. \label{aux:Ct2}
\end{align}
Let us first focus on $C_{4,t}$ in \eqref{aux:Ct2}. With the two changes of variables $x \to x- \bar \Theta_s$ and $y \to y- \bar \Theta_s$ we get
\begin{align*}
C_{4,t} &= \int_0^t \sum_{ i\in \mathbb{Z}}\sum_{ j\in \mathbb{Z}}\int_{I_i}\int_{I_j}\left(W(x_i-x_j)-W(x-y)\right)f(\hat u_{0})(y)\hat u'_{0}(x) m_{0}(x){\rm d}y{\rm d}x{\rm d}s.
%& =\int_0^t \sum_{ i\in \mathbb{Z}}\sum_{ j\in \mathbb{Z}}\int_{I_i}\int_{I_j}\left(W(x_i-x_j)-W(x-y)\right)f(\hat u_{0})(-y)\hat u'_{0}(-x) m_{0}(-x){\rm d}y{\rm d}x{\rm d}s.
\end{align*}
This is where we see the necessity of using the discrete phase $ \bar \Theta_{ s} \in \varepsilon \mathbb{ Z}$ defined in \eqref{eq:barTheta}: these two changes of variables do not induce any changes in the discrete sums on $i$ and $j$, as, doing so, we remain on the discrete lattice $ \varepsilon \mathbb{ Z}$. Another symmetry argument gives
\begin{align*}
C_{4,t} 
%&= \int_0^t \sum_{ i\in \mathbb{Z}}\sum_{ j\in \mathbb{Z}}\int_{I_i}\int_{I_j}\left(W(x_i-x_j)-W(x-y)\right)f(\hat u_{0})(y)\hat u'_{0}(x) m_{0}(x){\rm d}y{\rm d}x{\rm d}s\\
& =\int_0^t \sum_{ i\in \mathbb{Z}}\sum_{ j\in \mathbb{Z}}\int_{I_i}\int_{I_j}\left(W(x_i-x_j)-W(x-y)\right)f(\hat u_{0})(-y)\hat u'_{0}(-x) m_{0}(-x){\rm d}y{\rm d}x{\rm d}s.
\end{align*}
Since $x\mapsto \hat u'_{0}(x) m_{0}(x)$ is even while $y\mapsto f(\hat u_0)(y)-a$ is odd, we deduce that
\begin{equation*}
C_{4,t} = 2a \int_0^t\sum_{ i\in \mathbb{Z}}\sum_{ j\in \mathbb{Z}}\int_{I_i}\int_{I_j}\left(W(x_i-x_j)-W(x-y)\right)\hat u'_0(x) m_0(x){\rm d}y{\rm d}x{\rm d} s.
\end{equation*}
Remark now that since
\[
\left|\int_{I_i^2}\left(W(x_i-x_i)-W(x-y)\right)\hat u'_0(x) m_0(x){\rm d}y{\rm d}x\right|\leq C \varepsilon^2\int_{I_i}\hat u'_0(x) m_0(x){\rm d}x,
\]
we have, on $t\leq \varepsilon^{ -1} t_{ f}$,
\begin{align*}
\bigg|\int_0^t\sum_{ i\in \mathbb{Z}}\int_{I_i^2}\left(W(x_i-x_i)-W(x-y)\right)\hat u'_0(x) &m_0(x){\rm d}y{\rm d}x{\rm d} s\bigg|\\
&\leq C\varepsilon^2\int_0^t \int \hat u'_0(x) m_0(x){\rm d}x{\rm d} s\leq C\varepsilon,
\end{align*}
so we may neglect the diagonal terms in $C_{ 4,t}$. Moreover, for $i\neq j$, a straightforward computation leads to
\begin{align*}
\int_{I_j}\left(W(x_i-x_j)-W(x-y)\right){\rm d} y &= \varepsilon \left(W(x_i-x_j)- W(x-x_j)\right)\\
&\quad +\left(\varepsilon-\sigma\sinh(\varepsilon/\sigma)\right)W(x-x_j).
\end{align*}
Then, remarking that, for $x\in I_i$,
\begin{align*}
\sum_{ j\neq i} &\left(W(x_i-x_j)-W(x-x_j)\right)\\
&= \left(1-e^{-\frac{x-x_i}{\sigma}}\right)\sum_{j<i}W(x_i-x_j)+\left(1-e^{-\frac{x_i-x}{\sigma}}\right)\sum_{j>i}W(x_i-x_j),
\end{align*}
we obtain the decomposition
\begin{align*}
C_{4,t}&= a \int_0^t\sum_{ i\in \mathbb{Z}}\left(\varepsilon\sum_{j<i}W(x_i-x_j)\right)\int_{I_i}\left(1-e^{-\frac{x-x_i}{\sigma}}\right)\hat u'_0(x) m_0(x){\rm d}x{\rm d} s\\
&\quad + a \int_0^t\sum_{ i\in \mathbb{Z}}\left(\varepsilon\sum_{j>i}W(x_i-x_j)\right)\int_{I_i}\left(1-e^{-\frac{x_i-x}{\sigma}}\right)\hat u'_0(x) m_0(x){\rm d}x{\rm d} s\\
&\quad + a \frac{\varepsilon-\sigma\sinh(\varepsilon/\sigma)}{\varepsilon}\int_0^t\sum_{ i\in \mathbb{Z}}\int_{I_i}\left(\varepsilon\sum_{j\neq i}W(x-x_j) \right)\hat u'_0(x) m_0(x){\rm d}x{\rm d} s + O(\varepsilon)\\
&= C^1_{4,t}+C^2_{4,t}+C^3_{4,t}+ O(\varepsilon).
\end{align*}
Concerning $C^1_{4,t}$, decompose first
\begin{align*}
\int_{I_i} &\left(1-e^{-\frac{x-x_i}{\sigma}}\right)\hat u'_0(x) m_0(x) {\rm d}x\\
&=( \varepsilon- 2\sigma \sinh(\varepsilon/2\sigma))\hat u'_0(x_i) m_0(x_i)+ \int_{I_i}\left(1-e^{-\frac{x-x_i}{\sigma}}\right)\int_{x_i}^x(\hat u'_0 m_0)'(r){\rm d}r{\rm d}x\\
&=( \varepsilon- 2\sigma \sinh(\varepsilon/2\sigma))\hat u'_0(x_i) m_0(x_i)\\
&\quad +\int_{x_i}^{x_i+\varepsilon
/2}\left(2\sigma \left(e^{-\frac{\varepsilon}{2\sigma}}-e^{-\frac{r-x_i}{\sigma}}\right)+\left(x_i-r+\frac{\varepsilon}{2}\right)\right)(\hat u'_0 m_0)'(r){\rm d}r\\
&\quad +\int_{x_i-\varepsilon
/2}^{x_i}\left(2\sigma \left(e^{-\frac{r-x_i}{\sigma}}- e^{\frac{\varepsilon}{2\sigma}}\right)+\left(r-x_i+\frac{\varepsilon}{2}\right)\right)(\hat u'_0 m_0)'(r){\rm d}r.
\end{align*}
Hence, we get, for $t\leq t_{ \ast}$, remarking that $\varepsilon\sum_{j<i}W(x_i-x_j)\leq C$,
\begin{align*}
\left|C^1_{4,t}\right|\leq C \varepsilon^2\int_0^t\varepsilon\sum_{i\in\mathbb{Z}}\hat u'_0(x_i) m_0(x_i){\rm d}s+C\varepsilon^2 \int_0^t \int_{\mathbb{R}}\left|(\hat u'_0 m_0)'\right|(r){\rm d}r\leq C\varepsilon.
\end{align*}
Note here that we have used that $ \hat{ u}_{ 0}^{ \prime \prime}\in L^{ 2}$, see Lemma~\ref{lem:u0H2}. The term $C^2_{4,t}$ can be tackled in a similar way, and
\begin{align*}
\left|C^3_{4,t}\right|\leq C\varepsilon^2\int_0^t \int\hat u'_0(x) m_0(x){\rm d}x{\rm d} s\leq C\varepsilon,
\end{align*}
so that $\left|C_{4,t}\right|\leq C\varepsilon$. Concerning $C_{5,t}$ in \eqref{aux:Ct2}, we have, with two Cauchy-Schwarz inequalities, one on the integral w.r.t. $y$ and the second on the discrete sum w.r.t. $j$, the bound
\begin{multline*}
\left|C_{5,t}\right| \leq \\
\int_0^t\Vert \bar Y_s\Vert_{L^2}\sum_{ i\in \mathbb{Z}}\int_{I_i}\left(\sum_{ j\in \mathbb{Z}}\int_{I_j}\left(\left(W(x_i-x_j)-W(x-y)\right)h_s(y)\right)^2 {\rm d}y\right)^\frac12 \hat u'_{\bar \Theta_s}(x) m_{\bar \Theta_s}(x) {\rm d}x{\rm d}s,
\end{multline*}
and since a simple computation leads to
\[
\int_{I_j}\left(\left(W(x_i-x_j)-W(x-y)\right)h_s(y)\right)^2\leq C\varepsilon^3 W(x_i-x_j)^2,
\]
we obtain, for $t\leq t_{ \ast}$,
\begin{align*}
\left|C_{5,t}\right| &\leq C\int_0^t\Vert \bar Y_s\Vert_{L^2}\sum_{ i\in \mathbb{Z}}\int_{I_i}\left(\sum_{ j\in \mathbb{Z}}\varepsilon^3 W(x_i-x_j)^2\right)^\frac12\hat u'_{\bar \Theta_s}(x) m_{\bar \Theta_s}(x) {\rm d}x{\rm d}s\\
&\leq C\varepsilon \int_0^t\Vert \bar Y_s\Vert_{L^2}\int \hat u'_{\bar \Theta_s} m_{\bar \Theta_s}{\rm d}s\leq C \varepsilon^{1/2-2\beta-2\delta},
\end{align*}
where we used \eqref{eq:event_Oeps} and Lemma~\ref{lem:W_IN_IN}. Finally, since $f$ is bounded, we easily obtain, relying on \eqref{eq:uprime_exp}, on $t\leq t_{ \ast}$,
\begin{align*}
\left|C_{6,t}\right|&\leq C\int_0^t\int_{\mathcal{D_+}}\hat u'_{\bar \Theta_s}(x) m_{\bar \Theta_s}(x){\rm d}x{\rm d}s\leq C\int_0^t \int_{\varepsilon^{-\beta}-|\bar \Theta_s|}^\infty \hat u_0(x) m_0(x){\rm d}x{\rm d}s\\
&\leq C\varepsilon^{-1} e^{- \mu\left(\varepsilon^{-\beta}-\varepsilon^{-\frac{3\beta}{4}}\right)},
\end{align*}
and a similar bound can be obtained for $C_{7,t}$.

\medskip

\noindent
{\it Step $4$: fourth line of \eqref{eq:decomp Theta}.} Denote by $D_{ t}$ the fourth term in the decomposition \eqref{eq:decomp Theta}. By a simple Taylor expansion, we directly obtain that 
\begin{align*}
D_{ t}&= \sum_{ j\in \mathbb{ Z}}\int_0^t  {\rm D}^{2}\Theta \left(U_{ s} \right) \left[ \varpi_{ j}, \varpi_{ j}\right]\lambda_{ j,s}{\rm d}s\\
&\quad +  \sum_{ j\in \mathbb{ Z}}\int_{ 0}^{t}\int_0^\infty \int_{ 0}^{1} \frac{(1-r)^2}{2} {\rm D}^{ 3}\Theta \left(U_{ s} + r \left( \varpi_{ j} \mathbf{ 1}_{ z\leq \lambda_{ j, s}}\right)\right) \left[ \varpi_{ j}, \varpi_{ j}, \varpi_{ j}\right]\mathbf{ 1}_{ z\leq \lambda_{ j, s}}{\rm d}r{\rm d}z{\rm d}s
\end{align*}
Here, the strategy is again to use the definition of the microscopic intensity $ \lambda_{ j,s}= f \left(U_{ j,s^{ -}}\right)$ and to make sure that one can replace the random profile $U_{ s}$ by its stationary companion $ \hat{ u}_{ \Theta(U_{ s})}$ whenever it is necessary in the previous expression. This induces the following decomposition of $D_{ t}$:
\begin{align}
D_{ t}& =  \sum_{ j\in \mathbb{ Z}}\int_0^t {\rm D}^{2}\Theta \left(\hat u_{\Theta(U_s)} \right) \left[ \varpi_{ j}, \varpi_{ j}\right]f(\hat u_{\Theta(U_s)} )(x_j){\rm d}s \nonumber\\
&\quad +\sum_{ j\in \mathbb{ Z}}\int_0^t {\rm D}^{2}\Theta \left(\hat u_{\Theta(U_s)} \right) \left[ \varpi_{ j}, \varpi_{ j}\right]\left(f(U_{j,s^-})-f(\hat u_{\Theta(U_s)} )(x_j)\right){\rm d}s \nonumber\\
&\quad + \sum_{ j\in \mathbb{ Z}}\int_{ 0}^{t} \int_{ 0}^{1}  {\rm D}^{ 3}\Theta \left( \hat u_{\Theta(U_s)}+ r \left( U_{ s}-\hat u_{\Theta(U_s)}\right)\right) \left[ U_{ s}-\hat u_{\Theta(U_s)}, \varpi_{ j}, \varpi_{ j}\right]{\rm d}r\lambda_{ j,s}{\rm d}s \nonumber\\
&\quad +  \sum_{ j\in \mathbb{ Z}}\int_{ 0}^{t}\int_0^\infty \int_{ 0}^{1} \frac{(1-r)^2}{2} {\rm D}^{ 3}\Theta \left(U_{ s} + r \left( \varpi_{ j} \mathbf{ 1}_{ z\leq \lambda_{ j, s}}\right)\right) \left[ \varpi_{ j}, \varpi_{ j}, \varpi_{ j}\right]\mathbf{ 1}_{ z\leq \lambda_{ j, s}}{\rm d}r{\rm d}z{\rm d}s \nonumber\\
&= D_{1,t}+D_{2,t}+D_{3,t}+D_{4,t}. \label{aux:Dt}
\end{align}
Let us first focus on $D_{1,t}$. Remarking Lemma~\ref{lem:bound_sum_varpi} induces the bound
\[
\sum_{ j\in \mathbb{ Z}}|{\rm D}^{2}\Theta \left(\hat u_{\Theta(U_s)} \right) \left[ \varpi_{ j}, \varpi_{ j}\right]|\leq C \sum_{ j\in \mathbb{ Z}}\Vert \varpi_{ j}\Vert_{L^2}^2\leq C \varepsilon^{1-\beta},
\] 
and that $\sup_j \sup_{y_j\in I_j}|f(\hat u_{\Theta(U_s)} )(x_j)-f(\hat u_{\Theta(U_s)} )(y_j)|\leq C\varepsilon $ we get, for $t\leq t^*$,
\begin{align*}
D_{1,t} =  \frac{1}{\varepsilon}\sum_{ j\in \mathbb{ Z}}\int_{I_j}\int_0^t {\rm D}^{2}\Theta \left(\hat u_{\Theta(U_s)} \right) \left[ \varpi_{ j}, \varpi_{ j}\right]f(\hat u_{\Theta(U_s)} )(y_j){\rm d} y_j{\rm d}s+ O(\varepsilon^{1-\beta}).
\end{align*}
Now, for any $y_j\in I_j$, defining $W_y(x)=W(x-y)$, we have the decomposition
\begin{align*}
\frac{1}{\varepsilon}{\rm D}^{2}\Theta &\left(\hat u_{\Theta(U_s)} \right) \left[ \varpi_{ j}, \varpi_{ j}\right]-\varepsilon {\rm D}^{2}\Theta \left(\hat u_{\Theta(U_s)} \right) \left[ W_{y_j}\mathbf{1}_{\mathcal{D}_0},W_{y_j}\mathbf{1}_{\mathcal{D}_0}\right]\\
 &= \frac{1}{\varepsilon}{\rm D}^{2}\Theta \left(\hat u_{\Theta(U_s)} \right) \left[ \varpi_{ j}-\varepsilon W_{y_j}\mathbf{1}_{\mathcal{D}_0}, \varpi_{ j}-\varepsilon W_{y_j}\mathbf{1}_{\mathcal{D}_0}\right] \\
 &\quad + 2 {\rm D}^{2}\Theta \left(\hat u_{\Theta(U_s)} \right) \left[ \varpi_{ j}-\varepsilon W_{y_j}\mathbf{1}_{\mathcal{D}_0},  W_{y_j}\mathbf{1}_{\mathcal{D}_0}\right].
\end{align*}
We get the following bound
\begin{align}
\sum_{ j\in \mathbb{ Z}} \Vert \varpi_j-\varepsilon W_{y_j}\mathbf{1}_{\mathcal{D}_0}\Vert^2_{L^2}& =\sum_{j\in \mathbb{Z}} \sum_{i=-N_\varepsilon}^{N_\varepsilon}\varepsilon^2\int_{I_i}(W(x_i-x_j)-W(x-y_j))^2 {\rm d} x\nonumber\\
&\leq \sum_{j\in \mathbb{Z}}\sum_{i=-N_\varepsilon}^{N_\varepsilon}C\varepsilon^5 W(x_i-x_j)^2\nonumber\\
&\leq C\varepsilon^{4-\beta},\label{eq:approx_varpi_Wy}
\end{align}
while a straightforward computation leads to
\begin{align*}
\sum_{ j\in \mathbb{ Z}} \Vert W_{y_j}\mathbf{1}_{\mathcal{D}_0}\Vert^2_{L^2}\leq C \varepsilon^{-1-\beta},
\end{align*}
and thus, relying in particular on Cauchy-Schwarz inequality, we can replace the sum by an integral in $D_{1,t}$: for all $t\leq t^*$,
\begin{align*}
D_{1,t} =  \varepsilon\int_0^t \int_{\mathbb{R}} {\rm D}^{2}\Theta \left(\hat u_{\Theta(U_s)} \right) \left[ W_y \mathbf{1}_{\mathcal{D}_{0}}, W_y\mathbf{1}_{\mathcal{D}_{0}}\right]f(\hat u_{\Theta(U_s)} )(y){\rm d}y{\rm d}s+O(\varepsilon^{1-\beta}).
\end{align*}
Let us define now $\mathcal{D}_{0,\varphi} = \mathcal{D}_{0}\cup (\mathcal{D}_{0}+\varphi)$. Relying on Proposition~\ref{prop:D2Theta} and the fact that $t\leq t_*$, we have
\begin{align}
D_{1,t} =  \varepsilon\int_0^t \int_{\mathbb{R}} {\rm D}^{2}\Theta \left(\hat u_{\Theta(U_s)} \right) \left[ W_y \mathbf{1}_{\mathcal{D}_{0,\Theta(U_s)}}, W_y\mathbf{1}_{\mathcal{D}_{0,\Theta(U_s)}}\right]f(\hat u_{\Theta(U_s)} )(y){\rm d}s+O(\varepsilon^{1-\beta}).\label{aux:Dt1_2}
\end{align}
Indeed, if for example $\Theta(U_s)>0$,
\begin{align}
\varepsilon \int_{\mathbb{R}} {\rm D}^{2}\Theta \left(\hat u_{\Theta(U_s)} \right) &\left[ W_y \left(\mathbf{1}_{\mathcal{D}_{0,\Theta(U_s)}}-\mathbf{1}_{\mathcal{D}_0}\right), W_y\mathbf{1}_{\mathcal{D}_0}\right]f(\hat u_{\Theta(U_s)} )(y){\rm d} y \nonumber\\
&=\varepsilon \int_{\mathbb{R}} {\rm D}^{2}\Theta \left(\hat u_{\Theta(U_s)} \right) \left[ W_y \mathbf{1}_{(\ell,\ell+\Theta(U_s)]}, W_y\mathbf{1}_{\mathcal{D}_0}\right]f(\hat u_{\Theta(U_s)} )(y){\rm d} y \nonumber\\
&\leq C \varepsilon e^{-\varepsilon^{-\frac{\beta'}{2}}}\int_{\mathbb{R}} \left\Vert W_y \mathbf{1}_{(\ell,\ell+\Theta(U_s)]}\right\Vert_{L^2} \left\Vert W_y\mathbf{1}_{\mathcal{D}_{0}}\right\Vert_{L^2}{\rm d} y \nonumber\\
&\leq C \varepsilon e^{-\varepsilon^{-\frac{\beta'}{2}}} \left(\int_{\mathbb{R}} \left\Vert W_y \mathbf{1}_{(\ell,\ell+\Theta(U_s)]}\right\Vert_{L^2} ^2{\rm d} y\right)^\frac12 \left(\int_{\mathbb{R}} \left\Vert W_y\mathbf{1}_{\mathcal{D}_{0}}\right\Vert_{L^2}^2{\rm d} y\right)^\frac12, \label{aux:Dt1_1}
\end{align}
and
\begin{align*}
\int_{\mathbb{R}} \left\Vert W_y \mathbf{1}_{(\ell,\ell+\Theta(U_s)]}\right\Vert_{L^2} ^2{\rm d} y = \int_{\ell}^{\ell+\Theta(U_s)}\int_{\mathbb{R}} W^2(x-y){\rm d} y{\rm d}x\leq C\varepsilon^{-\frac{3\beta}{4}},
\end{align*}
while
\begin{align*}
\int_{\mathbb{R}} \left\Vert W_y \mathbf{1}_{\mathcal{D}_{0}}\right\Vert_{L^2} ^2{\rm d} y = \int_{-\ell}^{\ell}\int_{\mathbb{R}} W^2(x-y){\rm d} y{\rm d}x\leq C\varepsilon^{-\beta},
\end{align*}
so that gathering the two previous estimates in front of the $e^{-\varepsilon^{-\frac{\beta'}{2}}}$ term in \eqref{aux:Dt1_1} gives \eqref{aux:Dt1_2}.
Relying now on the translation invariance of the model, we obtain
\begin{align*}
D_{1,t} =  \varepsilon\int_0^t \int_{\mathbb{R}} {\rm D}^{2}\Theta \left(\hat u_{0} \right) \left[ W_y \mathbf{1}_{\mathcal{D}_{0,\Theta(U_s)}-\Theta(U_s)}, W_y\mathbf{1}_{\mathcal{D}_{0,\Theta(U_s)}-\Theta(U_s)}\right]f(\hat u_{0} )(y){\rm d}s+O(\varepsilon^{1-\beta}),
\end{align*} 
and with similar estimates as before, we may replace $\mathcal{D}_{0,\Theta(U_s)}-\Theta(U_s) =\mathcal{D}_{0}\cup (\mathcal{D}_{0}-\Theta(U_s))$ by $\mathcal{D}_0$:
\begin{align*}
D_{1,t} &=  \varepsilon\int_0^t \int_{\mathbb{R}} {\rm D}^{2}\Theta \left(\hat u_{0} \right) \left[ W_y \mathbf{1}_{\mathcal{D}_{0}}, W_y\mathbf{1}_{\mathcal{D}_{0}}\right]f(\hat u_{0} )(y){\rm d}s+O(\varepsilon^{1-\beta})\\
&:=D_{5,t}+O(\varepsilon^{1-\beta}).
\end{align*} 
Note that, applying Proposition~\ref{prop:regularity isochron},
\begin{equation}
\label{eq:Dt5}
D_{5,t} = \varepsilon t\int_{\mathbb{R}}\int_{\mathcal{D}_0} \int_{\mathbb R^2\setminus\{(0,0)\}}\frac{f(\hat u_{0})(y)f''(\hat u_{0})(x) \hat u'_{0}(x) }{\lambda +\mu}  {\rm d} P^\lambda_{0}\left[W_{y}\right](x)  {\rm d} P^\mu_{0}\left[W_{y}\right](x)    {\rm d} x{\rm d} y.
\end{equation}
The next point is to show that we have actually $D_{ 5,t}\equiv 0$, relying on the symmetries of the model. To do so, we separate the term $f(\hat u_{0})(y)$ in \eqref{eq:Dt5} into $f(\hat u_{0})(y)= \left(f(\hat u_{0})(y) - a\right) +a$ and study the induced expressions separately. Firstly, define
\begin{equation*}
A^+= \int_{\mathbb{R}}\int_{\mathcal{D}_0}\int_{\mathbb R^2\setminus\{(0,0)\}}\frac{(f(\hat u_0)(y)-a)f''(\hat u_0)(x) \hat u'_0(x) }{\lambda +\mu}  {\rm d} P^\lambda_{0}\left[W_{y}\right](x)  {\rm d} P^\mu_{0}\left[W_{y}\right](x)    {\rm d} x{\rm d} y,
\end{equation*}
and
\begin{equation*}
A^- =  \int_{\mathbb{R}}\int_{\mathcal{D}_0}\int_{\mathbb R^2\setminus\{(0,0)\}}\frac{(f(\hat u_0)(y)-a)f''(\hat u_0)(x) \hat u'_0(x) }{\lambda +\mu}  {\rm d} P^\lambda_{0}\left[W_{-y}\right](x)  {\rm d} P^\mu_{0}\left[W_{-y}\right](x)    {\rm d} x{\rm d} y.
\end{equation*}
Since $f(\hat u_0)-a$ is odd, a simple change of variable $y \mapsto -y$ shows that $A^++A^-=0$. Now remark that $W_y +W_{-y}$ is even, while $W_y - W_{-y}$ is odd, and thus, since the spectral projection maps even (respectively odd) functions to even (respectively odd) functions (because $\mathcal{L}_0$ has this property),
\[
x\mapsto 
 \int_{\mathbb R^2\setminus\{(0,0)\}}\frac{1}{\lambda +\mu}  {\rm d} P^\lambda_{0}\left[W_{y}+W_{-y}\right](x)  {\rm d} P^\mu_{0}\left[W_{y}+W_{-y}\right](x),
\]
and
\[
x\mapsto 
 \int_{\mathbb R^2\setminus\{(0,0)\}}\frac{1}{\lambda +\mu}  {\rm d} P^\lambda_{0}\left[W_{y}-W_{-y}\right](x)  {\rm d} P^\mu_{0}\left[W_{y}-W_{-y}\right](x),
\]
are even. So since $f''(\hat u_0)(x) \hat u'_0(x)$ is odd we get
\begin{equation*}
\int_{\mathcal{D}_0} \int_{\mathbb R^2\setminus\{(0,0)\}}\frac{f''(\hat u_0)(x) \hat u'_0(x) }{\lambda +\mu}  {\rm d} P^\lambda_0\left[W_{y}+W_{-y}\right](x)  {\rm d} P^\mu_0\left[W_{y}+W_{-y}\right](x) {\rm d} x=0,
\end{equation*}
and
\begin{equation*}
\int_{\mathcal{D}_0} \int_{\mathbb R^2\setminus\{(0,0)\}}\frac{f''(\hat u_0)(x) \hat u'_0(x) }{\lambda +\mu}  {\rm d} P^\lambda_0\left[W_{y}-W_{-y}\right](x)  {\rm d} P^\mu_0\left[W_{y}-W_{-y}\right](x) {\rm d} x=0.
\end{equation*}
Taking the difference between these two terms and expanding leads to the identity
\begin{align*}
&A^+-A^-\\
&\quad =4\int_{\mathbb{R}} \int_{\mathcal{D}_0} \int_{\mathbb R^2\setminus\{(0,0)\}}\frac{(f(\hat u_0)(y)-a)f''(\hat u_0)(x) \hat u'_{0}(x) }{\lambda +\mu}  {\rm d} P^\lambda_0\left[W_{y}\right](x)  {\rm d} P^\mu_0\left[W_{-y}\right](x)    {\rm d} x{\rm d} y,
\end{align*}
and making again the change of variable $y\mapsto -y$ leads to $A^+-A^-=0$, so that $A^+=A^-=0$. In conclusion, replacing the term $f(\hat u_{0})(y)$ by $f(\hat u_{0})(y) - a$ within \eqref{eq:Dt5} gives a zero contribution. It remains to do the same when one replaces $f(\hat u_{0})(y)$ by the constant $a$ within \eqref{eq:Dt5}: relying on the change of variable $x\mapsto -x$, on the fact that $f''( \hat{ u}_{ 0}) \hat{ u}_{ 0}^{ \prime}$ is odd, that $W$ is even and on the change of variable $y\mapsto -y$, denoting $\tilde W_y$ the map $x\mapsto \tilde W(y+x)$,
\begin{align*}
a\int_{\mathbb{R}} \int_{\mathcal{D}_0}&\int_{\mathbb R^2\setminus\{(0,0)\}}\frac{f''(\hat u_0)(x) \hat u'_0(x) }{\lambda +\mu}  {\rm d} P^\lambda_{0}\left[W_{y}\right](x)  {\rm d} P^\mu_{0}\left[W_{y}\right](x)    {\rm d} x{\rm d} y\\
&=a\int_{\mathbb{R}} \int_{\mathcal{D}_0}\int_{\mathbb R^2\setminus\{(0,0)\}}\frac{f''(\hat u_0)(-x) \hat u_0'(-x) }{\lambda +\mu}  {\rm d} P^\lambda_{0}\left[W_{y}\right](-x)  {\rm d} P^\mu_{0}\left[W_{y}\right](-x)    {\rm d} x{\rm d} y\\
&=-a\int_{\mathbb{R}} \int_{\mathcal{D}_0}\int_{\mathbb R^2\setminus\{(0,0)\}}\frac{f''(\hat u_0)(x) \hat u_0'(x) }{\lambda +\mu}  {\rm d} P^\lambda_{0}\left[\tilde W_{y}\right](x)  {\rm d} P^\mu_{0}\left[\tilde W_{y}\right](x)    {\rm d} x{\rm d} y\\
&=-a\int_{\mathbb{R}} \int_{\mathcal{D}_0}\int_{\mathbb R^2\setminus\{(0,0)\}}\frac{f''(\hat u_0)(x) \hat u'_0(x) }{\lambda +\mu}  {\rm d} P^\lambda_{0}\left[W_{-y}\right](x)  {\rm d} P^\mu_{0}\left[W_{-y}\right](x)    {\rm d} x{\rm d} y\\
&=-a\int_{\mathbb{R}} \int_{\mathcal{D}_0}\int_{\mathbb R^2\setminus\{(0,0)\}}\frac{f''(\hat u_0)(x) \hat u'_0(x) }{\lambda +\mu}  {\rm d} P^\lambda_{0}\left[W_{y}\right](x)  {\rm d} P^\mu_{0}\left[W_{y}\right](x)    {\rm d} x{\rm d} y,
\end{align*}
and thus $D_{5,t}  = 0$.

We can now turn to the other terms appearing in the decomposition of $D_t$ in \eqref{aux:Dt}. We have
\begin{align}
\left|D_{2,t}\right|\leq \int_0^t\left(\sum_{j\in \mathbb{Z}}\left({\rm D}^{2}\Theta \left(\hat u_{\Theta(U_s)} \right) \left[ \varpi_{ j}, \varpi_{ j}\right]\right)^2\right)^\frac12
\left(\sum_{j\in \mathbb{Z}}\left(f(U_{j,s^-})-f(\hat u_{\Theta(U_s)})(x_j)\right)^2\right)^\frac12
{\rm d}s. \label{aux:CS_D2}
\end{align}
As before the $s^-$ may be replaced by $s$ since there is almost surely a countable number of jumps before $t_*$, and
\begin{align*}
\sum_{j\in \mathbb{Z}}\left({\rm D}^{2}\Theta \left(\hat u_{\Theta(U_s)} \right) \left[ \varpi_{ j}, \varpi_{ j}\right]\right)^2\leq C \sum_{j\in \mathbb{Z}}\Vert \varpi_{ j}\Vert_{L^2}^4\leq C \varepsilon^{3-\beta},
\end{align*}  
where we applied Lemma~\ref{lem:bound_sum_varpi}. Moreover,
\begin{align}
\sum_{j\in \mathbb{Z}}&\left(f(U_{j,s})-f(\hat u_{\Theta(U_s)})(x_{ j})\right)^2 \label{aux:Deltaf}\\
& = \frac{1}{\varepsilon}\sum_{j\in \mathbb{Z}}\int_{I_j}\left(f(U_{s})(y)-f(\hat u_{\Theta(U_s)})(x_j)\right)^2{\rm d}y \nonumber\\
&=\frac{1}{\varepsilon}\sum_{j\in \mathbb{Z}}\int_{I_j}\left(f(U_{s})(y)-f(\hat u_{\Theta(U_s)})(y)-\int_{x_i}^y f'(\hat u_{\Theta(U_s)})(r)\hat u'_{\Theta(U_s)}(r){\rm d}r\right)^2{\rm d}y \nonumber\\
&\leq \frac{2\Vert U_s-\hat u_{\Theta(U_s)}\Vert^2_{L^2}}{\varepsilon}
+C\sum_{j\in \mathbb{Z}}\int_{I_j}\left|\int_{x_i}^y \left(u'_{\Theta(U_s)}\right)^2(r){\rm d} r\right|{\rm d}y \nonumber\\
&\leq C\varepsilon^{-4\beta-4 \delta}+C\varepsilon \int_{\mathbb{R}} \left(u'_{\Theta(U_s)}\right)^2(r){\rm d} r\leq  C\varepsilon^{-4\beta-4 \delta},\nonumber
\end{align}
where we recall that we are on the event $ \mathcal{ O}_{ \varepsilon, \delta, t_{ 1}}$ given in \eqref{eq:event_Oeps}. Then,
\begin{align*}
\left|D_{2,t}\right|\leq C \varepsilon^{-1} \varepsilon^{3/2-\beta/2}\varepsilon^{-2\beta-2 \delta}\leq C \varepsilon^{1/2-5\beta/2-2 \delta}.
\end{align*}
Concerning $D_{3,t}$ in \eqref{aux:Dt}, we get, relying on Proposition~\ref{prop:regularity isochron} and Lemma~\ref{lem:bound_sum_varpi}, since  we have $ \left\Vert \varpi_{ j} \right\Vert_{ L^{ \infty}}\leq \varepsilon$,
\begin{align*}
\left|D_{3,t}\right|&\leq C\sum_{j\in \mathbb{Z}}\int_0^t \Vert U_s-\hat u_{\Theta(U_s)}\Vert_{L^2}\Vert \varpi_{ j}\Vert_{L^2}\left(\Vert \varpi_{ j}\Vert_{L^2}+\Vert \varpi_{ j}\Vert_{L^\infty}\right)\\
&\leq C\varepsilon^{-1/2-2\beta-2\delta}\sum_{j\in \mathbb{Z}}\left(\Vert \varpi_{ j}\Vert_{L^2}^2+\varepsilon\Vert \varpi_{ j}\Vert_{L^2}\right)\leq C\varepsilon^{1/2-3\beta-2\delta}.
\end{align*}
Finally,
\begin{align*}
\left|D_{3,t}\right|&\leq C\varepsilon^{-1}\sum_{j\in \mathbb{Z}} \Vert \varpi_{ j}\Vert_{L^2}^2\left(\Vert \varpi_{ j}\Vert_{L^2}+\Vert \varpi_{ j}\Vert_{L^\infty}\right)\leq C\varepsilon^{2-\beta}.
\end{align*}
This concludes the proof of Proposition~\ref{prop: bound Theta}, since on the event $\mathcal{A}_{\varepsilon,\delta}$ given in \eqref{eq:event_mcA}, we have $| \Xi_t|\leq C \varepsilon^{-\beta/2-\delta}$.
\end{proof}
\begin{remark}
\label{rem:beta_delta_small}
The reason for the choice of $ \beta$ and $ \delta$ satisfying \eqref{eq:beta_small} and \eqref{eq:delta_small} becomes apparent in the previous proof of Proposition~\ref{prop: bound Theta}: note in particular that, on a time scale of order $ \varepsilon^{ -1}$, all the terms in the Ito decomposition \eqref{eq:decomp Theta} (exception made of the noise term $ \Xi_{ t}$) are actually vanishing as $ \varepsilon\to 0$ (of order at most $\varepsilon^{ 1/2 - 6 \beta - 6 \delta}$). The a priori bound on $ \Xi_{ t}$ being (with high probability) of order $ \varepsilon^{ - \frac{ \beta}{ 2} - \delta}$, this is anyway smaller than the $ \varepsilon^{ - \frac{ 3 \beta}{ 4}}$ bound given by \eqref{eq:Theta_3b4}, since \eqref{eq:delta_small} further supposes that $ \delta< \frac{ \beta}{ 4}$.
\end{remark}
%XXXX PRECISER valeurs de $ \beta$ et $ \delta$ $ \beta<1/12$ et $ \delta< \min(1/12 - \beta, \beta/4)$.
\subsection{Proof of Proposition~\ref{prop:closeness M}}

Let us begin with a technical Lemma.
\begin{lemma}
\label{lem:compare_D0_L2}
Let $U\in L^{ 2} \left(\mathcal{ D}_{ 0}\right)$ and define
\begin{equation*}
\bar U = a_{ 1} \mathbf{ 1}_{ \mathcal{ D}_{-}} + U \mathbf{ 1}_{ \mathcal{ D}_{ 0}} + a_{ 2} \mathbf{ 1}_{ \mathcal{ D}_{ +}}.
\end{equation*}
Then, for any $\eta\in (0,1)$, there exist some constants $C, c>0$ such that for all $ \left\vert \psi \right\vert\leq \eta \varepsilon^{ - \beta}$,
\begin{equation*}
\left\Vert \bar U - \hat{ u}_{ \psi} \right\Vert_{ L^{ 2}} \leq \left\Vert U- \hat{u}_{ \psi} \right\Vert_{ L^{ 2} \left(\mathcal{ D}_{ 0}\right)} + C e^{ - c \varepsilon^{ - \beta} }.
\end{equation*}
\end{lemma}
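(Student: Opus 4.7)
\textbf{Proof plan for Lemma~\ref{lem:compare_D0_L2}.}

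The strategy is entirely elementary: split the $L^2$ norm on $\mathbb{R}$ into its three contributions on $\mathcal{D}_-$, $\mathcal{D}_0$, $\mathcal{D}_+$, recognize the middle piece as exactly $\|U-\hat u_\psi\|_{L^2(\mathcal{D}_0)}^2$, and control the two boundary pieces by the tail estimate of Lemma~\ref{lem:int_tail_hatu}. More precisely, since $\bar U$ equals $a_1$ on $\mathcal{D}_-$, $U$ on $\mathcal{D}_0$ and $a_2$ on $\mathcal{D}_+$, one has
\begin{equation*}
\|\bar U-\hat u_\psi\|_{L^2}^2 = \int_{\mathcal{D}_-}(a_1-\hat u_\psi)^2 + \|U-\hat u_\psi\|_{L^2(\mathcal{D}_0)}^2 + \int_{\mathcal{D}_+}(a_2-\hat u_\psi)^2.
\end{equation*}

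The key computation is on the boundary terms. Using $\hat u_\psi(x)=\hat u_0(x-\psi)$ and the change of variable $y=x-\psi$, one gets, recalling $\mathcal{D}_+=[\varepsilon N_\varepsilon+\varepsilon/2,+\infty)$ and $\varepsilon N_\varepsilon+\varepsilon/2\geq \ell_\varepsilon=\varepsilon^{-\beta}$ for $\varepsilon$ small,
\begin{equation*}
\int_{\mathcal{D}_+}(a_2-\hat u_\psi(x))^2\,{\rm d}x \leq \int_{\ell_\varepsilon-\psi}^{+\infty}(a_2-\hat u_0(y))^2\,{\rm d}y.
\end{equation*}
Since $|\psi|\leq \eta\varepsilon^{-\beta}$ with $\eta\in(0,1)$, we have $\ell_\varepsilon-\psi\geq (1-\eta)\varepsilon^{-\beta}>0$ and Lemma~\ref{lem:int_tail_hatu} yields the bound $C\,e^{-2\mu(1-\eta)\varepsilon^{-\beta}}$. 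The same argument applied symmetrically to $\mathcal{D}_-$ gives the analogous bound for $\int_{\mathcal{D}_-}(a_1-\hat u_\psi)^2$.

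Putting everything together with $c:=\mu(1-\eta)>0$,
\begin{equation*}
\|\bar U-\hat u_\psi\|_{L^2}^2 \leq \|U-\hat u_\psi\|_{L^2(\mathcal{D}_0)}^2 + C\,e^{-2c\varepsilon^{-\beta}},
\end{equation*}
and the elementary inequality $\sqrt{a^2+b^2}\leq a+b$ for $a,b\geq 0$ concludes. There is no real obstacle here; the only point worth mentioning is that the constraint $\eta<1$ is exactly what ensures that $\ell_\varepsilon-|\psi|$ stays positive and in fact of order $\varepsilon^{-\beta}$, which is what provides the stretched-exponential gain $e^{-c\varepsilon^{-\beta}}$ in the final bound.
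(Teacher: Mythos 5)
Your proof is correct and follows exactly the argument the paper uses: split the $L^2$ norm over $\mathcal{D}_-$, $\mathcal{D}_0$, $\mathcal{D}_+$, and absorb the two boundary integrals via the tail estimate of Lemma~\ref{lem:int_tail_hatu} together with $\ell_\varepsilon-|\psi|\geq(1-\eta)\varepsilon^{-\beta}$. The only cosmetic slip is the claim $\varepsilon N_\varepsilon+\varepsilon/2\geq\ell_\varepsilon$, which may fail by at most $\varepsilon/2$ since $N_\varepsilon=\lfloor\ell_\varepsilon/\varepsilon\rfloor$, but this only shifts the lower integration limit by $O(\varepsilon)$ and leaves the stretched-exponential bound (and hence the lemma) unaffected.
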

\begin{proof}[Proof of Lemma~\ref{lem:compare_D0_L2}]
  This is an easy consequence of the tail estimates of Lemma~\ref{lem:int_tail_hatu} and the fact that $ \ell - \psi \geq (1-\eta) \varepsilon^{ - \beta}$.
\end{proof}
We are finally in position to prove the main result of this Section.

\begin{proof}[Proof of Proposition~\ref{prop:closeness M}] Recall the definitions of the events $\mathcal{C}_\varepsilon$ and $ \mathcal{ O}_{ \varepsilon, \delta, t_{ 1}}$ in \eqref{eq:event_Cepsilon} and \eqref{eq:event_Oeps} respectively. For all $k=0, \ldots, n_{ \varepsilon}$, define
\begin{equation*}
\mathcal{ C}_{ \varepsilon}^{ (k)}:= \mathcal{ C}_{ \varepsilon} \cap \mathcal{ O}_{ \varepsilon, \delta, \varepsilon T_{ k}} \cap \left\lbrace {\rm dist}_{ L^{ 2}} \left(U_{ T_{ k}}, \mathcal{ M}\right) \leq \varepsilon^{ \frac{ 1 }{ 2} - 2 \beta - \frac{ 3 \delta}{ 2}}\right\rbrace.
\end{equation*} 
The point is to prove the following inclusion:
\begin{equation}
\label{eq:inclusion_Ck}
\mathcal{ C}_{ \varepsilon}^{ (k-1)} \subset \mathcal{ C}_{ \varepsilon}^{ (k)},\ k=1, \ldots, n_{ \varepsilon}-1.
\end{equation}
Note that Proposition~\ref{prop:closeness M} follows directly from \eqref{eq:inclusion_Ck}, as then, $ \mathcal{ C}_{ \varepsilon}= \mathcal{ C}_{ \varepsilon}^{ (0)} \subset \cap_{ k=1, \ldots, n_{  \varepsilon}} \mathcal{ C}_{ \varepsilon}^{ (k)}$. Therefore, let $k=1, \ldots, n_{ \varepsilon}-1$ and place ourselves on $ \mathcal{ C}_{  \varepsilon}^{ (k-1)}$.
%is realized, and proceed by recursion. Let us suppose that, for some $k\geq1$,
%\begin{equation}
%\label{hyp:Yk}
%\sup_{0\leq t\leq T_{k-1}} {\rm dist}_{L^2(\mathcal{D}_0)}\left(U_t,\mathcal{M}\right) \leq  \varepsilon^{\frac12-2\beta-2\delta}, \quad  {\rm dist}_{L^2(\mathcal{D}_0)}\left(U_{T_{k-1}},\mathcal{M}\right) \leq  \varepsilon^{\frac12-2\beta -\frac{3\delta}{2}}, 
%\end{equation}
%Peut-être qu'il est plus simple de g\'erer ces diff\'erences \`a la fin, une fois qu'on a nos estim\'ees uniformes sur $Y_{ k}$. Je proposerais bien d'\'ecrire dans un premier temps plutôt:
%We suppose that the event $\mathcal{C}_\varepsilon$ (recall \eqref{eq:event_Cepsilon}) is realized, and proceed by recursion. Let us suppose that, for some $k\geq1$,
%\begin{equation}
%\label{hyp:Yk}
%\sup_{0\leq t\leq T} \left\Vert Y_{ k-1, t} \right\Vert_{ L^{ 2} \left(\mathcal{ D}_{ 0}\right)} \leq  \varepsilon^{\frac12-2\beta-2\delta}, \text{ and } \left\Vert Y_{ k, 0} \right\Vert_{ L^{ 2} \left( \mathcal{ D}_{ 0}\right)} \leq  \varepsilon^{\frac12-2\beta -\frac{3\delta}{2}}, 
%\end{equation}
Note here that we have
\begin{align*}
\left\vert \psi_{ k-1} \right\vert & \leq \left\vert \psi_{ k-1} - \Theta \left( U_{ T_{ k-1}}\right)\right\vert + \left\vert \Theta \left(U_{ T_{ k-1}}\right) \right\vert\\
&= \left\vert {\rm proj}_{ \mathcal{ M}} \left(U_{ T_{ k-1}}\right) - {\rm proj}_{ \mathcal{ M}} \left( \hat{ u}_{ \Theta \left( U_{ T_{ k-1}}\right)}\right)\right\vert + \left\vert \Theta \left(U_{ T_{ k-1}}\right) \right\vert\\
& \leq C_{ {\rm proj}} \left\Vert U_{ T_{ k-1}} - \hat{ u}_{ \Theta \left(U_{ T_{ k-1}}\right)}\right\Vert_{ L^{ 2}} + \left\vert \Theta \left(U_{ T_{ k-1}}\right) \right\vert\\
& \leq C_{ {\rm proj}} C_{ \Theta} \sup_{ 0\leq t \leq T_{ k-1}}{\rm dist}_{ L^{ 2}} \left(U_{ t}, \mathcal{ M}\right) + \left\vert \Theta \left(U_{ T_{ k-1}}\right) \right\vert ,
\end{align*} by Lipschitz continuity of the projection $ {\rm proj}_{ \mathcal{ M}}$ and \eqref{eq:Theta_dist}. Therefore, by Proposition~\ref{prop: bound Theta}, on $ \mathcal{C}_{ \varepsilon}^{ (k-1)}$,
\begin{equation}
\label{eq:apriori_psik}
\left\vert \psi_{ k-1} \right\vert \leq C_{ {\rm proj}} C_{ \Theta} \varepsilon^{ \frac{ 1}{ 2} - 2 \beta - 2 \delta} +  \varepsilon^{ - 3\beta/4}\leq \frac12 \varepsilon^{-\beta},
\end{equation}
where the last inequality holds for $ \varepsilon>0$ small enough, independent of $k$. 
\medskip

\noindent
{\it First step.} We first prove that, on $ \mathcal{ C}_{ \varepsilon}^{ (k-1)}$
\begin{equation}
\label{eq:step1}
\sup_{ T_{ k-1}\leq t \leq T_{ k}} {\rm dist} \left(U_{ t}, \mathcal{ M}\right) \leq \varepsilon^{ \frac{ 1}{ 2} - 2 \beta - 2 \delta}.
\end{equation}
To this end, define
\begin{equation}
\label{eq:tk}
t_k :=\inf\left\{t\in [0,T]:\, \left\Vert Y_{ k,t} \right\Vert_{m_{ \psi_{ k-1}}}>\varepsilon^{\frac12-2\beta-\frac{7\delta}{4}}\right\}.
\end{equation}
Our first aim is to show that $t_k=T$. Supposing that it is indeed the case, we will deduce that $\sup_{ 0\leq t \leq T} \left\Vert U_{ T_{ k-1}+t} - \hat{ u}_{ \psi_{ k-1}}\right\Vert_{ L^{ 2} \left( \mathcal{ D}_{ 0}\right)} \leq \varepsilon^{ \frac{ 1}{ 2} - 2 \beta - \frac{ 7 \delta}{ 4}}$ and hence, by Lemma~\ref{lem:compare_D0_L2}, $\sup_{ T_{ k-1}\leq t \leq T_{ k}} {\rm dist}_{ L^{ 2}} \left(U_{t}, \mathcal{ M}\right) \leq \varepsilon^{ \frac{ 1}{ 2} - 2 \beta - \frac{ 7 \delta}{ 4}} + C e^{ -c \varepsilon^{ - \beta}} \leq \varepsilon^{ \frac{ 1}{ 2} - 2 \beta - 2 \delta}$ for $ \varepsilon>0$ sufficiently small (independently on $k$), which gives \eqref{eq:step1}. Relying on the mild formulation \eqref{eq:Ykt mild}, we obtain the following decomposition, for $t\in [0,T]$ such that $T_{k-1}+t\leq \tau$,
\begin{equation}
\label{eq:mildYk_r}
Y_{k,t} = e^{t L_{\psi_{k-1}}}Y_{k,0} + \int_0^t  e^{ (t-s)L_{\psi_{k-1}}} W\ast g_{k,s}{\bf 1}_{ \mathcal{ D}_{ 0}}{\rm d}s + \sum_{i=-N_\varepsilon}^{N_\varepsilon}\left(r_{k, i,t }^{ (-)}+ r_{k, i, t}^{ (0)} + r_{k, i, t}^{ (+)}\right)+\zeta_{k,t},
\end{equation}
with
\begin{align*}
%r_{k, i, t}^{ (0)} &= \int_0^t e^{ (t-s)L_{\psi_{k-1}}} \Delta_{ T_{ k-1}+s, \varepsilon, i}^{ (0)} \mathbf{ 1}_{ I_{ i}}\Bigg(\varepsilon \sum_{ j=-N_{ \varepsilon}}^{ N_{ \varepsilon}} W(x_i-x_j)f(U_{j,T_{k-1}+s^-}) \\
r_{k, i, t}^{ (0)} &= \int_0^t e^{ (t-s)L_{\psi_{k-1}}} \Delta_{ T_{ k-1}+s, \varepsilon, i}^{ (0)} \mathbf{ 1}_{ I_{ i}}{\rm d}s ,\\
%r_{k, i, t}^{ (+)} &= \int_0^t e^{ (t-s)L_{\psi_{k-1}}}\Bigg(\varepsilon \sum_{ j>N_{ \varepsilon}}W(x_i-x_j)f(U_{j,T_{k-1}+s^-}) \\
%& \qquad \qquad \qquad\qquad \qquad\qquad- \int_{ \mathcal{ D}_{ +}} W(\cdot-y) f \left(U_{ T_{k-1}+s}(y)\right) {\rm d}y\Bigg) \mathbf{ 1}_{ I_{ i}}{\rm d}s ,\\
r_{k, i, t}^{ (+)} &= \int_0^t e^{ (t-s)L_{\psi_{k-1}}} \Delta_{ T_{ k-1}+s, \varepsilon, i}^{ (+)} \mathbf{ 1}_{ I_{ i}}{\rm d}s ,\\
%r_{k, i, t}^{ (-)}(x) &= \int_0^t e^{ (t-s)L_{\psi_{k-1}}}\Bigg(\varepsilon \sum_{ j<-N_{ \varepsilon}} W(x_i-x_j)f(U_{j,T_{k-1}+s^-}) \\
%& \qquad \qquad \qquad\qquad \qquad\qquad- \int_{ \mathcal{ D}_{ -}} W(\cdot-y) f \left(U_{ T_{k-1}+s}(y)\right) {\rm d}y\Bigg) \mathbf{ 1}_{ I_{ i}}{\rm d}s.
r_{k, i, t}^{ (-)}&= \int_0^t e^{ (t-s)L_{\psi_{k-1}}} \Delta_{ T_{ k-1}+s, \varepsilon, i}^{ (-)} \mathbf{ 1}_{ I_{ i}}{\rm d}s,
\end{align*}
where we recall the definitions of $ \Delta^{ (0)}, \Delta^{ (\pm)}$ in Definition~\ref{def:Deltas}. Using \eqref{eq:etL0exp}, we have
\begin{align*}
\left\Vert e^{t L_{\psi_{k-1}}}Y_{k,0}  \right\Vert_{m_{\psi_{k-1}}, \mathcal{ D}_{ 0}}&\leq e^{- \kappa t}\left\Vert Y_{k,0}  \right\Vert_{m_{\psi_{k-1}}, \mathcal{ D}_{ 0}} + \kappa_{ 1}t^{ 2} \left\Vert \hat{ u}^{ \prime}_{ \psi_{ k-1}} \mathbf{ 1}_{ \mathcal{ D}_{ 0}^{ c}} \right\Vert_{ m_{ \psi_{ k-1}}} \left\Vert Y_{ k, 0} \right\Vert_{ m_{ \psi_{ k-1}, \mathcal{ D}_{ 0}}}\\
&\leq  C_{ \mathcal{ M}} {\rm dist}_{L^2}\left(U_{T_{k-1}},\mathcal{M}\right) \left(e^{- \kappa t} + \kappa_{ 1} t^{ 2} \left\Vert \hat{ u}^{ \prime}_{ \psi_{ k-1}} \mathbf{ 1}_{ \mathcal{ D}_{ 0}^{ c}} \right\Vert_{ m_{ \psi_{ k-1}}} \right) ,
\end{align*}
where we used $ \left\Vert Y_{ k, 0} \right\Vert_{ m_{ \psi_{ k-1}, \mathcal{ D}_{ 0}}}= \left\Vert \left(U_{ T_{ k-1}} - \hat{ u}_{ \psi_{ k-1}}\right) \mathbf{ 1}_{ \mathcal{ D}_{ 0}} \right\Vert_{ m_{ \psi_{ k-1}}} \leq \left\Vert U_{ T_{ k-1}} - \hat{ u}_{ \psi_{ k-1}}\right\Vert_{ m_{ \psi_{ k-1}}} $ and \eqref{eq:proj_dist} on the last line. Note that using the apriori bound \eqref{eq:apriori_psik} on $ \psi_{ k}$ and the estimate \eqref{eq:uprime_exp}, we have, for some $c_{ 0}>0$
\begin{align*}
\left\Vert \hat{ u}^{ \prime}_{ \psi_{ k-1}} \mathbf{ 1}_{ \mathcal{ D}_{ 0}^{ c}} \right\Vert_{ m_{ \psi_{ k-1}}} & \leq c_{ 0} e^{ - \frac{ \mu}{ 2} \varepsilon^{ - \beta}}.
\end{align*}
Hence, we deduce, on $ \mathcal{ C}_{ \varepsilon}^{ (k-1)}$, for $ \varepsilon>0$ small
\begin{align}
\sup_{0\leq t\leq T} \left\Vert e^{t L_{\psi_{k-1}}}Y_{k,0}\right\Vert_{m_{\psi_{k-1}}} &\leq C_{ \mathcal{ M}} \left(1+ \kappa_{ 1} T^{ 2}\right)\varepsilon^{\frac12-2\beta - \frac{3\delta}{2}}, \label{aux:etYk0}\\
\left\Vert e^{T L_{\psi_{k-1}}}Y_{k,0}\right\Vert_{m_{\psi_{k-1}}} &\leq C_{ \mathcal{ M}} \left(e^{ - \kappa T} + c_{ 0}\kappa_{ 1} T^{ 2} e^{ - \frac{ \mu}{ 2} \varepsilon^{ - \beta}}\right)\varepsilon^{\frac12-2\beta - \frac{3\delta}{2}}. \label{aux:eTYk0}
\end{align}

Concerning the quadratic term in \eqref{eq:mildYk_r}, one has, using \eqref{eq:bound_eLphi},
\begin{align*}
\left\Vert\int_0^t  e^{ (t-s)L_{\psi_{k-1}}} W\ast g_{k,s}{\bf 1}_{ \mathcal{ D}_{ 0}} {\rm d}s\right\Vert_{m_{\psi_{k-1}}}
&\leq \int_0^t \left\Vert W\ast g_{k,s}{\bf 1}_{ \mathcal{ D}_{ 0}} \right\Vert_{m_{\psi_{k-1}}}{\rm d}s\\
&\leq  \left\Vert m_{\psi_{k-1}}\right\Vert_{L^\infty} \left\Vert W  \right\Vert_{ L^2}\int_0^t  \left\Vert g_{k,s} \right\Vert_{ 1}{\rm d}s,
\end{align*}
where we have used Young's convolution inequality. Recalling \eqref{eq:gpsi}, we obtain by boundedness of $f^{ \prime\prime}$ that $ \left\Vert g_{ k, s} \right\Vert_{ 1} \leq C \left\Vert Y_{ k,s} \right\Vert_{L^2 \left(\mathcal{ D}_{ 0}\right)}^{ 2} \leq C \left\Vert Y_{ k,s} \right\Vert_{m_{ \psi_{ k-1}}}^{ 2}\leq \varepsilon^{ 1 - 4 \beta - 4 \delta}$, for $ s\leq t_{ k}$ (recall the definition of $t_{ k}$ in \eqref{eq:tk}).

Concerning the last terms in \eqref{eq:mildYk_r}, we use here the bounds that we have found in Proposition~\ref{prop:bound_approx}: since $r_{k, i, t}^{ (0)}= \int_0^t e^{ (t-s)L_{\psi_{k-1}}} \Delta_{ T_{ k-1}+s, \varepsilon, i}^{ (0)} \mathbf{ 1}_{ I_{ i}}{\rm d}s$, we have
\begin{align*}
\left\Vert \sum_{ i=-N_{ \varepsilon}}^{ N_{ \varepsilon}} r_{k, i, t}^{ (0)} \right\Vert_{ m_{ \psi_{ k-1}}} \leq T \left\Vert \sum_{ i=-N_{ \varepsilon}}^{ N_{ \varepsilon}} \Delta_{ T_{ k-1}+s, \varepsilon, i}^{ (0)} \mathbf{ 1}_{ I_{ i}} \right\Vert \leq CT \varepsilon^{ 1- \frac{ \beta}{ 2}},
\end{align*}
by \eqref{eq:Norm_Delta0}. In a same way, we obtain from \eqref{eq:Norm_Delta+}
\begin{equation*}
\left\Vert \sum_{ i=-N_{ \varepsilon}}^{ N_{ \varepsilon}} r_{ k,i,t}^{ (\pm)}\right\Vert_{ m_{ \psi_{ k-1}}} \leq CT \varepsilon.
\end{equation*}
Recalling finally that we are on the event $ \mathcal{ B}_{ \varepsilon}$ (recall \eqref{eq:event_mcB}), and gathering all the previous estimates in \eqref{eq:mildYk_r} gives:
\begin{align*}
\sup_{ t\in [0, T]} \left\Vert Y_{ k,t} \right\Vert_{ m_{ \psi_{ k-1}}} &\leq C_{ \mathcal{ M}} \left(1+ \kappa_{ 1}T^{ 2}\right) \varepsilon^{ \frac{ 1}{ 2} - 2 \beta - \frac{ 3 \delta}{ 2}} + CT \varepsilon^{ 1 - 4 \beta - 4 \delta} \\
&\quad + CT \varepsilon^{ 1 - \frac{ \beta}{ 2}} + CT \varepsilon+ CT^{ 3/2} \varepsilon^{ \frac{ 1}{ 2}- 2 \beta}.
\end{align*}
Under the conditions \eqref{eq:beta_small} and \eqref{eq:delta_small} on the parameters $ \beta, \delta$, we have for $ \varepsilon>0$ sufficiently small (depending only on $T$ fixed defined in \eqref{hyp:T}) $\sup_{ t\in [0, T]} \left\Vert Y_{ k,t} \right\Vert_{ m_{ \psi_{ k-1}}} \leq \varepsilon^{ \frac{ 1}{ 2}- 2 \beta - \frac{ 3 \delta}{ 2}} \varepsilon^{ - \frac{ \delta}{ 8}}< \varepsilon^{ \frac{ 1}{ 2}- 2 \beta - \frac{ 7 \delta}{ 4}}$. Hence $t=t_{ k}$ given in \eqref{eq:tk} and we obtain \eqref{eq:step1}.

\medskip

\noindent
{\it Second step.} We prove that
\begin{equation}
\label{eq:step2}
{\rm dist}_{ L^{ 2}} \left(U_{ T_{ k}}, \mathcal{ M}\right) \leq \varepsilon^{ \frac{ 1}{ 2} - 2 \beta- \frac{ 3 \delta}{ 2}}.
\end{equation}
Indeed, we have 
\begin{align}
 {\rm dist}_{ L^{ 2}} \left( U_{ T_{ k}}, \mathcal{ M}\right) &\leq \left\Vert U_{ T_{ k}} - \hat{ u}_{ \psi_{ k-1}}\right\Vert_{ L^{ 2}} \leq \left\Vert Y_{ k, T} \right\Vert_{ L^{ 2} \left(\mathcal{ D}_{ 0}\right)} + C e^{ - c \varepsilon^{ - \beta}} \nonumber\\
 &\leq \frac{ 1}{ \sqrt{ f^{ \prime}(a_{ 1})}}\left\Vert Y_{ k, T} \right\Vert_{ m_{ \psi_{ k-1}}} + C e^{ - c \varepsilon^{ - \beta}} \label{aux:UTk_contract}
\end{align}
by Lemma~\ref{lem:compare_D0_L2} and \eqref{eq:norm_equiv}. Going back now to \eqref{eq:mildYk_r} for the choice of $t=T$, we obtain, using \eqref{aux:eTYk0}
\begin{align*}
\left\Vert Y_{ k, T} \right\Vert_{ m_{ \psi_{ k-1}}} &\leq C_{ \mathcal{ M}} \left(e^{ - \kappa T} + c_{ 0}\kappa_{ 1} T^{ 2} e^{ - \frac{ \mu}{ 2} \varepsilon^{ - \beta}}\right)\varepsilon^{\frac12-2\beta - \frac{3\delta}{2}} \\
&\quad + CT \varepsilon^{ 1 - 4 \beta - 4 \delta} + CT \varepsilon^{ 1 - \frac{ \beta}{ 2}} + CT \varepsilon+ CT^{ 3/2} \varepsilon^{ \frac{ 1}{ 2}- 2 \beta},
\end{align*}
so that, using \eqref{aux:UTk_contract},
\begin{align*}
 {\rm dist}_{ L^{ 2}} \left( U_{ T_{ k}}, \mathcal{ M}\right)  &\leq \varepsilon^{\frac12-2\beta - \frac{3\delta}{2}}\left\lbrace \frac{ C_{ \mathcal{ M}}}{ \sqrt{ f^{ \prime}(a_{ 1})}} \left(e^{ - \kappa T} + c_{ 0}\kappa_{ 1} T^{ 2} e^{ - \frac{ \mu}{ 2} \varepsilon^{ - \beta}}\right) + \rho_{ T} \left(\varepsilon\right) \right\rbrace,
\end{align*}
with $ \rho_{ T}( \varepsilon)\to 0$ as $ \varepsilon\to 0$. Recall now the definition of $T$ in \eqref{hyp:T} and for this fixed choice of $T$, choose $ \varepsilon>0$ sufficiently small such that $\frac{ C_{ \mathcal{ M}}c_{ 0}\kappa_{ 1} T^{ 2} }{ \sqrt{ f^{ \prime}(a_{ 1})}} e^{ - \frac{ \mu}{ 2} \varepsilon^{ - \beta}} + \rho_{ T} \left(\varepsilon\right) \leq \frac{ 2}{ 3}$. This gives \eqref{eq:step2} and Proposition~\ref{prop:closeness M} is proven.
\end{proof}

\section{Proof of Theorem~\ref{th:main}}
\label{sec:proof_main}
We prove in this Section the main theorem of the paper, Theorem~\ref{th:main}. Note that the proof of the finite-time convergence \eqref{eq:conv_main_T0} is elementary: it is a direct consequence of the mild formulation \eqref{eq:Ykt mild} applied for $k=0$ and to similar estimates that have been used for the proof of Proposition~\ref{prop:closeness M}. Hence, we leave to the reader the details and prove the more difficult remaining part of Theorem~\ref{th:main}. Recall that $ \beta, \delta$ are defined through \eqref{eq:beta_small} and \eqref{eq:delta_small} respectively. Let us start with the initial condition: the proximity statement of Theorem~\ref{th:main} requires obviously that the initial profile $ U_{ 0}^{ (\varepsilon)}$ should itself be close to the manifold $ \mathcal{ M}$. 
Since
\begin{align*}
U_0^{ (\varepsilon)}(x)=a_1{\bf 1}_{ \mathcal{ D}_-^{ (\varepsilon)}}(x) + \sum_{ i=-N_\varepsilon }^{N_\varepsilon} u_{ 0}^{ (\varepsilon)}(x_{ i}) {\bf 1}_{I_{i}^{ (\varepsilon)}}(x)+a_2{\bf 1}_{ \mathcal{ D}_+^{ (\varepsilon)}}(x),
\end{align*}
we immediately get that
\begin{align*}
\left\Vert U_{ 0}^{ (\varepsilon)} - \hat{ u}_{ 0}\right\Vert_{ L^{ 2}}^{ 2}&= \int_{ \mathcal{ D}_{ -}^{ (\varepsilon)}} \left\vert a_{ 1}- \hat{ u}_{ 0}(x) \right\vert^{ 2} {\rm d}x+ \int_{ \mathcal{ D}_{ +}^{ (\varepsilon)}} \left\vert a_{ 2}- \hat{ u}_{ 0}(x) \right\vert^{ 2} {\rm d}x \\
&\quad + \sum_{ i=- N_{ \varepsilon}}^{ N_{ \varepsilon}} \int_{ I_{ i}^{ (\varepsilon)}} \left\vert u_{ 0}^{ (\varepsilon)}(x_{ i}) - \hat{ u}_{ 0}(x)\right\vert^{ 2} {\rm d}x\\
&\leq 2C e^{ - 2 \mu \left(N_{ \varepsilon} + \varepsilon/2\right)}\\
&\quad  + 2\sum_{ i=- N_{ \varepsilon}}^{ N_{ \varepsilon}} \int_{ I_{ i}^{ (\varepsilon)}} \left\vert u_{ 0}^{ (\varepsilon)}(x_{ i}) - u_{ 0}^{ (\varepsilon)}(x)\right\vert^{ 2} {\rm d}x +2\int_{ \mathcal{ D}_{ 0}^{ (\varepsilon)}} \left\vert u_{ 0}^{ (\varepsilon)}(x) - \hat{ u}_{ 0}(x)\right\vert^{ 2} {\rm d}x,
\end{align*}
by \eqref{eq:int_tail_hatu}. Then, by Lipchitz continuity of $u_{ 0}$, there exists some $C>0$ such that 
\begin{align*}
\sum_{ i=- N_{ \varepsilon}}^{ N_{ \varepsilon}} \int_{ I_{ i}^{ (\varepsilon)}} \left\vert u_{ 0}^{ (\varepsilon)}(x_{ i}) - u_{ 0}^{ (\varepsilon)}(x)\right\vert^{ 2} {\rm d}x \leq C \varepsilon^{ 3} N_{ \varepsilon} = C \varepsilon^{ 2- \beta} ,
\end{align*}
so that, for some constant $C>0$,
\begin{align*}
\left\Vert U_{ 0}^{ (\varepsilon)} - \hat{ u}_{ 0}\right\Vert_{ L^{ 2}} \leq C \left(e^{ - \mu  \varepsilon^{ -1- \beta}} +  \varepsilon^{ 1- \beta/2} + \left\Vert u_{ 0}^{ (\varepsilon)} - \hat{ u}_{ 0}\right\Vert_{ L^{ 2}}\right).
\end{align*}
Therefore, fix any $ \beta_{ 0} \in \left(0, \frac{ 1}{ 2} - 2 \beta - \delta\right)$ and suppose that $ \left\Vert u_{ 0}^{ (\varepsilon)} - \hat{ u}_{ 0}\right\Vert_{ L^{ 2}} \leq C \varepsilon^{ \beta_{ 0}}$ as the hypothesis \eqref{eq:u0_close_to_M} in Theorem~\ref{th:main}. With this definition of $ \beta_{ 0}$, we see from the definition of the event $ \mathcal{ C}_{ \varepsilon, \delta}$ given in \eqref{eq:event_Cepsilon} (together with Proposition~\ref{prop:bound N, R} and Proposition~\ref{prop:bound zeta}) that $ \mathbb{ P} \left( \mathcal{C}_{  \varepsilon, \delta}\right) \xrightarrow[ \varepsilon\to 0]{}1$ and we conclude from Proposition~\ref{prop:closeness M} that, on $ \mathcal{ C}_{ \varepsilon, \delta}$,
\[
\sup_{0\leq t\leq t_f\varepsilon^{-1}} {\rm dist}_{L^2}\left(U^{ (\varepsilon)}_t,\mathcal{M}\right) \xrightarrow[ \varepsilon\to0]{}0.
\]
It remains to identify the phase process $ \psi^{ (\varepsilon)}$ of $ U_{ t \varepsilon^{ -1}}^{ (\varepsilon)}$ along the manifold $ \mathcal{ M}$. We proceed by identifying the asymptotic behavior as $ \varepsilon\to 0$ of the (rescaled) process
\begin{equation}
\label{eq:tildeTheta}
\left(\tilde{ \Theta}_{ u}^{ (\varepsilon)} \right)_{ u\in [0, t_{ f}]}:= \left( \Theta \left(U_{ u \varepsilon^{ -1}}^{ (\varepsilon)}\right)\right)_{ u\in[0, t_{ f}]}.
\end{equation}
Start from the Ito decomposition \eqref{eq:Ito to Theta}, written for $t=u \varepsilon^{ -1}$, $u\in[0, t_{ f}]$:
\begin{align}
\tilde{ \Theta}_{ u}^{ (\varepsilon)}  = &\, \tilde{ \Theta}_{ 0}^{ (\varepsilon)} -\int_0^{ u \varepsilon^{ -1}} {\rm D}\Theta(U_s)U_s  {\bf 1}_{\mathcal{D}_0} {\rm d} s \label{eq:Ito_Thetatilde}\\
&+ \sum_{j\in\mathbb{Z}}\int_0^{ u \varepsilon^{ -1}} \int_0^\infty\left(\Theta(U_s+\varpi_{j}\mathbf{ 1}_{ z\leq \lambda_{ j, s}} )-\Theta(U_s)\right){\rm d} s{\rm d} z+ \Xi_{ u \varepsilon^{ -1}}^{ (\varepsilon)}.\nonumber
\end{align}
The point is to prove that $\left(\tilde{ \Theta}_{ u}^{ (\varepsilon)} \right)_{ u\in [0, t_{ f}]}$ converges in law (as a càdlàg process on $[0, t_{ f}]$) as $ \varepsilon\to 0$ to $ \left(\sigma W_{ u}\right)_{ u\in [0, t_{ f}]}$,  where $ \sigma$ is given by Theorem~\ref{th:main} and $W$ is a standard Brownian motion. Note first that, since $ \left\Vert U_{ 0}^{ (\varepsilon)} - \hat{ u}_{ 0}\right\Vert_{ L^{ 2}} \to 0$ as $ \varepsilon\to 0$, $ \tilde{ \Theta}_{ 0}^{ (\varepsilon)} \to \Theta \left( \hat{ u}_{ 0}\right)=0$ as $ \varepsilon\to 0$. Secondly, remark that, from the proof of Proposition~\ref{prop: bound Theta} (recall Remark~\ref{rem:beta_delta_small}),  on the event $ \mathcal{ C}_{ \varepsilon, \delta}$, the drift term in \eqref{eq:Ito_Thetatilde} converges uniformly to $0$ on $[0, t_{ f} \varepsilon^{ -1}]$:
\begin{multline}
\label{eq:Thetadrift_to_0}
\sup_{ u\leq t_{ f}} \Bigg\vert -\int_0^{ u \varepsilon^{ -1}} {\rm D}\Theta(U_s)U_s  {\bf 1}_{\mathcal{D}_0} {\rm d} s 
\\
+ \sum_{j\in\mathbb{Z}}\int_0^{ u \varepsilon^{ -1}} \int_0^\infty\left(\Theta(U_s+\varpi_{j}\mathbf{ 1}_{ z\leq \lambda_{ j, s}} )-\Theta(U_s)\right){\rm d} s{\rm d} z \Bigg\vert \xrightarrow[ \varepsilon\to 0]{}0.
\end{multline}
Hence, everything boils down to proving that the process
\begin{equation}
\left( \tilde{\Xi}_{ u}^{ (\varepsilon)}\right)_{ u\in [0, t_{ f}]}:=\left(\Xi_{ u \varepsilon^{ -1}}^{ (\varepsilon)}\right)_{ u\in [0, t_{ f}]}
\end{equation}
converges to $ \left(\sigma W_{ u}\right)$. We use the standard procedure of tightness and identification of the limit: starting from \eqref{eq:Nt}, write
\begin{align}
\tilde{ \Xi}_{ u}^{ (\varepsilon)}&= \sum_{j\in\mathbb{Z}}\int_0^{u \varepsilon^{ -1}}\int_0^\infty \left(\Theta(U_s+\varpi_{j}\mathbf{ 1}_{ z\leq \lambda_{ j, s}} )-\Theta(U_s) - {\rm D} \Theta(U_{ s})[ \varpi_{ j} \mathbf{ 1}_{ z\leq \lambda_{ j,s}}]\right)\tilde \pi_j({\rm d} s,{\rm d}z) \nonumber\\
&\quad+\sum_{j\in\mathbb{Z}}\int_0^{u \varepsilon^{ -1}}\int_0^\infty \left({\rm D} \Theta(U_{ s})[ \varpi_{ j} \mathbf{ 1}_{ z\leq \lambda_{ j,s}}]- {\rm D} \Theta(\hat{ u}_{ \Theta(U_{ s})})[ \varpi_{ j} \mathbf{ 1}_{ z\leq \lambda_{ j,s}}]\right)\tilde \pi_j({\rm d} s,{\rm d}z) \nonumber\\
&\quad+\sum_{j\in\mathbb{Z}}\int_0^{u \varepsilon^{ -1}}\int_0^\infty {\rm D} \Theta(\hat{ u}_{ \Theta(U_{ s})})[ \varpi_{ j} \mathbf{ 1}_{ z\leq \lambda_{ j,s}}]\tilde \pi_j({\rm d} s,{\rm d}z):= \tilde{ \Xi}_{ 1, u}^{ (\varepsilon)} + \tilde{ \Xi}_{ 2,u}^{ (\varepsilon)} + \tilde{ \Xi}_{ 3,u}^{ (\varepsilon)}. \label{eq:tildeXi123}
\end{align}
We have by independence of the Poisson measures $ \pi_{ j}$
\begin{align}
\left[ \tilde{ \Xi}_{ 1}^{ (\varepsilon)}\right]_{ u}= \sum_{j\in\mathbb{Z}}\int_0^{u \varepsilon^{ -1}}\int_0^\infty \left(\Theta(U_s+\varpi_{j}\mathbf{ 1}_{ z\leq \lambda_{ j, s}} )-\Theta(U_s) - {\rm D} \Theta(U_{ s})[ \varpi_{ j} \mathbf{ 1}_{ z\leq \lambda_{ j,s}}]\right)^{ 2} \pi_j({\rm d} s,{\rm d}z).
\end{align}
Hence, by Proposition~\ref{prop:regularity isochron} and using that $ \lambda_{ j}\leq 1$
\begin{align*}
\mathbb{ E} \left( \left[ \tilde{ \Xi}_{ 1}^{ (\varepsilon)}\right]_{ u}\right) \leq C \varepsilon^{ -1} \sum_{ j\in \mathbb{ Z}} \left\Vert \varpi_{ j} \right\Vert^{ 4}\leq C \varepsilon^{ 2 - \beta}.
\end{align*}
In particular, by Burkholder-Davis-Gundy inequality, $ \mathbb{ E} \left( \sup_{ u\in[0, t_{ f}]}\left\vert \tilde{ \Xi}_{ 1, u}^{ (\varepsilon)} \right\vert^{ 2}\right) \xrightarrow[ \varepsilon\to 0]{}0$, so that $ \tilde{\Xi}_{ 1}^{ (\varepsilon)}$ does not contribute to the limit. In a same way, on $ \mathcal{ C}_{ \varepsilon, \delta}$, $ \mathbb{ E} \left( \left[ \tilde{ \Xi}_{ 2}^{ (\varepsilon)}\right]_{ u}\right)\leq C \varepsilon^{ -1} \varepsilon^{ 1- 4 \beta - 4 \delta} \sum_{ j\in \mathbb{ Z}} \left\Vert \varpi_{ j} \right\Vert_{ L^{ 2}}^{ 2}\leq C \varepsilon^{ -1} \varepsilon^{ 1- 4 \beta - 4 \delta} \varepsilon^{ 1- \beta}= C \varepsilon^{ 1- 5 \beta - 4 \delta} \xrightarrow[ \varepsilon\to 0]{}0$ so that this term does not contribute to the limit either. We finally turn to $ \tilde{ \Xi}_{3}^{ (\varepsilon)} $ in \eqref{eq:tildeXi123}:
\begin{align*}
\left[ \tilde{ \Xi}_{ 3}^{ (\varepsilon)}\right]_{ u}&=\sum_{j\in\mathbb{Z}}\int_0^{u \varepsilon^{ -1}}\int_0^\infty  \left({\rm D} \Theta(\hat{ u}_{ \Theta(U_{ s})})[ \varpi_{ j}]\right)^{ 2} \mathbf{ 1}_{ z\leq \lambda_{ j,s}}\pi_j({\rm d} s,{\rm d}z) := \Gamma_{ 1,u}^{ (\varepsilon)} + \Gamma_{ 2, u}^{ (\varepsilon)}.
\end{align*}
where the decomposition of the last line corresponds to the splitting of the stochastic integral into $ \pi_{ j}({\rm d}s, {\rm d}z)= \tilde{ \pi}_{ j}({\rm d}s, {\rm d}z) + {\rm d}s {\rm d}z$. The first contribution $ \Gamma_{ 1}^{ (\varepsilon)}$ is negligible: its bracket is bounded by $ C\varepsilon^{ -1}\sum_{ j\in \mathbb{ Z}} \left\Vert \varpi_{ j} \right\Vert^{ 4}\leq C \varepsilon^{ 2- \beta}$ so that another application of Proposition~\ref{prop:concentration_mart} gives that with high probability, this term is of order $ \varepsilon^{ 1- \beta/2}\to 0$. Hence everything boils down to estimating
\begin{align*}
\Gamma_{ 2, u}^{ (\varepsilon)}&=\sum_{j\in\mathbb{Z}}\int_0^{u \varepsilon^{ -1}} \left({\rm D} \Theta(\hat{ u}_{ \Theta(U_{ s})})[ \varpi_{ j}]\right)^{ 2} \lambda_{ j,s}{\rm d} s \\
&=\sum_{j\in\mathbb{Z}}\int_0^{u \varepsilon^{ -1}} \left({\rm D} \Theta(\hat{ u}_{ \Theta(U_{ s})})[ \varpi_{ j}]\right)^{ 2} \left(\lambda_{ j,s} - f \left( \hat{ u}_{ \Theta(U_{ s})}\right)(x_{ j})\right){\rm d} s\\
&\quad + \sum_{j\in\mathbb{Z}}\int_0^{u \varepsilon^{ -1}} \left({\rm D} \Theta(\hat{ u}_{ \Theta(U_{ s})})[ \varpi_{ j}]\right)^{ 2}f \left( \hat{ u}_{ \Theta(U_{ s})}\right)(x_{ j}){\rm d} s := \Gamma_{ 3, u}^{ (\varepsilon)} + \Gamma_{ 4, u}^{ (\varepsilon)}.
\end{align*}
Using similar estimates as for \eqref{aux:CS_D2} and \eqref{aux:Deltaf}, we have that on the event $ \mathcal{ C}_{  \varepsilon, \delta}$, $ \Gamma_{ 3}^{ (\varepsilon)}$ is of order $ \varepsilon^{ -1} \varepsilon^{ 3/2- \beta/2} \varepsilon^{ -2 \beta- 2 \delta} \to 0$. Concerning $ \Gamma_{ 4}^{ (\varepsilon)}$, one can further use the approximation $ \varpi_{ j} \approx \varepsilon W_{ x_{ j}}$ (as already done in \eqref{eq:approx_varpi_Wy}) to finally obtain (putting each of these estimates into \eqref{eq:tildeXi123}) the final decomposition 
\begin{equation}
\label{eq:decomp_croXi}
\left[\tilde{ \Xi}^{ (\varepsilon)}\right]_{ u} = \Pi_{ 1, u}^{ (\varepsilon)} + \Pi_{ 2, u}^{ (\varepsilon)},
\end{equation}
where $ \sup_{ u\in [0, t_{ f}]} \left\vert \Pi_{ 1, u}^{ (\varepsilon)} \right\vert \to \varepsilon$ as $ \varepsilon\to0$ in probability and 
\begin{align}
\Pi_{ 2, u}^{ (\varepsilon)}&:= \varepsilon^{ 2} \sum_{j\in\mathbb{Z}}\int_0^{u \varepsilon^{ -1}} \left({\rm D} \Theta(\hat{ u}_{ \Theta(U_{ s})})[ W_{ x_{ j}}]\right)^{ 2}f \left( \hat{ u}_{ \Theta(U_{ s})}\right)(x_{ j}){\rm d} s \nonumber \\
&=\varepsilon \sum_{j\in\mathbb{Z}}\int_0^{u} \left({\rm D} \Theta(\hat{ u}_{ \tilde{ \Theta}^{ (\varepsilon)}_{ r}})[ W_{ x_{ j}}]\right)^{ 2}f \left( \hat{ u}_{ \tilde{ \Theta}_{ r}^{ (\varepsilon)}}\right)(x_{ j}){\rm d} r \label{eq:croeps_final}
\end{align}
Note here that, by boundedness of $f$, $\sup_{ \varepsilon>0} \sup_{ u\in[0, t_{ f}]} \left\vert \Pi_{ 2, u}^{ (\varepsilon)} \right\vert< \infty$. Hence, we are now in position to apply Aldous tightness criterion \cite[Th.~16.10]{Billingsley1999}: firstly, from \eqref{eq:Ito_Thetatilde}, \eqref{eq:Thetadrift_to_0}, \eqref{eq:decomp_croXi} and Burkholder-Davis-Gundy inequality, we obtain that 
\begin{equation}
\lim_{ \nu\to \infty} \limsup_{ \varepsilon\to0}\mathbb{ P} \left(\sup_{ u\in [0, t_{ f}]} \left\vert \tilde{ \Theta}_{ u}^{ (\varepsilon)} \right\vert \geq \nu\right)=0.
\end{equation}
Secondly, for any $ \eta>0$, for any stopping time $ \tau$, by Burkholder-Davis-Gundy inequality, using again \eqref{eq:Ito_Thetatilde}, \eqref{eq:Thetadrift_to_0}, \eqref{eq:decomp_croXi}, we have, as $ \varepsilon\to 0$
\begin{align*}
\mathbb{ E} \left( \left\vert \tilde{ \Theta}_{ \tau+ \eta}^{ (\varepsilon)} - \tilde{ \Theta}_{ \tau}^{ (\varepsilon)}\right\vert^{ 2}\right)&\leq C \mathbb{ E} \left(\varepsilon \sum_{j\in\mathbb{Z}}\int_{ \tau}^{ \tau+ \eta} \left({\rm D} \Theta(\hat{ u}_{ \tilde{ \Theta}^{ (\varepsilon)}_{ r}})[ W_{ x_{ j}}]\right)^{ 2}f \left( \hat{ u}_{ \tilde{ \Theta}_{ r}^{ (\varepsilon)}}\right)(x_{ j}){\rm d} r\right) + o_{ \varepsilon\to0}(1),\\
&\leq C \eta + o_{ \varepsilon\to0}(1).
\end{align*}
Hence, for any $ \rho_{ 1}, \rho_{ 2}>0$, choosing $ \varepsilon\to0$ and $ \eta>0$ sufficiently small, we obtain $ \mathbb{ P} \left(\left\vert  \tilde{ \Theta}_{ \tau+ \eta}^{ (\varepsilon)} - \tilde{ \Theta}_{ \tau}^{ (\varepsilon)} \right\vert \geq \rho_{ 1}\right)\leq \rho_{ 2}$, so that Aldous criterion is verified. Hence, the process $ \tilde{ \Theta}^{ (\varepsilon)}$ is tight on $[0, t_{ f}]$. Denote by $ \tilde{ \Theta}^{ (\infty)}$ any of its limit in law as $ \varepsilon\to 0$. By Skorokhod representation Theorem, one can suppose that this convergence is almost-sure so that passing to the limit as $ \varepsilon\to 0$ into \eqref{eq:decomp_croXi} and \eqref{eq:croeps_final} gives, by definition of $ {\rm D}(\hat{ u}_{ \varphi})$ in \eqref{eq:DTheta},
\begin{align*}
\left[\tilde{ \Xi}^{ (\varepsilon)}\right]_{ u} \xrightarrow[ \varepsilon\to 0]{} &\frac{ 1}{ \left\langle \hat u'_0,\hat u'_0\right\rangle_{m_0}^{ 2}}\int_0^{u} \int_{ \mathbb{ R}}\left( \int_{ \mathbb{ R}} W(x-y)\hat u'_{ \tilde{ \Theta}_{ r}^{ (\infty)}}(y) m_{ \tilde{ \Theta}_{ r}^{ (\infty)}}(y) {\rm d}y\right)^{ 2}f \left( \hat{ u}_{ \tilde{ \Theta}_{ r}^{ (\infty)}}\right)(x) {\rm d}x{\rm d} r\\
&=\frac{ u}{ \left\langle \hat u'_0,\hat u'_0\right\rangle_{m_0}^{ 2}}\int_{ \mathbb{ R}}\left( \int_{ \mathbb{ R}} W(x-y)\hat u'_{ 0}(y) m_{ 0}(y) {\rm d}y\right)^{ 2}f \left( \hat{ u}_{ 0}\right)(x) {\rm d}x,
\end{align*}
where we used the changes of variables $y \to y- \tilde{ \Theta}_{ r}^{ (\infty)}$ and $ x \to x- \tilde{ \Theta}_{ r}^{ (\infty)}$ in the last expression, so that the limit does not actually depend on the realisation of the limiting process $\tilde{ \Theta}^{ (\infty)}$. We conclude the proof of Theorem~\ref{th:main} by Lévy's Theorem for characterization of Brownian motion. \qed

\section{Control of the boundary terms}\label{sec: boundary}

The aim of this Section is to give a proof of Proposition~\ref{prop:D2Theta}, that was useful for the proof of Proposition~\ref{prop:closeness M}. We will make the repeated use of the following parameters, whose definitions are gathered here for reading convenience: for any $ 0<\beta^{ \prime} < \beta$, define $ \nu:= \frac{ \beta - \beta^{ \prime}}{ 6}>0$ as well as 
\begin{equation}
\label{eq:alphas}
\alpha_{ 3}:= \frac{ \beta}{ 2} + \nu,\ \alpha_{ 2}:= \frac{ \beta}{ 2} - 2 \nu \text{ and } \alpha_{ 1}:= \frac{ \beta}{ 2} - 3 \nu= \frac{ \beta^{ \prime}}{ 2}.
\end{equation} 
Note that these parameters have been defined in such a way that (and this is the only property that is crucial for the following, the precise definition of these parameters being mostly irrelevant)
\begin{equation}
\label{eq:alphas_prop}
\beta> \alpha_{ 3} + \alpha_{ 2} \text{ and } \alpha_{ 3}> \alpha_{ 2} > \alpha_{ 1}= \frac{ \beta^{ \prime}}{ 2}>0.
\end{equation}
For any $ \varphi\in \mathbb{ R}$, define the bounded operator $\mathcal{ K}_{ \varphi}$ as
\begin{equation}
\label{eq:operator_K}
\mathcal{ K}_{ \varphi}v = W*(f'(\hat u_\varphi)v), v\in L^{ 2},
\end{equation}
as well as its iterates $ \mathcal{ K}^{ k}_{ \varphi}$ for $k\geq1$. Begin with a technical lemma, which will be useful for the proof of Proposition~\ref{prop:D2Theta}.
\begin{lemma}
\label{lem:K}
Fix $0< \beta^{ \prime}< \beta$. Consider $ \alpha_{ 3}>0$ given by \eqref{eq:alphas_prop} and some integer $k\geq0$. Then for $x \leq \varepsilon^{ - \beta} - \sigma k \varepsilon^{ - \alpha_{ 3}}$, we have
\begin{equation}
\label{eq:tailcK}
\left\vert \mathcal{ K}_{ \varphi}^{ k} \left(w \mathbf{ 1}_{ \mathcal{ D}_{ +}}\right) \right\vert(x) \leq \left(1+ \frac{ 1}{ 2 \sqrt{ 2 \sigma}}\right)^{ k} \left\Vert f^{ \prime} \right\Vert_{ \infty}^{ k} e^{ - \varepsilon^{ - \alpha_{ 3}}} \left\Vert w \right\Vert,
\end{equation}
and for $x \geq -\varepsilon^{ - \beta} + \sigma k \varepsilon^{ - \alpha_{ 3}}$,
\begin{equation*}
\left\vert \mathcal{ K}_{ \varphi}^{ k} \left(w \mathbf{ 1}_{ \mathcal{ D}_{ -}}\right) \right\vert(x) \leq \left(1+ \frac{ 1}{ 2 \sqrt{ 2 \sigma}}\right)^{ k} \left\Vert f^{ \prime} \right\Vert_{ \infty}^{ k} e^{ - \varepsilon^{ - \alpha_{ 3}}} \left\Vert w \right\Vert.
\end{equation*}
\end{lemma}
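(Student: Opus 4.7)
The plan is to prove the bound for $\mathcal{D}_+$ by induction on $k$, the case of $\mathcal{D}_-$ following by an identical (mirror) argument. The base $k=0$ is immediate since $w \mathbf{1}_{\mathcal{D}_+}$ vanishes on $(-\infty,\varepsilon N_\varepsilon+\varepsilon/2)$, which contains the admissible region.

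For the inductive step, denote $v_{k-1}:=\mathcal{K}_\varphi^{k-1}(w\mathbf{1}_{\mathcal{D}_+})$ and set the threshold
$$M_{k-1}:=\varepsilon^{-\beta}-\sigma(k-1)\varepsilon^{-\alpha_3},$$
so that the inductive hypothesis gives the pointwise bound $|v_{k-1}(y)|\le C_{k-1}e^{-\varepsilon^{-\alpha_3}}\|w\|$ for all $y\le M_{k-1}$, where $C_{k-1}:=(1+\tfrac{1}{2\sqrt{2\sigma}})^{k-1}\|f'\|_\infty^{k-1}$. For $x\le \varepsilon^{-\beta}-\sigma k\varepsilon^{-\alpha_3}$ I split
$$\mathcal{K}_\varphi v_{k-1}(x)=\int_{-\infty}^{M_{k-1}}\!\!W(x-y)f'(\hat u_\varphi(y))v_{k-1}(y)\,dy+\int_{M_{k-1}}^{\infty}\!\!W(x-y)f'(\hat u_\varphi(y))v_{k-1}(y)\,dy.$$
The first integral is bounded using the inductive pointwise estimate and $\|W\|_{L^1}=1$, yielding $\|f'\|_\infty C_{k-1}e^{-\varepsilon^{-\alpha_3}}\|w\|$. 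For the second I use Cauchy-Schwarz together with the $L^2$-boundedness of $\mathcal{K}_\varphi$ (which gives $\|v_{k-1}\|_{L^2}\le \|f'\|_\infty^{k-1}\|w\|$), and the key point is that for $y\ge M_{k-1}$ and $x\le \varepsilon^{-\beta}-\sigma k\varepsilon^{-\alpha_3}$ one has $y-x\ge \sigma\varepsilon^{-\alpha_3}$, so the explicit formula \eqref{eq:Wexp} yields
$$\|W(x-\cdot)\mathbf{1}_{[M_{k-1},\infty)}\|_{L^2}^2=\frac{1}{8\sigma}e^{-2(M_{k-1}-x)/\sigma}\le \frac{1}{8\sigma}e^{-2\varepsilon^{-\alpha_3}},$$
and this second piece is bounded by $\tfrac{\|f'\|_\infty^k}{2\sqrt{2\sigma}}e^{-\varepsilon^{-\alpha_3}}\|w\|$.

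Adding the two contributions gives $|v_k(x)|\le \bigl(\|f'\|_\infty C_{k-1}+\tfrac{\|f'\|_\infty^k}{2\sqrt{2\sigma}}\bigr)e^{-\varepsilon^{-\alpha_3}}\|w\|$, and an elementary check shows
$$\|f'\|_\infty C_{k-1}+\tfrac{\|f'\|_\infty^k}{2\sqrt{2\sigma}}=\|f'\|_\infty^k\Bigl((1+\tfrac{1}{2\sqrt{2\sigma}})^{k-1}+\tfrac{1}{2\sqrt{2\sigma}}\Bigr)\le (1+\tfrac{1}{2\sqrt{2\sigma}})^k\|f'\|_\infty^k,$$
closing the recursion. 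There is no genuine obstacle here: the only care is in choosing the splitting threshold $M_{k-1}$ so that the residual distance $M_{k-1}-x$ is at least $\sigma\varepsilon^{-\alpha_3}$ on the admissible window, which is exactly what the geometric receding $x\le\varepsilon^{-\beta}-\sigma k\varepsilon^{-\alpha_3}$ is designed to guarantee. The fact that the constant $C_k$ grows only geometrically in $k$ (rather than, say, factorially) is what will later allow one to iterate this estimate a polynomially large number of times in the applications leading to Proposition~\ref{prop:D2Theta}.
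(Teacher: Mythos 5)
Your proof is correct and follows essentially the same route as the paper's: induction on $k$, splitting the convolution at the receding threshold $\varepsilon^{-\beta}-\sigma(k-1)\varepsilon^{-\alpha_3}$, an $L^1$ bound with the inductive pointwise estimate on the near part, and Cauchy--Schwarz with the explicit exponential tail of $W$ plus Young's inequality on the far part, yielding the same constants $\bigl(1+\tfrac{1}{2\sqrt{2\sigma}}\bigr)^k\Vert f'\Vert_\infty^k$. The only differences are cosmetic (indexing of the induction step), so nothing further is needed.
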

\begin{proof}[Proof of Lemma~\ref{lem:K}]
We proceed by recursion on $k$, and only prove the first inequality, the proof of the second being similar. The property is obvious for $k=0$, since for $x\leq \varepsilon^{ - \beta}$, $ \mathbf{ 1}_{  \mathcal{ D}_{ +}}(x)=0$.  Suppose now the property \eqref{eq:tailcK} true for some $k$ and consider $x \leq \varepsilon^{ - \beta} - \sigma(k+1) \varepsilon^{ - \alpha_{ 3}}$. We have, setting $a:=\varepsilon^{ - \beta} - \sigma k \varepsilon^{ - \alpha_{ 3}} $
\begin{align*}
  \left\vert \mathcal{ K}_{ \varphi}^{ k+1} \left(w \mathbf{ 1}_{ \mathcal{ D}_{ +}}\right) \right\vert(x)
&=\left\vert \int_{ \mathbb{ R}} W(x-y)f^{ \prime}( \hat{ u}_{ \varphi})(y) \mathcal{ K}_{ \varphi}^{ k}\left(w \mathbf{ 1}_{ \mathcal{ D}_{ +}}\right)(y) {\rm d}y \right\vert \\
&\leq \left\Vert f^{ \prime} \right\Vert_{ \infty} \int_{ \mathbb{ R}} W(x-y) \left\vert \mathcal{ K}_{ \varphi}^{ k}\left(w \mathbf{ 1}_{ \mathcal{ D}_{ +}}\right)(y) \right\vert {\rm d}y,\\
 &=\left\Vert f^{ \prime} \right\Vert_{ \infty} \int_{-\infty}^{ a} W(x-y) \left\vert \mathcal{ K}_{ \varphi}^{ k}\left(w \mathbf{ 1}_{ \mathcal{ D}_{ +}}\right)(y) \right\vert {\rm d}y \\
&\quad + \left\Vert f^{ \prime} \right\Vert_{ \infty} \int_{a}^{ +\infty} W(x-y) \left\vert \mathcal{ K}_{ \varphi}^{ k}\left(w \mathbf{ 1}_{ \mathcal{ D}_{ +}}\right)(y) \right\vert {\rm d}y.
\end{align*}
  By recursion hypothesis on the first previous integral, we obtain
  \begin{align*}
  \left\Vert f^{ \prime} \right\Vert_{ \infty} \int_{-\infty}^{ a} W(x-y) \left\vert \mathcal{ K}_{ \varphi}^{ k}\left(w \mathbf{ 1}_{ \mathcal{ D}_{ +}}\right)(y) \right\vert {\rm d}y \leq  \left(1+ \frac{ 1}{ 2 \sqrt{ 2 \sigma}}\right)^{ k} \left\Vert f^{ \prime} \right\Vert_{ \infty}^{ k+1}e^{ - \varepsilon^{ - \alpha_{ 3}}} \left\Vert w \right\Vert,
  \end{align*}
  since $\int_{-\infty}^{ a} W(x-y) {\rm d}y \leq \int_{ \mathbb{ R}} W=1$. Concerning the second integral, we have, by Cauchy-Schwarz inequality,
  \begin{align*}
   \left\Vert f^{ \prime} \right\Vert_{ \infty} \int_{a}^{ +\infty} W(x-y) \left\vert \mathcal{ K}_{ \varphi}^{ k}\left(w \mathbf{ 1}_{ \mathcal{ D}_{ +}}\right)(y) \right\vert {\rm d}y&\leq  \left\Vert f^{ \prime} \right\Vert_{ \infty} \left( \int_{ a}^{+\infty} W(x-y)^{ 2} {\rm d}y\right)^{ \frac{ 1}{ 2}} \left\Vert \mathcal{ K}_{ \varphi}^{ k}\left(w \mathbf{ 1}_{ \mathcal{ D}_{ +}}\right) \right\Vert
  \end{align*}
Recall that $x \leq \varepsilon^{ - \beta} - \sigma(k+1) \varepsilon^{ - \alpha_{ 3}}= a- \sigma \varepsilon^{ - \alpha_{ 3}}$ so that $\left( \int_{ a}^{+\infty} W(x-y)^{ 2} {\rm d}y\right)^{ \frac{ 1}{ 2}} = \frac{ 1}{ 2 \sqrt{ 2 \sigma}} e^{ - \frac{ a-x}{ \sigma}} \leq \frac{ 1}{ 2 \sqrt{ 2 \sigma}} e^{ - \varepsilon^{ - \alpha_{ 3}}}$. Moreover, by an easy recursion and application of Young's inequality for convolution, we obtain that $ \left\Vert \mathcal{ K}_{ \varphi}^{ k}\left(w \mathbf{ 1}_{ \mathcal{ D}_{ +}}\right) \right\Vert \leq \left\Vert W \right\Vert_{ 1}^{ k} \left\Vert f^{ \prime} \right\Vert_{ \infty}^{ k} \left\Vert w \right\Vert = \left\Vert f^{ \prime} \right\Vert_{ \infty}^{ k} \left\Vert w \right\Vert$. Putting the two estimates together gives the result for $k+1$, so that \eqref{eq:tailcK} is true by recursion.
\end{proof}

We have now the ingredients to prove Proposition~\ref{prop:D2Theta}.

\begin{proof}[Proof of Proposition~\ref{prop:D2Theta}]
We only prove the first the bounds on $\mathcal{D}_+$ for simplicity, the bounds on $\mathcal{D}_-$ being similar. Concerning \eqref{eq:DThetaD+-} we have, for any $\varphi$ satisfying $|\varphi|\leq c \varepsilon^{-\beta}$ with $c\in(0,1)$, recalling \eqref{eq:uprime_exp},
\begin{align*}
\left|{\rm D}\Theta(\hat u_\varphi)[v\mathbf{1}_{\mathcal{D}_+}]\right|
& = \left|\int_{\varepsilon N_\varepsilon+\frac{\varepsilon}{2}}^\infty\frac{v \,\hat u'_\varphi}{\langle \hat u'_\varphi,\hat u'_\varphi\rangle_{m_\varphi}}m_\varphi\right|\\
&\leq C \Vert v\Vert_{L^2} \int_{\varepsilon N_\varepsilon+\frac{\varepsilon}{2}-\varphi}^\infty (\hat u'_0)^2\\
&\leq C e^{-2\mu (1-c)\varepsilon^{-\beta}}\Vert v\Vert_{L^2},
\end{align*}
which implies \eqref{eq:DThetaD+-} for $\varepsilon$ small enough.

Let us now focus on the proof of \eqref{eq:D2ThetaD+-}.
Recall the definitions of the parameters $ \alpha_{ 1}, \alpha_{ 2}$ and $ \alpha_{ 3}$ in \eqref{eq:alphas} and the expression of ${\rm D}^{ 2} \Theta\left( \hat{ u}_{ \varphi}\right)$ in \eqref{eq:D2Theta}. Recall also the definition of the projections $P_{ \varphi}$ and $P_{ \varphi}^{ \perp}$ in \eqref{eq:projectionsL2} and write $ w \mathbf{ 1}_{ \mathcal{ D}_{ +}}= P_{\varphi}(w \mathbf{ 1}_{ \mathcal{ D}_{ +}}) + P_{ \varphi}^{ \perp}(w \mathbf{ 1}_{ \mathcal{ D}_{ +}})$ and $ v= P_{\varphi}(v) + P_{ \varphi}^{ \perp}(v)$. Relying on the identity $ {\rm D}^{ 2}\Theta(u_{ \varphi}) \left[ \hat{ u}_{ \varphi}^{ \prime},  \hat{ u}_{ \varphi}^{ \prime}\right]=0$ (which can easily be deduced from \eqref{eq:D2Theta}, as $P_{ \varphi}^{ \perp} \hat{ u}_{ \varphi}=0$) we obtain, for some constant $C>0$ that does not depend on $ \varphi$,
 \begin{align}
  \left\vert {\rm D}^{ 2}\Theta(u_{ \varphi}) \left[ v, w \mathbf{ 1}_{  \mathcal{ D}_{ +}}\right] \right\vert &\leq C\left\vert  \int_{ 0}^{+\infty}  \int_{ \mathbb{ R}} f^{ \prime\prime}( \hat{ u}_{ \varphi}) \left(\hat{ u}^{ \prime}_{ \varphi}\right)^{ 2} \left(e^{ s \mathcal{ L}_{ \varphi}} P_{ \varphi}^{ \perp}v\right)  {\rm d}x {\rm d}s\right\vert \left\vert \frac{ \left\langle w \mathbf{ 1}_{ \mathcal{ D}_{ +}}\, ,\, \hat{ u}_{ \varphi}^{ \prime}\right\rangle}{ \left\langle \hat{ u}_{ \varphi}^{ \prime}\, ,\, \hat{ u}_{ \varphi}^{ \prime}\right\rangle} \right\vert \nonumber\\
  &\quad +C\left\vert  \int_{ 0}^{+\infty}  \int_{ \mathbb{ R}} f^{ \prime\prime}( \hat{ u}_{ \varphi}) \hat{ u}^{ \prime}_{ \varphi} \left(e^{ s \mathcal{ L}_{ \varphi}} v\right) \left(e^{ s \mathcal{ L}_{ \varphi}} \left(P_{ \varphi}^{ \perp} \left(w \mathbf{ 1}_{  \mathcal{ D}_{ +}}\right)\right)\right) {\rm d}x {\rm d}s\right\vert \nonumber\\
   &= (A)+(B). \label{aux:D2ThetaAB}
  \end{align}
Recall now that $ f^{\prime\prime}$ bounded, $ \hat{ u}_{ \varphi}^{ \prime}\in L^{ 4}$ and that by \eqref{eq:contract_ecL} $ \left\Vert e^{ s \mathcal{ L}_{ \varphi}} P_{ \varphi}^{ \perp}v \right\Vert_{ L^{ 2}} \leq e^{ - \kappa s} \left\Vert v \right\Vert_{ L^{ 2}}$. Hence, by Cauchy-Schwarz inequality, using the exponential tail on $ \hat{ u}_{ 0}^{ \prime}$ in \eqref{eq:uprime_exp}, if $ \varphi \leq c\varepsilon^{ - \beta}$,
  \begin{equation}
  \label{aux:Abound}
  (A) \leq C \left\Vert v \right\Vert\left\vert \left\langle w \mathbf{ 1}_{ \mathcal{ D}_{ +}}\, ,\, \hat{ u}_{ \varphi}^{ \prime}\right\rangle \right\vert \leq C \left\Vert v \right\Vert\left\Vert w \right\Vert e^{ - \mu \left( \varepsilon^{ - \beta} - \varphi\right)} \leq C \left\Vert v \right\Vert\left\Vert w \right\Vert e^{ - \mu(1-c)\varepsilon^{ - \beta}},
  \end{equation}
  where we recall that $c\in (0,1)$. Concerning $(B)$ in \eqref{aux:D2ThetaAB},
  \begin{align}
  (B)&\leq C\left\vert  \int_{ 0}^{ \varepsilon^{ - \alpha_{ 1}}}  \int_{ \mathbb{ R}} f^{ \prime\prime}( \hat{ u}_{ \varphi}) \hat{ u}^{ \prime}_{ \varphi} \left(e^{ s \mathcal{ L}_{ \varphi}} v\right) \left(e^{ s \mathcal{ L}_{ \varphi}} \left(P_{ \varphi}^{ \perp} \left(w \mathbf{ 1}_{  \mathcal{ D}_{ +}}\right)\right)\right) {\rm d}x {\rm d}s\right\vert \nonumber\\
  &\quad + C\left\vert  \int_{ \varepsilon^{ - \alpha_{ 1}}}^{+\infty}  \int_{ \mathbb{ R}} f^{ \prime\prime}( \hat{ u}_{ \varphi}) \hat{ u}^{ \prime}_{ \varphi} \left(e^{ s \mathcal{ L}_{ \varphi}} v\right) \left(e^{ s \mathcal{ L}_{ \varphi}} \left(P_{ \varphi}^{ \perp} \left(w \mathbf{ 1}_{  \mathcal{ D}_{ +}}\right)\right)\right) {\rm d}x {\rm d}s\right\vert \nonumber\\
  &:= (B_{ 1})+ (B_{ 2}). \label{eq:B12s}
  \end{align}
The treatment of the above term $(B_{ 2})$ is easy: using again that $ f^{ \prime\prime}$ and $ \hat{ u}^{ \prime}_{ \varphi}$ are bounded and that $ e^{ s \mathcal{ L}_{ \phi}}$ is dissipative, by Cauchy-Schwarz inequality,
  \begin{equation}
  \label{aux:B2bound}
  (B_{ 2}) \leq C \left\Vert v \right\Vert \left\Vert w \right\Vert \int_{ \varepsilon^{ - \alpha_{ 1}}}^{+\infty} e^{ - \kappa s} {\rm d}s= \frac{ C}{ \kappa} e^{ - \kappa \varepsilon^{ -\alpha_{ 1}}}  \left\Vert v \right\Vert \left\Vert w \right\Vert.
  \end{equation}
The control of $(B_{ 1})$ in \eqref{eq:B12s} requires some more substantial work and is the subject of the remainder of the proof: first, by Cauchy-Schwarz inequality,
  \begin{align}
  \label{aux:B1}
  (B_{ 1})& \leq C \left\Vert f^{ \prime\prime} \right\Vert_{ \infty} \left\Vert v \right\Vert\int_{ 0}^{ \varepsilon^{ - \alpha_{ 1}}} \left( \int_{ \mathbb{ R}} \left(\hat{ u}_{ \varphi}^{ \prime}(x) \left(e^{ s \mathcal{ L}_{ \varphi}} \left(P_{ \varphi}^{\perp} \left(w \mathbf{ 1}_{ \mathcal{ D}_{ +}}\right)\right)\right)(x)\right)^{ 2}{\rm d}x\right)^{ \frac{ 1}{ 2}} {\rm d}s.
  \end{align}
Focus first on the integral within the righthand side of the previous inequality \eqref{aux:B1}: write, for $c'\in (c,1)$,
  \begin{align}
 \int_{ \mathbb{ R}} \bigg(\hat{ u}_{ \varphi}^{ \prime}(x) \left(e^{ s \mathcal{ L}_{ \varphi}} \left(P_{ \varphi}^{\perp} \left(w \mathbf{ 1}_{ \mathcal{ D}_{ +}}\right)\right)\right)&(x)\bigg)^{ 2}{\rm d}x \nonumber\\
 &= \int_{ - \infty}^{ c'\varepsilon^{ - \beta}}\left(\hat{ u}_{ \varphi}^{ \prime}(x) \left(e^{ s \mathcal{ L}_{ \varphi}} \left(P_{ \varphi}^{\perp} \left(w \mathbf{ 1}_{ \mathcal{ D}_{ +}}\right)\right)\right)(x)\right)^{ 2}{\rm d}x \label{aux:Cuprime}\\
  &\quad + \int_{ c'\varepsilon^{ - \beta}}^{ +\infty} \left(\hat{ u}_{ \varphi}^{ \prime}(x) \left(e^{ s \mathcal{ L}_{ \varphi}} \left(P_{ \varphi}^{\perp} \left(w \mathbf{ 1}_{ \mathcal{ D}_{ +}}\right)\right)\right)(x)\right)^{ 2}{\rm d}x \label{aux:Duprime}\\ &:= (C_{ s}) + (D_{ s}) \nonumber,
  \end{align}
  so that $(B_{ 1})$ in \eqref{aux:B1} can be estimated as
  \begin{equation}
  \label{eq:B1vsCD}
  (B_{ 1}) \leq C \left\Vert v \right\Vert \int_{ 0}^{ \varepsilon^{ - \alpha_{ 1}}} \left( \sqrt{(C_{ s})} + \sqrt{ (D_{ s})}\right){\rm d}s.
  \end{equation}
  Concerning $(D_{ s})$ in \eqref{aux:Duprime}, we have, using \eqref{eq:uprime_exp}, for $ \left\vert \varphi \right\vert\leq c \varepsilon^{ - \beta}$,
  \begin{equation}
  \label{eq:Duprime_bound}
(D_{ s}) \leq C e^{ -  2 \mu (c'-c)\varepsilon^{ - \beta}} e^{ - 2 \kappa s}\left\Vert w \right\Vert^{ 2}.
  \end{equation}
 Now turn to $ (C_{ s})$ in \eqref{aux:Cuprime}: concentrate first on the term $e^{ s \mathcal{ L}_{ \varphi}}  \left(P_{ \varphi}^{\perp} \left(w \mathbf{ 1}_{ \mathcal{ D}_{ +}}\right)\right)$. Recall the definition of the integral operator $ \mathcal{ K}_{ \varphi}$ in \eqref{eq:operator_K} 
%  \begin{equation*}
%  \mathcal{ K}_{ \varphi}w= W\ast\left(f^{ \prime}(\hat{ u}_{ \varphi})w\right),
%  \end{equation*}
so that $ \mathcal{ L}_{ \varphi}= -I + \mathcal{ K}_{  \varphi}$. Hence, fixing some $M\geq1$ (to be specified later), we have
  \begin{align*}
  e^{ s \mathcal{ L}_{ \varphi}}  \left(P_{ \varphi}^{\perp} \left(w \mathbf{ 1}_{ \mathcal{ D}_{ +}}\right)\right) &= e^{ -s} \sum_{ k=0}^{ M} \frac{ s^{ k} \left(\mathcal{ K}_{ \varphi}\right)^{ k}  \left(P_{ \varphi}^{\perp} \left(w \mathbf{ 1}_{ \mathcal{ D}_{ +}}\right)\right)}{ k!} \\
  &\quad + e^{ -s} \sum_{ k=M+1}^{ +\infty} \frac{ s^{ k} \left(\mathcal{ K}_{ \varphi}\right)^{ k}  \left(P_{ \varphi}^{\perp} \left(w \mathbf{ 1}_{ \mathcal{ D}_{ +}}\right)\right)}{ k!} .
  \end{align*}
  Inserting this last sum into \eqref{aux:Cuprime}, using that $(a+b)^{ 2}\leq 2a^{ 2}+ 2b^{ 2}$, it suffices to estimate separately the two next terms $(C_{ 1, s})$ and $(C_{ 2, s})$ that are given in \eqref{aux:C1} and \eqref{aux:C2} below. Firstly,
  \begin{align}
  \label{aux:C1}
 (C_{ 1, s}):=\int_{ - \infty}^{ c'\varepsilon^{ - \beta}}\left(\hat{ u}_{ \varphi}^{ \prime}(x)  e^{ -s} \sum_{ k=0}^{ M} \frac{ s^{ k} \left(\mathcal{ K}_{ \varphi}\right)^{ k}  \left(P_{ \varphi}^{\perp} \left(w \mathbf{ 1}_{ \mathcal{ D}_{ +}}\right)\right)}{ k!}(x)\right)^{ 2}{\rm d}x.
  \end{align}
  Using that, by assumption, $s\leq \varepsilon^{ -\alpha_{ 1}}$,
  \begin{equation}
  \label{aux:C12}
 (C_{ 1, s})\leq e^{ -2 s}\int_{ - \infty}^{ c'\varepsilon^{ - \beta}}\hat{ u}_{ \varphi}^{ \prime}(x)^{ 2} \left(\sum_{ k=0}^{ M} \frac{ \varepsilon^{ -k \alpha_{ 1}} \left(\mathcal{ K}_{ \varphi}\right)^{ k}  \left(P_{ \varphi}^{\perp} \left(w \mathbf{ 1}_{ \mathcal{ D}_{ +}}\right)\right)}{ k!}(x)\right)^{ 2}{\rm d}x.
  \end{equation}
This is the point where we want to apply Lemma~\ref{lem:K}. To do so, let us first make an intermediate calculation: a sufficient condition for the inequality $c^{ \prime} \varepsilon^{ - \beta} \leq \varepsilon^{ - \beta} - \sigma k \varepsilon^{ - \alpha_{ 3}}$ to hold for any $k=0, \ldots, M$ is to require that $M \leq \frac{1- c^{ \prime}}{ \sigma} \varepsilon^{ - \beta + \alpha_{ 3}}$. If we choose now $ M:= \left\lfloor \varepsilon^{ - \alpha_{ 2}}\right\rfloor$, this boils down to verifying that $ \varepsilon^{ \beta} \leq \frac{ 1-c^{ \prime}}{ \sigma} \varepsilon^{ \alpha_{ 2}+ \alpha_{ 3}}$, which is true since we have required that $ \beta> \alpha_{ 3}+ \alpha_{ 2}$ (recall \eqref{eq:alphas_prop}). Hence, we are in position to use the estimate of Lemma~\ref{lem:K} for all $k=0, \ldots, M$ into the integral \eqref{aux:C12}: bounding $\int_{ - \infty}^{ c^{ \prime}\varepsilon^{ - \beta}}\hat{ u}_{ \varphi}^{ \prime}(x)^{ 2} {\rm d}x\leq \int_{ \mathbb{ R}}\hat{ u}_{ \varphi}^{ \prime}(x)^{ 2} {\rm d}x=C$, we obtain
  \begin{align}
  \label{aux:C1bound}
 (C_{ 1, s})&
\leq C e^{ -2 s}\left((M+1) \varepsilon^{ -M \alpha_{ 1}}  \left(1+ \frac{ 1}{ 2 \sqrt{ 2 \sigma}}\right)^{ M} \left\Vert f^{ \prime} \right\Vert_{ \infty}^{ M} e^{ - \varepsilon^{ - \alpha_{ 3}}} \left\Vert w \right\Vert\right)^{ 2}\nonumber\\
& \leq C e^{ -2 s}e^{ - \varepsilon^{ - \alpha_{ 3}}} \left\Vert w \right\Vert^{ 2}.
  \end{align}
Note that the last bound in the previous \eqref{aux:C1bound} comes from the fact that $M= \left\lfloor \varepsilon^{ - \alpha_{ 2}}\right\rfloor$ and $ \alpha_{ 3}> \alpha_{ 2}$, again by \eqref{eq:alphas_prop} (and therefore, one $ e^{ - \varepsilon^{ - \alpha_{ 3}}}$ is sufficient to control the rest of the expression, which gives \eqref{aux:C1bound}). In order to conclude with the estimation of \eqref{eq:B1vsCD}, it remains now to control
\begin{align}
(C_{ 2, s})&:= \int_{ - \infty}^{ c^{ \prime}\varepsilon^{ - \beta}}\left(\hat{ u}_{ \varphi}^{ \prime}(x)  e^{ -s} \sum_{ k=M+1}^{ +\infty} \frac{ s^{ k} \left(\mathcal{ K}_{ \varphi}\right)^{ k}  \left(P_{ \varphi}^{\perp} \left(w \mathbf{ 1}_{ \mathcal{ D}_{ +}}\right)\right)}{ k!} (x)\right)^{ 2}{\rm d}x \label{aux:C2}\\
&\leq \left( \sum_{ k=M+1}^{ +\infty} e^{ -s} \frac{ s^{ k}}{ k!}\right) \sum_{ k=M+1}^{ +\infty}\int_{ - \infty}^{ c^{ \prime}\varepsilon^{ - \beta}}\hat{ u}_{ \varphi}^{ \prime}(x)^{ 2}  e^{ -s}  \frac{ s^{ k} \left(\mathcal{ K}_{ \varphi}\right)^{ k}  \left(P_{ \varphi}^{\perp} \left(w \mathbf{ 1}_{ \mathcal{ D}_{ +}}\right)\right)^{ 2}}{ k!} (x){\rm d}x,\nonumber
\end{align}
by Cauchy-Schwarz inequality. Since $ \hat{u}_{ \varphi}^{ \prime}$ is bounded, a rough bound gives
\begin{align*}
(C_{ 2, s})&\leq C \left( \sum_{ k=M+1}^{ +\infty} e^{ -s} \frac{ s^{ k}}{ k!}\right) \sum_{ k=M+1}^{ +\infty}  \frac{e^{ -s} s^{ k}}{ k!} \left\Vert \left(\mathcal{ K}_{ \varphi}\right)^{ k}  \left(P_{ \varphi}^{\perp} \left(w \mathbf{ 1}_{ \mathcal{ D}_{ +}}\right)\right) \right\Vert^{ 2}\\
&\leq C \left( \sum_{ k=M+1}^{ +\infty} e^{ -s} \frac{ s^{ k}}{ k!}\right) \left(\sum_{ k=M+1}^{ +\infty}  \frac{e^{ -s} s^{ k}}{ k!} \left\Vert f^{ \prime} \right\Vert_{ \infty}^{ 2k}\right) \left\Vert w \right\Vert^{ 2}.
\end{align*}
Hence, using the bound $ \sum_{ k=M+1}^{ +\infty} e^{ -y} \frac{ y^{ k}}{ k!}\leq \frac{ y^{ M+1}}{ (M+1)M!}$, 
\begin{align*}
(C_{ 2, s})&\leq C \left\Vert f^{ \prime} \right\Vert_{ \infty}^{ 2(M+1)} \left( \frac{ s^{ M+1}}{ (M+1)M!}\right)^{ 2} e^{ \left( \left\Vert f^{ \prime} \right\Vert_{ \infty}^{ 2}-1\right)s} \left\Vert w \right\Vert^{ 2},
\end{align*}
so that
\begin{align*}
\int_{ 0}^{ \varepsilon^{ - \alpha_{ 1}}}\sqrt{ (C_{ 2, s})} {\rm d}s&\leq C \left\Vert w \right\Vert \frac{ \left\Vert f^{ \prime} \right\Vert_{ \infty}^{ M+1} }{ (M+1) M!}\int_{ 0}^{ \varepsilon^{ - \alpha_{ 1}}}s^{ M+1} e^{ \left( \left\Vert f^{ \prime} \right\Vert_{ \infty}^{ 2}-1\right) \frac{ s}{ 2}} {\rm d}s\\
&\leq C \left\Vert w \right\Vert \frac{ \left\Vert f^{ \prime} \right\Vert_{ \infty}^{ M+1} }{ (M+1) M!} e^{ \frac{ \left( \left\Vert f^{ \prime} \right\Vert_{ \infty}^{ 2}-1\right)}{ 2} \varepsilon^{ - \alpha_{ 1}}} \frac{ \varepsilon^{ - \alpha_{ 1}(M+2)}}{ M+2}.
\end{align*}
Using Stirling formula and recalling that $M= \left\lfloor \varepsilon^{ - \alpha_{ 2}}\right\rfloor$,
\begin{align*}
\int_{ 0}^{ \varepsilon^{ - \alpha_{ 1}}}\sqrt{ (C_{ 2, s})} {\rm d}s&\leq C \left\Vert w \right\Vert \frac{ \left\Vert f^{ \prime} \right\Vert_{ \infty}^{ M+1} e^{ M}}{ M^{ \frac{ 5}{ 2} +M}} e^{ \frac{ \left( \left\Vert f^{ \prime} \right\Vert_{ \infty}^{ 2}-1\right)}{ 2} \varepsilon^{ - \alpha_{ 1}}} \varepsilon^{ - \alpha_{ 1}(M+2)}\\
&\leq C \left\Vert w \right\Vert \exp \bigg( \varepsilon^{ - \alpha_{ 2}} \left\lbrace - \left(\alpha_{ 2} - \alpha_{ 1}\right) \ln \left(1/ \varepsilon\right)+ 1+ \ln \left\Vert f^{ \prime} \right\Vert_{ \infty}\right\rbrace  \\
&\qquad\qquad\qquad \quad + \frac{ 1}{ 2} \left( \left\Vert f^{ \prime} \right\Vert_{ \infty} -1\right) \varepsilon^{ - \alpha_{ 1}} +\left(2\alpha_{ 1} -  \frac{ 5}{ 2} \alpha_{ 2}\right) \ln \left(1/ \varepsilon\right)\bigg).
\end{align*}
Since we have supposed that $ \alpha_{ 2}> \alpha_{ 1}$,
\begin{equation}
\label{aux:C2bound}
\int_{ 0}^{ \varepsilon^{ - \alpha_{ 1}}}\sqrt{( C_{ 2, s})} {\rm d}s \leq C \left\Vert w \right\Vert \exp \left( \varepsilon^{ - \alpha_{ 2}} \left\lbrace - \left(\alpha_{ 2} - \alpha_{ 1}\right) \ln \left(1/ \varepsilon\right)+ O(1)\right\rbrace  \right) \leq C \left\Vert w \right\Vert e^{ - \varepsilon^{ - \alpha_{ 2}}},
\end{equation}
for sufficiently small $ \varepsilon>0$. Gather first the bounds \eqref{aux:C1bound}, \eqref{aux:C2bound} and \eqref{eq:Duprime_bound} on $(C_{ 1})$, $(C_{ 2})$ and $(D)$ into the bound \eqref{eq:B1vsCD} for $(B_{ 1})$ and collect the estimates \eqref{aux:Abound}, \eqref{aux:B2bound} to obtain the result.
\end{proof}

\appendix
\section{Auxiliary lemmas}
\label{sec:app_auxiliary}

\subsection{The truncated operator \texorpdfstring{$L_\varphi$}{Lvarphi}}
\label{app:L0}

The aim of this subsection is to obtained the bounds on $L_\varphi$ given in Proposition~\ref{prop:L0}.

\begin{proof}[Proof of Proposition~\ref{prop:L0}]
For $v,w\in L^{ 2} \left(\mathcal{ D}_{ 0}\right)$, 
\begin{equation}
\label{eq:DirFormsEqual}
\left\langle L_{ \varphi}v\, ,\, w\right\rangle_{ m_{ \varphi}, \mathcal{ D}_{ 0}}= \left\langle \mathcal{ L}_{ \varphi} v\, ,\, w\right\rangle_{ m_{ \varphi}}.
\end{equation}
Indeed, since $v= v \mathbf{ 1}_{ \mathcal{ D}_{ 0}}$ and $w= w \mathbf{ 1}_{ \mathcal{ D}_{ 0}}$
\begin{align*}
\left\langle L_{ \varphi}v\, ,\, w\right\rangle_{ m_{ \varphi}, \mathcal{ D}_{ 0}}&= \int_{ \mathcal{ D}_{ 0}} \mathcal{L}_\phi \left(v \mathbf{ 1}_{ \mathcal{ D}_{ 0}}\right)(x) \mathbf{ 1}_{ \mathcal{ D}_{ 0}}(x) w(x) m_{ \varphi}(x) {\rm d}x,\\
&=\int_{ \mathbb{ R}} \mathcal{L}_\phi \left(v \right)(x) w(x) m_{ \varphi}(x) {\rm d}x = \left\langle \mathcal{ L}_{ \varphi} v\, ,\, w\right\rangle_{ m_{ \varphi}}.
\end{align*}
In particular, using the fact that $\mathcal{L}_\phi$ is selfadjoint in $L^{ 2}_{ m_{ \varphi}}$, we obtain
\begin{align*}
\left\langle L_{\phi}v, w\right\rangle_{ m_\phi, \mathcal{ D}_{ 0}}&= \left\langle \mathcal{ L}_{ \varphi} v\, ,\, w\right\rangle_{ m_{ \varphi}} = \left\langle  v\, ,\,  \mathcal{ L}_{ \varphi} w\right\rangle_{ m_{ \varphi}}= \left\langle v,  L_{\phi}w\right\rangle_{ m_\phi, \mathcal{ D}_{ 0}}
\end{align*}
so $L_{ \varphi}$ is also selfadjoint in $L^{ 2}_{ m_{ \varphi}} \left(\mathcal{ D}_{ 0}\right)$. Note here that $ \mathcal{ L}_{ \varphi}: L^{ 2}_{ m_{ \varphi}}\to L^{ 2}_{ m_{ \varphi}}$ is dissipative: for any $v\in L^{ 2}_{ m_{ \varphi}}$, $ \left\langle \mathcal{ L}_{ \varphi} v\, ,\, v\right\rangle_{ m_{ \varphi}}\leq 0$. Indeed,  since $ \mathcal{ L}_{ \varphi}\hat{ u}^{ \prime}_{ \varphi}=0$ and $ \mathcal{ L}_{ \varphi}$ is selfadjoint, $ \left\langle \mathcal{ L}_{ \varphi}v\, ,\, v\right\rangle_{ m_{ \varphi}}= \left\langle \mathcal{ L}_{ \varphi}P_{ \varphi}^{ \perp} v\, ,\, P_{ \varphi}^{ \perp} v\right\rangle_{ m_{ \varphi}} \leq 0$, by \eqref{eq:spectralgap_LangStannat}. But then, using \eqref{eq:DirFormsEqual}, we have also $\left\langle L_{ \varphi}v\, ,\, v\right\rangle_{ m_{ \varphi}, \mathcal{ D}_{ 0}}\leq 0$ so that $ L_{ \varphi}$ is also dissipative on $L^{ 2}_{ m_{ \varphi}} \left(\mathcal{ D}_{ 0}\right)$. The bound \eqref{eq:bound_eLphi} follows then from standard arguments (see e.g. \cite[Th.~4.3]{Pazy1983}).
Concerning the spectral gap inequality \eqref{eq:SG_L0}, we have, using \eqref{eq:DirFormsEqual} and \eqref{eq:spectralgap_LangStannat}, for $v\in L^{ 2} \left(\mathcal{ D}_{ 0}\right)\subset L^{ 2}$,
\begin{align*}
\left\langle L_{\phi}v , v\right\rangle_{m_\phi,\mathcal{D}_0} &= \left\langle \mathcal{ L}_{ \varphi} v\, ,\, v\right\rangle_{ m_{ \varphi}} \leq -\kappa\left(\left\Vert v \right\Vert_{ m_\phi, \mathcal{ D}_{ 0}}^{ 2}- \left\langle v, \hat{ u}'_\phi\right\rangle_{ m_\phi}^{2}\right),
\end{align*}
which gives the result, since $v=v \mathbf{ 1}_{ \mathcal{ D}_{ 0}}$. For the last inequality, we have
\begin{align*}
\frac{\rm d}{{\rm d} t}\left\Vert e^{tL_\phi} v\right\Vert_{m_\phi,\mathcal{D}_0}^2
&\leq  -2\kappa\left\Vert e^{tL_\phi}v \right\Vert_{m_\phi, \mathcal{ D}_{ 0}}^{ 2} +2\kappa\left\langle e^{tL_\phi}v , \mathbf{ 1}_{ \mathcal{ D}_{ 0}} \hat{ u}'_\phi\right\rangle_{m_\phi, \mathcal{ D}_{ 0}}^{2},
\end{align*}
and
\begin{align*}
\left\vert \frac{\rm d}{{\rm d} t}\left\langle e^{tL_\phi}v , \mathbf{ 1}_{ \mathcal{ D}_{ 0}} \hat{ u}'_\phi\right\rangle_{m_\phi, \mathcal{ D}_{ 0}}\right\vert
&\leq  \left\Vert e^{tL_\phi}v \right\Vert_{m_\phi, \mathcal{ D}_{ 0}} \left\Vert L_\phi \hat u'_\phi \right\Vert_{m_\phi, \mathcal{ D}_{ 0}}.
\end{align*}
Now, since $\mathcal{L}_\phi \hat u'_\phi = 0$ and $\mathcal{L}_\phi$ is bounded, we have
\begin{equation*}
\left\Vert L_\phi \hat u'_\phi \right\Vert_{m_\phi, \mathcal{ D}_{ 0}}
= \left\Vert {\bf 1}_{\mathcal{ D}_{ 0}} \mathcal{L}_\phi  \left(\hat u'_\phi {\bf 1}_{\mathcal{ D}_{ 0}}\right)\right\Vert_{m_\phi}
= \left\Vert {\bf 1}_{\mathcal{ D}_{ 0}} \mathcal{L}_\phi  \left(\hat u'_\phi {\bf 1}_{\mathcal{ D}_{ 0}^c}\right)\right\Vert_{m_\phi} \leq c_{\mathcal{L}} \left\Vert \hat u'_\phi {\bf 1}_{\mathcal{D}_0^c}\right\Vert_{m_\phi},
\end{equation*}
and relying on the bound \eqref{eq:bound_eLphi} and the fact that $\left\langle v , \mathbf{ 1}_{ \mathcal{ D}_{ 0}} \hat{ u}'_\phi\right\rangle_{m_\phi, \mathcal{ D}_{ 0}}=0$,
we obtain
\begin{align*}
\left\vert \left\langle e^{tL_\phi}v , \mathbf{ 1}_{ \mathcal{ D}_{ 0}} \hat{ u}'_\phi\right\rangle_{m_\phi, \mathcal{ D}_{ 0}}\right\vert
&\leq c_{\mathcal L} t \left\Vert v \right\Vert_{m_\phi, \mathcal{ D}_{ 0}}\left\Vert \hat u'_\phi {\bf 1}_{\mathcal{D}_0^c}\right\Vert_{m_\phi}.
\end{align*}
This leads to the inequality
\begin{align*}
\frac{\rm d}{{\rm d} t}\left\Vert e^{tL_\phi} v\right\Vert_{m_\phi,\mathcal{D}_0}^2
&\leq  -2\kappa\left\Vert e^{tL_\phi}v \right\Vert_{m_\phi, \mathcal{ D}_{ 0}}^{ 2} + 2\kappa c_{ \mathcal{ L}}^{ 2} t^{ 2}   \left\Vert v \right\Vert_{m_\phi, \mathcal{ D}_{ 0}}^{ 2}\left\Vert \hat u'_\phi {\bf 1}_{\mathcal{D}_0^c}\right\Vert_{m_\phi}^{ 2},
\end{align*}
which implies the result.
\end{proof}

\subsection{Estimates on \texorpdfstring{$W$}{W}}
\label{sec:estimates_W_varpi}
The point of this section is to gather estimates concerning the asymptotic behaviors as $ \varepsilon\to 0$ of quantities like
\begin{equation}
\sup_{ i\in \mathbb{ U}} \sum_{ j\in \mathbb{ V}}W \left(x_{ i} - x_{ j}\right)^{ k}, k\geq1
\end{equation}
that appear everywhere in the rest of the paper. Here, $ \mathbb{ U}, \mathbb{ V} \in \left\lbrace \mathbb{ D}_{ -}, \mathbb{ D}_{ 0}, \mathbb{ D}_{ +}\right\rbrace$ where $ \mathbb{ D}_{ -}:= \left\lbrace j\in \mathbb{ Z}, j<-N_{ \varepsilon}\right\rbrace$, $ \mathbb{ D}_{ 0}:= \left\lbrace j\in \mathbb{ Z}, -N_{ \varepsilon}\leq j\leq N_{ \varepsilon}\right\rbrace$ and $ \mathbb{ D}_{ +}:= \left\lbrace j\in \mathbb{ Z}, j>N_{ \varepsilon}\right\rbrace$ is the discrete counterpart of the partition given in \eqref{eq:partition_R}. Ultimately, we prove at the end of the section Lemma~\ref{lem:bound_sum_varpi}. 
\begin{lemma}
\label{lem:W_IN_IN}
For all $k\geq1$, there exists some $c,C>0$ and $ \varepsilon_{ 0}>0$ depending only on $W$ and $k$ such that for all $ \varepsilon \in (0, \varepsilon_{ 0})$, the following holds.
\begin{enumerate}
\item The $ \mathbb{ U}=\mathbb{ D}_{ 0}$ vs $ \mathbb{ V}=\mathbb{ D}_{ 0}$ case:
\begin{align}
\label{eq:supWialpha_inside}
\sup_{ i=-N_{ \varepsilon}, \ldots, N_{ \varepsilon}} \varepsilon\sum_{ j=-N_{ \varepsilon}}^{ N_{ \varepsilon}} W \left(x_{ i} - x_{ j}\right)^{ k}& \leq  C,
%\label{eq:supWi2}
%\sup_{ i=-N_{ \varepsilon}, \ldots, N_{ \varepsilon}} \sum_{ j=-N_{ \varepsilon}}^{ N_{ \varepsilon}} W^{ 2} \left(x_{ i} - x_{ j}\right)& \leq \frac{ C_{ W}}{ \varepsilon}
\end{align}
\item The $ \mathbb{ U}=\mathbb{ D}_{ 0}$ vs $ \mathbb{ V}=\mathbb{ D}_{ \pm}$ case: for all $i=-N_{ \varepsilon}, \ldots, N_{ \varepsilon}$,
\begin{equation}
\label{eq:supWialpha_outside}
\begin{cases}
\varepsilon\sum_{ j >N_{ \varepsilon}} W \left(x_{ i} - x_{ j}\right)^{ k}\leq cW(x_{ i}- x_{ N})^{ k}\leq C,\\
\varepsilon\sum_{ j <-N_{ \varepsilon}} W \left(x_{ i} - x_{ j}\right)^{ k}\leq  cW(x_{ i}- x_{ -N})^{ k}\leq C.
\end{cases}
\end{equation}
\item The $ \mathbb{ U}=\mathbb{ D}_{ \pm}$ vs $ \mathbb{ V}=\mathbb{ D}_{ 0}$ case: 
\begin{equation}
\label{eq:supWialpha_outside2}
\begin{cases}
\varepsilon\sum_{ j=-N_{ \varepsilon}}^{ N_{ \varepsilon}} W \left(x_{ i} - x_{ j}\right)^{ k} \leq cW(x_{ i}- x_{ N})^{ k}\leq C,& \text{ for } i> N_{ \varepsilon}\\
\varepsilon\sum_{ j=-N_{ \varepsilon}}^{ N_{ \varepsilon}} W \left(x_{ i} - x_{ j}\right)^{ k} \leq  cW(x_{ i}- x_{ -N})^{ k}\leq C,& \text{ for }i<-N_{ \varepsilon}.
\end{cases}
\end{equation}
\end{enumerate}
In particular, the following estimate holds:
\begin{equation}
\label{eq:W_doublesum}
\sum_{ i\in \mathbb{ Z}} \sum_{ j=-N_{ \varepsilon}}^{ N_{ \varepsilon}} W(x_{ i}- x_{ j})^{ k} \leq C \varepsilon^{ -2- \beta}.
\end{equation}
\end{lemma}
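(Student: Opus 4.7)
\textbf{Proof plan for Lemma~\ref{lem:W_IN_IN}.} The key point is that by Assumption~\ref{ass:W_exp}, we have the explicit formula
\[
W(x_i-x_j)^k = \frac{1}{(2\sigma)^k}\, e^{-k|i-j|\varepsilon/\sigma},\qquad i,j\in\mathbb{Z},
\]
so every sum under consideration reduces to a one-sided or two-sided geometric series in the variable $m=i-j$. The basic identity is
\[
\varepsilon \sum_{m\in\mathbb{Z}} e^{-k|m|\varepsilon/\sigma} \;=\; \varepsilon\cdot\frac{1+e^{-k\varepsilon/\sigma}}{1-e^{-k\varepsilon/\sigma}} \;\xrightarrow[\varepsilon\to 0]{}\; \frac{2\sigma}{k},
\]
and, for the one-sided version,
\[
\varepsilon \sum_{m\geq 1} e^{-km\varepsilon/\sigma} \;=\; \varepsilon\cdot\frac{e^{-k\varepsilon/\sigma}}{1-e^{-k\varepsilon/\sigma}} \;\xrightarrow[\varepsilon\to 0]{}\; \frac{\sigma}{k}.
\]
In particular both quantities are bounded uniformly in $\varepsilon\in(0,\varepsilon_0)$ for $\varepsilon_0$ small enough depending only on $k$ and $\sigma$.

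First I would establish \eqref{eq:supWialpha_inside}: for any $i\in\mathbb{Z}$, by translation invariance of $m\mapsto W(m\varepsilon)^k$,
\[
\varepsilon \sum_{j=-N_\varepsilon}^{N_\varepsilon} W(x_i-x_j)^k \;\leq\; \varepsilon \sum_{m\in\mathbb{Z}} W(m\varepsilon)^k \;\leq\; C,
\]
which is the first bound and also gives the inner sum in \eqref{eq:W_doublesum}. For \eqref{eq:supWialpha_outside}, fix $i\in\{-N_\varepsilon,\dots,N_\varepsilon\}$; then for $j>N_\varepsilon$ one has $j-i = (j-N_\varepsilon)+(N_\varepsilon-i)\geq 0$, hence
\[
\varepsilon\sum_{j>N_\varepsilon} W(x_i-x_j)^k \;=\; \frac{\varepsilon}{(2\sigma)^k}\,e^{-k(N_\varepsilon-i)\varepsilon/\sigma}\sum_{m\geq 1} e^{-km\varepsilon/\sigma} \;\leq\; c\, W(x_i-x_{N_\varepsilon})^k,
\]
using the one-sided geometric series identity above, and since $W(x_i-x_{N_\varepsilon})\leq W(0)=(2\sigma)^{-1}$ we also obtain the uniform bound by a constant $C$. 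The estimate for $j<-N_\varepsilon$ is identical by symmetry. The bounds \eqref{eq:supWialpha_outside2} in the symmetric case $i\notin\mathbb{D}_0$, $j\in\mathbb{D}_0$ are obtained in the very same way, using that $|i-j|=|i|-|j^\ast|$ for an appropriate boundary index $j^\ast\in\{\pm N_\varepsilon\}$ and $W$ even.

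Finally, for the global estimate \eqref{eq:W_doublesum} I would swap the order of summation and use translation invariance of $\sum_{i\in\mathbb{Z}} W(x_i-x_j)^k$ in $j$:
\[
\sum_{i\in\mathbb{Z}}\sum_{j=-N_\varepsilon}^{N_\varepsilon} W(x_i-x_j)^k \;=\; (2N_\varepsilon+1)\sum_{m\in\mathbb{Z}} W(m\varepsilon)^k \;\leq\; (2N_\varepsilon+1)\cdot \frac{C}{\varepsilon} \;\leq\; C\,\varepsilon^{-2-\beta},
\]
where the last estimate uses $N_\varepsilon=\lfloor\varepsilon^{-1-\beta}\rfloor$. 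There is no real obstacle here; the only small subtlety is keeping track of the exact role of the factor $\varepsilon$ (which is absorbed either into the geometric series giving a constant, or kept out to produce the $\varepsilon^{-1}$ factor in \eqref{eq:W_doublesum}), which is why the specific exponential form of Assumption~\ref{ass:W_exp} makes the estimates fully explicit.

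\textbf{Proof plan for Lemma~\ref{lem:bound_sum_varpi}.} Although this lemma follows Lemma~\ref{lem:W_IN_IN}, it is stated earlier and postponed; the arguments use the exact same tools. Since $\|\mathbf{1}_{I_i}\|_{L^2}^2=\varepsilon$, the definition \eqref{eq:varpij} of $\varpi_j$ gives
\[
\|\varpi_j^{(\varepsilon)}\|_{L^2}^2 \;=\; \varepsilon^3\sum_{i=-N_\varepsilon}^{N_\varepsilon} W(x_i-x_j)^2,
\]
and plugging in \eqref{eq:supWialpha_inside} for $j\in\mathbb{D}_0$, respectively \eqref{eq:supWialpha_outside2} for $j\notin\mathbb{D}_0$ (after invoking evenness of $W$ to exchange the roles of $i$ and $j$), yields \eqref{eq:estim_varpi_j} and the uniform bound \eqref{eq:bound_supvarpi}. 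For \eqref{eq:bound sum vert pi} I would split the sum $\sum_{j\in\mathbb{Z}}\|\varpi_j\|^k$ into contributions from $\mathbb{D}_0,\mathbb{D}_+,\mathbb{D}_-$: the first is at most $C(2N_\varepsilon+1)\varepsilon^k\leq C\varepsilon^{k-1-\beta}$, while the two boundary contributions are bounded, via \eqref{eq:estim_varpi_j} and a further one-sided geometric series, by $C\varepsilon^k\sum_{m\geq 1}W(m\varepsilon)^k\leq C\varepsilon^{k-1}$, which is negligible with respect to $\varepsilon^{k-1-\beta}$ as $\beta>0$.
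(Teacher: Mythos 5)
Your proposal is correct and follows essentially the same route as the paper: every bound is reduced to explicit one- or two-sided geometric series for the exponential kernel of Assumption~\ref{ass:W_exp}, with the factor $e^{-k(N_\varepsilon-i)\varepsilon/\sigma}$ extracted to produce the $W(x_i-x_{N_\varepsilon})^k$ terms in \eqref{eq:supWialpha_outside} and \eqref{eq:supWialpha_outside2}. The only (harmless) deviation is in \eqref{eq:W_doublesum}, where you sum over $i\in\mathbb{Z}$ first and use lattice translation invariance, instead of the paper's splitting of the $i$-sum into bulk and exterior parts; both arguments yield the same $C\varepsilon^{-2-\beta}$ bound.
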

\begin{proof}[Proof of Lemma~\ref{lem:W_IN_IN}]
Let $ k\geq1$. For $ -N_\varepsilon \leq i \leq N_\varepsilon$,
\begin{align*}
\sum_{j=-N_\varepsilon}^{N_\varepsilon}W(x_{i}-x_{j})^{ k} &=\frac{1}{(2\sigma)^{ k}}\sum_{j=-N_\varepsilon}^{i}e^{-\frac{ k(x_i-x_j)}{\sigma}} + \frac{1}{(2\sigma)^{ k}}\sum_{j=i+1}^{N_\varepsilon}e^{-\frac{ k(x_j-x_i)}{\sigma}}\\
&= \frac{e^{-\frac{ k x_i}{\sigma}}}{(2\sigma)^{ k}}\frac{e^{\frac{ k x_{i+1}}{\sigma}}-e^{-\frac{ k N_\varepsilon\varepsilon}{\sigma}}}{e^{\frac{ k\varepsilon}{\sigma}}-1} + \frac{e^{\frac{ k x_i}{\sigma}}}{(2\sigma)^{ k}}\frac{e^{-\frac{ k(N_{\varepsilon}+1)\varepsilon }{\sigma}}-e^{-\frac{ k x_{i+1}}{\sigma}}}{e^{-\frac{ k\varepsilon}{\sigma}}-1}\\
& = \frac{e^{\frac{ k\varepsilon}{\sigma}}-e^{-\frac{ k (N_\varepsilon+i)\varepsilon}{\sigma}}}{(2\sigma)^{ k}\left(e^{\frac{ k\varepsilon}{\sigma}}-1\right)}+ \frac{e^{\frac{ k(i-N_{\varepsilon}-1)\varepsilon }{\sigma}}-e^{-\frac{ k\varepsilon}{\sigma}}}{(2\sigma)^{ k}\left(e^{-\frac{ k\varepsilon}{\sigma}}-1\right)}\\
& = \frac{1+e^{\frac{ k\varepsilon}{\sigma}}-e^{-\frac{ k (N_\varepsilon+i)\varepsilon}{\sigma}}-e^{\frac{ k (i-N_\varepsilon)\varepsilon}{\sigma}}}{ (2\sigma)^{ k}\left(e^{\frac{ k\varepsilon}{\sigma}}-1\right)},
\end{align*}
so that
\begin{align*}
\varepsilon\sum_{j=-N_\varepsilon}^{N_\varepsilon}W(x_{i}-x_{j})^{ k}\leq\frac{ \varepsilon(1+e^{\frac{ k\varepsilon}{\sigma}})}{ (2\sigma)^{ k}\left(e^{\frac{ k\varepsilon}{\sigma}}-1\right)}\leq \frac{ 3}{ (2\sigma)^{ k} \frac{ k }{ \sigma}}:= C, 
\end{align*}
at least for small $ \varepsilon>0$ (depending on fixed $ \sigma$). The proof of \eqref{eq:supWialpha_outside} is similar, we prove it only in the case $j>N_{ \varepsilon}$: for $ k\geq1$, since $ j>i$, 
  \begin{equation}
  \label{eq:sumjsupN}
\varepsilon\sum_{ j> N_{ \varepsilon}} W(x_i-x_j)^{ k}
%= \varepsilon\frac{ e^{ \frac{ i \alpha \varepsilon}{ \sigma}}}{ (2 \sigma)^{ \alpha}} \sum_{ j=N_{ \varepsilon}+1}^{ +\infty} e^{ - \frac{ j \alpha\varepsilon}{ \sigma}}
= \frac{ \varepsilon}{ (2 \sigma)^{ k}}\frac{ e^{ - \frac{ \varepsilon k\left(N_{ \varepsilon}-i\right)}{ \sigma}}}{ e^{ k\frac{ \varepsilon}{ \sigma}}-1}= \varepsilon\frac{ W(x_{ N}- x_{ i})^{ k}}{ e^{ k\frac{ \varepsilon}{ \sigma}}-1}\leq c W(x_{ N}- x_{ i})^{ k}\leq C.
  \end{equation}
 Turn now to \eqref{eq:supWialpha_outside2}, proven again only in the first case $i> N_{ \varepsilon}$, in a very similar way: if $i> N_\varepsilon$ we get, for $k\geq1$
\begin{align*}
\sum_{j=-N_\varepsilon}^{N_\varepsilon}W(x_{i}-x_{j})^k &=\frac{1}{(2 \sigma)^{ k}}\sum_{j=-N_\varepsilon}^{N_\varepsilon}e^{-\frac{k(x_i-x_j)}{\sigma}}\\
%& = \frac{e^{-\frac{kx_i}{\sigma}}}{(2\sigma)^{ k}}\frac{e^{\frac{k (N_\varepsilon+1)\varepsilon}{\sigma}}-e^{-\frac{k N_\varepsilon\varepsilon}{\sigma}}}{e^{\frac{k\varepsilon}{\sigma}}-1}\\
& = \frac{e^{-\frac{k(x_i-x_{N_\varepsilon})}{\sigma}}}{(2\sigma)^{ k}}\frac{e^{\frac{k \varepsilon}{\sigma}}-e^{-\frac{2k N_\varepsilon\varepsilon}{\sigma}}}{e^{\frac{k\varepsilon}{\sigma}}-1} \leq \frac{C}{\varepsilon} W(x_i-x_{N_\varepsilon})^k,
\end{align*}
which gives the result. We give finally the proof of \eqref{eq:W_doublesum}: we have
\begin{align}
\sum_{ i\in \mathbb{ Z}} \sum_{ j=-N_{ \varepsilon}}^{ N_{ \varepsilon}} W(x_{ i}- x_{ j})^{ k} &= \sum_{ \left\vert i \right\vert> N_{ \varepsilon}} \sum_{ j=-N_{ \varepsilon}}^{ N_{ \varepsilon}} W(x_{ i}- x_{ j})^{ k} + \sum_{ i=-N_{ \varepsilon}}^{ N_{ \varepsilon}} \sum_{ j=-N_{ \varepsilon}}^{ N_{ \varepsilon}} W(x_{ i}- x_{ j})^{ k}, \nonumber\\
&\leq 2\frac{ c}{ \varepsilon} \sum_{ i> N_{ \varepsilon}} W(x_{ i}- x_{ N})^{ k} + \frac{ C}{ \varepsilon} N_{ \varepsilon} \label{aux:double_sum}
\end{align}
where we have used \eqref{eq:supWialpha_outside2} for the first term and \eqref{eq:supWialpha_inside} for the second. Using now \eqref{eq:supWialpha_outside} for the first bound, we see that the first term in \eqref{aux:double_sum} is of order $ \varepsilon^{ -2}$ whereas the second is of order $ \varepsilon^{ -2- \beta}$, by definition of $N_{ \varepsilon}$.
\end{proof}
The following result gives the next order to \eqref{eq:supWialpha_outside}:
\begin{lemma}
\label{lem:W_IN_OUT}
For all $k\geq1$, there exist $C>0$ and $ \varepsilon_{ 0}>0$ depending only $W$ and $k$ such that for all $ \varepsilon\in (0, \varepsilon_{ 0})$, for $i=-N_{ \varepsilon}, \ldots, N_{ \varepsilon}$, we have
\begin{equation}
\label{eq:W_IN_OUT}
\begin{cases}
\left\vert  \varepsilon \sum_{ j> N_{ \varepsilon}} W(x_i-x_j)^{ k} -  \int_{ \mathcal{ D}_{ +}} W(x_{ i}-y)^{ k} {\rm d}y \right\vert  \leq C \varepsilon^{ 2} e^{ - \frac{ k\varepsilon}{ \sigma} \left(N_{ \varepsilon}-i\right)},\\
\left\vert  \varepsilon \sum_{ j<-N_{ \varepsilon}} W(x_i-x_j)^{ k} -  \int_{ \mathcal{ D}_{ -}} W(x_{ i}-y)^{ k} {\rm d}y \right\vert  \leq C \varepsilon^{ 2} e^{ - \frac{ k\varepsilon}{ \sigma} \left(N_{ \varepsilon}+i\right)}
\end{cases}
\end{equation}
\end{lemma}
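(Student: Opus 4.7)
The plan is to exploit the explicit exponential form of $W$ so that both the discrete sum and the integral can be computed in closed form (or essentially so) and then compared by Taylor expansion. I will focus on the first bound in \eqref{eq:W_IN_OUT}, as the second one is treated identically by symmetry. Fix $-N_\varepsilon \leq i \leq N_\varepsilon$, so that for every $j > N_\varepsilon$ one has $x_j - x_i \geq \varepsilon > 0$, and for $\left\lvert h\right\rvert < \varepsilon/2$ still $x_j - x_i + h \geq \varepsilon/2 > 0$. In particular $W(x_i - x_j - h) = \frac{1}{2\sigma}e^{-(x_j - x_i + h)/\sigma}$ throughout the regime of interest.

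The key step is an interval-by-interval comparison. Since $\bigcup_{j > N_\varepsilon} I_j = \mathcal{D}_+$, I will write
\begin{equation*}
\int_{\mathcal{D}_+} W(x_i - y)^k \, \rmd y - \varepsilon \sum_{j > N_\varepsilon} W(x_i - x_j)^k = \sum_{j > N_\varepsilon} \int_{-\varepsilon/2}^{\varepsilon/2} \left[W(x_i - x_j - h)^k - W(x_i - x_j)^k\right] \rmd h.
\end{equation*}
Using the explicit form of $W$ on this range, each inner integral evaluates to
\begin{equation*}
\frac{1}{(2\sigma)^k}\, e^{-k(x_j - x_i)/\sigma} \left[\frac{2\sigma}{k} \sinh\left(\frac{k\varepsilon}{2\sigma}\right) - \varepsilon\right].
\end{equation*}
Taylor expanding $\sinh(u) = u + u^3/6 + O(u^5)$ at $u = k\varepsilon/(2\sigma)$ gives $\frac{2\sigma}{k}\sinh(k\varepsilon/(2\sigma)) - \varepsilon = O(\varepsilon^3)$, with a constant depending only on $k$ and $\sigma$.

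It then remains to sum over $j > N_\varepsilon$: the geometric series
\begin{equation*}
\sum_{j > N_\varepsilon} e^{-k(x_j - x_i)/\sigma} = \frac{e^{-k(N_\varepsilon - i)\varepsilon/\sigma}}{e^{k\varepsilon/\sigma} - 1}
\end{equation*}
contributes a factor of order $\varepsilon^{-1}$ (since $e^{k\varepsilon/\sigma} - 1 \sim k\varepsilon/\sigma$) times the desired exponential tail $e^{-k(N_\varepsilon - i)\varepsilon/\sigma}$. Combining with the $O(\varepsilon^3)$ bound on the integrand yields the claimed $O(\varepsilon^2)$ estimate with the correct exponential decay. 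The mirror case $j < -N_\varepsilon$ is treated by the change of variable $j \mapsto -j$, $i \mapsto -i$, using evenness of $W$. The only mild technical point is to make sure that the interval constant in $\varepsilon_0$ is chosen so that the Taylor remainder is uniformly controlled, but since $\beta < 1$ the relevant parameter $k\varepsilon/\sigma$ goes to $0$ and no real obstacle arises; the argument is essentially a quantitative midpoint-rule bound made explicit by the exponential structure of $W$.
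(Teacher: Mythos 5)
Your proof is correct and follows essentially the same route as the paper: both exploit the explicit exponential form of $W$ to compute the geometric sum and the integral in closed form, and the per-interval error $\frac{2\sigma}{k}\sinh\left(\frac{k\varepsilon}{2\sigma}\right)-\varepsilon$ summed against $\sum_{j>N_\varepsilon}e^{-k(x_j-x_i)/\sigma}=\frac{e^{-k(N_\varepsilon-i)\varepsilon/\sigma}}{e^{k\varepsilon/\sigma}-1}$ is algebraically identical to the paper's expression, which is bounded there via the elementary inequality $\left\vert \frac{u}{e^{u}-1}-e^{-u/2}\right\vert\leq Cu^{2}$. Your interval-by-interval (midpoint-rule) organization versus the paper's global closed-form comparison is only a cosmetic difference, and your Taylor bound on $\sinh$ plays exactly the role of that inequality.
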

\begin{proof}[Proof of Lemma~\ref{lem:W_IN_OUT}]
%for $ j> N$, multiplying \eqref{eq:sumjsupN} by $ \varepsilon$ gives
%  \begin{align*}
%\varepsilon \sum_{ j> N_{ \varepsilon}} W(x_i-x_j) &= \frac{ \varepsilon}{ 2 \sigma}\frac{ e^{ - \frac{ \varepsilon \left(N_{ \varepsilon}-i\right)}{ \sigma}}}{ e^{ \frac{ \varepsilon}{ \sigma}}-1}.
%  \end{align*}
We only prove the first part of \eqref{eq:W_IN_OUT}. Let $k\geq1$, for $x\in \mathcal{ D}_{ 0}$
  \begin{align*}
  \int_{ \mathcal{ D}_{ +}} W(x-y)^{ k} {\rm d}y&= \frac{ 1}{ (2 \sigma)^{ k}} e^{ \frac{ kx}{ \sigma}}\int_{ \varepsilon(N_{ \varepsilon} + \frac{ 1}{ 2})}^{+\infty} e^{ - \frac{ ky}{ \sigma}}{\rm d}y = \frac{ \sigma}{ k(2 \sigma)^{ k}} e^{ - \frac{ k}{ \sigma} \left( \varepsilon \left(N_{ \varepsilon} + \frac{ 1}{ 2}\right)-x\right)}
  \end{align*}
  In particular, using \eqref{eq:sumjsupN},
  \begin{align*}
  \varepsilon \sum_{ j> N_{ \varepsilon}} W(x_i-x_j)^{ k} -  \int_{ \mathcal{ D}_{ +}} W(x_{ i}-y)^{ k} {\rm d}y&= \frac{ e^{ - \frac{ \varepsilon k\left(N_{ \varepsilon}-i\right)}{ \sigma}}}{ (2 \sigma)^{ k}} \frac{ \sigma}{ k}\left(\frac{ \varepsilon k/ \sigma}{ e^{ \frac{ \varepsilon k}{ \sigma}}-1}- e^{ - \frac{ \varepsilon k}{ 2\sigma}}\right)
%  &= \frac{ 1}{ 2} e^{ - \frac{ \varepsilon}{ \sigma} \left(N_{ \varepsilon}-i\right)} \left(\frac{ \varepsilon}{ \sigma} \frac{ 1}{ e^{ \frac{ \varepsilon}{ \sigma}}-1} - e^{ - \frac{ \varepsilon}{ 2 \sigma}}\right).
\end{align*}
Since there exists some numerical constant $C>0$ such that for all $u\in [0, 1]$ we have $ \left\vert \frac{ u}{ e^{ u} -1} - e^{ -u/2}\right\vert \leq C u^{ 2}$, we obtain \eqref{eq:W_IN_OUT}.
\end{proof}
We are now in position to prove Lemma~\ref{lem:bound_sum_varpi}. Recall the definition of $ \varpi_{j}^{ ( \varepsilon)}$ in \eqref{eq:varpij}.
\begin{proof}[Proof of Lemma~\ref{lem:bound_sum_varpi}]
First remark that, since $\left\langle  \mathbf{ 1}_{ I_{ i_1}}, \mathbf{ 1}_{ I_{ i_2}}   \right\rangle_m=0$ for $i_1\neq i_2$,
\begin{align*}
\Vert \varpi_{j}^{ (\varepsilon)}\Vert_m^2 &= \varepsilon^2 \sum_{i_1=-N_\varepsilon}^{N_\varepsilon}\sum_{i_2=-N_\varepsilon}^{N_\varepsilon} W(x_{i_1}-x_{j})W(x_{i_2}-x_{j})\left\langle  \mathbf{ 1}_{ I_{ i_1}}, \mathbf{ 1}_{ I_{ i_2}}   \right\rangle_m\\
&\leq C \varepsilon^3 \sum_{i=-N_\varepsilon}^{N_\varepsilon}W(x_{i}-x_{j})^2,
\end{align*}
so that \eqref{eq:estim_varpi_j} is a direct consequence of \eqref{eq:supWialpha_inside} and \eqref{eq:supWialpha_outside2}.
Using this last result, the symmetry of $W$, \eqref{eq:supWialpha_outside} and the definition of $N_{ \varepsilon}$ in \eqref{eq:N_ell_eps}, we obtain
 \begin{align}
\sum_{ j\in \mathbb{Z}} \Vert \varpi_{ \varepsilon, j}\Vert_{ m}^k  &\leq C\varepsilon^k\left(2(N_\varepsilon+1)+ 2\sum_{j>N_{\varepsilon}}W(x_j-x_{N_{\varepsilon}})^{ k}\right)\nonumber \\
&\leq C\varepsilon^k\left( \varepsilon^{ -1 - \beta} + \varepsilon^{ -1}\right)\leq C\varepsilon^{k-1-\beta}. \label{eq:bound sum vert pi 3}
\end{align}
This concludes the proof of Lemma~\ref{lem:bound_sum_varpi}.
\end{proof}

\subsection{Discrete to continuous}\label{sec:discrete continuous}

The purpose of the present section is to prove Proposition~\ref{prop:bound_approx}.

\begin{proof}[Proof of Proposition~\ref{prop:bound_approx}]
Concerning \eqref{eq:Delta0_def}, we have $ \Delta_{ s, \varepsilon, i}^{ (0)}(x)= \Delta_{ s, \varepsilon, i}^{ (0,1)} + \Delta_{ s, \varepsilon, i}^{ (0,2)}(x)$ where
\begin{align*}
 \Delta_{ s, \varepsilon, i}^{ (0,1)} &:= \varepsilon \sum_{ j=-N_{ \varepsilon}}^{ N_{ \varepsilon}} W(x_i-x_j)\Big(f(U_{j,s^-})-f(U_{j,s})\Big),\\
 \Delta_{ s, \varepsilon, i}^{ (0,2)}(x) &:= \varepsilon \sum_{ j=-N_{ \varepsilon}}^{ N_{ \varepsilon}} W(x_i-x_j)f(U_{j,s}) - \int_{ \mathcal{ D}_{ 0}} W(x-y) f \left(U_{s}(y)\right) {\rm d}y.
\end{align*}
Concerning $ \Delta_{ s, \varepsilon, i}^{ (0,1)}$, since $f$ is Lipschitz, $W$ is bounded and the processes $(Z_{ j})$ a.s. do not jump together, there is a constant $C=C_{ W, f}>0$ such that, a.s. for all $j\in \mathbb{ Z}$, $s>0$, $ \left\vert \Delta U_{ j, s} \right\vert \leq C \varepsilon$, hence, using \eqref{eq:supWialpha_inside}, $ \left\vert \Delta_{ s, \varepsilon, i}^{ (0,1)} \right\vert = \left| \varepsilon\sum_{ j=-N_{ \varepsilon}}^{ N_{ \varepsilon}} W(x_i-x_j)\Big(f(U_{j,s^-})-f(U_{j,s})\Big) \right|\leq C \varepsilon$.
So, using the fact that $ \left\langle \mathbf{ 1}_{ I_{ i}}\, ,\, \mathbf{ 1}_{ I_{ j}}\right\rangle_{ L^{ 2}} = \varepsilon \mathbf{ 1}_{ i=j}$,
\begin{equation}
\label{aux:Delta01}
\left\Vert \sum_{ i=-N_{ \varepsilon}}^{ N_{ \varepsilon}} \Delta_{ s, \varepsilon, i}^{ (0,1)} \mathbf{ 1}_{ I_{ i}} \right\Vert_{L^{ 2}}^{ 2}  = \varepsilon \sum_{ i=-N_{ \varepsilon}}^{ N_{ \varepsilon}} \left\vert \Delta_{ s, \varepsilon, i}^{ (0,1)} \right\vert^{ 2} \leq C \varepsilon^{ 3} N_{ \varepsilon}= C \varepsilon^{ 2- \beta}.
\end{equation} 
Turn now to $ \Delta_{ s, \varepsilon, i}^{ (0,2)}$: we have, since $U_s(x)=U_{  j,s}$ for $x\in I_j$,
\begin{equation*}
\Delta_{ s, \varepsilon, i}^{ (0,2)}(x) = \sum_{ j=-N_\varepsilon}^{ N_\varepsilon} f(U_{  j, s}) \int_{I_j}\left( W(x_i-x_j)-W(x-z)\right){\rm d}z,
\end{equation*}
and, since $f$ is bounded by $1$,
\begin{align*}
\left\Vert \sum_{i=-N_\varepsilon}^{N_{\varepsilon}} \Delta_{ s, \varepsilon, i}^{ (0,2)} \mathbf{ 1}_{ I_{ i}} \right\Vert_{L^{ 2}}^2 &\leq \sum_{i=-N_\varepsilon}^{N_{\varepsilon}}\int_{I_i}\left(\sum_{ j=-N_\varepsilon}^{ N_\varepsilon} \left\vert \int_{I_j}\left( W(x_i-x_j)-W(x-z)\right){\rm d} z \right\vert \right)^2 {\rm d} x.
\end{align*}
Decomposing the sum $\sum_{ j=-N_{ \varepsilon}}^{ N_{ \varepsilon}}$ above into $\sum_{ j=-N_{ \varepsilon}}^{ i} + \sum_{ j=i+1}^{ N_{ \varepsilon}}$, it remains to treat the corresponding contributions separately. We only address the first one (and leave the similar second to the reader), that is
\begin{align*}
(I)&:= \sum_{i=-N_\varepsilon}^{N_{\varepsilon}}\int_{I_i}\left(\sum_{ j=-N_\varepsilon}^{ i} \left\vert \int_{I_j}\left( W(x_i-x_j)-W(x-z)\right){\rm d} z \right\vert \right)^2 {\rm d} x
\end{align*}
Noting that since $j\leq i$, $x_{ j}\leq x_{ i}$ and $z\leq x$ for $x\in I_{ i}$, $z\in I_{ j}$, we have
\begin{align*}
(I)&= \sum_{ i=-N_{ \varepsilon}}^{ N_{ \varepsilon}} \int_{ I_{ i}} \left(\frac{ 1}{ 2 \sigma} \sum_{ j=-N_{ \varepsilon}}^{ i} \left\vert \int_{ I_{ j}} \left( e^{ - \frac{ x_{ i}- x_{ j}}{ \sigma}} - e^{ - \frac{ x-z}{ \sigma}}\right) {\rm d}z\right\vert\right)^{ 2} {\rm d}x\\
&= \frac{ 1}{ \left(2 \sigma\right)^{ 2}}\sum_{ i=-N_{ \varepsilon}}^{ N_{ \varepsilon}} \left(\sum_{ j=-N_{ \varepsilon}}^{ i} e^{ \frac{ j \varepsilon}{ \sigma}} \right)^{ 2} \int_{ I_{ i}} \left\vert \varepsilon e^{ - \frac{ i \varepsilon}{ \sigma}} - \sigma e^{ - \frac{ x}{ \sigma}} \left(e^{ \frac{ \varepsilon}{ \sigma}}-1\right)\right\vert^{ 2} {\rm d}x\\
&=\frac{ 1}{ \left(2 \sigma\right)^{ 2}}\sum_{ i=-N_{ \varepsilon}}^{ N_{ \varepsilon}} \left( \frac{ e^{ - \frac{ N_{ \varepsilon} \varepsilon}{ \sigma}} - e^{ \frac{ \left(i+1\right) \varepsilon}{ \sigma}}}{ 1- e^{ \frac{ \varepsilon}{ \sigma}}} \right)^{ 2} e^{ - \frac{ 2 i \varepsilon}{ \sigma}}  \chi \left(\varepsilon, \sigma\right),
\end{align*}
for $ \chi(\varepsilon, \sigma):= \left(\varepsilon^{ 3} + 2 \sigma^{ 2} \varepsilon \left(2 - e^{ \frac{ \varepsilon}{ \sigma}} - e^{ - \frac{ \varepsilon}{ \sigma}}\right) - \frac{ \sigma^{ 3}}{ 2} \left(e^{ \frac{ \varepsilon}{ \sigma}}-1\right)^{ 2} \left(e^{ - \frac{ 2 \varepsilon}{ \sigma}}-1\right)\right)$ which satisfies the bound $ \left\vert \chi \left(\varepsilon, \sigma\right) \right\vert\leq C \varepsilon^{ 5}$ for small $ \varepsilon>0$. Hence,
\begin{align*}
(I)&\leq \frac{ C \varepsilon^{ 5}}{ \left(1- e^{ \frac{ \varepsilon}{ \sigma}}\right)^{ 2}}\sum_{ i=-N_{ \varepsilon}}^{ N_{ \varepsilon}} \left(e^{ - \frac{ \left(N_{ \varepsilon} +i\right) \varepsilon}{ \sigma}} - e^{ \frac{ \varepsilon}{ \sigma}}\right)^{ 2} .
\end{align*}
Expanding the previous square gives
\begin{align*}
(I)&\leq \frac{ C \varepsilon^{ 5}}{ \left(1- e^{ \frac{ \varepsilon}{ \sigma}}\right)^{ 2}} \left( \left( \frac{ 1- e^{ - \frac{ \left(4N_{ \varepsilon} +1\right) \varepsilon}{ \sigma}}}{ 1- e^{ - \frac{ 2 \varepsilon}{ \sigma}}}\right) - 2 e^{ \frac{ \varepsilon}{ \sigma}} \left( \frac{ 1- e^{ - \frac{ -(2N_{ \varepsilon} +1) \varepsilon}{ \sigma}}}{ 1- e^{ - \frac{ \varepsilon}{ \sigma}}}\right) + \left(2N_{ \varepsilon}+1\right)e^{ \frac{ 2 \varepsilon}{ \sigma}}\right)\\
&\leq  C \varepsilon^{ 2- \beta}.
\end{align*}
Hence the result \eqref{eq:Norm_Delta0}.

We now turn to $ \Delta_{ s, \varepsilon, i}^{ (+)}$ given by \eqref{eq:Delta+_def} (we leave the similar case of \eqref{eq:Delta-_def} to the reader): recalling the definitions \eqref{eq:Ui_outside} of $U_{ j,t}$ for $j>N_{ \varepsilon}$ and \eqref{eq:profileU}, we obtain that
\begin{align*}
\Delta_{ s, \varepsilon, i}^{ (+)}(x) &= \Delta_{ \varepsilon, i}^{ (+)}(x)= a_2 \left( \varepsilon\sum_{ j> N_{ \varepsilon}} W(x_i-x_j)- \int_{\mathcal{D}_+}W(x-y) {\rm d}y \right)\\
&= a_2 \left( \varepsilon\sum_{ j> N_{ \varepsilon}} W(x_i-x_j)- \int_{\mathcal{D}_+}W(x_i-y) {\rm d}y \right) \\
&\quad + a_2 \int_{\mathcal{D}_+}\big( W(x_i-y)- W(x-y)\big) {\rm d}y\\
& = \Delta_{\varepsilon, i}^{ (+,1)} + \Delta_{ \varepsilon, i}^{ (+,2)} (x). 
\end{align*}
Concerning $ \Delta_{ \varepsilon, i}^{ (+, 1)}$ we have, using Lemma~\ref{lem:W_IN_OUT},
\begin{align*}
\left\Vert \sum_{i=-N_\varepsilon}^{N_{\varepsilon}} \Delta_{ \varepsilon, i}^{ (+,1)} \mathbf{ 1}_{ I_{ i}} \right\Vert_{L^{ 2}}^2 &= \varepsilon\sum_{ i=-N_{ \varepsilon}}^{ N_{ \varepsilon}} \left\vert \Delta_{ \varepsilon, i}^{ (+, 1)} \right\vert^{ 2}\leq C \varepsilon^{ 5} \sum_{ i=-N_{ \varepsilon}}^{ N_{ \varepsilon}} e^{ - \frac{ 2\varepsilon}{ \sigma} \left(N_{ \varepsilon}-i\right)}\\
%& a_2^2 t\int_0^t \left\Vert \mathbf{ 1}_{ I_{ i}}\left( \varepsilon\sum_{ j> N_{ \varepsilon}} W(x_i-x_j)- \int_{\mathcal{D}_+}W(x_i-y) {\rm d}y \right)\right\Vert_{m_{\psi_{k-1}}}^2 {\rm d} s\\
%&\leq a_{ 2}^{ 2}T^2 \sum_{ i=-N_{ \varepsilon}}^{ N_{ \varepsilon}} \left(\varepsilon \sum_{ j> N_{ \varepsilon}} W(x_i-x_j) - \int_{ \mathcal{ D}_{ +}} W(x_{ i}-y) {\rm d}y\right)^{ 2} \left\Vert \mathbf{ 1}_{ I_{ i}} \right\Vert_{ m_{\psi_{k-1}}}^{ 2}\\
%&\leq C \varepsilon^{ 5}\sum_{ i=-N_{ \varepsilon}}^{ N_{ \varepsilon}} e^{ - \frac{ 2\varepsilon}{ \sigma} \left(N_{ \varepsilon}-i\right)} \\
&\leq C \varepsilon^{ 5} \frac{ e^{ \frac{ 2 \varepsilon}{ \sigma}} -  e^{ - \frac{ 4 \varepsilon N}{ \sigma}}}{ e^{ \frac{ 2 \varepsilon}{ \sigma}}- 1} \leq C \varepsilon^{ 4}.
\end{align*}
Turn now to $ \Delta_{ \varepsilon, i}^{ (+, 2)}$. A straightforward computation leads to
\begin{align*}
\Delta_{ \varepsilon, i}^{ (+, 2)}(x) &=\frac{a_2}{2} e^{- \frac{ \varepsilon \left(N_{ \varepsilon}+ \frac{ 1}{ 2}\right)}{ \sigma}} \left(e^{x/\sigma}-e^{x_i/\sigma}\right), 
\end{align*}
and thus,
\begin{align*}
\left\Vert \sum_{i=-N_\varepsilon}^{N_{\varepsilon}} \Delta_{ \varepsilon, i}^{ (+, 2)} \mathbf{ 1}_{ I_{ i}}\right\Vert_{L^{ 2}}^2 &= \sum_{ i=-N_{ \varepsilon}}^{ N_{ \varepsilon}} \int_{ I_{ i}} \left\vert \Delta_{ \varepsilon, i}^{ (+, 2)}(x) \right\vert^{ 2}{\rm d}x\\
%&=  \frac{a_2^2}{4} e^{- \frac{2 \varepsilon \left(N_{ \varepsilon}+ \frac{ 1}{ 2}\right)}{ \sigma}} \left\Vert \sum_{i=-N_\varepsilon}^{N_{\varepsilon}}{\bf 1}_{I_i} \left(e^{\cdot/\sigma}-e^{x_i/\sigma}\right)\right\Vert_{m_{\psi_{k-1}}}^2\\
%&= \frac{ a_{ 2}^{ 2}}{ 4} e^{- \frac{ 2\varepsilon \left(N_{ \varepsilon}+ \frac{ 1}{ 2}\right)}{ \sigma}} \sum_{i=-N_\varepsilon}^{N_{\varepsilon}}\int_{I_i}\left(e^{x/\sigma}-e^{x_i/\sigma}\right)^2 {\rm d} x\\
&= \frac{ a_{ 2}^{ 2}}{ 4} e^{- \frac{ 2\varepsilon \left(N_{ \varepsilon}+ \frac{ 1}{ 2}\right)}{ \sigma}}  \left( \sigma\int_{- \frac{ \varepsilon}{ 2 \sigma}}^{ \frac{ \varepsilon}{ 2 \sigma}}\left(e^{u}-1\right)^2 {\rm d} u\right)\sum_{i=-N_\varepsilon}^{N_{\varepsilon}} e^{ 2 x_{ i}/ \sigma}\\
&= \frac{ a_{ 2}^{ 2}}{ 4} \left( \varepsilon - 4 \sigma\sinh \left( \frac{ \varepsilon}{ 2 \sigma}\right) + \sigma\sinh \left( \frac{ \varepsilon}{ \sigma}\right)\right)  \frac{ e^{ \frac{ \varepsilon}{ \sigma}}- e^{ - \frac{ (4N_{ \varepsilon}+1) \varepsilon}{ \sigma}}}{ e^{ \frac{ 2 \varepsilon}{ \sigma}}-1}.
\end{align*}
We have now the Taylor expansion as $ \varepsilon\to0$, $\varepsilon - 4 \sigma\sinh \left( \frac{ \varepsilon}{ 2 \sigma}\right) + \sigma\sinh \left( \frac{ \varepsilon}{ \sigma}\right)= \frac{ \varepsilon^{ 3}}{ 12 \sigma^{ 2}} + o( \varepsilon^{ 3})$.
Since there is $C>0$ such that $ \left\vert \frac{ e^{ \frac{ \varepsilon}{ \sigma}}- e^{ - \frac{ (4N_{ \varepsilon}+1) \varepsilon}{ \sigma}}}{ e^{ \frac{ 2 \varepsilon}{ \sigma}}-1} \right\vert \leq \frac{ C \sigma}{ \varepsilon}$ as $ \varepsilon\to0$, we obtain finally that 
\begin{equation*}
\left\Vert \sum_{i=-N_\varepsilon}^{N_{\varepsilon}} \Delta_{ \varepsilon, i}^{ (+, 2)} \mathbf{ 1}_{ I_{ i}}\right\Vert_{L^{ 2}}^2\leq C \varepsilon^{ 2}.
\end{equation*}
This gives the desired result \eqref{eq:Norm_Delta+}.
\end{proof}

\subsection{Estimates on \texorpdfstring{$\hat{ u}_{ 0}$}{hatu0}}
We gather in this paragraph some useful estimates and properties concerning the profile $ \hat{ u}_{ 0}$.

\begin{lemma}
\label{lem:u0H2}
The profile $ \hat{ u}_{ 0}$ satisfies
\begin{equation}
\int_{ \mathbb{ R}} \left( \hat{ u}_{ 0}^{ \prime \prime}\right)^{ 2}(x) {\rm d}x<+\infty.
\end{equation}
\end{lemma}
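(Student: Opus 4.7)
The plan is to exploit the specific form \eqref{eq:Wexp} of the kernel to turn the fixed-point equation \eqref{eq:profile_hatu} (with $c=0$) into a pointwise ODE, and then to use the exponential tail estimate \eqref{eq:uprime_exp} on $\hat{u}_0'$ to control $\hat{u}_0''$ pointwise.

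First, I would recall that $W(x) = \frac{1}{2\sigma} e^{-|x|/\sigma}$ is the Green's function of the operator $1 - \sigma^2 \partial_x^2$, namely $(1 - \sigma^2 \partial_x^2)W = \delta_0$ in the sense of distributions. The stationary profile satisfies $\hat{u}_0 = W \ast f(\hat{u}_0)$ by \eqref{eq:profile_hatu} with $c=0$. Applying the operator $1 - \sigma^2 \partial_x^2$ to both sides (this is precisely the same deconvolution observation as in \eqref{eq:u0Wv0exp}), I obtain the pointwise identity
\begin{equation*}
\hat{u}_0''(x) = \frac{1}{\sigma^2}\bigl(\hat{u}_0(x) - f(\hat{u}_0(x))\bigr), \qquad x \in \mathbb{R}.
\end{equation*}

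Second, I would estimate the right-hand side using the exponential decay of $\hat{u}_0'$. By \eqref{eq:uprime_exp}, for $x \geq 0$,
\begin{equation*}
|\hat{u}_0(x) - a_2| = \int_x^{+\infty} \hat{u}_0'(z)\, \mathrm{d}z \leq c \int_x^{+\infty} e^{-\mu z}\, \mathrm{d}z = \frac{c}{\mu} e^{-\mu x},
\end{equation*}
and similarly $|\hat{u}_0(x) - a_1| \leq \frac{c}{\mu} e^{\mu x}$ for $x \leq 0$. Since $f(a_i) = a_i$ for $i=1,2$ (they are fixed points of $f$) and $f$ is globally Lipschitz by Assumption~\ref{ass:Wf}, I get
\begin{equation*}
|\hat{u}_0(x) - f(\hat{u}_0(x))| \leq (1 + \|f'\|_\infty)\, |\hat{u}_0(x) - a_i|,
\end{equation*}
with $i=1$ for $x\leq 0$ and $i=2$ for $x\geq 0$. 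Combining the two bounds yields $|\hat{u}_0''(x)| \leq C e^{-\mu |x|}$ for a constant $C>0$.

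Finally, this pointwise bound directly gives $\int_\mathbb{R} (\hat{u}_0'')^2 \leq C^2 \int_\mathbb{R} e^{-2\mu|x|}\, \mathrm{d}x < +\infty$, which is the conclusion. There is no significant obstacle here: the only slightly nontrivial step is converting the convolution equation \eqref{eq:profile_hatu} into the local ODE via the special structure of $W$, which is exactly the reduction already emphasized in Remark~\ref{rem:hyp W}.
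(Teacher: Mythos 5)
Your proof is correct, but it follows a different route from the paper's. The paper never passes to the local ODE: it differentiates the convolution representation $\hat u_0 = W\ast f(\hat u_0)$ twice, splitting the integral at $x$ so that the first derivative falls on the exponential kernel and the second on $f(\hat u_0)$, which yields the pointwise bound $|\hat u_0''|\leq \frac{1}{\sigma}\,W\ast\bigl(f'(\hat u_0)\,\hat u_0'\bigr)$ and then concludes by Young's convolution inequality, $\Vert \hat u_0''\Vert_{L^2}\leq \frac{\Vert f'\Vert_\infty}{\sigma}\Vert W\Vert_{L^1}\Vert \hat u_0'\Vert_{L^2}$, the finiteness of $\Vert \hat u_0'\Vert_{L^2}$ coming from \eqref{eq:uprime_exp}. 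You instead invert the convolution using that $W$ is the Green's function of $1-\sigma^2\partial_x^2$, i.e. the reduction $\hat u_0''=\frac{1}{\sigma^2}(\hat u_0-f(\hat u_0))$ already recorded in Remark~\ref{rem:hyp W}, and then use \eqref{eq:uprime_exp}, the fixed-point property $f(a_i)=a_i$ and the Lipschitz continuity of $f$ on $[a_1,a_2]$ to get a pointwise bound $|\hat u_0''(x)|\leq Ce^{-\mu|x|}$. Your version actually gives more (exponential pointwise decay of $\hat u_0''$, hence membership in every $L^p$), at the price of being tied to the exponential kernel and of requiring the justification that $\hat u_0$ is twice differentiable so that the Green's function identity holds pointwise; this is standard for the kernel \eqref{eq:Wexp} (a direct computation of the second derivative of $W\ast g$ for $g$ bounded continuous, or a citation of \cite{ermentrout_mcleod_1993} as in Remark~\ref{rem:hyp W}), but it deserves a sentence. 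The paper's argument needs only $\hat u_0'\in L^2$ and Young's inequality, and its final step would survive more general kernels of bounded variation.
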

\begin{proof}[Proof of Lemma~\ref{lem:u0H2}]
  Start with equation \eqref{eq:profile_hatu} (recall that $c=0$): $ \hat{ u}_{ 0}= W\ast f \left( \hat{ u}_{ 0}\right)$. The point is to differentiate twice the previous convolution, letting one derivative be upon $W$ and the second upon $ f(\hat{ u}_{ 0})$. There is however need to be a little careful due to the lack of differentiability of $W$ in $0$. Differentiating once
  \begin{align*}
  \hat{ u}_{ 0}(x)&= \frac{ 1}{ 2 \sigma} \int_{ -\infty}^{ x} e^{ - \frac{ x-y}{ \sigma}}f( \hat{ u}_{ 0}(y)) {\rm d}y +  \frac{ 1}{ 2 \sigma} \int_{ x}^{ +\infty} e^{ - \frac{ y-x}{ \sigma}}f( \hat{ u}_{ 0}(y)) {\rm d}y
  \end{align*}
  gives
  \begin{align*}
  \hat{ u}_{ 0}^{ \prime}(x)&= \frac{ f \left( \hat{ u}_{ 0}(x)\right)}{ 2 \sigma} - \frac{ 1}{ 2 \sigma^{ 2}} \int_{ -\infty}^{x} e^{ - \frac{ x-y}{ \sigma}} f( \hat{ u}_{ 0}(y)) {\rm d}y -  \frac{ f \left( \hat{ u}_{ 0}(x)\right)}{ 2 \sigma} \\
  &\quad + \frac{ 1}{ 2 \sigma^{ 2}} \int_{ x}^{+\infty} e^{ - \frac{ y-x}{ \sigma}} f( \hat{ u}_{ 0}(y)) {\rm d}y,\\
  &= \frac{ 1}{ 2 \sigma^{ 2}} \left(- \int_{ 0}^{ +\infty} e^{ - \frac{ v}{ \sigma}} f \left( \hat{ u}_{ 0}(x-v)\right) {\rm d}v + \int_{ -\infty}^{0} e^{ \frac{ v}{ \sigma}} f \left( \hat{ u}_{ 0}(x-v)\right) {\rm d}v\right),
  \end{align*}
  and hence,
  \begin{multline*}
  \hat{ u}_{ 0}^{ \prime \prime}(x)= \frac{ 1}{ 2 \sigma^{ 2}} \Bigg(- \int_{ 0}^{ +\infty} e^{ - \frac{ v}{ \sigma}} f^{ \prime} \left( \hat{ u}_{ 0}(x-v)\right) \hat{ u}_{ 0}^{ \prime}(x-v) {\rm d}v\\
   + \int_{ -\infty}^{0} e^{ \frac{ v}{ \sigma}} f^{ \prime} \left( \hat{ u}_{ 0}(x-v)\right)\hat{ u}_{ 0}^{ \prime}(x-v) {\rm d}v\Bigg).
  \end{multline*}
  Hence, in particular, some rough triangular inequality gives
  \begin{equation}
  \left\vert \hat{ u}_{ 0}^{ \prime \prime}(x) \right\vert \leq \frac{ 1}{ \sigma} W \ast \left(f^{ \prime}(\hat{ u}_{ 0}) \hat{ u}_{ 0}^{ \prime}\right)(x)
  \end{equation}
  so that by Young's inequality on convolution, since $f^{ \prime}$ is bounded
  \begin{equation}
  \left\Vert \hat{ u}_{ 0}^{ \prime \prime} \right\Vert_{ L^{ 2}} \leq \frac{ \left\Vert f^{ \prime} \right\Vert_{ \infty}}{ \sigma} \left\Vert W \right\Vert_{ L^{ 1}} \left\Vert \hat{ u}_{ 0}^{ \prime} \right\Vert_{ L^{ 2}}
  \end{equation}
  But the tail estimate \eqref{eq:uprime_exp} gives that $\left\Vert \hat{ u}_{ 0}^{ \prime} \right\Vert_{ L^{ 2}}<\infty$ and so is $\left\Vert \hat{ u}_{ 0}^{ \prime \prime} \right\Vert_{ L^{ 2}}<+\infty$.
\end{proof}

\section{Rate-based Neural Field Equation}
\label{sec:app rate based NFE}

\begin{proof}[Proof of Proposition~\ref{prop:correspondanceNFE}]
Concerning Item~\eqref{it:v}, the existence and uniqueness of a continuous solution $ t \mapsto v_{ t}$ to \eqref{eq:rateNFE} follows from standard Picard Iteration procedure (see e.g. \cite{Riedler2013}). The positivity and boundedness statement follows from the following equivalent mild formulation of \eqref{eq:rateNFE}
\begin{equation}
\label{eq:rateNFE_mild}
v_{ t}= e^{ - t} v_{ 0} + \int_{ 0}^{t} e^{ -(t-s)} f \left( \int_{ \mathbb{ R}} W(x-y) v_{ s}(y) {\rm d}y\right){\rm d}s
\end{equation}
together with the positivity and boundedness of $v_{0}$ and $f$.

\medskip

Turn now to Item~\eqref{it:utov}: let $u_{ 0}$ continuous and nonnegative and let $v_{ 0}$ satisfies \eqref{eq:u0Wv0}. Differentiating \eqref{eq:vtfromut} w.r.t. $t$ gives that 
\begin{equation}
\label{aux:vt_f_ut}
\partial_{ t} v_{ t}= - v_{ t} +f \left(u_{ t}\right).
\end{equation} 
Hence, $v$ will solve \eqref{eq:rateNFE} once we have been able to prove that $u_{ t}= W\ast v_{ t}$ for all $t\geq0$. To prove so, take the convolution of \eqref{aux:vt_f_ut} w.r.t. $W$: we have
\begin{align*}
\partial_{ t} \left(W\ast v_{ t}\right)+ W\ast v_{ t} = W\ast f(u_{ t}) = \partial_{ t} u_{ t} + u_{ t},
\end{align*}
where we have used the fact that $u$ solves \eqref{eq:NFE conv} in the last equality. Hence,
\begin{equation*}
\partial_{ t} \left(e^{ t} W\ast v_{ t}\right)= \partial_{ t} \left(e^{ t} u_{ t}\right),
\end{equation*}
so that integrating this identity and using that at time $t=0$ \eqref{eq:u0Wv0} is true by assumption, we obtain indeed that $u_{ t}= W\ast v_{ t}$. Putting this identity into \eqref{aux:vt_f_ut} gives that $v_{ t}$ solves \eqref{eq:rateNFE}. Conversely, let $v_{ 0}$ nonnegative, continuous and bounded and let $(v_{ t})_{ t\geq0}$ solution to \eqref{eq:rateNFE} with initial condition $v_{ 0}$. By Young inequality ($v\in L^{ \infty}( \mathbb{ R})$ and $W\in L^{ 1}(\mathbb{ R})$), \eqref{eq:utWvt} indeed defines a bounded and continuous function $u$ on $ \mathbb{ R}$. The fact that $u$ given by \eqref{eq:utWvt} is solution to \eqref{eq:NFE conv} follows from a simple convolution w.r.t. $W$ of \eqref{eq:rateNFE}.

\medskip

Turn now to Item~\eqref{it:TW}: suppose that \eqref{eq:rateNFE} admits a traveling wave solution $(x,t) \mapsto v_{ t}(x):= \hat{ v}(x-c^{ \prime}t)$ with profile $ \hat{ v}$ et speed $ c^{ \prime}$. Then, by Item~\eqref{subit:utfromvt}, $u_{ t}:= W\ast v_{ t}$ is a solution to \eqref{eq:NFE conv}. It is also a traveling wave solution to \eqref{eq:NFE conv}, as
\begin{align*}
u(x,t)= \int_{ \mathbb{ R}}W(x-y) \hat{ v}(y-c^{ \prime}t) {\rm d}y= \int_{ \mathbb{ R}}W(x-c^{ \prime}t -z) \hat{ v}(z) {\rm d}z = \left(W\ast \hat{ v}\right)(x-c^{ \prime}t).
\end{align*}
By uniqueness of the couple $ \left(\hat{ u}, c\right)$ (recall Theorem~\ref{th:EML}), we necessarily have that $ c^{ \prime}= c$ and the existence of some $ \varphi\in \mathbb{ R}$ such that 
\begin{equation}
\label{eq:Wastuphi}
W\ast \hat{ v}= \hat{ u}(\cdot + \varphi),
\end{equation} 
which proves \eqref{eq:hatuVSv}. Secondly, note that $ \hat{ v}$ necessarily solves 
\begin{equation}
\label{eq:hatv_c}
- c^{ \prime} \partial_{ \xi} \hat{ v}(\xi) = - \hat{ v}(\xi) + f \left(W\ast \hat{ v}\right)(\xi).
\end{equation}
In the case $c^{ \prime}=c=0$, \eqref{eq:hatv_c} boils down to 
\begin{align*}
\hat{ v}(\xi)= f \left(W\ast \hat{ v}\right)(\xi)= f \left( \hat{ u}( \xi+ \varphi)\right).
\end{align*}
In the case $c^{ \prime}=c\neq 0$, \eqref{eq:hatv_c} can be written as $\partial_{ \xi} \left(e^{ - \frac{ \xi}{ c}}\hat{ v}(\xi)\right) =  - \frac{ e^{ - \frac{ \xi}{ c}}}{ c} f \left(W\ast \hat{ v}\right)(\xi)$. Integrating this identity between $x$ and $+\infty$ gives
\begin{align*}
\hat{ v}(x)= \frac{ e^{ \frac{ x}{ c}}}{ c} \int_{ x}^{ +\infty} e^{ - \frac{ \xi}{ c}} f \left(W\ast \hat{ v}\right)(\xi) {\rm d}\xi=\int_{ 0}^{ +\infty} \frac{ e^{ - \frac{ z}{ c}}}{ c} f \left(W\ast \hat{ v}\right)(z+x) {\rm d}z.
\end{align*}
Using now \eqref{eq:Wastuphi} gives
\begin{equation*}
\hat{ v}(x)=\int_{ 0}^{ +\infty} \frac{ e^{ - \frac{ z}{ c}}}{ c} f \left( \hat{ u}(z+x+ \varphi)\right) {\rm d}z,
\end{equation*}
which gives uniqueness (up to translation) of $ \hat{ v}$, as well as the representation \eqref{eq:hatv}. It remains to show that such a profile $ \hat{ v}$ indeed defines a traveling wave solution to \eqref{eq:rateNFE}, with the desired properties. The monotonicity and the existence of appropriate limits in $\pm \infty$ follow directly from the corresponding properties for $ \hat{ u}$, the monotonicity of $f$ and the dominated convergence theorem. Define now $v_{ t}(x):= \hat{ v}(x-ct)$ and prove that it solves \eqref{eq:rateNFE}. First note that $v_{ t}$ may be written as \eqref{eq:vtfromut} for an appropriate initial condition $v_{ 0}$: indeed,
\begin{align}
v_{ t}(x)&= \hat{ v}(x-ct)= \frac{ 1}{ c}  \int_{ 0}^{+\infty} e^{ - \frac{ z}{ c}} f \left(\hat{ u}(x-ct+z+ \varphi)\right) {\rm d}z,\nonumber\\
&= \frac{ 1}{ c}  \int_{ -ct}^{+\infty} e^{ -(t+ z^{ \prime}/c)} f \left(\hat{ u}(x+z^{ \prime}+ \varphi)\right) {\rm d}z^{ \prime}=\int_{ -\infty}^{t} e^{ -(t-s)} f \left(\hat{ u}(x-cs+ \varphi)\right) {\rm d}s \nonumber\\
&= \int_{ 0}^{t} e^{ -(t-s)} f \left( \hat{ u}(x-cs+ \varphi)\right) {\rm d}s+ v_{ 0}e^{ -t}, \label{eq:vt_TW}
\end{align} 
for
\begin{equation}
\label{eq:v0_TW}
 v_{ 0}(x):= \int_{ 0}^{+\infty} e^{-s} f \left( \hat{ u}(x+cs+ \varphi)\right) {\rm d}s.
\end{equation}
Hence, applying Item~\eqref{subit:vtfromut} to the particular case of $u(x,t)= \hat{ u}(x-ct+ \varphi)$ gives that $v$ given by \eqref{eq:vt_TW} is a solution to \eqref{eq:rateNFE} as long as the compatibility condition \eqref{eq:u0Wv0} for the initial conditions is satisfied in this particular case: we need to check that $v_{ 0}$ given by \eqref{eq:v0_TW} satisfies
\begin{equation*}
u(x,0)=\hat{ u}(x+ \varphi)= W\ast v_{ 0}(x),\ x \in \mathbb{ R},
\end{equation*}
that is
\begin{equation}
\label{eq:pt_fixe_hatu}
\hat{ u}(x+ \varphi)= \int_{ 0}^{+\infty} e^{-s} \int_{ \mathbb{ R}} W(x-z)f \left( \hat{ u}(z+cs+ \varphi)\right) {\rm d}s {\rm d}z, \ x\in \mathbb{ R}, \ \varphi\in \mathbb{ R}.
\end{equation}
This is indeed the case, as identity \eqref{eq:pt_fixe_hatu} is a direct consequence of \eqref{eq:profile_hatu} together with similar calculations as before: we obtain from \eqref{eq:profile_hatu} that $\partial_{ \xi} \left(e^{ - \frac{ \xi}{ c}} \hat{ u}(\xi)\right) =  - \frac{ e^{ - \frac{ \xi}{ c}}}{ c} W\ast f( \hat{ u})(\xi)$ so that $  \hat{ u}(x)= e^{ \frac{ x}{ c}}\int_{ x}^{ +\infty} \frac{ e^{ - \frac{ \xi}{ c}}}{ c} W\ast f( \hat{ u})(\xi) {\rm d} \xi=  \int_{ x}^{ +\infty} \frac{ e^{ - \frac{ \xi-x}{ c}}}{ c} \int_{ \mathbb{ R}}W(\xi-y)\ast f( \hat{ u}(y)) {\rm d}y {\rm d} \xi$ wich gives \eqref{eq:pt_fixe_hatu} by the changes of variables $s= \frac{ \xi-x}{ c}$ and $z=y-cs- \varphi$. Hence, $v$ given by \eqref{eq:vt_TW} is indeed the solution to \eqref{eq:rateNFE} with initial condition $v_{ 0}$ and hence a traveling wave solution to \eqref{eq:rateNFE}.
\end{proof}

\section{Regularity of the isochron map}\label{sec:app regularity isochron}

\begin{proof}[Proof of Proposition~\ref{prop:regularity isochron}]
Let us denote by $T_t(u_0)$ the flow of solutions associated to \eqref{eq:NFE conv}. Following the usual theory of regularity with respect to the initial condition (see for example \cite{sell2013dynamics}), the two successive Frechet derivatives with respect to the initial condition are given by $h_t={\rm D} T_t(u_0)[h]$ with
\begin{equation}
\label{eq:ht}
\partial_t h_t = - h_t + W *(f'(u_t)h_t),\quad h_0=h,
\end{equation}
and $\xi_t={\rm D}^2 T_t(u_0)[h^1,h^2]$ with
\begin{equation*}
\partial_t \xi_t = - \xi_t + W *\left(f'(u_t)\xi_t+ f''(u_t)h^1_t h^2_t\right),\quad \xi_0=0, \quad h^i_t={\rm D}T(u_0)[h^i].
\end{equation*}
For the third derivative one needs to be a bit more careful since Hölder's and Young's inequality only imply
\begin{equation*}
\left\Vert  W *\left(f^{(3)}(u_t)h^1_t h^2_t h^3_t\right)\right\Vert_{L^2} \leq \left\Vert  W \right\Vert_{L^2} \left\Vert  f^{(3)}(u_t)\right\Vert_{L^\infty} \left\Vert h^1_t \right\Vert_{L^2} \left\Vert h^2_t\right\Vert_{L^2}\left\Vert h^3_t\right\Vert_{L^\infty}.
\end{equation*}
We will prove that, considering the space $\mathcal{X} = L^2\cap L^\infty$ (endowed with the norm  $\Vert \cdot \Vert_{L^2}+\Vert \cdot \Vert_{L^\infty}$), if $h\in \mathcal{X}$ then ${\rm D}T_t(u_0)[h]\in \mathcal{X}$. Then, for $h^3\in \mathcal{X}$ and $h^1,h^2\in L^2$, relying again on the usual regularity theory \cite{sell2013dynamics}, one can prove that the map $u_0 \mapsto {\rm D}^2T_t(u_0)[h^2,h^3]$ is differentiable, with differential ${\rm D} {\rm D}^2T_t(u_0)[h^2,h^3][h^1] =:{\rm D}^3 T_t(u_0)[h^1,h^2,h^3]$ given by, for $\zeta_t = {\rm D}^3 T(u_0)[h^1,h^2,h^3]$,
\begin{multline*}
\partial_t \zeta_t = - \zeta_t + W *\left(f'(u_t)\zeta_t+ f''(u_t)\left(\xi^{1,2}_t h^3_t +\xi^{1,3}_t h^2_t +\xi^{2,3}_t h^1_t  \right)+f^{(3)}(u_t)h^1_t h^2_t h^3_t\right),\\
 \zeta_0=0, \quad h^i_t={\rm D}T(u_0)[h^i], \quad \xi^{i,j}_t = {\rm D}^2 T(u_0)[h^i,h^j].
\end{multline*}
Here ${\rm D} T_{t}$, ${\rm D}^{ 2} T_{t}$ and ${\rm D}^{ 2} T^3_{t}$ are respectively elements of $C(\mathcal{N}, \mathcal{B}(L^2;L^2))$, $C(\mathcal{N}, \mathcal{B}((L^2)^2;L^2))$ and 
$C(\mathcal{N}, \mathcal{B}((L^2)^2\times \mathcal{X};L^2))$.
The idea of the following proof is to show that, for an increasing sequence of times $(t_n)_{n\in \mathbb{N}}$ going to infinity, $({\rm D} T_{t_n})_{n\in \mathbb{N}}$, $({\rm D}^{ 2} T_{t_n})_{n\in \mathbb{N}}$ and $({\rm D}^{ 3} T_{t_n})_{n\in \mathbb{N}}$ are Cauchy sequences.

\medskip

\noindent
{\it First step.} We show that ${\rm D} T_t$ is uniformly bounded in time. Considering $u_0\in \mathcal{N}$ such that $\Theta(u_0)=\varphi$ and remarking that
\begin{equation*}
\partial_t h_t = \mathcal{L}_\varphi h_t + W*\left( (f'(u_t)-f'(\hat u_\varphi)) h_t\right),
\end{equation*}
we get, since $u_0\in \mathcal{N}$, $\Vert u_0-\hat u_\varphi\Vert_{L^2}\leq C_0$,
\begin{align*}
\Vert h_t\Vert_{L^2}&\leq \left\Vert e^{t\mathcal{L}_\varphi} h_0\right\Vert_{L^2} +\int_0^t\left\Vert e^{(t-s)\mathcal{L}_\varphi}W*\left( (f'(u_s)-f'(\hat u_\varphi)) h_s\right)\right\Vert_{L^2} {\rm d} s\\
&\leq C_\mathcal{L}\left\Vert  h_0\right\Vert_{L^2} + C_0 C_\mathcal{L}C_\lambda\Vert W\Vert_{L^2} \Vert f''\Vert_{L^\infty}\int_0^t  e^{-\lambda s} \Vert h_s\Vert_{L^2} {\rm d} s,
\end{align*}
where we recall the definitions of the constants $ C_{  \mathcal{ L}}$ and $ C_{ \lambda}$ in \eqref{eq:contract_ecL} and \eqref{eq:convergence ut}. This implies by Gronwall's inequality,
\begin{equation*}
\sup_{t\geq 0}\left\Vert {\rm D} T_t(u_0)[h_0]\right\Vert_{L^2}\leq C_\mathcal{L}e^{\frac{C_0C_\mathcal{L}C_\lambda\Vert W\Vert_{L^2}\Vert f''\Vert_{L^\infty}}{\lambda}}\left\Vert h_0\right\Vert_{L^2}.
\end{equation*}
Moreover, writing \eqref{eq:ht} in a mild form, we obtain a similar $L^{ \infty}$ estimate:
\begin{align*}
\Vert h_t\Vert_{L^\infty} &\leq e^{-t} \Vert h_0\Vert_{L^\infty} + \int_0^t e^{-(t-s)}\left\Vert W*(f'(u_s)h_s) \right\Vert_{L^\infty} {\rm d} s\\
&\leq e^{-t} \Vert h_0\Vert_{L^\infty} +\Vert W\Vert _2 \Vert f'\Vert_{L^\infty}  \int_0^t e^{-(t-s)} \Vert h_s\Vert_{L^2}{\rm d} s\\
&\leq e^{-t} \Vert h_0\Vert_{L^\infty} +\Vert W\Vert _2 \Vert f'\Vert_{L^\infty} \sup_{s\geq 0}\left\Vert {\rm D} T_s(u_0)[h_0]\right\Vert_{L^2}.
\end{align*}
So there exist constants $c_1>0$ and $c_2>0$ such that for all $u_0\in \mathcal{N}$, all $t\geq 0$ and all $h_0\in \mathcal{X}$,
\begin{equation}\label{eq:DT_t bounded}
\left\Vert {\rm D} T_t(u_0)[h_0]\right\Vert_{L^2} \leq c_1 \Vert h_0\Vert_{L^2},\quad \left\Vert {\rm D} T_t(u_0)[h_0]\right\Vert_{L^\infty} \leq e^{-t}\Vert h_0\Vert_{L^\infty}+c_2 \Vert h_0\Vert_{L^2}.
\end{equation}

\medskip

\noindent
{\it Second step.} We prove that $({\rm D} T_{t_n})_{n\in \mathbb{N}}$ is a Cauchy sequence in $C(\mathcal{N}, \mathcal{B}(L^2;L^2))$. 
For $t_m>t_n$ we have the decomposition
\begin{align*}
h_{t_m}-h_{t_n} &= e^{t_n\mathcal{L}_\varphi}\left(e^{(t_m-t_n)\mathcal{L}_\varphi}-1\right)h_0\\
&\quad +\int_0^{t_n} e^{(t_n-s)\mathcal{L}_\varphi}\left(e^{(t_m-t_n)\mathcal{L}_\varphi}-1\right)W*\left((f'(u_s)-f'(\hat u_\varphi) )h_s\right){\rm d} s\\
&\quad +\int_{t_n}^{t_m}e^{(t_m-s)\mathcal{L}_\varphi}W*\left((f'(u_s)-f'(\hat u_\varphi) )h_s\right){\rm d} s.\\
&= K_1+ K_2+ K_3.
\end{align*}
Since
\begin{equation}\label{eq:eL-I}
e^{t\mathcal{L}_\varphi}-1=\left(e^{t\mathcal{L}_\varphi}-1\right) P^\perp_\varphi,
\end{equation}
we obtain
\begin{equation*}
\left\Vert K_1\right\Vert_{L^2}\leq C_{\mathcal{L}} e^{-\kappa t_n}\Vert h_0\Vert_{L^2}.
\end{equation*}
Moreover,
\begin{align*}
\left\Vert K_2\right\Vert_{L^2} & \leq C_{\mathcal{L}}\int_0^{t_n} e^{-\kappa (t_n-s)} \left\Vert W*\left((f'(u_s)-f'(\hat u_\varphi) )h_s\right)\right\Vert_{L^2}{\rm d}s \\
& \leq C_0 C_{\mathcal{L}}C_\lambda \Vert W\Vert_{L^2} \Vert f''\Vert_{L^\infty} \int_0^{t_n} e^{-\kappa (t_n-s)} e^{-\lambda s} \Vert h_s\Vert_{L^2} {\rm d} s\\
&\leq C e^{-\lambda t_n} \Vert h_0\Vert_{L^2},
\end{align*}
and
\begin{align*}
\left\Vert K_3\right\Vert_{L^2} & \leq C_{\mathcal{L}}\int_{t_n}^{t_m} \left\Vert W*\left((f'(u_s)-f'(\hat u_\varphi) )h_s\right)\right\Vert_{L^2}{\rm d}s \\
& \leq C_0 C_{\mathcal{L}} C_\lambda \Vert W\Vert_{L^2} \Vert f''\Vert_{L^\infty} \int_{t_n}^{t_m} e^{-\lambda s} \Vert h_s\Vert_{L^2} {\rm d} s\\
&\leq C e^{-\lambda t_n} \Vert h_0\Vert_{L^2}.
\end{align*}
We deduce that there exists a constant $c_3>0$ such that for $u_0\in \mathcal{N}$, all $t_m>t_n \geq 0$ and all $h_0\in \mathcal{X}$,
\begin{equation*}
\left\Vert h_{t_m }-h_{t_n}\right\Vert_{L^2} \leq c_3 e^{-\lambda t_n} \Vert h_0\Vert_{L^2}.
\end{equation*}
Hence $(h_{ t_{ n}})_{ n\geq1}$ is indeed a Cauchy sequence in $L^{ 2}$ and converges in $L^2$ as $n\to\infty$ to a $h_\infty\in L^2$, with
\begin{equation}\label{eq:DT_t CV L2}
\Vert h_t-h_\infty\Vert_{L^2} \leq c_3 e^{-\lambda t}\Vert h_0\Vert_{L^2}.
\end{equation}
Moreover
\begin{align*}
\left\Vert P^\perp_\varphi h_t\right\Vert_{L^2} &\leq \left\Vert P^\perp_\varphi e^{t\mathcal{L}_\varphi } h_0\right\Vert_{L^2}+\int_0^t \left\Vert P^\perp_\varphi e^{(t-s)\mathcal{L}_\varphi}W*\left((f'(u_s)-f'(\hat u_\varphi) )h_s\right)\right\Vert_{L^2}{\rm d} s\\
&\leq e^{-\lambda t} \Vert h_0\Vert_{L^2}+C_0 C_\lambda  \Vert W\Vert_{L^2} \Vert f''\Vert_{L^\infty} \sup_{s\geq 0}\Vert h_s\Vert_{L^2}\int_0^t e^{-\lambda(t-s)}e^{-\lambda s} {\rm d} s\\
&\leq C(1+t)e^{-\lambda t}\Vert h_0\Vert_{L^2},
\end{align*}
so that $P^\perp_\varphi h_\infty=0$, $h_\infty$ is proportional to $\hat u_\varphi'$.
Let us make some further calculation that will be useful in the following: concerning the $L^\infty$ norm, remarking that
\begin{equation}
h_t  = e^{-t}h_0 + \int_0^t e^{-(t-s)} W*\left(f'(u_s)h_s\right){\rm d} s,
\end{equation}
we obtain, decomposing $f'(u_s)h_s=f'(\hat u_\varphi)h_\infty+(f'(u_s)-f'(\hat u_\varphi)) h_\infty +f'(u_s)(h_s-h_\infty)$ and relying on similar estimates as above, for some constant $c_4>0$,
\begin{equation*}
\left\Vert h_t - \left(1-e^{-t}\right) W*\left(f'(\hat u_\varphi)  h_\infty\right)\right\Vert_{L^\infty}\leq e^{-t} \Vert h_0\Vert_{L^\infty} + c_4 e^{-\min(\lambda, 1)t} \Vert h_0 \Vert_{L^2}.
\end{equation*}
Relying on the fact that $h_\infty=W*\left(f'(\hat u_\varphi)  h_\infty\right)$ (recall that $P^\perp_\varphi h_\infty=0$) and on \eqref{eq:DT_t bounded}, we deduce that, for some constant $c_5>0$,
\begin{equation}\label{eq:DT_t CV Linfty}
\left\Vert h_t -  h_\infty\right\Vert_{L^\infty}\leq c_4 e^{-t} \Vert h_0\Vert_{L^\infty} + c_5 e^{-\min(\lambda, 1)t} \Vert h_0 \Vert_{L^2}.
\end{equation}

\medskip

Note that one may now deduce \eqref{eq:Theta_dist}: for $v\in \mathcal{N}$ and $w\in \mathcal{M}$ such that $\Vert v-w\Vert_{L^2}={\rm dist}_{L^2}(v,\mathcal{M})$, one has, since $T_t(w)=w$,
\begin{align*}
\Vert v-T_\infty(v)\Vert_{L^2} &\leq \Vert v-w\Vert_{L^2}+\Vert T_\infty(w)-T_\infty(v)\Vert_{L^2}\\
&\leq \Vert v-w\Vert_{L^2}+\int_0^1 \Vert {\rm D}T_\infty(v+r(w-v))[w-v]\Vert_{L^2}{\rm d} r\\
&\leq (1+c_1){\rm dist}_{L^2}(v,\mathcal{M}).
\end{align*}

\medskip

\noindent
{\it Third step.} We prove that ${\rm D}^2 T_t(u_0)[h^1_0,h^2_0]$ is uniformly bounded in $C(\mathcal{N},\mathcal{B}((L^2)^2;L^2))$.
For $t>0$, denoting $\xi_t ={\rm D}^2 T_t(u_0)[h^1_0,h^2_0]$ we have
\begin{align*}
\xi_t& = \int_0^t e^{(t-s)\mathcal{L}_\varphi}W *\big((f'(u_s)-f'(\hat u_\varphi))\xi_s \big){\rm d} s + \int_0^t e^{(t-s)\mathcal{L}_\varphi}W *\big(f''(u_s)h^1_s h^2_s \big){\rm d} s\\
&=\int_0^t e^{(t-s)\mathcal{L}_\varphi}W *\big((f'(u_s)-f'(\hat u_\varphi))\xi_s \big){\rm d} s + \int_0^t e^{(t-s)\mathcal{L}_\varphi}W *\big(f''(\hat u_\varphi)h^1_\infty h^2_\infty \big){\rm d} s\\
&\quad + \int_0^t e^{(t-s)\mathcal{L}_\varphi}W *\big((f''(u_s)-f''(\hat u_\varphi))h^1_\infty h^2_\infty \big){\rm d} s\\
&\quad + \int_0^t e^{(t-s)\mathcal{L}_\varphi}W *\big(f''(u_s)(h^1_s-h^1_\infty) h^2_\infty \big){\rm d} s\\
&\quad + \int_0^t e^{(t-s)\mathcal{L}_\varphi}W *\big(f''(u_s)h^1_s( h^2_s-h^2_\infty) \big){\rm d} s:=I_1+I_2+I_3+I_4+I_5.
\end{align*}
Relying on similar estimates as above, recalling in particular \eqref{eq:convergence ut}, \eqref{eq:DT_t bounded} and \eqref{eq:DT_t CV L2}, we get
\begin{equation*}
\Vert I_1\Vert_{L^2}\leq C\int_0^t e^{-\lambda s} \Vert \xi_s\Vert_{L^2} {\rm d} s, \quad \Vert I_3+I_4+I_5\Vert_{L^2}\leq C \Vert h^1_0\Vert_{L^2}\Vert h^2_0\Vert_{L^2}.
\end{equation*}
Moreover, since $h^i_\infty= \eta_i \hat u'_\varphi$ for some constants $ \eta_i$, by translation invariance of $\mathcal{M}$,
\begin{align*}
P^t_\varphi\left(W*\big(f''(\hat u_\varphi)h^1_\infty h^2_\infty\big)\right) = \eta_1 \eta_2 \hat u_\varphi \int \hat u'_0(x) W(x-y)f''(\hat u_0)(y) \hat u'_0(y) m_0(x) {\rm d} x{\rm d} y =0,
\end{align*}
where we used the fact that $\hat u'_0$ and $m_0$ are even while $f''(\hat u_0)$ is odd. So
\begin{equation*}
\Vert I_2\Vert_{L^2}\leq C_\mathcal{L} \int_0^t e^{-\kappa(t-s)}\left\Vert W *\big(f''(\hat u_\varphi)h^1_\infty h^2_\infty \big)\right\Vert_{L^2} {\rm d} s\leq C  \Vert h^1_0\Vert_{L^2}\Vert h^2_0\Vert_{L^2}.
\end{equation*}
Applying again Gronwall's inequality we deduce that there exist a constant $c_6>0$ such that for all $u_0\in \tilde{\mathcal{N}}$, all $t\geq 0$ and all $h^1_0, h^2_0\in L^2$,
\begin{equation}\label{eq:D2T bounded}
\left\Vert {\rm D}^2 T_t(u_0)[h^1_0,h^2_0]\right\Vert_{L^2} \leq c_6 \Vert h^1_0\Vert_2 \Vert h^2_0\Vert_{L^2}.
\end{equation}

\medskip

\noindent
{\it Fourth step.} We prove that $({\rm D}^2 T_{t_n})_{n\in \mathbb{N}}$ is a Cauchy sequence in $C(\mathcal{N}, \mathcal{B}((L^2)^2;L^2))$. 
For $t_m>t_n$ we have the decomposition
\begin{align*}
\xi_{t_m}-\xi_{t_n} &= \int_0^{t_n}e^{(t_n-s)\mathcal{L}_\varphi}\left(e^{(t_m-t_n)\mathcal{L}_\varphi}-1\right) W*\big((f'(u_s)-f'(\hat u_\varphi))\xi_s \big){\rm d} s\\
&\quad+\int_0^{t_n}e^{(t_n-s)\mathcal{L}_\varphi}\left(e^{(t_m-t_n)\mathcal{L}_\varphi}-1\right) W*\big(f''(u_s)h^1_sh^2_s \big){\rm d} s\\
&\quad+\int_{t_n}^{t_m}e^{(t_m-s)\mathcal{L}_\varphi}W*\big((f'(u_s)-f'(\hat u_\varphi))\xi_s \big){\rm d} s\\
&\quad+\int_{t_n}^{t_m}e^{(t_m-s)\mathcal{L}_\varphi}W*\big(f''(u_s)h^1_sh^2_s \big){\rm d} s\\
&=: J_1+J_2+J_3+J_4.
\end{align*}
Relying on \eqref{eq:convergence ut}, \eqref{eq:eL-I} and \eqref{eq:D2T bounded}, we get
\begin{align*}
\Vert J_1\Vert_{L^2} &\leq C_0 C_\mathcal{L} C_\lambda \Vert W\Vert_{L^2}\Vert f''\Vert_{L^\infty} \int_0^{t_n}e^{-\kappa(t_n-s)}e^{-\lambda s}\Vert \xi_s\Vert_{L^2}{\rm d} s \\
& \leq Ce^{-\lambda t_n} \Vert h^1_0\Vert_{L^2}\Vert h^2_0\Vert_{L^2}.
\end{align*}
Moreover,
\begin{align*}
J_2 &=\int_0^{t_n}e^{(t_n-s)\mathcal{L}_\varphi}\left(e^{(t_m-t_n)\mathcal{L}_\varphi}-1\right) W*\big(f''(u_s)h^1_s(h^2_s -h^2_\infty)\big){\rm d} s\\
&\quad +\int_0^{t_n}e^{(t_n-s)\mathcal{L}_\varphi}\left(e^{(t_m-t_n)\mathcal{L}_\varphi}-1\right) W*\big(f''(u_s)(h^1_s-h^1_\infty)h^2_\infty \big){\rm d} s\\
&\quad +\int_0^{t_n}e^{(t_n-s)\mathcal{L}_\varphi}\left(e^{(t_m-t_n)\mathcal{L}_\varphi}-1\right) W*\big((f''(u_s)-f''(\hat u_\varphi))h^1_\infty h^2_\infty \big){\rm d} s \\
&\quad +\int_0^{t_n}e^{(t_n-s)\mathcal{L}_\varphi}\left(e^{(t_m-t_n)\mathcal{L}_\varphi}-1\right) W*\big(f''(\hat u_\varphi)h^1_\infty h^2_\infty\big){\rm d} s\\
&= J_2^1+J_2^2+J_2^3+J_2^4.
\end{align*}
We remark first that $J_2^4=0$ since $P^t_\varphi\left(W*\big(f''(\hat u_\varphi)h^1_\infty h^2_\infty\big)\right) =0$.
Moreover, relying in particular on \eqref{eq:convergence ut} and \eqref{eq:DT_t bounded},
\begin{align*}
\left\Vert J^3_2\right\Vert_{L^2}&\leq C_\mathcal{L} \Vert W\Vert_{L^2}\Vert f^{(3)}\Vert_{L^\infty} \int_0^{t_n}e^{-\kappa(t_n-s)} \Vert u_s-\hat u_\varphi\Vert_{L^2} \Vert h^1_\infty\Vert_{L^2}\Vert h^2_\infty\Vert_{L^\infty} {\rm d} s\\
&\leq  C_\mathcal{L} \Vert W\Vert_{L^2}\Vert f^{(3)}\Vert_{L^\infty}  \int_0^{t_n} e^{-\kappa(t_n-s)}C_0e^{-\lambda s} c_1\Vert h^1_0\Vert_{L^2}c_2\Vert h^2_0\Vert_{L^2}{\rm d} s\\
&\leq C e^{-\lambda t_n}\Vert h^1_0\Vert_{L^2}\Vert h^2_0\Vert_{L^2},
\end{align*}
and similarly
\begin{align*}
\left\Vert J^2_2\right\Vert_{L^2} \leq C e^{-\lambda t_n}\Vert h^1_0\Vert_{L^2}\Vert h^2_0\Vert_{L^2}, \quad \left\Vert J^1_2\right\Vert_{L^2} \leq C e^{-\lambda t_n}\Vert h^1_0\Vert_{L^2}\Vert h^2_0\Vert_{L^2}.
\end{align*}
For the remaining terms we have, recalling \eqref{eq:D2T bounded},
\begin{align*}
\left\Vert J_3\right\Vert_{L^2} &\leq C_0 C_{\mathcal{L}} C_\lambda \Vert W\Vert_{L^2}\Vert f''\Vert_{L^\infty} \int_{t_n}^{t_m}e^{-\lambda s} \Vert\xi_s\Vert_{L^2}{\rm d} s\leq Ce^{-\lambda t_n}\Vert h^1_0\Vert_{L^2}\Vert h^2_0\Vert_{L^2},
\end{align*}
and, relying on a similar decomposition as for $J_2$ (in particular with a term equal to zero), we get
\[
\left\Vert J_4\right\Vert_{L^2} \leq Ce^{-\lambda t_n}\Vert h^1_0\Vert_{L^2}\Vert h^2_0\Vert_{L^2}.
\]
We deduce that there exists a constant $c_7>0$ such that, for all $u_0\in \mathcal{N}$, all $t_m>t_n\geq 0$ and all $h^1_0,h^2_0\in L^2$,
\begin{equation*}
\Vert \xi_{t_m}-\xi_{t_n}\Vert_{L^2}\leq c_7 e^{-\lambda t_n} \Vert h^1_0\Vert_{L^2}\Vert h^2_0\Vert_{L^2},
\end{equation*}
and thus  $ \left(\xi_{ t_{ n}}\right)_{ n\geq1}$ converges as $n\to\infty$ in $L^2$ to a $\xi_\infty\in L^2$, with
\begin{equation}\label{eq:D2T CV}
\Vert \xi_{t}-\xi_{\infty}\Vert_{L^2}\leq c_7 e^{-\lambda t} \Vert h^1_0\Vert_{L^2}\Vert h^2_0\Vert_{L^2}.
\end{equation}

\medskip

\noindent
{\it Fifth step.} we prove that ${\rm D}^3 T_t(u_0)[h^1_0,h^2_0,h^3_0]$ is uniformly bounded in $C(\mathcal{N},\mathcal{B}((L^2)^2\times\mathcal{X};L^2))$.
For $t>0$, denoting $\zeta_t ={\rm D}^3 T_t(u_0)[h^1_0,h^2_0,h^3_0]$ and $\xi^{i,j}_t = {\rm D}^2 T(u_0)[h^i_0,h^j_0]$ we have
\begin{align}
\zeta_t =&\int_0^t e^{(t-s)\mathcal{L}_\varphi}W *\big((f'(u_s)-f'(\hat u_\varphi))\zeta_s \big){\rm d} s\nonumber \\
&+ \int_0^t e^{(t-s)\mathcal{L}_\varphi}W *\left(f''(u_s)\left(\xi^{1,2}_s h^3_s +\xi^{1,3}_s h^2_s +\xi^{2,3}_s h^1_s  \right)+f^{(3)}(u_s)h^1_s h^2_s h^3_s\right){\rm d} s.\label{eq:zeta mild}
\end{align}
Let us first remark that
\begin{equation}\label{eq:Ptvarphi(...)=0}
P^t_\varphi\left(W *\left(f''(\hat u_\varphi)\left(\xi^{1,2}_\infty h^3_\infty +\xi^{1,3}_\infty h^2_\infty +\xi^{2,3}_\infty h^1_\infty  \right)+f^{(3)}(\hat u_\varphi)h^1_\infty h^2_\infty h^3_\infty\right)\right)=0.
\end{equation}
Indeed, differentiating two times the identity
\[
T_\infty(u_0) = W*(f(T_\infty(u_0)),  
\]
one obtains
\begin{align*}
{\rm D}^2T_\infty(u_0)[h^1_0,h^2_0] = W*\Big(&f'(T_\infty(u_0)){\rm D}^2T_\infty(u_0)[h^1_0,h^2_0]\\
&+f''(T_\infty(u_0)){\rm D}T_\infty(u_0)[h^1_0] {\rm D}T_\infty(u_0)[h^2_0]\Big),
\end{align*}
and thus
\begin{align*}
{\rm D}^2T_\infty&(\tilde u_0)[h^1_0,h^2_0]-{\rm D}^2T_\infty(u_0)[h^1_0,h^2_0] \\
&\quad - W*\Big(f'(T_\infty(u_0))({\rm D}^2T_\infty(\tilde u_0)[h^1_0,h^2_0]-{\rm D}^2T_\infty(u_0)[h^1_0,h^2_0])\Big)\\
& = W *\Big((f'(T_\infty(\tilde u_0))-f'(T_\infty(u_0))){\rm D}^2T_\infty(\tilde u_0)[h^1_0,h^2_0]\\
&\qquad \qquad +f''(T_\infty(\tilde u_0)){\rm D}T_\infty(\tilde u_0)[h^1_0] {\rm D}T_\infty(\tilde u_0)[h^2_0]\\
&\qquad \qquad -f''(T_\infty(u_0)){\rm D}T_\infty(u_0)[h^1_0] {\rm D}T_\infty(u_0)[h^2_0]\Big).
\end{align*}
Now $P^t_\varphi$ applied to the left-hand side gives $0$, since for any $v$
\begin{align*}
P^t_\varphi(W*(f'(\hat u_\varphi)v))&=\int W*(f'(\hat u_\varphi)v) \hat u'_\varphi f'(\hat u_\varphi) = \int f'(\hat u_\varphi) W*(f'(\hat u_\varphi) u'_\varphi)\\
&=\int f'(\hat u_\varphi)  u'_\varphi = P^t_\varphi(v),
\end{align*}
and thus $P^t_\varphi$ applied to the right-hand side gives also $0$, which implies \eqref{eq:Ptvarphi(...)=0} when taking $\hat u_0=u_0+\varepsilon h^3_0$ and $\varepsilon$ going to $0$.

With \eqref{eq:Ptvarphi(...)=0} in mind one can then perform a decomposition of \eqref{eq:zeta mild} similar to the one made for the second derivative, replacing 
\begin{align*}
f''(u_s)&\left(\xi^{1,2}_s h^3_s +\xi^{1,3}_s h^2_s +\xi^{2,3}_s h^1_s  \right)+f^{(3)}(u_s)h^1_s h^2_s h^3_s
\end{align*}
with
\begin{align*}
f''(\hat u_\varphi)\left(\xi^{1,2}_\infty h^3_\infty +\xi^{1,3}_\infty h^2_\infty +\xi^{2,3}_\infty h^1_\infty  \right)+f^{(3)}(\hat u_\varphi)h^1_\infty h^2_\infty h^3_\infty.
\end{align*}
This replacement produces several terms, as for example
\[
I = \int_0^t e^{(t-s)\mathcal{L}_\varphi} W*\Big(f^{(3)}(\hat u_s)h^1_s h^2_s (h^3_s-h^3_\infty)\Big){\rm d} s,
\]
which may be tackled in the following way, relying in particular on \eqref{eq:DT_t bounded} and \eqref{eq:DT_t CV Linfty}
\begin{align*}
\Vert I \Vert_{L^2}\leq C\Vert h^1_0\Vert_{L^2}\Vert h^2_0\Vert_{L^2} \int_0^t (c_4 e^{-s}\Vert h_0^3\Vert_{L^\infty} +c_5 e^{-\min(\lambda,1)s}\Vert h_0^3\Vert_{L^2}){\rm d} s.
\end{align*}
The other terms may be bounded with similar arguments (recalling that $\Vert h^3_\infty\Vert_{L^\infty}\leq m_2\Vert h^3_0\Vert_{L^2}$), which means that there exists a constant $c_8>0$ such that for any $u_0\in \tilde{N}$, $t\geq 0$,  $h^1_0,h^2_0\in L^2$ and $h^3_0\in \mathcal{X}$,
\begin{equation}
\Vert {\rm D}^3 T_t(u_0)[h^1_0,h^2_0,h^3_0]\Vert_{L^2} \leq c_8 \Vert h^1_0\Vert_{L^2}\Vert h^2_0\Vert_{L^2}(\Vert h^3_0\Vert_{L^\infty}+\Vert h^3_0\Vert_{L^2}).
\end{equation}

\medskip

\noindent
{\it Sixth step.} It remains to prove that $({\rm D}^3 T_{t_n})_{n\in \mathbb{N}}$ is a Cauchy sequence in $C(\mathcal{N}, \mathcal{B}((L^2)^2\times \mathcal{X};L^2))$. We do not give in details all the computations, which involve several terms. 
With a similar decomposition as in Step 4, one obtains in particular the term
\[
J = \int_0^{t_n}e^{(t_n-s)\mathcal{L}_\varphi}\left(e^{(t_m-t_n)\mathcal{L}_\varphi}-1\right) W*\big(f^{(3)}(u_s)h^1_sh^2_s(h^3_s -h^3_\infty)\big){\rm d} s
\]
which may be tackled as follows:
\begin{align*}
\Vert J\Vert_{L^2}&\leq C_\mathcal{L} \Vert W\Vert_{L^2}\Vert f^{(3)}\Vert_{L^\infty} \int_0^{t_n}e^{-\kappa(t_n-s)} \Vert h^1_s\Vert_{L^2}\Vert h^2_s\Vert_{L^2} \Vert h^3_s-h^3_\infty\Vert_{L^\infty}{\rm d} s\\
&\leq  C \Vert h^1_0\Vert_{L^2}\Vert h^2_0\Vert_{L^2}   \int_0^{t_n} e^{-\kappa(t_n-s)}(c_4 e^{-s}\Vert h_0^3\Vert_{L^\infty} +c_5 e^{-\min(\lambda,1)s}\Vert h_0^3\Vert_{L^2}){\rm d} s\\
&\leq C e^{-c t_n}\Vert h^1_0\Vert_{L^2}\Vert h^2_0\Vert_{L^2}(\Vert h^3_0\Vert_{L^\infty}+\Vert h^3_0\Vert_{L^2}),
\end{align*}
for some $c>0$. The other terms may be tackled similarly, recalling that $\Vert h_\infty^3\Vert_{L^\infty}\leq c_2\Vert h^3_0\Vert_{L^2}$, and at the end one obtains, for $u_0\in \mathcal{N}$ , $h^1_0,h^2_0\in L^2$ and $h^3_0\in \mathcal{X}$, 
\begin{equation*}
\Vert \zeta_{t_m}-\zeta_{t_n}\Vert_{L^2}\leq c_9 e^{-c t_n} \Vert h^1_0\Vert_{L^2}\Vert h^2_0\Vert_{L^2}(\Vert h^3_0\Vert_{L^\infty}+\Vert h^3_0\Vert_{L^2}).
\end{equation*}

\medskip

\noindent
{\it Derivatives of the isochron.} From the above calculations, we obtain that for $u_{ 0} \in \mathcal{ N}$,
\begin{align*}
{\rm D}T_{ \infty} (u_{ 0}) [v] &= -{\rm D} \Theta(u_{ 0})[v] \hat{ u}^{ \prime}_{ \Theta(u_{ 0})},\\
{\rm D}^2 T_\infty(u_0)[v,w]&=- {\rm D}^2\Theta(u_0)[v,w]\hat u_{\Theta(u_0)} '+{\rm D}\Theta(u_0)[v]{\rm D}\Theta(u_0)[w]\hat u_{\Theta(u_0)} ''.
\end{align*} 
Consequently,
\begin{align}
{\rm D} \Theta(u_{ 0})[v]&= - \frac{ \left\langle {\rm D}T_{ \infty} (u_{ 0})[v]\, ,\, \hat{ u}^{ \prime}_{ \Theta(u_{ 0})}\right\rangle_{ m_{ \Theta(u_{ 0})}}}{ \left\langle \hat{ u}^{ \prime}_{0}\, ,\, \hat{ u}^{ \prime}_{ 0}\right\rangle_{ m_{0}}}, \label{eq:DTheta_expr}\\
{\rm D}^2\Theta(u_0)[v,w]&= \frac{ \left\langle -{\rm D}^2 T_\infty(u_0)[v,w]\, ,\, \hat{ u}^{ \prime}_{ \Theta(u_{ 0})}\right\rangle_{ m_{ \Theta(u_{ 0})}}}{ \left\langle \hat{ u}^{ \prime}_{0}\, ,\, \hat{ u}^{ \prime}_{ 0}\right\rangle_{ m_{0}}}\nonumber\\
&\quad +\frac{ {\rm D}\Theta(u_0)[v]{\rm D}\Theta(u_0)[w]\left\langle \hat u_{0} ''\, ,\, \hat{ u}^{ \prime}_{ 0}\right\rangle_{ m_{ 0}}}{ \left\langle \hat{ u}^{ \prime}_{0}\, ,\, \hat{ u}^{ \prime}_{ 0}\right\rangle_{ m_{0}}},\label{eq:D2Theta_expr}
\end{align}
where we have used the identities
\[
\left\langle \hat{ u}^{ \prime}_{ \Theta(u_{ 0})}\, ,\, \hat{ u}^{ \prime}_{ \Theta(u_{ 0})}\right\rangle_{ m_{ \Theta(u_{ 0})}}=  \left\langle \hat{ u}^{ \prime}_{0}\, ,\, \hat{ u}^{ \prime}_{ 0}\right\rangle_{ m_{0}}, \text{ and } \left\langle \hat u_{\Theta(u_0)} ''\, ,\, \hat{ u}^{ \prime}_{ \Theta(u_{ 0})}\right\rangle_{ m_{ \Theta(u_{ 0})}}= \left\langle \hat u_{0} ''\, ,\, \hat{ u}^{ \prime}_{ 0}\right\rangle_{ m_{ 0}}.
\]
Therefore, we indeed deduce from \eqref{eq:DT_t bounded} and \eqref{eq:D2T bounded} that $\sup_{ u_{ 0}\in \mathcal{ N}} \left\Vert {\rm D} \Theta(u_{ 0}) \right\Vert< +\infty$ and $\sup_{ u_{ 0}\in \mathcal{ N}} \left\Vert {\rm D}^{ 2} \Theta(u_{ 0}) \right\Vert< +\infty$.
\medskip

\noindent
{\it Computations on $\mathcal{M}$.}
Let us now compute the two first derivatives of $\Theta$ at $u_0=\hat u_\varphi $. Applying first \eqref{eq:DTheta_expr} to $u_{ 0}:= \hat{ u}_{ \varphi}$, we have,
%\begin{equation*}
%{\rm D} T_\infty(u_0)[v]=- {\rm D}\Theta(u_0)[v]\hat u_{\Theta(u_0)} ',
%\end{equation*}
%and
%\begin{equation*}
%{\rm D}^2 T_\infty(u_0)[v,w]=- {\rm D}^2\Theta(u_0)[v,w]\hat u_{\Theta(u_0)} '+{\rm D}\Theta(u_0)[v]{\rm D}\Theta(u_0)[w]\hat u_{\Theta(u_0)} ''.
%\end{equation*}
%So
%\begin{equation*}
%{\rm D}\Theta(u_0)[v] = - \frac{\left\langle {\rm D} T_\infty(u_0)[v],\hat u_{\Theta(u_0)}'\right\rangle_{m_{\Theta(u_0)}}}{\left\langle \hat u_{\Theta(u_0)}',\hat u_{\Theta(u_0)}'\right\rangle_{m_{\Theta(u_0)}}},
%\end{equation*}
since ${\rm D} T_t(\hat u_\phi)[v]= e^{t\mathcal{L}_\phi}v$,
\begin{equation*}
{\rm D}\Theta(\hat u_\phi)[v] = - \frac{\left\langle v,\hat u'_\phi\right\rangle_{m_\phi}}{\left\langle \hat u'_0,\hat u'_0\right\rangle_{m_0}}.
\end{equation*}
Moreover, remarking that
\begin{equation*}
\langle \hat u'_0,\hat u''_0\rangle_{m_0} = \int_{\mathbb{R}} \hat u'_0(x) \hat u''_0(x) f'(\hat u_0)(x) {\rm d} x =0,
\end{equation*}
since $\hat u'_0$ is even, $\hat u''_0$ is odd and $f'(\hat u_0)$ is even, we obtain
\begin{align*}
{\rm D}^2\Theta(\hat u_\phi)[v,w]&=- \frac{\left\langle {\rm D}^2 T_\infty(\hat u_\phi)[v,w],\hat u'_\phi\right\rangle_{m_\phi}}{\left\langle \hat u_0',\hat u_0'\right\rangle_{m_0}}\\
&=\frac{\left\langle \int_0^t e^{(t-s)\mathcal{L}_\phi} W*\big(f''(\hat u_\phi)\,  e^{s\mathcal{L}_\phi}v\,  e^{s\mathcal{L}_\phi}w\big){\rm d}s,\hat u'_\phi\right\rangle_{m_\phi}}{\left\langle \hat u'_0,\hat u'_0\right\rangle_{m_0}}\\
&=- \frac{\int_\mathbb{R}\int_0^\infty f''(\hat u_\phi)(x) \hat u'_\phi(x)  e^{s\mathcal{L}_\phi}v(x)  e^{s\mathcal{L}_\phi}w(x)  {\rm d} x}{\left\langle \hat u'_0,\hat u'_0\right\rangle_{m_0}},
\end{align*}
where we used the identity $ \hat{ u}'_\phi = W*(m_\phi \hat{ u}'_\phi)$.
One obtains then \eqref{eq:D2Theta} remarking that, since $f''(\hat u_0)$ is odd while $ \hat{u}'_0$ is even and $e^{s \mathcal{L}_0} \hat{ u}'_0= \hat{ u}'_0$, we have  ${\rm D}^2\Theta(\hat u_\phi)[\hat u'_\phi,\hat u'_\phi]={\rm D}^2\Theta(\hat u_0)[\hat u'_0,\hat u'_0]=0$. \eqref{eq:D2Theta spect proj} is a consequence of the Spectral Theorem.
\end{proof}

\subsection*{Acknowledgments.}
E.L. acknowledges the support of projects CHAMANE ANR-19-CE40-002 and HAPPY ANR-23-CE40-0007 of the French National Research Agency. 
C.P. acknowledges the support of project CONVIVIALITY ANR-23-CE40-0003 of the French National Research Agency. 

\def\cprime{$'$}


\begin{thebibliography}{10}

\bibitem{Adams2025}
Z.~P. Adams and J.~MacLaurin.
\newblock The isochronal phase of stochastic pde and integral equations:
  Metastability and other properties.
\newblock {\em Journal of Differential Equations}, 414:773--816, Jan. 2025.

\bibitem{agathenerine22}
Z.~Agathe-Nerine.
\newblock Long-term stability of interacting hawkes processes on random graphs.
\newblock {\em Electronic Journal of Probability}, 28(none), Jan. 2023.

\bibitem{AgatheNerine2025}
Z.~Agathe-Nerine.
\newblock Stability of wandering bumps for hawkes processes interacting on the
  circle.
\newblock {\em Stochastic Processes and their Applications}, 182:104577, Apr.
  2025.

\bibitem{Amari:1977:DPF:2731211.2731248}
S.-I. Amari.
\newblock Dynamics of pattern formation in lateral-inhibition type neural
  fields.
\newblock {\em Biol. Cybern.}, 27(2):77--87, June 1977.

\bibitem{MR3023410}
D.~C. Antonopoulou, D.~Bl{{\"o}}mker, and G.~D. Karali.
\newblock Front motion in the one-dimensional stochastic {C}ahn-{H}illiard
  equation.
\newblock {\em SIAM J. Math. Anal.}, 44(5):3242--3280, 2012.

\bibitem{Bates1998}
P.~Bates, K.~Lu, and C.~Zeng.
\newblock {\em Existence and persistence of invariant manifolds for semiflows
  in Banach space.}, volume 135.
\newblock Mem. Amer. Math. Soc., 1998.

\bibitem{Bates2008}
P.~W. Bates, K.~Lu, and C.~Zeng.
\newblock Approximately invariant manifolds and global dynamics of spike
  states.
\newblock {\em Inventiones mathematicae}, 174(2):355--433, July 2008.

\bibitem{MR2448321}
L.~Bertini, S.~Brassesco, and P.~Butt{{\`a}}.
\newblock Soft and hard wall in a stochastic reaction diffusion equation.
\newblock {\em Arch. Ration. Mech. Anal.}, 190(2):307--345, 2008.

\bibitem{BGP}
L.~Bertini, G.~Giacomin, and K.~Pakdaman.
\newblock Dynamical aspects of mean field plane rotators and the {K}uramoto
  model.
\newblock {\em J. Statist. Phys.}, 138:270--290, 2010.

\bibitem{Bertini2014}
L.~Bertini, G.~Giacomin, and C.~Poquet.
\newblock Synchronization and random long time dynamics for mean-field plane
  rotators.
\newblock {\em Probability Theory and Related Fields}, 160(3):593--653, Dec
  2014.

\bibitem{Billingsley1999}
P.~Billingsley.
\newblock {\em Convergence of probability measures}.
\newblock Wiley Series in Probability and Statistics: Probability and
  Statistics. John Wiley \& Sons Inc., New York, second edition, 1999.

\bibitem{MR1340032}
S.~Brassesco, A.~De~Masi, and E.~Presutti.
\newblock Brownian fluctuations of the interface in the {$D=1$}
  {G}inzburg-{L}andau equation with noise.
\newblock {\em Ann. Inst. H. Poincar{\'e} Probab. Statist.}, 31(1):81--118,
  1995.

\bibitem{MR2871421}
P.~C. Bressloff.
\newblock Spatiotemporal dynamics of continuum neural fields.
\newblock {\em J. Phys. A}, 45(3):033001, 109, 2012.

\bibitem{Bressloff2015}
P.~C. Bressloff and Z.~P. Kilpatrick.
\newblock Nonlinear langevin equations for wandering patterns in stochastic
  neural fields.
\newblock {\em SIAM Journal on Applied Dynamical Systems}, 14(1):305--334, Jan.
  2015.

\bibitem{Bressloff2012}
P.~C. Bressloff and M.~A. Webber.
\newblock Front propagation in stochastic neural fields.
\newblock {\em {SIAM} Journal on Applied Dynamical Systems}, 11(2):708--740,
  Jan. 2012.

\bibitem{chen1997}
X.~Chen.
\newblock Existence, uniqueness, and asymptotic stability of traveling waves in
  nonlocal evolution equations.
\newblock {\em Adv. Differential Equations}, 2(1):125--160, 1997.

\bibitem{CHEVALLIER20191}
J.~Chevallier, A.~Duarte, E.~L{\"o}cherbach, and G.~Ost.
\newblock {Mean field limits for nonlinear spatially extended Hawkes processes
  with exponential memory kernels}.
\newblock {\em Stochastic Processes and their Applications}, 129(1):1 -- 27,
  2019.

\bibitem{2019arXiv190906401C}
J.~Chevallier and G.~Ost.
\newblock {Fluctuations for spatially extended Hawkes processes}.
\newblock {\em Stochastic Processes and their Applications}, 130(9):5510--5542,
  Sept. 2020.

\bibitem{Coombes2005}
S.~Coombes.
\newblock Waves, bumps, and patterns in neural field theories.
\newblock {\em Biological Cybernetics}, 93(2):91--108, Aug 2005.

\bibitem{PSMIR_1995___2_A3_0}
B.~Courbot.
\newblock In\'egalit\'es exponentielles pour semimartingales : la m\'ethode du
  cumulant.
\newblock {\em Publications de l'Institut de recherche math\'ematiques de
  Rennes}, (2), 1995.

\bibitem{MR3449317}
S.~Delattre, N.~Fournier, and M.~Hoffmann.
\newblock Hawkes processes on large networks.
\newblock {\em Ann. Appl. Probab.}, 26(1):216--261, 2016.

\bibitem{DITLEVSEN20171840}
S.~Ditlevsen and E.~L\"{o}cherbach.
\newblock Multi-class oscillating systems of interacting neurons.
\newblock {\em Stochastic Processes and their Applications}, 127(6):1840 --
  1869, 2017.

\bibitem{Ermentrout1998}
B.~Ermentrout.
\newblock Neural networks as spatio-temporal pattern-forming systems.
\newblock {\em Reports on Progress in Physics}, 61(4):353--430, apr 1998.

\bibitem{ERMENTROUT200133}
G.~Ermentrout and D.~Kleinfeld.
\newblock Traveling electrical waves in cortex: Insights from phase dynamics
  and speculation on a computational role.
\newblock {\em Neuron}, 29(1):33--44, 2001.

\bibitem{ermentrout_mcleod_1993}
G.~B. Ermentrout and J.~B. McLeod.
\newblock Existence and uniqueness of travelling waves for a neural network.
\newblock {\em Proceedings of the Royal Society of Edinburgh: Section A
  Mathematics}, 123(3):461--478, 1993.

\bibitem{MR3367676}
O.~Faugeras and J.~Inglis.
\newblock Stochastic neural field equations: a rigorous footing.
\newblock {\em J. Math. Biol.}, 71(2):259--300, 2015.

\bibitem{Faugeras2008}
O.~Faugeras, R.~Veltz, and F.~Grimbert.
\newblock Persistent neural states: Stationary localized activity patterns in
  nonlinear continuousn-population, q-dimensional neural networks.
\newblock {\em Neural Computation}, 0(0):080804143617793--41, Aug. 2008.

\bibitem{Faye2013}
G.~Faye.
\newblock Existence and stability of traveling pulses in a neural field
  equation with synaptic depression.
\newblock {\em SIAM Journal on Applied Dynamical Systems}, 12(4):2032--2067,
  Jan. 2013.

\bibitem{doi:10.1137/18M1165797}
G.~Faye and Z.~Kilpatrick.
\newblock Threshold of front propagation in neural fields: An interface
  dynamics approach.
\newblock {\em SIAM Journal on Applied Mathematics}, 78(5):2575--2596, 2018.

\bibitem{MR1337253}
T.~Funaki.
\newblock The scaling limit for a stochastic {PDE} and the separation of
  phases.
\newblock {\em Probab. Theory Related Fields}, 102(2):221--288, 1995.

\bibitem{GPP2012}
G.~Giacomin, K.~Pakdaman, and X.~Pellegrin.
\newblock Global attractor and asymptotic dynamics in the {K}uramoto model for
  coupled noisy phase oscillators.
\newblock {\em Nonlinearity}, 25(5):1247, 2012.

\bibitem{doi:10.1137/110846452}
G.~Giacomin, K.~Pakdaman, X.~Pellegrin, and C.~Poquet.
\newblock Transitions in active rotator systems: Invariant hyperbolic manifold
  approach.
\newblock {\em SIAM Journal on Mathematical Analysis}, 44(6):4165--4194, 2012.

\bibitem{MR3336876}
G.~Giacomin and C.~Poquet.
\newblock Noise, interaction, nonlinear dynamics and the origin of rhythmic
  behaviors.
\newblock {\em Braz. J. Probab. Stat.}, 29(2):460--493, 2015.

\bibitem{Giacomin:2015ab}
G.~Giacomin, C.~Poquet, and A.~Shapira.
\newblock Small noise and long time phase diffusion in stochastic limit cycle
  oscillators.
\newblock {\em Journal of Differential Equations}, 264(2):1019--1049, Jan.
  2018.

\bibitem{Guckenheimer1975}
J.~Guckenheimer.
\newblock Isochrons and phaseless sets.
\newblock {\em Journal of Mathematical Biology}, 1(3):259--273, Sept. 1975.

\bibitem{Hausenblas2006}
E.~Hausenblas.
\newblock A note on the it{\^{o}} formula of stochastic integrals in banach
  spaces.
\newblock {\em Random Operators and Stochastic Equations}, 14(1), Jan. 2006.

\bibitem{MR0378093}
A.~G. Hawkes and D.~Oakes.
\newblock A cluster process representation of a self-exciting process.
\newblock {\em J. Appl. Probability}, 11:493--503, 1974.

\bibitem{Heesen2021}
S.~Heesen and W.~Stannat.
\newblock Fluctuation limits for mean-field interacting nonlinear hawkes
  processes.
\newblock {\em Stochastic Processes and their Applications}, 139:280--297,
  Sept. 2021.

\bibitem{Huang2010}
X.~Huang, W.~Xu, J.~Liang, K.~Takagaki, X.~Gao, and J.-y. Wu.
\newblock Spiral wave dynamics in neocortex.
\newblock {\em Neuron}, 68(5):978--990, Dec. 2010.

\bibitem{10113715M102856X}
J.~Inglis and J.~MacLaurin.
\newblock A general framework for stochastic traveling waves and patterns, with
  application to neural field equations.
\newblock {\em SIAM Journal on Applied Dynamical Systems}, 15(1):195--234,
  2016.

\bibitem{Kilpatrick2013}
Z.~P. Kilpatrick and B.~Ermentrout.
\newblock Wandering bumps in stochastic neural fields.
\newblock {\em {SIAM} Journal on Applied Dynamical Systems}, 12(1):61--94, Jan.
  2013.

\bibitem{10113713095094X}
J.~Kr{\"u}ger and W.~Stannat.
\newblock Front propagation in stochastic neural fields: A rigorous
  mathematical framework.
\newblock {\em SIAM Journal on Applied Dynamical Systems}, 13(3):1293--1310,
  2014.

\bibitem{Kruger:2017aa}
J.~Kr{\"u}ger and W.~Stannat.
\newblock A multiscale-analysis of stochastic bistable reaction-diffusion
  equations.
\newblock 01 2017.

\bibitem{Laing2005}
C.~R. Laing.
\newblock Spiral waves in nonlocal equations.
\newblock {\em SIAM Journal on Applied Dynamical Systems}, 4(3):588--606, Jan.
  2005.

\bibitem{10113715M1033927}
E.~Lang.
\newblock A multiscale analysis of traveling waves in stochastic neural fields.
\newblock {\em SIAM Journal on Applied Dynamical Systems}, 15(3):1581--1614,
  2016.

\bibitem{https://doi.org/10.14279/depositonce-5019}
E.~Lang.
\newblock {\em Traveling waves in stochastic neural fields}.
\newblock PhD thesis, Fakult{\"a}t Mathematik und Naturwissenschaften der
  Technischen Universit{\"a}t Berlin, 2016.

\bibitem{2016JDE...261.4275L}
E.~{Lang} and W.~{Stannat}.
\newblock {L$^{2}$-stability of traveling wave solutions to nonlocal evolution
  equations}.
\newblock {\em Journal of Differential Equations}, 261(8):4275--4297, Oct 2016.

\bibitem{Lang2017}
E.~Lang and W.~Stannat.
\newblock Finite-size effects on traveling wave solutions to neural field
  equations.
\newblock {\em The Journal of Mathematical Neuroscience}, 7(1), July 2017.

\bibitem{MR546120}
P.~A.~W. Lewis and G.~S. Shedler.
\newblock Simulation of nonhomogeneous {P}oisson processes by thinning.
\newblock {\em Naval Res. Logist. Quart.}, 26(3):403--413, 1979.

\bibitem{Locherbach2017}
E.~L\"{o}cherbach.
\newblock Large deviations for cascades of diffusions arising in oscillating
  systems of interacting hawkes processes.
\newblock {\em Journal of Theoretical Probability}, 32(1):131--162, Oct. 2017.

\bibitem{MR3689966}
E.~Lu\c{c}on and C.~Poquet.
\newblock Long time dynamics and disorder-induced traveling waves in the
  stochastic {K}uramoto model.
\newblock {\em Ann. Inst. Henri Poincar{\'e} Probab. Stat.}, 53(3):1196--1240,
  2017.

\bibitem{2021arXiv210702473L}
E.~{Lu{\c{c}}on} and C.~{Poquet}.
\newblock {Periodicity and longtime diffusion for mean field systems in
  $\mathbb{R}^d$}.
\newblock {\em ArXiv e-print 2107.02473}, July 2021.

\bibitem{Lucon2019}
E.~Lu{\c{c}}on and C.~Poquet.
\newblock {Emergence of Oscillatory Behaviors for Excitable Systems with Noise
  and Mean-Field Interaction: A Slow-Fast Dynamics Approach}.
\newblock {\em Communications in Mathematical Physics}, 373(3):907--969, Dec.
  2019.

\bibitem{Lucon2021}
E.~Lu{\c{c}}on and C.~Poquet.
\newblock Periodicity induced by noise and interaction in the kinetic
  mean-field {FitzHugh}{\textendash}nagumo model.
\newblock {\em The Annals of Applied Probability}, 31(2), Apr. 2021.

\bibitem{2021arXiv210702468L}
E.~Lu{\c{c}}on and C.~Poquet.
\newblock Existence, stability and regularity of periodic solutions for
  nonlinear {F}okker{\textendash}{P}lanck equations.
\newblock {\em Journal of Dynamics and Differential Equations}, Mar. 2022.

\bibitem{Luo2020}
L.~Luo.
\newblock {\em Principles of Neurobiology}.
\newblock Garland Science, Sept. 2020.

\bibitem{Ogata1981}
Y.~Ogata.
\newblock On lewis{\textquotesingle} simulation method for point processes.
\newblock {\em {IEEE} Transactions on Information Theory}, 27(1):23--31, Jan.
  1981.

\bibitem{Pazy1983}
A.~Pazy.
\newblock {\em Semigroups of linear operators and applications to partial
  differential equations}, volume~44 of {\em Applied Mathematical Sciences}.
\newblock Springer-Verlag, New York, 1983.

\bibitem{Pinto2001}
D.~J. Pinto and G.~B. Ermentrout.
\newblock Spatially structured activity in synaptically coupled neuronal
  networks: Ii. lateral inhibition and standing pulses.
\newblock {\em SIAM Journal on Applied Mathematics}, 62(1):226--243, Jan. 2001.

\bibitem{101137130918721}
J.~Rankin, D.~Avitabile, J.~Baladron, G.~Faye, and D.~Lloyd.
\newblock Continuation of localized coherent structures in nonlocal neural
  field equations.
\newblock {\em SIAM Journal on Scientific Computing}, 36(1):B70--B93, 2014.

\bibitem{Riedler2013}
M.~G. Riedler and E.~Buckwar.
\newblock Laws of large numbers and langevin approximations for stochastic
  neural field equations.
\newblock {\em The Journal of Mathematical Neuroscience}, 3(1):1--54, 2013.

\bibitem{sell2013dynamics}
G.~R. Sell and Y.~You.
\newblock {\em Dynamics of evolutionary equations}, volume 143 of {\em Applied
  Mathematical Sciences}.
\newblock Springer-Verlag, New York, 2013.

\bibitem{Veltz2010}
R.~Veltz and O.~Faugeras.
\newblock Local/global analysis of the stationary solutions of some neural
  field equations.
\newblock {\em {SIAM} Journal on Applied Dynamical Systems}, 9(3):954--998,
  Jan. 2010.

\bibitem{Wilson:1972aa}
H.~R. Wilson and J.~D. Cowan.
\newblock Excitatory and inhibitory interactions in localized populations of
  model neurons.
\newblock {\em Biophysical journal}, 12(1):1--24, 01 1972.

\bibitem{Wilson1973}
H.~R. Wilson and J.~D. Cowan.
\newblock A mathematical theory of the functional dynamics of cortical and
  thalamic nervous tissue.
\newblock {\em Kybernetik}, 13(2):55--80, Sept. 1973.

\bibitem{Zhu2017}
J.~Zhu, Z.~Brze{\'{z}}niak, and E.~Hausenblas.
\newblock Maximal inequalities for stochastic convolutions driven by
  compensated poisson random measures in banach spaces.
\newblock {\em Annales de l{\textquotesingle}Institut Henri Poincar{\'{e}},
  Probabilit{\'{e}}s et Statistiques}, 53(2), May 2017.

\end{thebibliography}
\end{document}